\documentclass[11pt]{article}

\usepackage[margin=1in]{geometry}
\usepackage{amsmath,amssymb,amsthm,mathtools}
\usepackage{mathrsfs}
\usepackage{enumitem}
\usepackage{array}
\usepackage[colorlinks=true,linkcolor=blue,citecolor=blue,urlcolor=blue]{hyperref}
\hypersetup{
  pdftitle={Response Calculus for Spectral Simplicity and Joint Eigenvalue Densities},
  pdfauthor={Chunhao Cai},
  pdfsubject={Gaussian response calculus, simple spectra, and joint eigenvalue densities},
  pdfkeywords={Gaussian multiplicative chaos, spectral response, Liouville Brownian motion, Liouville--Cauchy operator, simple spectrum, eigenvalue densities}
}

\numberwithin{equation}{section}

\newtheorem{theorem}{Theorem}[section]
\newtheorem{proposition}[theorem]{Proposition}
\newtheorem{lemma}[theorem]{Lemma}
\newtheorem{corollary}[theorem]{Corollary}
\newtheorem{assumption}[theorem]{Assumption}
\theoremstyle{definition}
\newtheorem{definition}[theorem]{Definition}
\theoremstyle{remark}
\newtheorem{remark}[theorem]{Remark}

\newcommand{\LCircle}{\mathbb T}
\newcommand{\R}{\mathbb R}
\newcommand{\E}{\mathcal E}
\newcommand{\Hc}{H}
\newcommand{\Hreg}{H_{\mathrm{reg}}}
\newcommand{\Vd}{V}
\newcommand{\Xspace}{X}
\newcommand{\State}{S}
\newcommand{\GreenOp}{G}
\newcommand{\Pot}{U_D}
\newcommand{\Potential}{U}
\newcommand{\ME}{\mathcal M_{\rm fe}(D)}
\newcommand{\WinProj}{\Pi}
\newcommand{\RieszProj}{R}
\newcommand{\EigProj}{P}
\newcommand{\OrthProj}{\operatorname{pr}}
\newcommand{\Lzero}{L_0}
\newcommand{\Mult}{\operatorname{Mult}}
\newcommand{\Mc}{\mathcal M}
\newcommand{\eps}{\varepsilon}
\newcommand{\dd}{\,d}
\newcommand{\inner}[2]{\left\langle #1,#2\right\rangle}
\newcommand{\norm}[1]{\left\|#1\right\|}
\newcommand{\one}{\mathbf 1}
\newcommand{\Dom}{\operatorname{Dom}}
\newcommand{\Response}{B}
\newcommand{\Compress}{C}
\DeclareMathOperator{\Cov}{Cov}
\DeclareMathOperator{\Tr}{Tr}
\DeclareMathOperator{\supp}{supp}
\DeclareMathOperator{\rank}{rank}
\DeclareMathOperator{\Spec}{Spec}
\DeclareMathOperator{\Disc}{Disc}
\title{Response Calculus for Spectral Simplicity\\
and Joint Eigenvalue Densities}
\author{Chunhao Cai\\
{\small School of Mathematics (Zhuhai), Sun Yat-sen University}\\
{\small \texttt{caichh9@mail.sysu.edu.cn}}}
\date{}

\begin{document}
\maketitle

\begin{abstract}
We develop a perturbative response calculus for spectral problems obtained by
changing the speed measure of a fixed symmetric energy form in a Gaussian
environment.  If Cameron--Martin translation acts by
$\mu_{x+f}=e^{\kappa f}\mu_x$, unitary transport identifies the varying
$L^2$ spaces and produces a common-domain analytic family.  For a positive
eigenvalue $\Lambda$, the first-order operator is $-\kappa\Lambda$ times the
compression of multiplication by $f$ to the $\Lambda$-eigenspace; its
eigenvalues are the derivatives of the analytic branches issuing from
$\Lambda$.  A
countable separation condition and finite-dimensional Gaussian
disintegration then give almost-sure simplicity; a response-transversality
condition and the inverse function theorem give joint densities for all
finite vectors of positive ordered eigenvalues.  We verify these hypotheses
for every $0<\gamma<2$ in two models: Dirichlet Liouville Brownian motion on
an arbitrary bounded connected planar domain, and the Liouville--Cauchy
operator on the circle.  In the first model the whole spectrum is almost
surely simple; in the second the constants form the deterministic zero mode
and the positive spectrum is almost surely simple.  Transversality follows
from a local eigenfunction-square identity in the Brownian case and its
nonlocal jump-form analogue in the Cauchy case.
\end{abstract}

\medskip
\noindent\textbf{2020 Mathematics Subject Classification.}
Primary 60J35; Secondary 47A10, 60G57, 31C25.

\smallskip
\noindent\textbf{Keywords and phrases.}
Gaussian multiplicative chaos; spectral response; Liouville Brownian motion;
Liouville--Cauchy operator; simple spectrum; joint eigenvalue densities.

\begingroup
\small
\tableofcontents
\endgroup

\section{Introduction}
\label{sec:introduction}

Liouville Brownian motion (LBM) is the diffusion obtained by time-changing
planar Brownian motion with the Liouville quantum gravity area measure.  Its
construction from Gaussian multiplicative chaos (GMC), together with the
associated Dirichlet form, heat kernel, and resolvent, was developed in
\cite{BerestyckiLQGDiffusion,GarbanRhodesVargasLBM,
GarbanRhodesVargasHeatKernel,AndresKajinoHeatKernel,
MaillardRhodesVargasZeitouniHeatKernel}.  Recent work establishes a Weyl law
\cite{BerestyckiWongWeyl} and a second-order annealed heat-trace expansion
\cite{BerestyckiKleinHeatTrace}; see
\cite{BerestyckiSpectralGeometry} for an overview and open problems.  Such
counting results do not determine whether
individual eigenvalues collide or whether finite collections of indexed
eigenvalues possess joint densities.  In particular, simple spectrum for the
Dirichlet LBM generator was posed explicitly in
\cite[Problem~4.2]{BerestyckiSpectralGeometry}.

The second model is the time change of the symmetric Cauchy process on the
circle by boundary GMC, introduced in
\cite[Chapter~4, especially Section~4.3.4]{BaverezThesis}; see also
\cite{OoiGMCStable} for a related stable-process convergence result.  Its
deterministic zero mode and nonlocal jump form make it a genuinely different
test of the same response mechanism.

We prove almost-sure simplicity and joint absolute continuity for every finite
vector of positive ordered eigenvalues in both models.  In particular, the LBM
simplicity result resolves \cite[Problem~4.2]{BerestyckiSpectralGeometry}.

\subsection{Models and main results}
\label{subsec:intro-models-main-results}

\paragraph{Abstract response principle.}
Both models described above fit the same scheme: a fixed non-negative energy
form is paired with a random finite speed measure, regular Cameron--Martin
translations act by exponential tilting, and unitary transport turns the
varying $L^2$ spaces into an analytic perturbation problem on a fixed Hilbert
space.  We first state the abstract criterion and then identify its two
realizations.

\paragraph{Abstract setting and model inputs.}
Let $\State$ be a locally compact second-countable Hausdorff space,
$\Vd_{\rm e}$ a real linear space of functions on $\State$, and $\E$ a fixed
non-negative symmetric form on $\Vd_{\rm e}$.  Fix a class
$\mathcal M_{\rm adm}$ of finite positive Radon measures and set
\begin{equation}
 \Vd_\mu:=\Vd_{\rm e}\cap L^2(\mu),
 \qquad \mu\in\mathcal M_{\rm adm}.
 \label{eq:abstract-trace-domain}
\end{equation}
Let $(\Xspace,\Hc,\mathbb P)$ be an abstract Wiener space and
for each environment $x\in\Xspace$ let $\mu_x$ be the corresponding finite
positive Radon measure on $\State$.  Thus $\State$ is the state space,
$\Xspace$ the ambient Banach space, $\Hc$ its Cameron--Martin space, and
$x\in\Xspace$ the environment variable.  Let
$\Hreg\subset\Hc$ be a linear space of bounded Borel functions on $\State$,
and fix $\kappa>0$.  This paragraph specifies the ambient objects.  The
following five assumptions are the standing model-dependent inputs; they are
verified for LBM in
Section~\ref{sec:lbm-realization} and for the Liouville--Cauchy operator in
Section~\ref{sec:lcp-realization}.

For $\mu\in\mathcal M_{\rm adm}$, write
\[
 \norm{u}_{\E,\mu}^2:=\E(u,u)+\norm{u}_{L^2(\mu)}^2,
 \qquad u\in\Vd_\mu.
\]

\begin{assumption}
\label{ass:admissible-forms}
For every $\mu\in\mathcal M_{\rm adm}$, the form $(\E,\Vd_\mu)$ is densely
defined and closed on $L^2(\mu)$,
$\dim L^2(\mu)=\infty$, and
\begin{equation}
 j_\mu:(\Vd_\mu,\norm{\cdot}_{\E,\mu})\hookrightarrow L^2(\mu)
 \quad\text{is compact}.
 \label{eq:abstract-compact-form-embedding}
\end{equation}
\end{assumption}

For $\mu\in\mathcal M_{\rm adm}$, let $A_\mu$ be the associated non-negative
self-adjoint operator.  Proposition~\ref{prop:abstract-admissible-spectral-consequences}
shows that $A_\mu$ has compact resolvent; write its positive eigenvalues, with
multiplicity, as
\begin{equation}
 0<\Lambda_1^\mu\le\Lambda_2^\mu\le\cdots\uparrow\infty.
 \label{eq:intro-positive-eigenvalue-convention}
\end{equation}
We call the positive spectrum simple when all inequalities in
\eqref{eq:intro-positive-eigenvalue-convention} are strict; no condition on
$\ker A_\mu$ is included.

\begin{assumption}
\label{ass:regular-form-multipliers}
For every $\mu\in\mathcal M_{\rm adm}$ and $f\in\Hreg$,
\[
 f\Vd_\mu\subset\Vd_\mu.
\]
\end{assumption}

Since $f\in L^\infty(\mu)$, Assumptions~\ref{ass:admissible-forms}--
\ref{ass:regular-form-multipliers} and the closed graph theorem give, for some
$C_{\mu,f}<\infty$,
\begin{equation}
 \Mult_f\in\mathcal L(\Vd_\mu),
 \qquad
 \norm{fu}_{\E,\mu}\le C_{\mu,f}\norm{u}_{\E,\mu},
 \qquad u\in\Vd_\mu.
 \label{eq:abstract-regular-form-multiplier}
\end{equation}

\begin{assumption}
\label{ass:coherent-orbit}
There exist a canonical event $\Omega_{\rm can}\in\mathcal B(\Xspace)$ and a
structural event $\Omega_{\rm str}\in\mathcal B(\Xspace)$ such that
\begin{equation}
 \Omega_{\rm str}\subset\Omega_{\rm can},
 \qquad
 \mathbb P(\Omega_{\rm str})=1,
 \qquad
 \mu_x\in\mathcal M_{\rm adm}\quad(x\in\Omega_{\rm str}).
 \label{eq:abstract-structural-data}
\end{equation}
For every $f\in\Hreg$, the chosen family is shift-covariant on
$\Omega_{\rm can}$:
\begin{equation}
 \mu_{x+f}=e^{\kappa f}\mu_x,
 \qquad
 x,x+f\in\Omega_{\rm can},
 \label{eq:pointwise-abstract-orbit}
\end{equation}
and the admissible class is stable under the same tilt:
\begin{equation}
 \{e^{\kappa f}\mu:\mu\in\mathcal M_{\rm adm}\}
 \subset\mathcal M_{\rm adm}.
 \label{eq:abstract-admissible-orbit-closure}
\end{equation}
\end{assumption}

\begin{assumption}
\label{ass:direction-separation}
There exist a reference measure $\rho$ and a collection $\mathcal Q$ of
regular perturbation directions such that
\begin{equation}
 \rho\in\{0\}\cup\mathcal P(\State),
 \qquad
 0\ne\mathcal Q\subset\Hreg,
 \qquad
 \mathcal Q\text{ is a countable }\mathbb Q\text{-vector space}.
 \label{eq:countable-direction-space}
\end{equation}
The only finite signed Radon measures invisible to every direction in
$\mathcal Q$ are the multiples of $\rho$:
\begin{equation}
 \mathcal Q^\perp
 :=\left\{\sigma\in\Mc_{\rm fin}(\State):\int_{\State}q\,d\sigma=0
          \text{ for all }q\in\mathcal Q\right\}
 =\operatorname{span}\{\rho\}.
 \label{eq:direction-annihilator}
\end{equation}
\end{assumption}

\begin{assumption}
\label{ass:spectral-borel}
For $x\in\Omega_{\rm str}$ and rational $0<a<b$, put
$\WinProj_{a,b}^x:=\one_{(a,b)}(A_{\mu_x})$.  For every such $a,b$, every
$n,k\ge1$, and every $q\in\mathcal Q$, the following functions on
$\Omega_{\rm str}$ are measurable:
\begin{equation}
 x\longmapsto\Lambda_n^{\mu_x},
 \qquad
 x\longmapsto\Tr\!\left[
  (\WinProj_{a,b}^x\Mult_q\WinProj_{a,b}^x)^k\right]
 \label{eq:minimal-borel-interface}
\end{equation}
\end{assumption}

Window ranks require no separate assumption, since
\[
 \rank\WinProj_{a,b}^x
 =\sum_{n\ge1}\one_{(a,b)}(\Lambda_n^{\mu_x}),
 \qquad x\in\Omega_{\rm str}.
\]
When these scalar data are used on all of $\Xspace$, we extend them by zero
on $\Xspace\setminus\Omega_{\rm str}$.

Assumption~\ref{ass:spectral-borel} is used only to make the collision and
submersion events in Sections~\ref{sec:slicing-simplicity}--\ref{sec:density}
measurable.  The operators and their compact resolvents already follow from
Assumption~\ref{ass:admissible-forms} and
Proposition~\ref{prop:abstract-admissible-spectral-consequences}.

For a positive eigenvalue $\Lambda$ of $A_\mu$, let
$\EigProj_\Lambda^\mu$ be the $L^2(\mu)$-orthogonal projection onto
$\ker(A_\mu-\Lambda)$, and let $\Mult_f$ denote multiplication by $f$.
Proposition~\ref{prop:abstract-cluster-response} shows that, under the tilt
$d\mu_t=e^{\kappa t f}\,d\mu$, the first-order cluster response is
\begin{equation}
 \Response_f^\Lambda(\mu)
 =-\kappa\Lambda\,
 \EigProj_\Lambda^\mu\Mult_f|_{\ker(A_\mu-\Lambda)}.
 \label{eq:intro-cluster-response}
\end{equation}
For a simple $L^2(\mu)$-normalized eigenfunction $\phi$, this becomes
\begin{equation}
 \left.\frac{d}{dt}\right|_{t=0}\Lambda(t)
 =-\kappa\Lambda\int_{\State}f\phi^2\,d\mu.
 \label{eq:intro-simple-response}
\end{equation}
Here $\Lambda(0)=\Lambda$ is the corresponding analytic eigenvalue branch.
Accordingly, for every simple positive eigenvalue $\Lambda_n^\mu$ with a real normalized
eigenfunction $\phi_n^\mu$, define
\begin{equation}
 \ell_n^\mu(q):=-\kappa\Lambda_n^\mu
 \int_{\State}q(\phi_n^\mu)^2\,d\mu,
 \qquad q\in\mathcal Q.
 \label{eq:abstract-response-functionals-Q}
\end{equation}

\begin{definition}
\label{def:abstract-response-transversality}
Let $\Lambda_{n_1}^\mu,\ldots,\Lambda_{n_M}^\mu$ be distinct simple positive
eigenvalues.  The family $(\Lambda_{n_i}^\mu)_{i=1}^M$ is called
\emph{response-transverse} if its response functionals in
\eqref{eq:abstract-response-functionals-Q} are linearly independent on
$\mathcal Q$.  Equivalently, for real $a_1,\ldots,a_M$,
\begin{equation}
 \sum_{i=1}^Ma_i\ell_{n_i}^\mu(q)=0
 \quad\text{for every }q\in\mathcal Q
 \quad\Longrightarrow\quad a_1=\cdots=a_M=0.
 \label{eq:abstract-RT}
\end{equation}
\end{definition}

The five assumptions provide the perturbation, measurability, and
direction-separation framework.
Definition~\ref{def:abstract-response-transversality} is the pathwise spectral
condition used only for the joint-density conclusion; the square criteria below
will verify it in the two models.

\begin{theorem}
\label{thm:intro-abstract-response}
In the abstract setup above, suppose
Assumptions~\ref{ass:admissible-forms}--\ref{ass:spectral-borel} hold.
\begin{enumerate}[label=\textnormal{(\roman*)}]
\item The positive spectrum is almost surely simple:
\begin{equation}
 \mathbb P\!\left\{x\in\Omega_{\rm str}:
  \dim\ker(A_{\mu_x}-\Lambda_n^{\mu_x})=1
  \text{ for every }n\ge1
 \right\}=1.
 \label{eq:intro-abstract-simple-spectrum}
\end{equation}
\item Fix $M\ge1$ and $1\le n_1<\cdots<n_M$.  If, for
$\mathbb P$-almost every $x\in\Omega_{\rm str}$, the selected positive
eigenvalues form a response-transverse family in the sense of
Definition~\ref{def:abstract-response-transversality}, then
\begin{equation}
 (\Lambda_{n_1}^{\mu_x},\ldots,\Lambda_{n_M}^{\mu_x})_\#\mathbb P
 \ll\mathcal L^M.
 \label{eq:intro-abstract-density}
\end{equation}
\end{enumerate}
\end{theorem}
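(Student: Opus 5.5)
The plan is to combine the cluster-response formula of Proposition~\ref{prop:abstract-cluster-response} with Gaussian disintegration along finitely many Cameron--Martin directions from $\mathcal Q$. Both parts share one reduction. By Assumption~\ref{ass:coherent-orbit}, for $x\in\Omega_{\rm str}$ and $q_1,\dots,q_d\in\mathcal Q$, the map $t\mapsto \mu_{x+\sum t_jq_j}=e^{\kappa\sum t_jq_j}\mu_x$ stays in $\mathcal M_{\rm adm}$ whenever $x+\sum t_jq_j\in\Omega_{\rm can}$. Unitary transport by $e^{\kappa\sum t_jq_j/2}$ gives an analytic family on $L^2(\mu_x)$ with common form domain, by \eqref{eq:abstract-regular-form-multiplier}. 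Disintegrating $\mathbb P$ along $\mathrm{span}\{q_j\}$ (Cameron--Martin quasi-invariance) then reduces an almost-sure statement to a Lebesgue-a.e. statement in $t\in\R^d$ for $\mathbb P$-a.e. base point. Measurability of the relevant events comes from Assumption~\ref{ass:spectral-borel}: rational windows and trace moments detect collisions and compressed multiplicities.

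\emph{Part (i).} Countability lets me reduce to a fixed rational window $(a,b)$. I will show that the event $E$ "some eigenvalue in $(a,b)$ is non-simple" is null.

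Take $x\in E$ and a multiple eigenvalue $\Lambda\in(a,b)$ with eigenspace $K$, $\dim K\ge2$. The compression $P_\Lambda\Mult_q|_K$ cannot be scalar for every $q\in\mathcal Q$. If it were, then for orthonormal $\phi,\psi\in K$ the signed measures $\phi\psi\,\mu$ and $(\phi^2-\psi^2)\mu$ would annihilate $\mathcal Q$. By \eqref{eq:direction-annihilator} they would therefore be multiples of $\rho$. Now $\phi\psi\mu$ has total mass $0$ while $\rho$ is $0$ or a probability, so $\phi\psi\mu=0$; likewise $(\phi^2-\psi^2)\mu=0$. Hence $\phi\psi=0$ and $\phi^2=\psi^2$ $\mu$-a.e., which forces $\phi=0$, a contradiction.

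So some $q$ has a response $\Response_q^\Lambda$ with at least two distinct eigenvalues. By Proposition~\ref{prop:abstract-cluster-response} the analytic branches through $\Lambda$ then split at first order along the line $x+tq$. Analytic branches that separate at first order coincide only on a discrete set, so the collision set on the line through $x$ in direction $q$ is countable. Conditioning $\mathbb P$ on the one-dimensional lines parallel to $q$, each line meets $E_q:=\{x\in E: q\text{ splits}\}$ in a Lebesgue-null set, so $\mathbb P(E_q)=0$. Taking the union over countable $q\in\mathcal Q$ gives $\mathbb P(E)=0$.

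\emph{Part (ii).} Part (i) lets me assume all eigenvalues are simple. Response transversality then yields $q_1,\dots,q_M\in\mathcal Q$, which I take rational, so that the Jacobian $[\ell_{n_i}^{\mu_x}(q_j)]$ is invertible. The sets $F_{q}$, indexed by $M$-tuples $q=(q_1,\dots,q_M)$ and consisting of those $x$ where this Jacobian is nonsingular, are measurable via the trace functionals and cover a full-measure set. I then disintegrate $\mathbb P$ along $\mathrm{span}\{q_j\}$. On each fibre the map $t\mapsto(\Lambda_{n_i}^{\mu_{x+\sum t_jq_j}})_i$ is real-analytic near points of $F_q$, with derivative given by \eqref{eq:intro-simple-response}. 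The inverse function theorem makes it a local diffeomorphism there, so by the area formula it pushes Lebesgue measure restricted to $F_q$ to a measure absolutely continuous with respect to $\mathcal L^M$. The Gaussian fibre densities are absolutely continuous, so integrating over fibres and taking the countable union over tuples gives \eqref{eq:intro-abstract-density}.

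\emph{Main obstacle.} I expect the hardest step to be the pathwise analyticity along whole lines while staying inside $\Omega_{\rm can}$/$\Omega_{\rm str}$. Index bookkeeping is part of this: ordered labels $\Lambda_n$ must be matched with analytic branches, and this matching is only locally valid. I would first replace $\Omega_{\rm str}$ by a shift-invariant full-measure subset along $\mathcal Q$ (countable intersection of translates under rational shifts, using quasi-invariance), then use local Kato–Rellich continuity on small $t$-neighbourhoods.
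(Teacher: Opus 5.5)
Part (i) has a genuine gap. It occurs in the passage from ``some $q\in\mathcal Q$ gives a non-scalar compression at $x$'' to ``each line in direction $q$ meets $E_q$ in a null set''. There are two problems.

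\begin{itemize}
\item A non-scalar compression does not make all branches through $\Lambda$ separate once $r\ge3$. If $\Spec(\Response_q^\Lambda(\mu_x))=\{\beta,\beta,\beta'\}$, the two branches with derivative $\beta$ may coincide identically along the whole line.
\item A direction $q$ chosen for one multiple eigenvalue says nothing about the other multiple eigenvalues of $A_{\mu_x}$ in $(a,b)$. These may persist for every $t$.
\end{itemize}

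So your claim that ``the collision set on the line through $x$ in direction $q$ is countable'' is false in general. Moreover, your argument only uses information at the base point $t=0$. A null-set statement for $\{t:x^\perp+tq\in E_q\}$ needs local control around \emph{every} point of that set, and nothing you wrote provides it.

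The paper closes this in two steps.
\begin{itemize}
\item Proposition~\ref{prop:abstract-pairwise-nonscalar} gives a non-scalar direction for every two-plane of the eigenspace. Lemma~\ref{lem:abstract-simple-matrix} and rational approximation of the discriminant (Theorem~\ref{thm:abstract-simple-splitting}) then produce $q\in\mathcal Q$ with \emph{simple} response spectrum.
\item The events $\mathcal C_{K,H_N}$ require \emph{every} collision value among the first $K+1$ labels to be simply split by some direction of a finite-dimensional slice $H_N$. Lemma~\ref{lem:abstract-local-window-nullity} (non-vanishing analytic cluster discriminants plus the zero-set theorem) then gives a Lebesgue-null neighbourhood of every point of the coherent section, and Lindel\"of's property finishes.
\end{itemize}

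Your one-direction architecture can also be rescued, by an observation you do not make. In a small enough neighbourhood of any $t_0$, the finitely many Rellich branches near $[a,b]$ that are not identically equal meet only at $t_0$. Hence for $t\ne t_0$ every collision comes from identically equal branches. By Proposition~\ref{prop:abstract-cluster-response} re-centred at $t$, the $q$-response on that eigenspace then has a single eigenvalue and is therefore scalar, so $t$ is not in the set. Thus every point of the coherent set $\{t:\mu_t^{\rm coh}\text{ has a multiple eigenvalue in }(a,b)\text{ with non-scalar }q\text{-compression}\}$ is isolated. That set is countable, each $E_q$ is null, and $E=\bigcup_{q}E_q$ is null.

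Your Part (ii) matches the paper's submersion argument. One correction there: intersecting translates under rational shifts does not give the all-$t$ analytic family you need. As in Lemma~\ref{lem:abstract-coherent-fiber}, tilt from one good base point on the fibre, and use agreement with the canonical measures only for Lebesgue-a.e.\ $t$.
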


Here $F_\#\mathbb P$ denotes the push-forward law of a random vector $F$.
Thus \eqref{eq:intro-abstract-density} means that the selected eigenvalue
vector has a joint density with respect to Lebesgue measure.

Part~\textnormal{(i)} gives simplicity in each model; the model-specific
square identity then verifies the response transversality required in
part~\textnormal{(ii)}.

\paragraph{The two models.}
We now specify the two realizations of the abstract principle.  Fix
$0<\gamma<2$.  Let $D\subset\R^2$ be bounded, open, and connected, with no
regularity assumption on $\partial D$.  Let $h$ be the zero-boundary Gaussian
free field on $D$ with law $\mathbb P$, and let $M_h$ be its full-variance
Gaussian multiplicative chaos with parameter $\gamma$.  The Dirichlet
Liouville operator $A_h$ is the
non-negative self-adjoint operator associated with
\begin{equation}
 \E(u,v)=\frac1{2\pi}\int_D\nabla u\cdot\nabla v\,dz,
 \qquad
 \Vd_h=\{u\in H_0^1(D):\widetilde u\in L^2(M_h)\}.
 \label{eq:intro-lbm-form}
\end{equation}
Write $\Lzero:=-(2\pi)^{-1}\Delta_D$ for the background Dirichlet generator
associated with this normalization.

For the second model, let
\begin{equation}
 \LCircle=\R/(2\pi\mathbb Z),
 \qquad m(d\theta)=\frac{d\theta}{2\pi},
 \qquad A_0e^{in\theta}=|n|e^{in\theta}.
 \label{eq:intro-cauchy-background}
\end{equation}
Let $h$ be the mean-zero circle field with covariance $A_0^{-1}$ on
$L_0^2(m)$ and law $\mathbb P_{\rm C}$, and let $M_h^\partial$ be its boundary
chaos with coefficient $\gamma/2$.  The Liouville--Cauchy operator
$A_h^{\rm C}$ is associated on $L^2(M_h^\partial)$ with
\begin{equation}
 \E(u,v)=\sum_{n\ne0}|n|\widehat u(n)\overline{\widehat v(n)},
 \qquad
 \Vd_h^{\rm C}
 =\{u\in H^{1/2}(\LCircle):\widetilde u\in L^2(M_h^\partial)\}.
 \label{eq:intro-cauchy-form}
\end{equation}
In both displays, $\widetilde u$ is a fixed quasi-continuous representative
for the relevant form.  The superscript ${\rm C}$ distinguishes Cauchy-model
objects; constants form the kernel of the Cauchy form.

In the notation of the abstract theorem, $\kappa$ is the exponential-orbit
coefficient and $\rho$ is the fixed centering measure invisible to the
countable direction core $\mathcal Q$.  The two models correspond to
\begin{equation}
 (\kappa,\rho)=(\gamma,0)
 \qquad\text{for Dirichlet LBM},
 \qquad
 (\kappa,\rho)=\left(\frac\gamma2,m\right)
 \qquad\text{for Liouville--Cauchy},
 \label{eq:intro-model-dictionary}
\end{equation}
where $m$ is normalized Haar measure on the circle.

We now state the resulting model theorems and record the model-specific
identities that verify response transversality.

\paragraph{Dirichlet Liouville Brownian motion.}

\begin{theorem}
\label{thm:intro-lbm-spectrum}
For every $0<\gamma<2$, almost surely, $A_h$ has compact resolvent, and its
positive eigenvalues, listed with multiplicity, satisfy
\begin{equation}
 0<\Lambda_1^h<\Lambda_2^h<\cdots\uparrow\infty.
 \label{eq:intro-lbm-simple-spectrum}
\end{equation}
Moreover, for every $N\ge1$ and $1\le n_1<\cdots<n_N$,
\begin{equation}
 (\Lambda_{n_1}^h,\ldots,\Lambda_{n_N}^h)_\#\mathbb P
 \ll\mathcal L^N.
 \label{eq:intro-lbm-density}
\end{equation}
\end{theorem}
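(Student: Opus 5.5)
The plan is to deduce Theorem~\ref{thm:intro-lbm-spectrum} from Theorem~\ref{thm:intro-abstract-response}. This requires three things: realizing Dirichlet LBM in the abstract setup with $(\kappa,\rho)=(\gamma,0)$, verifying Assumptions~\ref{ass:admissible-forms}--\ref{ass:spectral-borel}, and checking response transversality almost surely.

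\textbf{Realization.} Take $\State=D$, $\Vd_{\rm e}=H_0^1(D)$ (via quasi-continuous representatives) and $\E$ as in \eqref{eq:intro-lbm-form}. Let $\mathcal M_{\rm adm}$ be the finite Radon measures on $D$ that charge no set of zero capacity and satisfy a compactness criterion. A natural choice is finite energy, $\iint G_D(z,w)\,\mu(dz)\mu(dw)<\infty$, or more precisely uniform smallness of the Green potential near $\partial D$. Take $\Xspace$ to be a negative Sobolev space carrying $h$, and $\Hc=H_0^1(D)$ with the Dirichlet inner product. Take $\Hreg=C_c^\infty(D)$, and let $\mathcal Q$ be a countable $\mathbb Q$-span of smooth compactly supported functions that is dense in $C_c(D)$.

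Assumption~\ref{ass:admissible-forms} follows from a capacity/Green-potential argument. Closedness is the standard statement for time-changed forms of smooth measures. Compactness of $H_0^1\cap L^2(\mu)\hookrightarrow L^2(\mu)$ follows because $M_h$ has finite energy with the $\gamma$-dependent Frostman-type bound $M_h(B(z,r))\le Cr^{\alpha}$, together with boundary decay. Assumption~\ref{ass:regular-form-multipliers} is immediate for smooth compactly supported multipliers. For Assumption~\ref{ass:coherent-orbit}, the Cameron--Martin shift $h\mapsto h+f$ multiplies the chaos by $e^{\gamma f}$ on a canonical event where $M_h$ is defined as an almost-sure limit of circle-average approximations. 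The admissible class is stable under bounded tilts. Assumption~\ref{ass:direction-separation} holds with $\rho=0$, since a finite signed measure annihilating a set dense in $C_c(D)$ vanishes. Assumption~\ref{ass:spectral-borel} follows from min-max formulas over countable dense families of test functions, together with continuity of spectral windows in $x$ when $a,b$ avoid the spectrum. Part~(i) then gives simplicity. Moreover $\ker A_h=0$, because $\E(u,u)=0$ forces $u=0$ in $H_0^1$, which yields \eqref{eq:intro-lbm-simple-spectrum}.

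\textbf{Transversality.} By simplicity, the condition in \eqref{eq:abstract-RT} reduces to the following: if $\sum_i a_i\Lambda_{n_i}\phi_{n_i}^2\,M_h=0$ as a measure, then all $a_i=0$. Since $M_h$ has full support (and, I expect, the eigenfunctions are continuous), this says that $\sum_i c_i\phi_{n_i}^2=0$ on $D$. The eigenfunctions are harmonic off the support of $M_h$, which is all of $D$, so harmonicity is not directly available. Instead, I would use the \emph{local eigenfunction-square identity}: for $u=\phi^2$ with $\phi\in\Vd_h$ and any test function $\psi$,
\[
 \E(\phi^2,\psi)=\frac1\pi\int\psi|\nabla\phi|^2\,dz+2\Lambda\int\psi\phi^2\,dM_h ,
\]
with signs fixed appropriately. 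Apply $\E(\cdot,\psi)$ to $\sum_i c_i\phi_{n_i}^2=0$ and use the vanishing relation itself. Because the measures $dz$ and $M_h$ are almost surely mutually singular, the Lebesgue part gives $\sum_i c_i|\nabla\phi_{n_i}|^2=0$ a.e., and the $M_h$ part gives $\sum_i c_i\Lambda_{n_i}\phi_{n_i}^2=0$. Iterating this produces $\sum_i c_i\Lambda_{n_i}^k\phi_{n_i}^2=0$ for every $k$. A Vandermonde argument over the distinct $\Lambda_{n_i}$ then gives $c_i\phi_{n_i}^2=0$, hence $c_i=0$. Part~(ii) then yields \eqref{eq:intro-lbm-density}.

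\textbf{Main obstacle.} I expect the compactness and measurability inputs to be the hardest step for an arbitrary bounded domain with no boundary regularity. To control mass near $\partial D$, I would first try an exhaustion by compact subdomains, combined with the bound $\mathbb E\iint G_D\,dM_h\,dM_h<\infty$, which holds for all $\gamma<2$.
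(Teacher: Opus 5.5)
Your overall architecture is the paper's. You verify the five inputs with $(\kappa,\rho)=(\gamma,0)$, take simplicity from Theorem~\ref{thm:intro-abstract-response}(i), and get transversality from the square identity together with $M_h\perp\mathcal L^2$, full support and a Vandermonde iteration, exactly as in Lemma~\ref{lem:eigenfunction-square-laplacian} and Theorem~\ref{thm:abstract-square-transversality}.

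\textbf{The gap: compactness and eigenfunction regularity.} Both halves of the proof rest on this analytic input, and your proposed route for it fails.
\begin{itemize}
\item The bound you plan to use, $\mathbb E\iint G_D\,dM_h\,dM_h<\infty$ ``for all $\gamma<2$'', is false. By the rooted-measure (Girsanov) formula this expectation equals $\iint G_D(z,w)e^{\gamma^2G_D(z,w)}\,dz\,dw$. Since $e^{\gamma^2G_D}\asymp|z-w|^{-\gamma^2}$ near the diagonal, it diverges for $\gamma\ge\sqrt2$; already $\mathbb E[M_h(D)^2]=\infty$ there.
\item Finite Green energy is not a compactness criterion in any case. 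It only controls $\langle\one,\GreenOp\one\rangle_{L^2(\mu)}$, not the Green operator on $L^2(\mu)$. For instance, a critical Hardy weight $|z|^{-2}\log^{-2}(1/|z|)\,dz$ near a point has finite mass and finite energy, yet its trace embedding is non-compact.
\item The boundary is not where the difficulty lies. Domain monotonicity gives $G_D\le C_D+\log^+|z-w|^{-1}$ on all of $D\times D$, and Rellich holds on $H_0^1(D)$ for any bounded $D$. The real difficulty is the logarithmic diagonal singularity against a multifractal measure, and it must be handled pathwise.
\end{itemize}

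\textbf{What the paper does instead.} It proves a Frostman bound $M_x(D\cap B(z,r))\le C_xr^\alpha$, uniform in $z\in D$, on a Borel full-probability event. The bound comes from Kahane comparison with an exact-scale field, moments of order $p$ close to $1$ (this is how the whole range $\gamma<2$ is reached), and dyadic Borel--Cantelli. It yields $\sup_z\int G_D(z,w)^2M_x(dw)<\infty$ and $L^2(M_x)$-continuity of $z\mapsto G_D(z,\cdot)$. Hence $\GreenOp_x$ is Hilbert--Schmidt and equals $A_x^{-1}$ (Lemma~\ref{lem:green-form-inverse}, Proposition~\ref{prop:canonical-measurable-liouville-data}), which gives compact resolvent. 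Your ``smallness'' idea also works if you place it at the diagonal rather than at $\partial D$. Frostman gives uniform smallness of $\int_{B(z,r)}G_D(z,w)\,M_x(dw)$, hence $\int\widetilde u^2\,dM_x\le\epsilon\E(u,u)+C_\epsilon\|u\|_{L^2(dz)}^2$, and Rellich finishes.

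The same estimate supplies the step you left as ``I expect'': $\phi=\Lambda\GreenOp_x\phi$ is bounded and continuous. Boundedness is needed for $\phi^2\in H_0^1(D)$ and $\zeta\phi\in\Vd_x$ in the square identity. Continuity plus full support is what converts $\sum_ic_i\Lambda_i^k\phi_i^2M_h=0$ into the pointwise identity the Vandermonde step uses.

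\textbf{Smaller repairs.}
\begin{itemize}
\item Your admissible class must also require full $\E$-quasi-support. Otherwise $u\mapsto[\widetilde u]_\mu$ is not injective on $H_0^1(D)$, and $(\E,\Vd_\mu)$ is not a form on $L^2(\mu)$. The paper simply takes the regular-tilt orbit of canonical GMC on $\Omega_{\rm LBM}$.
\item ``Continuity of spectral windows in $x$'' has no meaning when $x\mapsto M_x$ is only Borel. The paper treats $\GreenOp_x$ as a measurable operator field against a fixed countable family and uses measurable functional calculus.
\item Circle averages are not defined on all of $H^{-s_0}(D)$. This is why the paper smooths with the killed heat semigroup, which is continuous on $\Xspace$.
\end{itemize}
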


For a real $L^2(M_h)$-normalized eigenpair $A_h\phi=\Lambda\phi$ with
$\Lambda>0$, the key model-specific ingredient for response transversality,
proved in Lemma~\ref{lem:eigenfunction-square-laplacian}, is the
measure identity
\begin{equation}
 \Lzero(\phi^2)
 =2\Lambda\phi^2M_h-2\Gamma_{\E}(\phi)\,\mathcal L^2,
 \label{eq:intro-lbm-square-identity}
\end{equation}
where $\Lzero$ is understood distributionally and $\Gamma_\E$ is the
carr\'e-du-champ density; the precise conventions are
\eqref{eq:lbm-energy-laplacian-convention} and
\eqref{eq:carre-du-champ-definition}.
The first measure on the right is singular with respect to Lebesgue measure,
whereas the second is absolutely continuous.  Together with a Vandermonde
argument, this establishes the response transversality required in
Theorem~\ref{thm:intro-abstract-response}\textnormal{(ii)}.

\paragraph{The Liouville--Cauchy operator.}

\begin{theorem}
\label{thm:intro-lcp-spectrum}
For every $0<\gamma<2$, almost surely, $A_h^{\rm C}$ has compact resolvent, its
kernel consists exactly of the constants, and, with the positive eigenvalues
listed with multiplicity,
\begin{equation}
 0=\Lambda_0^{{\rm C},h}
 <\Lambda_1^{{\rm C},h}<\Lambda_2^{{\rm C},h}<\cdots\uparrow\infty.
 \label{eq:intro-lcp-simple-spectrum}
\end{equation}
Moreover, for every $N\ge1$ and $1\le n_1<\cdots<n_N$,
\begin{equation}
 (\Lambda_{n_1}^{{\rm C},h},\ldots,
  \Lambda_{n_N}^{{\rm C},h})_\#\mathbb P_{\rm C}
 \ll\mathcal L^N.
 \label{eq:intro-lcp-density}
\end{equation}
\end{theorem}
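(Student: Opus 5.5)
The plan is to deduce Theorem~\ref{thm:intro-lcp-spectrum} from Theorem~\ref{thm:intro-abstract-response} with $(\kappa,\rho)=(\gamma/2,m)$. This requires three things: a realization of the five assumptions, an identification of the kernel, and a check of response transversality. For the realization, take $\State=\LCircle$, $\Vd_{\rm e}=H^{1/2}(\LCircle)$ (quasi-continuous representatives), and $\E$ the Cauchy form \eqref{eq:intro-cauchy-form}. Let $\mathcal M_{\rm adm}$ be the class of finite measures not charging sets of zero capacity and satisfying an energy bound of the type $\iint|\theta-\theta'|^{-\alpha}\mu(d\theta)\mu(d\theta')<\infty$ for some $\alpha>0$. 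This class is stable under bounded tilts $e^{\kappa f}$.

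Assumption~\ref{ass:admissible-forms} then follows from closability of time-changed forms (Fukushima's trace theory). The compact embedding comes from $H^{1/2}\hookrightarrow L^2(\mu)$ being compact when $\mu$ has positive H\"older-type dimension, proved by a Fourier/Sobolev trace estimate. For $\Hreg$, take trigonometric polynomials (real, mean zero), which lie in the Cameron--Martin space $H^{1/2}$. Such $f$ are Lipschitz, so $f\Vd\subset\Vd$ (Assumption~\ref{ass:regular-form-multipliers}). The boundary chaos with coefficient $\gamma/2$ satisfies $M^\partial_{h+f}=e^{(\gamma/2)f}M^\partial_h$ on a canonical event where the chaos is defined as a limit of regularizations (Assumption~\ref{ass:coherent-orbit}). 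Take $\mathcal Q$ to be the mean-zero trigonometric polynomials with rational coefficients. A finite signed measure annihilating all of them has vanishing nonzero Fourier coefficients, so it is a multiple of $m$; this gives Assumption~\ref{ass:direction-separation} with $\rho=m$. Measurability (Assumption~\ref{ass:spectral-borel}) follows from min-max formulas over a countable core and from approximating traces of window compressions via resolvents that depend measurably on $x$.

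The kernel is handled directly. If $\E(u,u)=0$, all nonzero Fourier coefficients vanish, so $u$ is constant, and constants lie in $L^2(M_h^\partial)$ since the measure is finite. Part (i) of Theorem~\ref{thm:intro-abstract-response} then gives simplicity of the positive spectrum, which is \eqref{eq:intro-lcp-simple-spectrum}.

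For (ii), the response functionals are $\ell_n(q)=-\kappa\Lambda_n\int q\phi_n^2\,dM_h^\partial$. Suppose $\sum a_i\ell_{n_i}=0$ on $\mathcal Q$. By \eqref{eq:direction-annihilator}, $\sum_i a_i\Lambda_{n_i}\phi_{n_i}^2M_h^\partial=c\,m$. Since $M_h^\partial\perp m$ almost surely (the boundary chaos is carried by $\gamma/2$-thick points), both sides vanish, so $c=0$ and $\sum b_i\phi_{n_i}^2=0$ holds $M_h^\partial$-a.e. with $b_i=a_i\Lambda_{n_i}$. The nonlocal square identity serves as the replacement for \eqref{eq:intro-lbm-square-identity}. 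For an eigenfunction, a direct computation with the jump kernel $J(\theta,\theta')\propto|\sin((\theta-\theta')/2)|^{-2}$ gives the distributional identity
\[
 A_0(\phi^2)=2\Lambda\phi^2M_h^\partial-\Gamma(\phi)\,m,
 \qquad \Gamma(\phi)(\theta)=\int(\phi(\theta)-\phi(\theta'))^2J(\theta,\theta')\,d\theta'.
\]
Since $u\mapsto\phi^2$ sums linearly, $w:=\sum b_i\phi_{n_i}^2$ vanishes $M^\partial$-a.e. Pairing with test functions, $A_0w=-\big(\sum b_i\Gamma(\phi_{n_i})\big)m$ is absolutely continuous. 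Repeating the argument with $b_i$ replaced by $b_i\Lambda_{n_i}^k$ requires an iteration, and the cleanest route is the one suggested for the LBM case. Apply the identity to $\sum_i b_i\Lambda_{n_i}^k\phi_{n_i}^2$ via powers of the operator, obtain a Vandermonde system in distinct $\Lambda_{n_i}$, and conclude that each $b_i\phi_{n_i}^2M^\partial_h=0$. This forces $b_i=0$ because $\phi_{n_i}$ is $L^2(M^\partial)$-normalized.

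The main obstacle is this last step: making the singular/absolutely continuous separation yield a Vandermonde system, and justifying the jump-form square identity for $\phi\in H^{1/2}$ that need not be bounded. I would first prove boundedness of eigenfunctions via ultracontractivity of the time-changed semigroup, and then derive the identity by polarization of $\E(\phi,\phi\psi)$ for smooth $\psi$. Given these steps, part (ii) of Theorem~\ref{thm:intro-abstract-response} yields \eqref{eq:intro-lcp-density}.
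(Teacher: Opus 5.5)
Your verification of the five assumptions broadly follows the paper. The transversality step for part (ii), however, has a genuine gap, exactly where you locate the main obstacle.

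Once you have $w:=\sum_i b_i\phi_{n_i}^2=0$ $M_h^\partial$-a.e., your formula $A_0w=-\bigl(\sum_i b_i\Gamma(\phi_{n_i})\bigr)m$ is wrong. Linearity and the square identity give
\[
 A_0w=2\sum_i b_i\Lambda_{n_i}\phi_{n_i}^2M_h^\partial-\sum_i b_i\Gamma(\phi_{n_i})\,m.
\]
Nothing established so far kills the singular term, because $w=0$ a.e.\ says nothing about the $\Lambda$-weighted combination. Nor can the iteration come from ``powers of the operator'': $A_0(\phi^2)$ is a measure, so there is nothing to apply $A_0$ to again.

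The missing idea is to upgrade $M_h^\partial$-a.e.\ vanishing to identical vanishing. In the paper, each positive eigenfunction has the continuous representative $\phi=\Lambda\GreenOp_h\phi$ (Corollary~\ref{cor:lcp-continuous-eigenfunctions}, which comes from the Frostman bound and row-$L^2$ continuity of the centered Green kernel). Also $\operatorname{supp}M_h^\partial=\LCircle$. Hence $w$ vanishes at every point, and $A_0w=0$ in $\mathcal D'(\LCircle)$.

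The display then equates a singular measure with an absolutely continuous one. Since $M_h^\partial\perp m$, this forces $\sum_ib_i\Lambda_{n_i}\phi_{n_i}^2=0$ $M_h^\partial$-a.e., hence everywhere by the same upgrade. Applying the square identity once at each level, to the function just shown to vanish, gives $\sum_ib_i\Lambda_{n_i}^k\phi_{n_i}^2\equiv0$ for $0\le k<N$. The pointwise Vandermonde system then yields $b_i\phi_{n_i}^2\equiv0$, so $b_i=0$. This is Theorem~\ref{thm:abstract-square-transversality}, whose continuity and full-support hypotheses exist precisely for this step.

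Boundedness via ultracontractivity only secures $\phi^2\in H^{1/2}(\LCircle)$. It gives the upgrade only if you add full quasi-support ($\widetilde w=0$ $M_h^\partial$-a.e.\ forcing $w=0$ q.e.). The Green representation, by contrast, delivers boundedness and continuity together with no heat-kernel input.

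A smaller, fixable point: your admissible class (finite measures not charging polar sets, with a Riesz-energy bound) is too large for Assumption~\ref{ass:admissible-forms}. Without full quasi-support, the map $u\mapsto[\widetilde u]_\mu$ is not injective on $\Vd_\mu$. For example, take $\mu=\one_Im$ for an arc $I$ and $u$ smooth and supported off $I$. Then $(\E,\Vd_\mu)$ is not even a well-defined form on $L^2(\mu)$. Restrict to the tilt orbit of $M_h^\partial$, as the paper does, or add full quasi-support to the definition. Note also that compactness of $H^{1/2}(\LCircle)\hookrightarrow L^2(\mu)$ must still be combined with a trace--Poincar\'e bound such as \eqref{eq:lcp-trace-poincare-bound}, to control the $m$-mean by the form norm.
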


For a real $L^2(M_h^\partial)$-normalized eigenpair
$A_h^{\rm C}\phi=\Lambda\phi$ with $\Lambda>0$, the corresponding
model-specific ingredient is the nonlocal eigenfunction-square identity from
Lemma~\ref{lem:lcp-nonlocal-square-identity}.  The local carr\'e-du-champ term
in \eqref{eq:intro-lbm-square-identity} is replaced by the jump-energy density
$\Gamma_J$ defined in \eqref{eq:lcp-jump-energy-density}:
\begin{equation}
 A_0(\phi^2)
 =2\Lambda\phi^2M_h^\partial-\Gamma_J(\phi)m.
 \label{eq:intro-lcp-square-identity}
\end{equation}
This identity is an equality of finite signed measures.  Its
singular--absolutely-continuous decomposition and the same Vandermonde argument
verify transversality for the centered response measures
\[
 -\frac\gamma2\Lambda
 \bigl(\phi^2M_h^\partial-m\bigr).
\]

\begin{remark}
\label{rem:intro-super-cauchy}
For $1<\alpha<2$, the inverse of the circle form with symbol $|n|^\alpha$
defines an $L^2(m)$-valued Gaussian field, whose exponential weight has an
almost surely strictly positive density relative to $m$; compare
\cite{OoiGMCStable}.  Simplicity should follow once the five model inputs above
are verified, whereas joint densities require a new transversality argument.
\end{remark}

\begin{remark}
\label{rem:intro-sub-cauchy}
For $0<\alpha<1$, the pseudo-Green singularity is of order
$|e^{i\theta}-e^{i\varphi}|^{-(1-\alpha)}$ near the diagonal.  When
$\alpha>1/2$, a Frostman exponent
$\delta>2(1-\alpha)$ is compatible with the present Hilbert--Schmidt route;
for $\alpha\le1/2$, a different compactness argument is needed.
\end{remark}

\begin{remark}
\label{rem:intro-non-gaussian}
The pathwise response formula is deterministic.  Once the corresponding
model inputs are in place, the remaining probabilistic step is to replace
Gaussian slicing by finite coordinate blocks with conditional Lebesgue
densities compatible with the orbit law
$\mu_{x+tf}=e^{\kappa tf}\mu_x$.  Non-Gaussian and L\'evy chaos are natural
testing grounds; see \cite{JunnilaNonGaussianChaos,RhodesSohierVargasLevyChaos}.
\end{remark}

Section~\ref{sec:abstract-setting} collects the functional-analytic
consequences of the abstract hypotheses and constructs analytic Gaussian
slices.
Section~\ref{sec:pathwise-response} derives the cluster-response formula and
the square criterion for transversality.
Sections~\ref{sec:slicing-simplicity} and~\ref{sec:density} use
finite-dimensional Gaussian slicing to prove almost-sure simplicity and joint
absolute continuity.
Finally, Sections~\ref{sec:lbm-realization} and~\ref{sec:lcp-realization}
verify the abstract model inputs and the corresponding local and nonlocal
square identities for Dirichlet LBM and the Liouville--Cauchy operator,
respectively.
Appendices~\ref{app:lbm-finite-energy-response} and~\ref{app:lcp-response}
develop the stronger potential-theoretic response structure summarized above,
including the full Cameron--Martin extensions.  These results are not used in
the proofs of the model theorems.

\section{Functional-analytic preparation and Gaussian slices}
\label{sec:abstract-setting}

\subsection{Basic spectral and tilt consequences}

Apart from the response-transversality hypothesis in
Theorem~\ref{thm:intro-abstract-response}\textnormal{(ii)}, the arguments
used to prove that theorem require no model-dependent inputs beyond
Assumptions~\ref{ass:admissible-forms}--\ref{ass:spectral-borel}.
The optional full Cameron--Martin results and the model-ready square criterion
state their additional hypotheses separately.  The analytic constructions
below use the form
conditions in Assumptions~\ref{ass:admissible-forms}--
\ref{ass:regular-form-multipliers}, the closure clause
\eqref{eq:abstract-admissible-orbit-closure}, and the orbit identity
\eqref{eq:pointwise-abstract-orbit}; the remaining clauses enter only where
cited.  We first extract the operator consequences and put the
tilted operators on fixed Hilbert spaces along finite-dimensional
Cameron--Martin slices.

\begin{proposition}
\label{prop:abstract-admissible-spectral-consequences}
For every $\mu\in\mathcal M_{\rm adm}$, there is a unique non-negative self-adjoint
operator $A_\mu$ associated with $(\E,\Vd_\mu)$, and
\begin{equation}
 (A_\mu+1)^{-1}=j_\mu j_\mu^*.
 \label{eq:abstract-form-resolvent}
\end{equation}
Here
\[
 j_\mu^*:L^2(\mu)\longrightarrow
 (\Vd_\mu,\inner{\cdot}{\cdot}_{\E,\mu})
\]
is the adjoint of $j_\mu$, where
$\inner{u}{v}_{\E,\mu}:=\E(u,v)+\inner{u}{v}_{L^2(\mu)}$.
Consequently $A_\mu$ has compact resolvent, $\ker A_\mu$ is
finite-dimensional, and its positive spectrum is infinite and can be listed,
with multiplicity, as
\begin{equation}
 0<\Lambda_1^\mu\le\Lambda_2^\mu\le\cdots\uparrow\infty.
 \label{eq:abstract-positive-spectrum}
\end{equation}
\end{proposition}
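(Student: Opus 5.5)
The plan is to run the standard Kato representation theorem and then read off the spectral consequences from the factorization of the resolvent.

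\textbf{Step 1: the operator.} Assumption~\ref{ass:admissible-forms} says $(\E,\Vd_\mu)$ is a densely defined, closed, non-negative symmetric form on $L^2(\mu)$. Kato's first representation theorem then gives a unique non-negative self-adjoint $A_\mu$ with $\Dom(A_\mu)\subset\Vd_\mu$ and
\[
 \E(u,v)=\inner{A_\mu u}{v}_{L^2(\mu)},\qquad u\in\Dom(A_\mu),\ v\in\Vd_\mu .
\]
Uniqueness is part of that theorem.

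\textbf{Step 2: the resolvent identity \eqref{eq:abstract-form-resolvent}.} Since the form is closed, $(\Vd_\mu,\inner{\cdot}{\cdot}_{\E,\mu})$ is a Hilbert space. The inclusion $j_\mu$ is bounded because $\norm{u}_{L^2(\mu)}\le\norm{u}_{\E,\mu}$. By definition of the adjoint, for $g\in L^2(\mu)$ and $v\in\Vd_\mu$,
\[
 \inner{j_\mu^*g}{v}_{\E,\mu}=\inner{g}{v}_{L^2(\mu)} .
\]
Put $w:=j_\mu^*g$. Then $\E(w,v)+\inner{w}{v}_{L^2(\mu)}=\inner{g}{v}_{L^2(\mu)}$ for every $v\in\Vd_\mu$. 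The characterization of $\Dom(A_\mu)$ in the representation theorem then gives $w\in\Dom(A_\mu)$ and $(A_\mu+1)w=g$. Hence $(A_\mu+1)^{-1}g=j_\mu j_\mu^*g$.

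\textbf{Step 3: compactness and the spectrum.} The embedding $j_\mu$ is compact by \eqref{eq:abstract-compact-form-embedding} and $j_\mu^*$ is bounded, so $(A_\mu+1)^{-1}=j_\mu j_\mu^*$ is compact. It is also self-adjoint, positive and injective. The spectral theorem for compact self-adjoint operators then gives an orthonormal eigenbasis of $L^2(\mu)$. The corresponding eigenvalues $(1+\lambda)^{-1}$ have finite multiplicities and accumulate only at $0$. Translating back, $A_\mu$ has discrete spectrum consisting of eigenvalues $\lambda\ge0$ of finite multiplicity, with no finite accumulation point. In particular $\ker A_\mu$, the eigenspace for $\lambda=0$, is finite-dimensional.

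\textbf{Step 4: infinitely many positive eigenvalues.} Because $\dim L^2(\mu)=\infty$, the orthonormal eigenbasis is infinite. Only finitely many of its elements lie in $\ker A_\mu$. So infinitely many eigenvalues are positive, each has finite multiplicity, and they have no finite accumulation point. They can therefore be listed with multiplicity as in \eqref{eq:abstract-positive-spectrum}, increasing to $\infty$.

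None of these steps is a real obstacle. The only point needing care is Step 2, where one must match the adjoint $j_\mu^*$ (taken with respect to the $\E$-inner product) with the weak formulation that defines $\Dom(A_\mu)$. Real versus complex scalars play no role.
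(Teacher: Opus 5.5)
Your proposal is correct and follows the paper's route: Kato's first representation theorem, the weak-form identification $j_\mu j_\mu^*g=(A_\mu+1)^{-1}g$, and compactness of $j_\mu j_\mu^*$ from compactness of $j_\mu$. The only cosmetic difference is how $\dim\ker A_\mu<\infty$ is obtained. You use the finite multiplicity of the eigenvalue $1$ of the compact resolvent, whereas the paper notes that $j_\mu$ restricts to the identity on $\ker A_\mu$, where the two norms coincide, and then lists the positive spectrum via the resolvent restricted to $(\ker A_\mu)^\perp$.
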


\begin{proof}
The first representation theorem for closed non-negative forms
\cite[Chapter~VI, Section~2]{KatoPerturbation} gives a unique non-negative
self-adjoint $A_\mu$.  For $f\in L^2(\mu)$ and $u\in\Vd_\mu$,
\begin{align*}
 u=j_\mu^*f
 &\Longleftrightarrow
 \E(u,v)+\inner{u}{v}_{L^2(\mu)}
 =\inner{f}{v}_{L^2(\mu)}
 \quad(v\in\Vd_\mu)\notag\\
 &\Longleftrightarrow
 j_\mu u=(A_\mu+1)^{-1}f.
\end{align*}
Thus \eqref{eq:abstract-form-resolvent} holds and its right-hand side is
compact.  Moreover,
\begin{equation*}
 u\in\ker A_\mu
 \Longrightarrow
 \norm{u}_{\E,\mu}=\norm{u}_{L^2(\mu)},
 \qquad
 j_\mu|_{\ker A_\mu}=I_{\ker A_\mu}.
\end{equation*}
Since the identity on a normed space is compact only in finite dimension,
\begin{equation*}
 \dim\ker A_\mu<\infty,
 \qquad
 \dim(\ker A_\mu)^\perp=\infty.
\end{equation*}
Let $(s_n^\mu)_{n\ge1}$ be the nonzero eigenvalues of the compact
positive operator
$(A_\mu+1)^{-1}|_{(\ker A_\mu)^\perp}$, in non-increasing order and with
multiplicity.  The compact spectral theorem gives
\begin{equation*}
 1>s_1^\mu\ge s_2^\mu\ge\cdots\downarrow0,
 \qquad
 \Lambda_n^\mu=(s_n^\mu)^{-1}-1\uparrow\infty,
\end{equation*}
which proves \eqref{eq:abstract-positive-spectrum}.
\end{proof}

For analytic perturbation, complexify the real form domain by setting
\[
 \Vd_\mu^{\mathbb C}:=\Vd_\mu\oplus i\Vd_\mu
\]
and extend $\E$ sesquilinearly, without changing its notation.  Thus
$\Vd_\mu^{\mathbb C}$ is the complex vector space generated by the real
form domain.  Every bounded real operator on $\Vd_\mu$ used below is
extended complex-linearly to $\Vd_\mu^{\mathbb C}$, with the same operator norm.

Fix $\mu\in\mathcal M_{\rm adm}$ and $f_1,\ldots,f_N\in\Hreg$, and set
\[
 v_z:=\sum_{j=1}^Nz_jf_j,
 \qquad
 m_z:=e^{-\kappa v_z/2},
 \qquad z=(z_1,\ldots,z_N)\in\mathbb C^N.
\]
Equation~\eqref{eq:abstract-regular-form-multiplier} makes each
$\Mult_{f_j}$ bounded on $\Vd_\mu^{\mathbb C}$.  The operator exponential therefore
gives
\begin{equation}
 \Mult_{m_z}
 =\exp\!\left(-\frac\kappa2\sum_{j=1}^Nz_j\Mult_{f_j}\right)
 \in\mathcal L(\Vd_\mu^{\mathbb C}),
 \qquad z\in\mathbb C^N.
 \label{eq:abstract-analytic-tilts}
\end{equation}
Thus $z\mapsto\Mult_{m_z}$ is entire in operator norm, with inverse
$\Mult_{m_{-z}}$.  In particular, for every compact $K\subset\mathbb C^N$,
\begin{equation}
 \sup_{z\in K}\left(
 \norm{\Mult_{m_z}}_{\mathcal L(\Vd_\mu^{\mathbb C})}
 +\norm{\Mult_{m_z}^{-1}}_{\mathcal L(\Vd_\mu^{\mathbb C})}\right)<\infty.
 \label{eq:abstract-multiplier-inverse-bounds}
\end{equation}

The structural weighted-form properties in
Assumption~\ref{ass:admissible-forms} are automatically stable under a bounded
real tilt.  Indeed,
\begin{equation}
 e^{-\kappa\norm{v_t}_\infty}\mu
 \le e^{\kappa v_t}\mu
 \le e^{\kappa\norm{v_t}_\infty}\mu,
 \qquad
 \Vd_{e^{\kappa v_t}\mu}=\Vd_\mu,
 \qquad
 \norm{\cdot}_{\E,e^{\kappa v_t}\mu}\asymp
 \norm{\cdot}_{\E,\mu}.
 \label{eq:abstract-real-tilt-equivalence}
\end{equation}
Thus the form properties in Assumption~\ref{ass:admissible-forms} are stable
under bounded tilts; membership in the designated model class follows from
the closure clause \eqref{eq:abstract-admissible-orbit-closure}.

\subsection{Analytic perturbation along Gaussian slices}

Fix an admissible measure $\mu$, let $f\in\Hreg$, put
\begin{equation}
 d\mu_\tau=e^{\kappa\tau f}\,d\mu,
 \qquad
 U_{\tau,f}:L^2(\mu_\tau)\longrightarrow L^2(\mu),
 \qquad
 U_{\tau,f}u=e^{\kappa\tau f/2}u,
 \label{eq:abstract-spectral-unitary}
\end{equation}
and set
$\widehat A_{\tau,f}=U_{\tau,f}A_{\mu_\tau}U_{\tau,f}^{-1}$.
The measures $\mu_\tau$ are admissible by
\eqref{eq:abstract-admissible-orbit-closure}.

\begin{proposition}
\label{prop:abstract-common-domain}
The operator $\widehat A_{\tau,f}$ is associated in $L^2(\mu)$ with
\begin{equation}
 a_{\tau,f}(u,v)
 =\E(e^{-\kappa\tau f/2}u,e^{-\kappa\tau f/2}v),
 \qquad
 \Dom a_{\tau,f}=\Vd_\mu.
 \label{eq:abstract-fixed-space-form}
\end{equation}
For every $T>0$, the form norms
\[
 \norm{u}_{\tau,f}^2
 :=a_{\tau,f}(u,u)+\norm{u}_{L^2(\mu)}^2
\]
are uniformly equivalent for $|\tau|\le T$: there are
$0<c_{T,f}\le C_{T,f}<\infty$ such that
\begin{equation}
 c_{T,f}\norm{u}_{\E,\mu}
 \le \norm{u}_{\tau,f}
 \le C_{T,f}\norm{u}_{\E,\mu},
 \qquad |\tau|\le T.
 \label{eq:abstract-uniform-form-norm-equivalence}
\end{equation}
For $u,v\in\Vd_\mu$,
$\tau\mapsto a_{\tau,f}(u,v)$ is real analytic and
\begin{equation}
 \dot{a}_{0,f}(u,v)
 =-\frac\kappa2\E(fu,v)-\frac\kappa2\E(u,fv).
 \label{eq:abstract-fixed-form-derivative}
\end{equation}
\end{proposition}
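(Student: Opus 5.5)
The plan is to transport the form of $A_{\mu_\tau}$ through the unitary $U_{\tau,f}$, identify domains using the multiplier bounds \eqref{eq:abstract-regular-form-multiplier}, \eqref{eq:abstract-multiplier-inverse-bounds} and the tilt equivalence \eqref{eq:abstract-real-tilt-equivalence}, and then expand $\E(\Mult_{m_\tau}u,\Mult_{m_\tau}v)$ as a composition of an entire operator-valued function with a bounded sesquilinear form.

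\emph{Step 1 (association).} By \eqref{eq:abstract-admissible-orbit-closure}, $\mu_\tau\in\mathcal M_{\rm adm}$, so $A_{\mu_\tau}$ is the operator of the closed form $(\E,\Vd_{\mu_\tau})$, and by \eqref{eq:abstract-real-tilt-equivalence}, $\Vd_{\mu_\tau}=\Vd_\mu$ as sets. Since $U_{\tau,f}$ is unitary, $\widehat A_{\tau,f}$ is self-adjoint and non-negative, and it is associated with the transported form $b(u,v):=\E(U_{\tau,f}^{-1}u,U_{\tau,f}^{-1}v)$ on $\Dom b=U_{\tau,f}\Vd_\mu$. Here $U_{\tau,f}^{-1}u=e^{-\kappa\tau f/2}u=\Mult_{m_\tau}u$ with $N=1$ and $f_1=f$. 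By \eqref{eq:abstract-analytic-tilts}, $\Mult_{m_\tau}$ is a bounded bijection of $\Vd_\mu$ with inverse $\Mult_{m_{-\tau}}$, so $U_{\tau,f}\Vd_\mu=\Vd_\mu$ and $b=a_{\tau,f}$. One point needs care: the operator exponential must agree with pointwise multiplication by $e^{-\kappa\tau f/2}$. For this, note that the power series converges in $\mathcal L(\Vd_\mu)$, hence in $L^2(\mu)$, where it converges to pointwise multiplication because $f$ is bounded.

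\emph{Step 2 (norm equivalence).} For $|\tau|\le T$, write
$a_{\tau,f}(u,u)+\norm{u}^2_{L^2(\mu)}\le \norm{\Mult_{m_\tau}u}_{\E,\mu}^2+\norm{u}^2_{L^2(\mu)}$.
Then \eqref{eq:abstract-multiplier-inverse-bounds} on the compact set $K=[-T,T]$ gives the upper bound. For the lower bound, write $u=\Mult_{m_{-\tau}}(\Mult_{m_\tau}u)$, so that
\[
\norm{u}_{\E,\mu}\le C\norm{\Mult_{m_\tau}u}_{\E,\mu}\le C\bigl(a_{\tau,f}(u,u)+\norm{m_\tau u}_{L^2(\mu)}^2\bigr)^{1/2}.
\]
Finally use $\norm{m_\tau u}_{L^2(\mu)}\le e^{\kappa T\norm f_\infty/2}\norm u_{L^2(\mu)}$.

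\emph{Step 3 (analyticity and derivative).} The form $\E$ is bounded on $(\Vd^{\mathbb C}_\mu,\norm\cdot_{\E,\mu})$, since $|\E(u,v)|\le\norm u_{\E,\mu}\norm v_{\E,\mu}$. The map $\tau\mapsto\Mult_{m_\tau}$ is entire in $\mathcal L(\Vd_\mu^{\mathbb C})$. Hence $\tau\mapsto\E(\Mult_{m_{\bar\tau}}u,\Mult_{m_\tau}v)$ is entire, and its restriction to real $\tau$ is real analytic. Differentiating at $0$ with $\frac{d}{d\tau}\Mult_{m_\tau}|_0=-\frac\kappa2\Mult_f$ (also in $\mathcal L(\Vd_\mu)$) and applying the product rule for bounded bilinear maps yields \eqref{eq:abstract-fixed-form-derivative}.

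The main obstacle is mild: it is the identification in Step 1, namely checking that the transported domain equals $\Vd_\mu$ and that the operator exponential coincides with the pointwise multiplier. Everything else follows directly from the earlier bounds.
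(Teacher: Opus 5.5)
Your proposal is correct and follows the paper's proof essentially step for step: you transport the form through the unitary and identify the domain via the invertible multiplier $\Mult_{m_\tau}$, you obtain the two-sided norm equivalence from \eqref{eq:abstract-multiplier-inverse-bounds} together with the bound $\norm{m_\tau u}_{L^2(\mu)}\le e^{\kappa T\norm{f}_\infty/2}\norm{u}_{L^2(\mu)}$, and you get analyticity and the derivative from operator-norm holomorphy of $\tau\mapsto\Mult_{m_\tau}$ composed with the bounded form $\E$. Your explicit check that the operator exponential agrees with pointwise multiplication in $L^2(\mu)$ is a detail the paper leaves implicit; the only nit is that which of $\E(\Mult_{m_{\bar\tau}}u,\Mult_{m_\tau}v)$ and $\E(\Mult_{m_\tau}u,\Mult_{m_{\bar\tau}}v)$ is the holomorphic extension depends on which slot is conjugate-linear (the paper uses the latter, with linearity in the first slot), and this is immaterial for real $\tau$.
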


\begin{proof}
Put $m_\tau=e^{-\kappa\tau f/2}$.  The map $U_{\tau,f}$ is unitary because
\[
 \norm{U_{\tau,f}u}_{L^2(\mu)}^2
 =\int_{\State}e^{\kappa\tau f}|u|^2\,d\mu
 =\norm{u}_{L^2(\mu_\tau)}^2.
\]
By \eqref{eq:abstract-analytic-tilts} and
\eqref{eq:abstract-multiplier-inverse-bounds}, multiplication by $m_\tau$ is
an automorphism of the form space.  Hence
\begin{align*}
 u\in U_{\tau,f}\Vd_{\mu_\tau}
 &\Longleftrightarrow
 m_\tau u\in\Vd_{\rm e}\text{ and }
 \int|m_\tau u|^2e^{\kappa\tau f}\,d\mu<\infty\\
 &\Longleftrightarrow u\in\Vd_\mu.
\end{align*}
This proves \eqref{eq:abstract-fixed-space-form}.  Set
\begin{equation*}
 B_T^+:=\sup_{|\tau|\le T}
 \norm{\Mult_{m_\tau}}_{\mathcal L(\Vd_\mu^{\mathbb C})},
 \qquad
 B_T^-:=\sup_{|\tau|\le T}
 \norm{\Mult_{m_\tau}^{-1}}_{\mathcal L(\Vd_\mu^{\mathbb C})},
 \qquad
 L_T:=\sup_{|\tau|\le T}
 \norm{\Mult_{m_\tau}}_{\mathcal L(L^2(\mu))}.
\end{equation*}
The first two constants are finite by
\eqref{eq:abstract-multiplier-inverse-bounds}, and $L_T<\infty$ because $f$ is
bounded.  Therefore
\begin{align*}
 \norm{u}_{\tau,f}^2
 &\le \bigl(1+(B_T^+)^2\bigr)\norm{u}_{\E,\mu}^2,
\\
 \norm{u}_{\E,\mu}^2
 &\le (B_T^-)^2\max\{1,L_T^2\}\,
       \norm{u}_{\tau,f}^2.
\end{align*}
This proves \eqref{eq:abstract-uniform-form-norm-equivalence}.

Operator-norm holomorphy of the multiplier gives, in the form space,
\[
 \frac{m_\tau u-u}{\tau}\longrightarrow-\frac\kappa2fu.
\]
Therefore
\begin{align*}
 \frac{a_{\tau,f}(u,v)-a_{0,f}(u,v)}\tau
 &=\E\left(\frac{m_\tau u-u}\tau,m_\tau v\right)
   +\E\left(u,\frac{m_\tau v-v}\tau\right)\\
 &\longrightarrow
 -\frac\kappa2\E(fu,v)-\frac\kappa2\E(u,fv).
\end{align*}
The same operator-valued holomorphy proves analyticity of every form
coefficient.
\end{proof}

We now isolate finitely many Gaussian coordinates.  For every $g\in\Hc$, fix
a Borel representative
$\widehat W_g:\Xspace\to\R$ of the isonormal coordinate $W(g)$
viewed as a measurable linear functional
\cite[Section~2.10]{BogachevGaussianMeasures}.  Thus
\begin{equation}
 \mathbb E[W(f)W(g)]=\inner{f}{g}_{\Hc}.
 \label{eq:abstract-isonormal-field}
\end{equation}

Let $H_N:=\operatorname{span}\{g_1,\ldots,g_N\}\subset\Hc$, where
$g_1,\ldots,g_N$ are linearly independent, and set
\begin{equation}
 \Sigma:=(\inner{g_i}{g_j}_{\Hc})_{i,j=1}^N.
 \label{eq:abstract-slice-gram}
\end{equation}
Define the measurable maps
\begin{equation}
 t(x)
 :=\Sigma^{-1}
 (\widehat W_{g_1}(x),\ldots,\widehat W_{g_N}(x))^\top,
 \qquad
 v_a:=\sum_{j=1}^Na_jg_j,
 \qquad
 x^\perp:=x-v_{t(x)}.
 \label{eq:abstract-slice-coordinates}
\end{equation}

\begin{lemma}
\label{lem:abstract-gaussian-disintegration}
With $t$ and $x^\perp$ defined in
\eqref{eq:abstract-slice-coordinates}, $t\sim N(0,\Sigma^{-1})$ and is independent of
$x^\perp$.  Its density is
\begin{equation}
 \varphi_{H_N}(a)
 =(2\pi)^{-N/2}(\det\Sigma)^{1/2}
 \exp\!\left(-\tfrac12a^\top\Sigma a\right)>0.
 \label{eq:abstract-slice-density}
\end{equation}
Consequently, for every non-negative Borel $F:\Xspace\to[0,\infty]$,
\begin{equation}
 \mathbb E[F(x)]
 =\int_{\Xspace}\int_{\R^N}
 F(x^\perp+v_a)\varphi_{H_N}(a)\,da\,
 \mathbb P_{H_N}^\perp(dx^\perp),
 \label{eq:abstract-disintegration-formula}
\end{equation}
where $\mathbb P_{H_N}^\perp=\operatorname{Law}(x^\perp)$.
\end{lemma}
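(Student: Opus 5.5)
The plan is to reduce everything to the joint Gaussianity of the isonormal coordinates and the Cameron--Martin structure of the measurable linear functionals $\widehat W_g$.

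\emph{Step 1: law of $t$.} Put $\xi(x):=(\widehat W_{g_1}(x),\ldots,\widehat W_{g_N}(x))^\top$. By \eqref{eq:abstract-isonormal-field}, $\xi$ is a centered Gaussian vector with covariance $\Sigma$, which is invertible because the $g_j$ are linearly independent. Hence $t=\Sigma^{-1}\xi\sim N(0,\Sigma^{-1})$. Its density is exactly \eqref{eq:abstract-slice-density}, since $\det(\Sigma^{-1})^{-1/2}=(\det\Sigma)^{1/2}$.

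\emph{Step 2: independence of $t$ and $x^\perp$.} The key fact about measurable linear functionals (Bogachev, Section~2.10) is the shift identity: for $h\in\Hc$ and every $x$ outside a $\mathbb P$-null linear subspace complement (equivalently, on a Borel linear subspace of full measure, which we may take as the domain of linearity of all $\widehat W_{g_i}$),
\[
 \widehat W_{g}(x+h)=\widehat W_g(x)+\inner{g}{h}_{\Hc}.
\]
Applied with $h=v_{t(x)}$, this gives $\widehat W_{g_i}(x^\perp)=\xi_i(x)-(\Sigma t(x))_i=0$. To get independence, I would test against continuous linear functionals $\ell\in\Xspace^*$: the pair $(\ell(x^\perp),t(x))$ is jointly Gaussian, being a linear image of the Gaussian family $(\ell(x),\xi(x))$, because $\ell(v_{t})=\sum_j t_j\ell(g_j)$. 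Its cross-covariance is
\[
 \mathbb E[\ell(x)t^\top]-\bigl(\ell(g_1),\ldots,\ell(g_N)\bigr)\mathbb E[tt^\top].
\]
Using $\mathbb E[\ell(x)\widehat W_{g_i}(x)]=\ell(g_i)$ (the defining property of the Cameron--Martin embedding $\ell\mapsto R\ell$), the first term equals $(\ell(g_i))_i^\top\Sigma^{-1}$. Since $\mathbb E[tt^\top]=\Sigma^{-1}$, the cross-covariance vanishes. Uncorrelated jointly Gaussian variables are independent. Because $\Xspace$ is separable, its Borel $\sigma$-field is generated by countably many elements of $\Xspace^*$, so finite-dimensional independence extends to independence of $t$ and the $\Xspace$-valued variable $x^\perp$.

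\emph{Step 3: disintegration formula.} Write $x=x^\perp+v_{t(x)}$, which is an identity by definition. Then $\mathbb E[F(x)]=\mathbb E[F(x^\perp+v_{t})]$. By independence, the joint law of $(x^\perp,t)$ is $\mathbb P^\perp_{H_N}\otimes\varphi_{H_N}(a)\,da$. Measurability of $(y,a)\mapsto F(y+v_a)$ follows because addition is continuous on $\Xspace$. Tonelli's theorem for non-negative $F$ then gives \eqref{eq:abstract-disintegration-formula}.

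\emph{Main obstacle.} The main obstacle is Step 2, specifically the technical handling of the Borel representatives $\widehat W_g$. They are defined only up to null sets and are linear only on a full-measure subspace. The simplest route avoids needing $\widehat W_{g_i}(x^\perp)=0$ at all: the independence argument above only uses covariances computed with continuous $\ell$. Note also that the identity $x=x^\perp+v_{t(x)}$ holds everywhere, by construction, without any linearity of $\widehat W_g$.
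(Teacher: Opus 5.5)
Your proposal is correct and follows essentially the same route as the paper: you identify the law of $t=\Sigma^{-1}\xi$, show $\Cov(t,\ell(x^\perp))=0$ for every $\ell\in\Xspace^*$ using $\mathbb E[\ell(x)\widehat W_{g_i}(x)]=\ell(g_i)$ (this is the paper's vector $b_L$), and then combine joint Gaussianity, separability of $\Xspace$, and the resulting product law with Tonelli to obtain the disintegration formula. As you note, the shift identity for $\widehat W_g$ and the claim $\widehat W_{g_i}(x^\perp)=0$ are not needed, and the paper likewise avoids them.
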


\begin{proof}
Set
\[
 c=(W(g_1),\ldots,W(g_N))^\top.
\]
Then $c\sim N(0,\Sigma)$,
$t=\Sigma^{-1}c$, and
$\Cov(t)=\Sigma^{-1}$, which gives
\eqref{eq:abstract-slice-density}.  For $L\in\Xspace^*$, let $h_L\in\Hc$
denote its Cameron--Martin representative,
\[
 L(g)=\inner{h_L}{g}_{\Hc},\qquad g\in\Hc,
\]
and set
\[
 b_L
 :=\bigl(\inner{h_L}{g_k}_{\Hc}\bigr)_{k=1}^N.
\]
Since $L(v_t)=b_L^\top t$,
\begin{equation*}
 \Cov(t,L(x^\perp))
 =\Sigma^{-1}b_L-\Cov(t,b_L^\top t)
 =\Sigma^{-1}b_L-\Sigma^{-1}b_L=0.
\end{equation*}
Joint Gaussianity and separability of $\Xspace$ give independence of
$t$ and $x^\perp$; their product law is
\eqref{eq:abstract-disintegration-formula}.
\end{proof}

The canonical orbit identity is available only when both endpoints belong to
$\Omega_{\rm can}$.  The next lemma replaces the canonical measures on almost
every fiber by a coherent family defined for all parameters.

\begin{lemma}
\label{lem:abstract-coherent-fiber}
Let
\[
 H_N:=\operatorname{span}\{e_1,\ldots,e_N\}\subset\Hreg,
\]
where $e_1,\ldots,e_N$ are linearly independent.  Put
$v_t:=\sum_{k=1}^Nt_ke_k$, and let
$t(x)$, $x^\perp$, and $\mathbb P_{H_N}^\perp$ be the corresponding slice
objects from \eqref{eq:abstract-slice-coordinates} and
Lemma~\ref{lem:abstract-gaussian-disintegration}.  Let
$\Omega_0\in\mathcal B(\Xspace)$ satisfy
$\Omega_0\subset\Omega_{\rm can}$ and $\mathbb P(\Omega_0)=1$; in the
applications below, $\Omega_0=\Omega_{\rm str}$.
For $\mathbb P_{H_N}^\perp$-almost every $x^\perp$, the set
\begin{equation}
 T_0(x^\perp)
 :=\{t\in\R^N:x^\perp+v_t\in\Omega_0\}
 \label{eq:abstract-good-fiber-parameters}
\end{equation}
satisfies
\[
 T_0(x^\perp)\in\mathcal B(\mathbb R^N),
 \qquad
 \mathcal L^N(\mathbb R^N\setminus T_0(x^\perp))=0.
\]
For such $x^\perp$ and any
$s\in T_0(x^\perp)$, define
\begin{equation}
 \mu_t^{\rm coh}
 :=e^{\kappa(v_t-v_s)}\mu_{x^\perp+v_s},
 \qquad t\in\R^N.
 \label{eq:abstract-coherent-fiber-measure}
\end{equation}
Then
\begin{align}
 d\mu_t^{\rm coh}
 &=e^{\kappa(v_t-v_r)}\,d\mu_r^{\rm coh},
 &&r,t\in\R^N,
 \label{eq:abstract-fiber-covariance}\\
 \mu_t^{\rm coh}
 &=\mu_{x^\perp+v_t},
 &&t\in T_0(x^\perp).
 \label{eq:abstract-canonical-coherent-agreement}
\end{align}
If $\Omega_0\subset\Omega_{\rm str}$, then
\begin{equation}
 \mu_t^{\rm coh}\in\mathcal M_{\rm adm},
 \qquad t\in\R^N.
 \label{eq:abstract-coherent-fiber-admissibility}
\end{equation}
\end{lemma}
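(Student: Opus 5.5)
The plan has three parts: Fubini on the slice, verification of the coherent family through the canonical orbit identity, and transfer of admissibility by the tilt-closure clause.

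\emph{Good parameters.} Apply the disintegration formula \eqref{eq:abstract-disintegration-formula} of Lemma~\ref{lem:abstract-gaussian-disintegration} to $F=\one_{\Xspace\setminus\Omega_0}$, which gives
\[
 0=\mathbb P(\Xspace\setminus\Omega_0)
 =\int_{\Xspace}\int_{\R^N}\one_{\Xspace\setminus\Omega_0}(x^\perp+v_a)\,\varphi_{H_N}(a)\,da\,\mathbb P_{H_N}^\perp(dx^\perp).
\]
The map $(x^\perp,a)\mapsto x^\perp+v_a$ is continuous from $\Xspace\times\R^N$ to $\Xspace$, hence jointly Borel. Consequently each section $T_0(x^\perp)$ is Borel in $\R^N$, and this holds for every $x^\perp$. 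The inner integral therefore vanishes for $\mathbb P_{H_N}^\perp$-a.e.\ $x^\perp$. Since $\varphi_{H_N}>0$ everywhere, this forces $\mathcal L^N(\R^N\setminus T_0(x^\perp))=0$ for such $x^\perp$. In particular $T_0(x^\perp)\neq\emptyset$, so an $s\in T_0(x^\perp)$ can be chosen.

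\emph{Coherent family.} Fix such an $x^\perp$ and $s$. The covariance identity \eqref{eq:abstract-fiber-covariance} is immediate from the definition \eqref{eq:abstract-coherent-fiber-measure}: for any $r,t\in\R^N$,
\[
 e^{\kappa(v_t-v_r)}\mu_r^{\rm coh}
 =e^{\kappa(v_t-v_r)}e^{\kappa(v_r-v_s)}\mu_{x^\perp+v_s}
 =\mu_t^{\rm coh}.
\]
Each $v_t$ is a bounded Borel function, so these are genuine finite Radon measures. For the agreement \eqref{eq:abstract-canonical-coherent-agreement}, take $t\in T_0(x^\perp)$. Then both $x^\perp+v_s$ and $x^\perp+v_t=(x^\perp+v_s)+(v_t-v_s)$ lie in $\Omega_0\subset\Omega_{\rm can}$. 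Moreover $v_t-v_s\in H_N\subset\Hreg$ because $\Hreg$ is a linear space. The orbit identity \eqref{eq:pointwise-abstract-orbit} with $f=v_t-v_s$ then gives $\mu_{x^\perp+v_t}=e^{\kappa(v_t-v_s)}\mu_{x^\perp+v_s}=\mu_t^{\rm coh}$.

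\emph{Admissibility.} Now assume $\Omega_0\subset\Omega_{\rm str}$. Then $x^\perp+v_s\in\Omega_{\rm str}$, so $\mu_{x^\perp+v_s}\in\mathcal M_{\rm adm}$ by \eqref{eq:abstract-structural-data}. The closure clause \eqref{eq:abstract-admissible-orbit-closure} with $f=v_t-v_s\in\Hreg$ gives $\mu_t^{\rm coh}\in\mathcal M_{\rm adm}$ for every $t\in\R^N$, not only for good $t$.

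\emph{Main obstacle.} The only delicate step is measurability. We need Borel measurability of the sections together with the joint measurability required to apply Fubini inside \eqref{eq:abstract-disintegration-formula}. Both follow from continuity of the affine map $(x^\perp,a)\mapsto x^\perp+v_a$ and from $\Omega_0$ being Borel. It is also worth noting that $\mu_t^{\rm coh}$ does not depend on the choice of $s\in T_0(x^\perp)$: for another choice $s'$, the agreement \eqref{eq:abstract-canonical-coherent-agreement} applied at $t=s'$ shows $\mu_{x^\perp+v_{s'}}=e^{\kappa(v_{s'}-v_s)}\mu_{x^\perp+v_s}$, so the two definitions coincide. This independence is not required by the statement.
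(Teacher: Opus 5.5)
Your proposal is correct and follows the paper's proof essentially step by step. You apply Tonelli to the disintegration of $\one_{\Xspace\setminus\Omega_0}$ with the strictly positive slice density, use the orbit identity with $f=v_t-v_s\in\Hreg$ for the agreement, and obtain admissibility from the structural-data and tilt-closure clauses. Your extra remarks are correct but not needed for the statement: that $T_0(x^\perp)$ is nonempty, and that $\mu_t^{\rm coh}$ does not depend on the base point $s$.
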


\begin{proof}
Lemma~\ref{lem:abstract-gaussian-disintegration} gives
\[
 0=\mathbb P(\Omega_0^c)
 =\int_{\Xspace}\int_{\R^N}
 \one_{\Omega_0^c}(x^\perp+v_t)\varphi_{H_N}(t)\,dt\,
 d\mathbb P_{H_N}^\perp(x^\perp).
\]
The integrand is a non-negative measurable function on the product space.
Hence, for almost every fiber,
\[
 \int_{\R^N}\one_{T_0(x^\perp)^c}(t)
 \varphi_{H_N}(t)\,dt=0.
\]
Since $\varphi_{H_N}(t)>0$ for every $t$,
$\mathcal L^N(T_0(x^\perp)^c)=0$.  Continuity of
$t\mapsto x^\perp+v_t$ also gives
$T_0(x^\perp)\in\mathcal B(\mathbb R^N)$.

Equation~\eqref{eq:abstract-fiber-covariance} follows directly from
\eqref{eq:abstract-coherent-fiber-measure}.  If
$s,t\in T_0(x^\perp)$, both endpoints lie in $\Omega_{\rm can}$, and
the orbit identity \eqref{eq:pointwise-abstract-orbit}, applied to
$v_t-v_s\in\Hreg$, gives
\[
 \mu_{x^\perp+v_t}
 =e^{\kappa(v_t-v_s)}\mu_{x^\perp+v_s}
 =\mu_t^{\rm coh}.
\]
If $\Omega_0\subset\Omega_{\rm str}$, then
\eqref{eq:abstract-structural-data} makes
$\mu_{x^\perp+v_s}$ admissible and
\[
 \mu_t^{\rm coh}
 =e^{\kappa(v_t-v_s)}\mu_{x^\perp+v_s}
 \overset{\eqref{eq:abstract-admissible-orbit-closure}}{\in}
 \mathcal M_{\rm adm},
 \qquad t\in\R^N.
\]
The base point $s$ is selected only after fixing the fiber; no measurable choice
of $s$ as a function of $x^\perp$ is required.
\end{proof}

Once a coherent admissible fiber has been obtained, the following
deterministic perturbation statement applies.  Fix
$e_1,\ldots,e_N\in\Hreg$ and an admissible measure $\mu_0$, and set
\[
 v_z:=\sum_{k=1}^Nz_ke_k,
 \qquad
 m_z:=e^{-\kappa v_z/2}.
\]
For $t\in\R^N$, define
\begin{equation}
 d\mu_t=e^{\kappa v_t}\,d\mu_0,
 \label{eq:abstract-kato-measures}
\end{equation}
By \eqref{eq:abstract-admissible-orbit-closure}, every $\mu_t$ is admissible.
Set
\[
 U_tu:=e^{\kappa v_t/2}u,
 \qquad
 \widehat A_t:=U_tA_{\mu_t}U_t^{-1}.
\]

\begin{proposition}
\label{prop:abstract-kato-slice}
For every $t_0\in\R^N$, there exists a complex neighborhood
$\mathcal U\subset\mathbb C^N$ of $t_0$ such that the following assertions hold.
\begin{enumerate}[label=\textnormal{(\roman*)}]
\item The family of sesquilinear forms $(a_z)_{z\in\mathcal U}$, defined by
\begin{equation}
 a_z(u,v):=\E(m_zu,m_{\bar z}v),
 \qquad \Dom a_z=\Vd_{\mu_0}^{\mathbb C},
 \label{eq:abstract-kato-form}
\end{equation}
is a holomorphic family of closed sectorial forms with common domain; in Kato's
terminology, it is a family of type~\textnormal{(a)}.  Let $\widehat A_z$
denote the associated $m$-sectorial operator.  For real $t\in\mathcal U\cap\R^N$,
this agrees with the previously defined $\widehat A_t$.
\item For every $z\in\mathcal U$, the operator $\widehat A_z$ has compact
resolvent, and the map
$z\mapsto(\widehat A_z+1)^{-1}$ from $\mathcal U$ into
$\mathcal L(L^2(\mu_0))$ is holomorphic in the operator norm.
\item Suppose that a positively oriented contour $\Gamma\subset\mathbb C$ lies
in the resolvent set of $\widehat A_{t_0}$ and encloses a finite cluster of its
eigenvalues.  After shrinking $\mathcal U$ if necessary, $\Gamma$ remains in
the resolvent set of $\widehat A_t$ for every real
$t\in\mathcal U\cap\R^N$, and
\begin{equation}
 \RieszProj_{\Gamma}(t)
 =\frac1{2\pi i}\int_{\Gamma}
   (\zeta-\widehat A_t)^{-1}\,d\zeta
 \label{eq:abstract-riesz-projection}
\end{equation}
is a real-analytic family of finite-rank projections satisfying
\begin{equation*}
 \rank\RieszProj_{\Gamma}(t)=\rank\RieszProj_{\Gamma}(t_0).
\end{equation*}
The coefficients of the characteristic polynomial of
$\widehat A_t|_{\operatorname{Ran}\RieszProj_{\Gamma}(t)}$, and hence its
discriminant, depend real-analytically on $t$.
\end{enumerate}
If $t\mapsto(\lambda(t),\Phi(t))$ is a local real-analytic eigenpair branch of
$\widehat A_t$, normalized by
$\norm{\Phi(t)}_{L^2(\mu_0)}=1$, then $\Phi$ is real analytic as a map with
values in $\Vd_{\mu_0}^{\mathbb C}$ and
\begin{equation}
 \partial_{t_k}\lambda(t)
 =(\partial_{t_k}a_t)(\Phi(t),\Phi(t)),
 \qquad 1\le k\le N.
 \label{eq:abstract-form-hellmann-feynman}
\end{equation}
\end{proposition}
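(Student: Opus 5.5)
The plan is to reduce everything to Kato's theory of holomorphic families of type~(a), using the operator-norm entire multipliers \eqref{eq:abstract-analytic-tilts}. Together with Proposition~\ref{prop:abstract-common-domain}, this is essentially a multi-parameter restatement of that result.

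For (i), write $a_z(u,v)=\E(\Mult_{m_z}u,\Mult_{m_{\bar z}}v)$ on the fixed Hilbert space $\Vd_{\mu_0}^{\mathbb C}$. Because $z\mapsto\Mult_{m_z}$ and $z\mapsto\Mult_{m_{\bar z}}$ are entire and antiholomorphic respectively in $\mathcal L(\Vd_{\mu_0}^{\mathbb C})$, the map $z\mapsto a_z(u,v)$ is holomorphic, and this holds uniformly on bounded sets of the form space. Closedness and sectoriality near $t_0$ come from a perturbation estimate. First, $a_{t_0}$ is the real symmetric closed form of $\widehat A_{t_0}$, by Proposition~\ref{prop:abstract-common-domain} applied to $\mu_0$ and $f=v_{t_0}/|t_0|$, or directly. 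Next,
\[
 |a_z(u,u)-a_{t_0}(u,u)|\le \eps(z)\norm{u}_{\E,\mu_0}^2 ,
\]
with $\eps(z)\to0$ as $z\to t_0$, which again follows from the operator-norm continuity of $\Mult_{m_z}$. Combined with the uniform equivalence \eqref{eq:abstract-uniform-form-norm-equivalence} of $\norm{\cdot}_{t_0}$ and $\norm{\cdot}_{\E,\mu_0}$, this says $a_z-a_{t_0}$ is small relative to $a_{t_0}+1$. Kato's theorem VI.1.33 (relatively bounded form perturbations with bound $<1$) then gives closed sectorial forms on a small polydisc $\mathcal U$. Holomorphy in each variable yields type~(a) in each $z_k$, and joint holomorphy follows by Hartogs or directly from the norm-holomorphy of the forms. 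For real $t$ we have $m_{\bar t}=m_t$ real, so $a_t$ coincides with \eqref{eq:abstract-fixed-space-form} and $\widehat A_t$ is the operator already defined.

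For (ii), holomorphy of $(\widehat A_z+1)^{-1}$ in operator norm follows from Kato VII.4.2 in each variable, or directly from a Neumann series. Write $(\widehat A_z+1)^{-1}=j\,\mathcal A_z^{-1}j^*$, where $\mathcal A_z\colon\Vd_{\mu_0}^{\mathbb C}\to(\Vd_{\mu_0}^{\mathbb C})^*$ is the form operator of $a_z+1$. This operator is norm-holomorphic and invertible near $t_0$. Compactness of the resolvent follows from this factorization through the compact embedding $j_{\mu_0}$ of Assumption~\ref{ass:admissible-forms}. Shrinking $\mathcal U$ if needed keeps $-1$ in the resolvent set.

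For (iii), norm-holomorphy of $(\widehat A_z+1)^{-1}$ and the identity $(\zeta-\widehat A)^{-1}=$ a Möbius function of $(\widehat A+1)^{-1}$ give joint continuity of $(\zeta,z)\mapsto(\zeta-\widehat A_z)^{-1}$ on compact $\Gamma\times\mathcal U'$. After shrinking, $\Gamma$ stays in the resolvent set. The Riesz integral is then holomorphic in $z$ and restricts to a real-analytic family of projections on real $t$. Continuity of a projection-valued map forces constant rank. For the characteristic polynomial, use Kato's transformation function $W(t)$, analytic and invertible with $W(t)\RieszProj_\Gamma(t_0)=\RieszProj_\Gamma(t)W(t)$. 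This conjugates $\widehat A_t\RieszProj_\Gamma(t)$ to an analytic matrix family on $\operatorname{Ran}\RieszProj_\Gamma(t_0)$. An alternative is to take the traces $\Tr(\widehat A_t^k\RieszProj_\Gamma(t))=\frac1{2\pi i}\int_\Gamma\zeta^k\Tr(\zeta-\widehat A_t)^{-1}\RieszProj_\Gamma(t)\,d\zeta$ and apply Newton's identities. Either way the coefficients, and hence the discriminant, are real analytic.

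For the Hellmann--Feynman formula, $\Phi(t)$ lies in $\operatorname{Ran}\RieszProj_\Gamma(t)\subset\Vd_{\mu_0}^{\mathbb C}$. Analyticity of $\Phi$ in the form norm comes from writing $\Phi(t)=(\widehat A_t+1)^{-1}(\lambda(t)+1)\Phi(t)$. Here $\mathcal A_t^{-1}$ is analytic into $\Vd_{\mu_0}^{\mathbb C}$ and $\Phi$ is analytic in $L^2$. Now differentiate the identity $a_t(\Phi(t),w)=\lambda(t)\inner{\Phi(t)}{w}$ and set $w=\Phi(t)$. Symmetry of $a_t$ for real $t$, together with $\operatorname{Re}\inner{\partial\Phi}{\Phi}=0$ from the normalization, cancels the terms containing $\partial\Phi$, leaving \eqref{eq:abstract-form-hellmann-feynman}.

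The main obstacle I expect is the sectoriality/closedness step near $t_0$ for complex $z$. It needs a clean relative-boundedness estimate with respect to $a_{t_0}+1$, not just $\E$, since $\E$ alone may have a kernel. This is exactly where \eqref{eq:abstract-uniform-form-norm-equivalence} is used.
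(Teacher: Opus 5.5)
Your proposal is correct and follows essentially the paper's route. The paper likewise uses a relatively bounded form perturbation to get a type~(a) family, factors the resolvent through the compact embedding as $(\widehat A_z+1)^{-1}=j\,T(z)^{-1}j^*$, obtains the Riesz projections from the resolvent, and proves Hellmann--Feynman after lifting analyticity of $\Phi$ to the form space via the same factorization. The remaining differences are cosmetic. The paper recenters at $t_0=0$ and checks sectoriality by explicit inequalities rather than citing Kato~VI.1.33. It also gets the characteristic polynomial from the Gram determinants $\det(\lambda H_\Gamma(t)-B_\Gamma(t))/\det H_\Gamma(t)$ built on $\RieszProj_\Gamma(t)w_j^0$, instead of a transformation function or power traces.
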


\begin{proof}
It suffices to prove the assertions near $t_0=0$.  Indeed, for a general real
$t_0$, replace $\mu_0$ by $\mu_{t_0}$, write $z=t_0+w$, and conjugate back to
$L^2(\mu_0)$ by the fixed unitary
$W_{t_0}u=e^{\kappa v_{t_0}/2}u$.  All the assertions are preserved under this
fixed unitary conjugation.

The common-domain identity follows from
Proposition~\ref{prop:abstract-common-domain}.  Complexify the fixed Hilbert and
form spaces, taking all inner products linear in the first variable.  Since
$\overline{m_{\bar z}}=m_z$, the map
$z\mapsto a_z(u,v)$ is holomorphic.  With
\[
 \norm{u}_+^2=a_0(u,u)+\norm{u}_{L^2(\mu_0)}^2,
\]
Equation~\eqref{eq:abstract-analytic-tilts} gives, locally uniformly,
\begin{equation*}
 |a_z(u,v)-a_0(u,v)|
 \le C|z|\norm{u}_+\norm{v}_+.
\end{equation*}
Choose $r_0>0$ so that $\delta:=Cr_0<1/2$.  For $|z|<r_0$,
\begin{align*}
 \operatorname{Re}\bigl(
   a_z(u,u)+\norm{u}_{L^2(\mu_0)}^2\bigr)
 &\ge(1-\delta)\norm{u}_+^2,
\\
 |\operatorname{Im}a_z(u,u)|
 &\le\frac\delta{1-\delta}
 \operatorname{Re}\bigl(
   a_z(u,u)+\norm{u}_{L^2(\mu_0)}^2\bigr),
\\
 |a_z(u,u)+\norm{u}_{L^2(\mu_0)}^2|
 &\le(1+\delta)\norm{u}_+^2.
\end{align*}
Thus $(a_z)$ is a holomorphic family of closed sectorial forms with common
domain, hence a family of type~(a) in the sense of
\cite[Chapter~VII, Section~4]{KatoPerturbation}.

Let $V_+=(\Vd_{\mu_0}^{\mathbb C},\inner{\cdot}{\cdot}_+)$ and define
$T(z)\in\mathcal L(V_+)$ by
\[
 \inner{T(z)u}{v}_+
 =a_z(u,v)+\inner{u}{v}_{L^2(\mu_0)}.
\]
Then $T(0)=I$, $T(z)$ is operator-norm holomorphic, and, after
shrinking $r_0$, $\norm{T(z)-I}<1$.  Hence
\begin{equation*}
 T(z)^{-1}
 =\sum_{n=0}^{\infty}(I-T(z))^n
\end{equation*}
locally uniformly.  If
$j:V_+\hookrightarrow L^2(\mu_0)$ denotes the embedding, the weak
form equation gives
\begin{equation*}
 (\widehat A_z+1)^{-1}
 =j\,T(z)^{-1}j^*.
\end{equation*}
The embedding $j$ is compact by
Assumption~\ref{ass:admissible-forms}.
Thus every nearby $\widehat A_z$ has compact resolvent and the displayed
resolvent is holomorphic in norm.

If $t\mapsto(\lambda(t),\Phi(t))$ is a real-analytic eigenpair branch in
$L^2(\mu_0)$, then
\begin{equation*}
 \Phi(t)=(1+\lambda(t))T(t)^{-1}j^*\Phi(t)
\end{equation*}
in $V_+$.  Hence $\Phi$ is real analytic with values in the form space,
which justifies the following calculation.  For a normalized branch and any $k$,
\[
 a_t(\Phi,\Phi)
 =\lambda(t)\norm{\Phi}_{L^2(\mu_0)}^2
 =\lambda(t).
\]
\begin{align*}
 \partial_{t_k}\lambda
 &=\partial_{t_k}\bigl[a_t(\Phi,\Phi)\bigr]\notag\\
 &=(\partial_{t_k}a_t)(\Phi,\Phi)
   +a_t(\partial_{t_k}\Phi,\Phi)
   +a_t(\Phi,\partial_{t_k}\Phi)\notag\\
 &=(\partial_{t_k}a_t)(\Phi,\Phi)
   +2\lambda\,\operatorname{Re}
       \inner{\partial_{t_k}\Phi}{\Phi}_{L^2(\mu_0)}
 =(\partial_{t_k}a_t)(\Phi,\Phi),
\end{align*}
because
\begin{equation*}
 2\operatorname{Re}
 \inner{\partial_{t_k}\Phi}{\Phi}_{L^2(\mu_0)}
 =\partial_{t_k}\norm{\Phi}_{L^2(\mu_0)}^2=0.
\end{equation*}
This proves \eqref{eq:abstract-form-hellmann-feynman}.

After shrinking the neighborhood so that $\Gamma$ remains in the resolvent set,
the factorization
\begin{equation*}
 (\widehat A_z-\zeta)^{-1}
 =(\widehat A_z+1)^{-1}
 [I-(\zeta+1)(\widehat A_z+1)^{-1}]^{-1}
\end{equation*}
shows that the corresponding Riesz projection is operator-norm holomorphic in
$z$, and therefore real analytic for real $t$; its rank is locally constant.
Set
\begin{equation*}
 d_{\Gamma}:=\rank\RieszProj_{\Gamma}(t_0),
\end{equation*}
choose a basis $w_1^0,\ldots,w_{d_{\Gamma}}^0$ of its range, and set
\[
 w_j(t)=\RieszProj_{\Gamma}(t)w_j^0,
 \qquad
 A_{\Gamma}(t)
 =\frac1{2\pi i}\int_{\Gamma}
 \zeta(\zeta-\widehat A_t)^{-1}\,d\zeta.
\]
With
\begin{equation*}
 (B_{\Gamma}(t))_{ij}
 =\inner{A_{\Gamma}(t)w_j(t)}{w_i(t)},
 \qquad
 (H_{\Gamma}(t))_{ij}=\inner{w_j(t)}{w_i(t)},
\end{equation*}
the cluster polynomial is
\begin{equation*}
 p_t(\lambda)
 =\frac{\det(\lambda H_{\Gamma}(t)-
                    B_{\Gamma}(t))}
        {\det H_{\Gamma}(t)}.
\end{equation*}
Its coefficients and discriminant are real analytic.  This proves~\textnormal{(iii)}
and completes the proof.
\end{proof}

The slicing arguments in Sections~\ref{sec:slicing-simplicity}
and~\ref{sec:density} integrate over spectral events, so we finally record
their Borel structure.  Let $e_1,\ldots,e_N\in\Hreg$ be linearly independent
and let $(\mu_t)_{t\in\R^N}$ be a coherent admissible family of the type
supplied by Lemma~\ref{lem:abstract-coherent-fiber},
\begin{equation}
 d\mu_t=e^{\kappa\sum_{k=1}^N(t_k-s_k)e_k}\,d\mu_s,
 \qquad s,t\in\R^N.
 \label{eq:abstract-coherent-borel-family}
\end{equation}
Fix $M\ge1$, $1\le n_1<\cdots<n_M$, and $g_1,\ldots,g_M\in\Hreg$, and write
$\nu=(n_1,\ldots,n_M)$ and $g=(g_1,\ldots,g_M)$.  On the locus
where the selected eigenvalues are simple, choose normalized real
eigenfunctions and, for $q\in\Hreg$, set
\begin{equation}
 \ell_{n_i}^{\mu_t}(q)
 :=-\kappa\Lambda_{n_i}^{\mu_t}
 \int_{\State}q(\phi_{n_i}^{\mu_t})^2\,d\mu_t,
 \label{eq:abstract-slice-response-functional}
\end{equation}
and
\begin{equation}
 \mathcal T_{\nu,g}
 :=\left\{t:\ \Lambda_{n_1}^{\mu_t},\ldots,\Lambda_{n_M}^{\mu_t}
 \text{ are simple and }
 \det(\ell_{n_i}^{\mu_t}(g_j))_{i,j=1}^M\ne0\right\}.
 \label{eq:abstract-slice-borel-determinant-event}
\end{equation}

\begin{lemma}
\label{lem:abstract-slice-borel}
Every positive ordered eigenvalue $t\mapsto\Lambda_n^{\mu_t}$ is continuous,
and
\[
 \mathcal T_{\nu,g}\in\mathcal B(\mathbb R^N).
\]
\end{lemma}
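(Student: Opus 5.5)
The plan is to transfer everything to the fixed Hilbert space $L^2(\mu_0)$ via the unitaries $U_t$, where $\widehat A_t$ is unitarily equivalent to $A_{\mu_t}$. The positive ordered eigenvalues $\Lambda_n^{\mu_t}$ are then the positive eigenvalues of $\widehat A_t$.

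\textbf{Continuity of $t\mapsto\Lambda_n^{\mu_t}$.} By Proposition~\ref{prop:abstract-kato-slice}(ii), $t\mapsto(\widehat A_t+1)^{-1}$ is norm-continuous (indeed real analytic) on $\R^N$. These are compact, positive, self-adjoint operators for real $t$. The eigenvalues of such operators, listed in non-increasing order with multiplicity, are $1$-Lipschitz in operator norm by the min--max principle (Weyl's inequality).

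The one subtlety is the kernel. We must show that the number $d_0(t)=\dim\ker\widehat A_t$ of eigenvalues $1$ of $(\widehat A_t+1)^{-1}$ is locally constant, so that the index shift between ``all eigenvalues'' and ``positive eigenvalues'' does not jump. Since $a_t(u,u)=\E(m_tu,m_tu)$ and $\Mult_{m_t}$ is an automorphism of $\Vd_{\mu_0}$, we have $\ker\widehat A_t=m_t^{-1}\ker A_{\mu_0}$. Hence $d_0(t)=d_0$ is constant, and $\Lambda_n^{\mu_t}=1/s_{d_0+n}(t)-1$ is continuous, where $s_k(t)$ are the ordered eigenvalues of the resolvent.

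\textbf{Borel measurability of $\mathcal T_{\nu,g}$.} First, the simplicity locus is Borel. Indeed, $\Lambda_{n_i}^{\mu_t}$ is simple iff $\Lambda_{n_i-1}^{\mu_t}<\Lambda_{n_i}^{\mu_t}<\Lambda_{n_i+1}^{\mu_t}$ (with the lower condition vacuous for $n_i=1$). This is an open condition by continuity. So the locus $O$ where all selected eigenvalues are simple is open.

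Second, on $O$ each response $\ell_{n_i}^{\mu_t}(g_j)$ is continuous, and it does not depend on the sign choice of eigenfunction. To see this, fix $t_0\in O$ and take a small circle $\Gamma$ around $\Lambda_{n_i}^{\mu_{t_0}}$ enclosing no other eigenvalue. By Proposition~\ref{prop:abstract-kato-slice}(iii) the Riesz projection $R_\Gamma(t)$ is real analytic with rank one near $t_0$. By continuity of the ordered eigenvalues, $\Gamma$ encloses exactly $\Lambda_{n_i}^{\mu_t}$.

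Now write $\phi=U_t^{-1}\Phi$ with $\Phi$ spanning $\operatorname{Ran}R_\Gamma(t)$. Then
\[
\int q\,\phi^2\,d\mu_t=\int q\,\Phi^2\,d\mu_0=\Tr\bigl[R_\Gamma(t)\Mult_qR_\Gamma(t)\bigr],
\]
which is continuous in $t$. Here $R_\Gamma(t)$ is the orthogonal projection because $\widehat A_t$ is self-adjoint for real $t$. Multiplying by the continuous factor $-\kappa\Lambda_{n_i}^{\mu_t}$ shows that the determinant $t\mapsto\det(\ell_{n_i}^{\mu_t}(g_j))$ is continuous on $O$.

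Therefore $\mathcal T_{\nu,g}$ is an open subset of the open set $O$, and in particular Borel.

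The main obstacle I expect is the bookkeeping for the kernel and the local identification of the Riesz projection with the $n_i$-th ordered eigenvalue. Both are handled by constancy of $\dim\ker\widehat A_t$ and continuity of the ordered list, as above.
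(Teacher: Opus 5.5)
Your proof is correct and follows essentially the same route as the paper. Continuity comes from the constancy of $\dim\ker\widehat A_t$ (via the invertible multiplier $m_t$) together with the min--max/Weyl Lipschitz bound for the ordered resolvent eigenvalues, and measurability comes from rank-one transported spectral projections, which turn the response determinant into a continuous trace expression on the simple locus. The only cosmetic difference is that the paper writes $\mathcal T_{\nu,g}$ as a countable union over rational isolating windows, whereas you apply local Riesz projections directly on the open simple locus and so obtain the slightly stronger statement that $\mathcal T_{\nu,g}$ is open.
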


\begin{proof}
Fix $s\in\R^N$, put $v_t=\sum_kt_ke_k$, and define
\begin{equation*}
 m_{t,s}:=e^{-\kappa(v_t-v_s)/2},
 \qquad
 U_{t,s}u:=e^{\kappa(v_t-v_s)/2}u,
 \qquad
 \widehat A_{t,s}:=U_{t,s}A_{\mu_t}U_{t,s}^{-1}.
\end{equation*}
Thus all operators act on $L^2(\mu_s)$.
Proposition~\ref{prop:abstract-kato-slice} gives local norm-resolvent analyticity and
compact resolvent, while
\begin{equation*}
 \ker\widehat A_{t,s}
 =m_{t,s}^{-1}
   \{w\in\Vd_{\mu_s}:\E(w,w)=0\}.
\end{equation*}
Because $m_{t,s}$ is invertible, the nullity
\[
 d_0:=\dim\ker\widehat A_{t,s}
\]
is independent of $t$.  Set
\[
 R_{t,s}:=(\widehat A_{t,s}+1)^{-1}
\]
and let $r_k(t,s)$ be its eigenvalues in non-increasing order, with
multiplicity.  Then
\begin{equation*}
 \Lambda_n^{\mu_t}
 =r_{d_0+n}(t,s)^{-1}-1,
 \qquad
 |r_k(t,s)-r_k(t',s)|
 \le\norm{R_{t,s}-R_{t',s}},
 \qquad t,t'\in\R^N.
\end{equation*}
Local norm-resolvent continuity therefore makes every ordered positive
eigenvalue locally, hence globally, continuous.

Put $\Lambda_0^{\mu_t}=0$.  For rational vectors
$\alpha=(\alpha_i)$ and $\beta=(\beta_i)$ with $0<\alpha_i<\beta_i$, define
\begin{equation*}
 \mathcal W_{\alpha,\beta}
 :=\bigcap_{i=1}^M
 \{\Lambda_{n_i-1}^{\mu_t}<\alpha_i<\Lambda_{n_i}^{\mu_t}
   <\beta_i<\Lambda_{n_i+1}^{\mu_t}\}.
\end{equation*}
On this open set, let
$\WinProj_i^{\mu_t}=\one_{(\alpha_i,\beta_i)}(A_{\mu_t})$ and transport it to
the fixed Hilbert space as $\widehat\WinProj_i(t)$.  It is rank one and locally
real analytic.  Since multiplication commutes with the transport,
\begin{align*}
 \Tr(\widehat\WinProj_i(t)\Mult_{g_j}\widehat\WinProj_i(t))
 &=\int_{\State}g_j(\phi_{n_i}^{\mu_t})^2\,d\mu_t.
\end{align*}
Thus, on $\mathcal W_{\alpha,\beta}$,
\[
 D_{\alpha,\beta}(t)
 :=\det\left(
 -\kappa\Lambda_{n_i}^{\mu_t}
 \Tr(\widehat\WinProj_i(t)\Mult_{g_j}\widehat\WinProj_i(t))
 \right)_{i,j=1}^M
\]
is continuous.  The set in
\eqref{eq:abstract-slice-borel-determinant-event} equals the countable union
\begin{equation*}
 \bigcup_{\substack{\alpha,\beta\in\mathbb Q^M\\0<\alpha_i<\beta_i}}
 \{t\in\mathcal W_{\alpha,\beta}:D_{\alpha,\beta}(t)\ne0\},
\end{equation*}
and therefore belongs to $\mathcal B(\mathbb R^N)$.
\end{proof}

\section{Pathwise spectral response calculus}
\label{sec:pathwise-response}

Throughout this section, Assumptions~\ref{ass:admissible-forms} and
\ref{ass:regular-form-multipliers}, together with
\eqref{eq:abstract-admissible-orbit-closure} and the direction space
\eqref{eq:countable-direction-space}, are in force.  The annihilator identity
\eqref{eq:direction-annihilator} is invoked only where cited, and
Assumption~\ref{ass:spectral-borel} is not used.  The optional
potential-realization property is invoked only in
Subsection~\ref{sec:abstract-full-response}.  Fix
an admissible measure $\mu$ and a positive eigenvalue $\Lambda$ of multiplicity
$r$.  Regard
\begin{equation}
 E_\mu(\Lambda):=\ker(A_\mu-\Lambda)
 \label{eq:abstract-eigenspace}
\end{equation}
as a real Hilbert space, and let
$\phi_1,\ldots,\phi_r$ be a real $L^2(\mu)$-orthonormal basis.

For $u,v\in L^2(\mu)$, set
\begin{equation}
 \Theta_\mu(u,v)
 :=uv\,\mu-\inner{u}{v}_{L^2(\mu)}\rho.
 \label{eq:centered-product-measure}
\end{equation}
Whenever the annihilator identity \eqref{eq:direction-annihilator} is invoked,
$\rho\in\mathcal Q^\perp$, so $\int_{\State}q\,d\rho=0$ for every
$q\in\mathcal Q$.  Thus these directions do not see the centering term.

\subsection{Active response of an isolated cluster}

For the cluster at $\Lambda$, define the active response family
\begin{equation}
 \mathcal N_\mu(\Lambda)
 :=\left\{-\kappa\Lambda\Theta_\mu(\psi,\psi):
 \psi\in E_\mu(\Lambda),\ \norm{\psi}_{L^2(\mu)}=1\right\}.
 \label{eq:abstract-active-family}
\end{equation}

This family is compact in total variation.  Indeed, the unit sphere is compact
and the centering term cancels in differences, so
\begin{equation}
 \norm{\kappa\Lambda(\psi^2-\varphi^2)\mu}_{\rm TV}
 \le\kappa\Lambda
 \norm{\psi-\varphi}_{L^2(\mu)}
 \norm{\psi+\varphi}_{L^2(\mu)}.
 \label{eq:abstract-active-tv-continuity}
\end{equation}

\begin{lemma}
\label{lem:abstract-derivative-collapse}
For every $f\in\Hreg$ and $1\le i,j\le r$,
\begin{equation}
 \dot{a}_{0,f}(\phi_i,\phi_j)
 =-\kappa\Lambda\int_{\State}f\phi_i\phi_j\,d\mu.
 \label{eq:abstract-derivative-collapse}
\end{equation}
\end{lemma}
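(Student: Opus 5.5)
The plan is to start from the derivative formula \eqref{eq:abstract-fixed-form-derivative} in Proposition~\ref{prop:abstract-common-domain}:
\[
 \dot a_{0,f}(\phi_i,\phi_j)=-\frac\kappa2\E(f\phi_i,\phi_j)-\frac\kappa2\E(\phi_i,f\phi_j).
\]
Both terms make sense because $f\phi_i,f\phi_j\in\Vd_\mu$ by Assumption~\ref{ass:regular-form-multipliers} and \eqref{eq:abstract-regular-form-multiplier}. The idea is that each term can be evaluated with the eigenvalue equation in weak form, since the multiplied function is an admissible test function.

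Since $\phi_j\in\Dom A_\mu\subset\Vd_\mu$ with $A_\mu\phi_j=\Lambda\phi_j$, the first representation theorem gives
\[
 \E(w,\phi_j)=\inner{w}{A_\mu\phi_j}_{L^2(\mu)}=\Lambda\int_\State w\phi_j\,d\mu,\qquad w\in\Vd_\mu .
\]
Taking $w=f\phi_i$ yields
\[
 \E(f\phi_i,\phi_j)=\Lambda\int_\State f\phi_i\phi_j\,d\mu .
\]
In the same way, symmetry of $\E$ and the eigen-equation for $\phi_i$ tested against $w=f\phi_j$ give
\[
 \E(\phi_i,f\phi_j)=\E(f\phi_j,\phi_i)=\Lambda\int_\State f\phi_j\phi_i\,d\mu .
\]
Adding the two contributions with the prefactors $-\kappa/2$ gives \eqref{eq:abstract-derivative-collapse}.

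Everything here is real: $f$ is a real bounded Borel function and the $\phi_i$ are real, so no conjugation issues arise. The only points that need care are that $f\phi_i$ genuinely belongs to the form domain $\Vd_\mu$, which is exactly Assumption~\ref{ass:regular-form-multipliers}, and that the integrals are finite, which holds because $f$ is bounded and $\phi_i,\phi_j\in L^2(\mu)$. I do not expect a genuine obstacle; the main point is that the regular multiplier property allows $f\phi_i$ to serve as a test function.
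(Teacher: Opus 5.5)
Your proof is correct and follows the paper's argument: Assumption~\ref{ass:regular-form-multipliers} puts $f\phi_i$ and $f\phi_j$ in $\Vd_\mu$. Symmetry of $\E$ and the weak eigenvalue equation then reduce each term of \eqref{eq:abstract-fixed-form-derivative} to $\Lambda\int_\State f\phi_i\phi_j\,d\mu$.
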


\begin{proof}
Assumption~\ref{ass:regular-form-multipliers} gives
$f\phi_i,f\phi_j\in\Vd_\mu$.  Hence symmetry of $\E$, the weak eigenvalue
equation, and
\eqref{eq:abstract-fixed-form-derivative} give
\begin{align*}
 \dot{a}_{0,f}(\phi_i,\phi_j)
 &=-\frac\kappa2\left[
    \E(f\phi_i,\phi_j)+\E(\phi_i,f\phi_j)\right]\\
 &=-\frac\kappa2\left[
    \E(\phi_j,f\phi_i)+\E(\phi_i,f\phi_j)\right]\\
 &=-\kappa\Lambda\int_{\State}f\phi_i\phi_j\,d\mu.
\end{align*}
\end{proof}

Let $\EigProj_\Lambda^\mu$ be the orthogonal projection onto
$E_\mu(\Lambda)$.  For $f\in\Hreg$ define
\begin{equation}
 \Compress_f^\Lambda(\mu)
 :=\EigProj_\Lambda^\mu\Mult_f|_{E_\mu(\Lambda)},
 \qquad
 \inner{\Compress_f^\Lambda(\mu)u}{v}_{L^2(\mu)}
 =\int fuv\,d\mu.
 \label{eq:abstract-cluster-compression}
\end{equation}
The first-order response operator in direction $f$ is
\begin{equation}
 \Response_f^\Lambda(\mu)
 :=-\kappa\Lambda\Compress_f^\Lambda(\mu).
 \label{eq:abstract-response-operator}
\end{equation}

\begin{proposition}
\label{prop:abstract-cluster-response}
For $f\in\Hreg$, there are $\eps>0$ and real-analytic branches
$\lambda_j:(-\eps,\eps)\to\R$, $1\le j\le r$, listing the
eigenvalues of $\widehat A_{\tau,f}$ issuing from $\Lambda$.  In the basis
$(\phi_i)$,
\begin{equation}
 \left(\inner{\Response_f^\Lambda(\mu)\phi_j}{\phi_i}_{L^2(\mu)}\right)_{i,j}
 =-\kappa\Lambda
 \left(\int f\phi_i\phi_j\,d\mu\right)_{i,j}.
 \label{eq:abstract-response-matrix}
\end{equation}
Moreover,
\begin{equation}
 \{\lambda_1'(0),\ldots,\lambda_r'(0)\}
 =\Spec(\Response_f^\Lambda(\mu))
 \label{eq:abstract-branch-derivatives}
\end{equation}
as multisets.  The statement is independent of the basis.
\end{proposition}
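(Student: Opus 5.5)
The plan is to reduce everything to the one-parameter family $\widehat A_{\tau,f}$ on the fixed Hilbert space $L^2(\mu)$ and apply Rellich--Kato analytic perturbation theory for self-adjoint holomorphic families of type~(a).

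\emph{Step 1: the operator family.} Take $N=1$, $e_1=f$, $\mu_0=\mu$ in Proposition~\ref{prop:abstract-kato-slice}. This shows that $(a_\tau)$ is a holomorphic family of type~(a) near $\tau=0$ with compact resolvent. For real $\tau$, $a_\tau$ is symmetric and non-negative, because $m_\tau$ is real. So $\widehat A_{\tau,f}$ is a self-adjoint holomorphic family, unitarily equivalent to $A_{\mu_\tau}$ by Proposition~\ref{prop:abstract-common-domain}.

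\emph{Step 2: analytic branches.} Choose a small circle $\Gamma$ around $\Lambda$ that encloses no other spectrum of $A_\mu$. By Proposition~\ref{prop:abstract-kato-slice}(iii), the Riesz projection $\RieszProj_\Gamma(\tau)$ is real analytic of constant rank $r$ for $|\tau|<\eps$. Kato's theorem on self-adjoint holomorphic families (Rellich's theorem, \cite[Chapter~VII, Section~3]{KatoPerturbation}) then gives real-analytic eigenvalues $\lambda_1,\ldots,\lambda_r$ of $\widehat A_{\tau,f}|_{\operatorname{Ran}\RieszProj_\Gamma(\tau)}$. It also gives real-analytic $L^2(\mu)$-orthonormal eigenvectors $\Phi_1(\tau),\ldots,\Phi_r(\tau)$ with $\lambda_j(0)=\Lambda$. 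These branches list all eigenvalues issuing from $\Lambda$.

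\emph{Step 3: the derivative formula.} The vectors $\Phi_j(0)$ form an orthonormal basis of $E_\mu(\Lambda)$, so $\Phi_j(0)=\sum_i O_{ij}\phi_i$ for some orthogonal matrix $O$. They are real up to a phase; a unit scalar is harmless here. By the Hellmann--Feynman identity \eqref{eq:abstract-form-hellmann-feynman},
\[
\lambda_j'(0)=\dot a_{0,f}(\Phi_j(0),\Phi_j(0)).
\]
By sesquilinearity and Lemma~\ref{lem:abstract-derivative-collapse}, this equals $(O^\top K O)_{jj}$, where
\[
K:=\Bigl(-\kappa\Lambda\int f\phi_i\phi_j\,d\mu\Bigr)_{i,j}
\]
is exactly the matrix \eqref{eq:abstract-response-matrix}. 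That matrix identity itself is immediate from the definitions \eqref{eq:abstract-cluster-compression} and \eqref{eq:abstract-response-operator}.

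\emph{Step 4: the main obstacle.} Step~3 only gives the diagonal entries of $K$ in the basis $\Phi(0)$, whereas \eqref{eq:abstract-branch-derivatives} needs $O^\top KO$ to be diagonal. I expect to prove this by differentiating the off-diagonal identity $a_\tau(\Phi_j(\tau),\Phi_i(\tau))=\lambda_j(\tau)\delta_{ij}$ at $\tau=0$ for $i\ne j$. This gives
\[
\dot a_0(\Phi_j,\Phi_i)+a_0(\dot\Phi_j,\Phi_i)+a_0(\Phi_j,\dot\Phi_i)=0 .
\]
Since $\Phi_i(0),\Phi_j(0)$ lie in $E_\mu(\Lambda)$ and $a_0$ is symmetric, the last two terms equal
\[
\Lambda\bigl(\inner{\dot\Phi_j}{\Phi_i}+\inner{\Phi_j}{\dot\Phi_i}\bigr)=\Lambda\,\partial_\tau\inner{\Phi_j}{\Phi_i}=0
\]
by orthonormality. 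Hence $(O^\top KO)_{ij}=0$ for $i\ne j$. The differentiation is justified because $\Phi$ is analytic in the form space (Proposition~\ref{prop:abstract-kato-slice}). So $O^\top K O=\operatorname{diag}(\lambda_j'(0))$, and the multiset of derivatives equals $\Spec K=\Spec\Response_f^\Lambda(\mu)$.

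Basis independence follows because changing the orthonormal basis conjugates $K$ by an orthogonal matrix, which leaves its spectrum unchanged.
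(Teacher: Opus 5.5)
Your proposal is correct and follows essentially the paper's route: Rellich branches from Proposition~\ref{prop:abstract-kato-slice}, differentiation of the weak eigenvalue relation with the unperturbed eigen-equation cancelling the eigenvector-derivative terms, and Lemma~\ref{lem:abstract-derivative-collapse} to identify the compression of $\dot a_{0,f}$. The only difference is that the paper differentiates $a_{\tau,f}(\Phi_j(\tau),v)=\lambda_j(\tau)\inner{\Phi_j(\tau)}{v}$ against a fixed $v\in E_\mu(\Lambda)$, which handles diagonal and off-diagonal entries together, whereas you pair against the moving $\Phi_i(\tau)$ and use preserved orthonormality.

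One small correction: the transition matrix from $(\phi_i)$ to $(\Phi_j(0))$ is in general only unitary, not orthogonal up to phases, since persistently degenerate branches permit complex rotations. This does not affect the conclusion, because $U^*KU=\operatorname{diag}(\lambda_j'(0))$ gives the same multiset.
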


\begin{proof}
Proposition~\ref{prop:abstract-kato-slice} and the Rellich selection theorem give local
analytic eigenvalue branches and form-space analytic eigenvectors
$\Phi_j(\tau)$ that are orthonormal in the fixed $L^2(\mu)$ space.  For
$v\in E_\mu(\Lambda)$, differentiate
\[
 a_{\tau,f}(\Phi_j(\tau),v)
 =\lambda_j(\tau)
   \inner{\Phi_j(\tau)}{v}_{L^2(\mu)}
\]
at zero.  Form symmetry and the weak equation for
$v\in E_\mu(\Lambda)$ imply, for every $w\in\Vd_\mu$,
\begin{equation}
 a_{0,f}(w,v)
 =a_{0,f}(v,w)
 =\Lambda\inner{w}{v}_{L^2(\mu)}.
 \label{eq:abstract-eigenvector-second-slot}
\end{equation}
Consequently,
\begin{align*}
 &\dot{a}_{0,f}(\Phi_j(0),v)
  +a_{0,f}(\Phi_j'(0),v)\\
 &\qquad=\lambda_j'(0)
   \inner{\Phi_j(0)}{v}_{L^2(\mu)}
   +\Lambda\inner{\Phi_j'(0)}{v}_{L^2(\mu)},
\end{align*}
so \eqref{eq:abstract-eigenvector-second-slot} cancels the differentiated
eigenvector terms and yields
\begin{equation*}
 \dot{a}_{0,f}(\Phi_j(0),v)
 =\lambda_j'(0)
  \inner{\Phi_j(0)}{v}_{L^2(\mu)}.
\end{equation*}
Since $(\Phi_j(0))_{j=1}^r$ is an orthonormal basis of
$E_\mu(\Lambda)$, the branch derivatives are, as a multiset, the
eigenvalues of the compression of $\dot{a}_{0,f}$ to that
eigenspace.  Lemma~\ref{lem:abstract-derivative-collapse} identifies this
compression with \eqref{eq:abstract-response-operator}.
\end{proof}

For a scalar spectral quantity $F$ defined along a coherent line, write
\begin{equation}
 \partial_f^\pm F
 :=\lim_{\tau\to0^\pm}\frac{F(\mu_\tau)-F(\mu)}\tau
 \label{eq:abstract-one-sided-derivative}
\end{equation}
when the limit exists.

For $f\in\Hreg$, write
\begin{equation}
 \beta_1(f)\le\cdots\le\beta_r(f)
 \label{eq:abstract-ordered-response-eigenvalues}
\end{equation}
for the ordered eigenvalues of $\Response_f^\Lambda(\mu)$.

\begin{proposition}
\label{prop:abstract-ordered-cluster-response}
Let $\Lambda$ occupy the positive ordered labels
$n,\ldots,n+r-1$.  Then, for every $f\in\Hreg$,
\begin{equation}
 \partial_f^+\Lambda_{n+j-1}^{\mu}=\beta_j(f),
 \qquad
 \partial_f^-\Lambda_{n+j-1}^{\mu}=\beta_{r-j+1}(f),
 \qquad 1\le j\le r.
 \label{eq:abstract-ordered-one-sided-response}
\end{equation}
\end{proposition}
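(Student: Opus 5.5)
We combine Proposition~\ref{prop:abstract-cluster-response} with the continuity and ordering of the positive eigenvalues along the tilt line $d\mu_\tau=e^{\kappa\tau f}\,d\mu$. All eigenvalues are computed for the unitarily equivalent operators $\widehat A_{\tau,f}$ on the fixed space $L^2(\mu)$, so the ordered labels $\Lambda_m^{\mu_\tau}$ are the ordered positive eigenvalues of $\widehat A_{\tau,f}$.

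\emph{Step 1: only the $r$ branches fill the labels near $\tau=0$.} Choose a circle $\Gamma$ around $\Lambda$ that encloses no other point of the spectrum of $A_\mu$ and lies in $(0,\infty)+i\R$. By Proposition~\ref{prop:abstract-kato-slice}(iii), for $|\tau|$ small, $\Gamma$ stays in the resolvent set of $\widehat A_{\tau,f}$ and the Riesz projection has constant rank $r$. So exactly $r$ eigenvalues lie inside $\Gamma$, and these are the values $\lambda_1(\tau),\ldots,\lambda_r(\tau)$ of the analytic branches from Proposition~\ref{prop:abstract-cluster-response}.

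By Lemma~\ref{lem:abstract-slice-borel}, applied on the one-dimensional slice with $N=1$, each ordered eigenvalue is continuous in $\tau$ and the nullity $d_0$ is constant. The labels below $n$ converge to eigenvalues strictly below $\Lambda$, and the labels above $n+r-1$ converge to eigenvalues strictly above $\Lambda$. Hence, for small $|\tau|$, the labels $n,\ldots,n+r-1$ are exactly the sorted list of $\lambda_1(\tau),\ldots,\lambda_r(\tau)$:
\[
\Lambda^{\mu_\tau}_{n+j-1}=\text{the $j$-th smallest of }\{\lambda_i(\tau)\}.
\]

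\emph{Step 2: sorting first-order expansions.} Each branch satisfies $\lambda_i(\tau)=\Lambda+\tau\lambda_i'(0)+O(\tau^2)$. By \eqref{eq:abstract-branch-derivatives}, the multiset $\{\lambda_i'(0)\}$ equals $\{\beta_1(f),\ldots,\beta_r(f)\}$.

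For $\tau>0$, sorting the $\lambda_i(\tau)$ is governed to first order by the slopes. The precise claim is that the $j$-th order statistic of $\Lambda+\tau\lambda_i'(0)+O(\tau^2)$ equals $\Lambda+\tau\beta_j(f)+O(\tau^2)$. This holds because order statistics are $1$-Lipschitz in the sup norm on $\R^r$: the $j$-th smallest of vectors $x$ and $y$ differ by at most $\max_i|x_i-y_i|$. Here $x_i=\lambda_i(\tau)$ and $y_i=\Lambda+\tau\lambda_i'(0)$. For $\tau>0$, the $j$-th smallest of $y$ is $\Lambda+\tau\beta_j$.

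For $\tau<0$, multiplying the slopes by a negative number reverses the order. The $j$-th smallest of $y$ is then $\Lambda+\tau\beta_{r-j+1}$.

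Dividing by $\tau$ and letting $\tau\to0^\pm$ gives \eqref{eq:abstract-ordered-one-sided-response}.

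\emph{Main obstacle.} The delicate point is Step 1: the ordered labels must not be contaminated by eigenvalues from outside the cluster, and no eigenvalue of the cluster may leave the window. This is exactly what the constant Riesz rank from Proposition~\ref{prop:abstract-kato-slice}(iii) provides, together with the continuity of the neighbouring labels from Lemma~\ref{lem:abstract-slice-borel}. Those facts also justify identifying ordered labels of $A_{\mu_\tau}$ with ordered eigenvalues of $\widehat A_{\tau,f}$ inside $\Gamma$. Once that is in place, Step 2 is just the Lipschitz property of order statistics.
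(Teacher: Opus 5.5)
Your proof is correct and is essentially the paper's argument: relabel the analytic cluster branches so that their slopes are $\beta_1(f)\le\cdots\le\beta_r(f)$, then use that increasing rearrangement is $1$-Lipschitz in the maximum norm, with $\tau<0$ reversing the order of the slopes. Your Step~1 (constant Riesz rank plus continuity of the neighbouring labels, so that the labels $n,\dots,n+r-1$ are exactly the sorted cluster branches for small $|\tau|$) makes explicit an identification the paper uses without comment; the only trivial omission is $f=0$, where Lemma~\ref{lem:abstract-slice-borel} requires $e_1=f\ne0$ but the claim is immediate.
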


\begin{proof}
Relabel the analytic branches from
Proposition~\ref{prop:abstract-cluster-response} so that
$\lambda_a(\tau)=\Lambda+\tau\beta_a(f)+o(\tau)$.  Since increasing
rearrangement is $1$-Lipschitz in the maximum norm,
\[
 \max_j\left|\Lambda_{n+j-1}^{\mu_\tau}-
 \begin{cases}
  \Lambda+\tau\beta_j(f),&\tau>0,\\
  \Lambda+\tau\beta_{r-j+1}(f),&\tau<0
 \end{cases}\right|=o(|\tau|).
\]
Division by $\tau$ proves
\eqref{eq:abstract-ordered-one-sided-response}.
\end{proof}

\begin{proposition}
\label{prop:abstract-cluster-envelope}
Let $\Lambda$ occupy the positive ordered labels $n,\ldots,n+r-1$, and let
$f\in\Hreg$ satisfy
\begin{equation}
 \int_{\State}f\,d\rho=0.
 \label{eq:abstract-centering-invisible-direction}
\end{equation}
Then
\begin{equation}
 \partial_f^+\Lambda_n^\mu
 =\min_{\eta\in\mathcal N_\mu(\Lambda)}\int f\,d\eta,
 \qquad
 \partial_f^-\Lambda_n^\mu
 =\max_{\eta\in\mathcal N_\mu(\Lambda)}\int f\,d\eta.
 \label{eq:abstract-cluster-envelope}
\end{equation}
\end{proposition}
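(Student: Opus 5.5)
The plan is to reduce the claim to Proposition~\ref{prop:abstract-ordered-cluster-response} together with the Rayleigh--Ritz (min--max) characterization of the extreme eigenvalues of the self-adjoint response operator $\Response_f^\Lambda(\mu)$ on the finite-dimensional real Hilbert space $E_\mu(\Lambda)$. The label $n$ is the lowest label in the cluster. Taking $j=1$ in \eqref{eq:abstract-ordered-one-sided-response}, we get
\[
\partial_f^+\Lambda_n^\mu=\beta_1(f),
\qquad
\partial_f^-\Lambda_n^\mu=\beta_r(f).
\]
Thus both one-sided derivatives exist, and they are the smallest and largest eigenvalues of $\Response_f^\Lambda(\mu)$.

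Next, I identify the quadratic form of the response operator with the integrals against the active family. By \eqref{eq:abstract-cluster-compression} and \eqref{eq:abstract-response-operator}, for a unit vector $\psi\in E_\mu(\Lambda)$,
\[
\inner{\Response_f^\Lambda(\mu)\psi}{\psi}_{L^2(\mu)}
=-\kappa\Lambda\int f\psi^2\,d\mu .
\]
This is symmetric in the real basis, since the compression matrix $(\int f\phi_i\phi_j\,d\mu)$ is symmetric. For $\eta=-\kappa\Lambda\Theta_\mu(\psi,\psi)\in\mathcal N_\mu(\Lambda)$, the definition \eqref{eq:centered-product-measure} gives
\[
\int f\,d\eta=-\kappa\Lambda\Bigl(\int f\psi^2\,d\mu-\norm{\psi}_{L^2(\mu)}^2\int f\,d\rho\Bigr)
=-\kappa\Lambda\int f\psi^2\,d\mu,
\]
where the centering term vanishes exactly by the hypothesis \eqref{eq:abstract-centering-invisible-direction}. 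This is the only place that hypothesis is needed. Here $f$ is bounded and $\psi\in L^2(\mu)$, so all integrals are finite.

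Consequently $\{\int f\,d\eta:\eta\in\mathcal N_\mu(\Lambda)\}$ is exactly the numerical range of $\Response_f^\Lambda(\mu)$ over the unit sphere of $E_\mu(\Lambda)$. For a real symmetric operator on a space of dimension $r$, the Rayleigh quotient attains its minimum $\beta_1(f)$ and its maximum $\beta_r(f)$ at unit eigenvectors, which is the finite-dimensional spectral theorem. Hence the minimum and maximum in \eqref{eq:abstract-cluster-envelope} are attained, not merely infima and suprema. This is consistent with the compactness of $\mathcal N_\mu(\Lambda)$ recorded via \eqref{eq:abstract-active-tv-continuity}. Combining with the first step gives the two identities.

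There is no real obstacle. The only points needing care are that the ordered one-sided derivatives are taken along the coherent line $\mu_\tau=e^{\kappa\tau f}\mu$, as in Proposition~\ref{prop:abstract-ordered-cluster-response}, and the sign bookkeeping: the $\tau<0$ side picks out the reversed order, and hence the maximum.
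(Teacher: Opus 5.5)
Your proposal is correct and follows the paper's proof: you identify $\int f\,d\eta$ for $\eta=-\kappa\Lambda\Theta_\mu(\psi,\psi)$ with the Rayleigh quotient $\inner{\Response_f^\Lambda(\mu)\psi}{\psi}_{L^2(\mu)}$, using $\int_{\State}f\,d\rho=0$ to remove the centering term. You then combine Rayleigh--Ritz with Proposition~\ref{prop:abstract-ordered-cluster-response} at $j=1$, which gives $\beta_1(f)$ for $\tau>0$ and $\beta_r(f)$ for $\tau<0$.
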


\begin{proof}
For every unit $\psi\in E_\mu(\Lambda)$,
\begin{align*}
 \int_{\State} f\,d[-\kappa\Lambda\Theta_\mu(\psi,\psi)]
 &=-\kappa\Lambda\left(
   \int_{\State}f\psi^2\,d\mu
   -\int_{\State}f\,d\rho\right)\\
 &=\inner{\Response_f^\Lambda(\mu)\psi}{\psi}_{L^2(\mu)}.
\end{align*}
Rayleigh--Ritz and
Proposition~\ref{prop:abstract-ordered-cluster-response} give
\eqref{eq:abstract-cluster-envelope}.
\end{proof}

This envelope also identifies when the active response can collapse to a single measure.

\begin{lemma}
\label{lem:abstract-active-response-multiplicity}
The active response family satisfies
\begin{equation}
 \mathcal N_\mu(\Lambda)\text{ is a singleton}
 \quad\Longleftrightarrow\quad r=1.
 \label{eq:abstract-active-singleton}
\end{equation}
\end{lemma}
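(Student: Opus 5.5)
The statement concerns the map $\psi\mapsto-\kappa\Lambda\Theta_\mu(\psi,\psi)$ on the unit sphere of $E_\mu(\Lambda)$. The direction $r=1\Rightarrow$ singleton is immediate. The unit sphere of a one-dimensional real space is $\{\pm\phi_1\}$, and $\Theta_\mu(\psi,\psi)=\psi^2\mu-\norm{\psi}^2_{L^2(\mu)}\rho$ is invariant under $\psi\mapsto-\psi$. So $\mathcal N_\mu(\Lambda)=\{-\kappa\Lambda(\phi_1^2\mu-\rho)\}$.

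For the converse we argue by contraposition: assume $r\ge2$ and produce two unit vectors with different response measures. Since the centering term $\rho$ is the same for every unit $\psi$, it suffices to find unit $\psi,\varphi\in E_\mu(\Lambda)$ with $\psi^2\mu\ne\varphi^2\mu$. Because $\kappa\Lambda>0$, these then give distinct elements of $\mathcal N_\mu(\Lambda)$.

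Take orthonormal $\phi_1,\phi_2$, and compare the unit vectors $\psi_\pm=(\phi_1\pm\phi_2)/\sqrt2$. We have
\[
 \psi_+^2-\psi_-^2=2\phi_1\phi_2 .
\]
Hence $\psi_+^2\mu=\psi_-^2\mu$ would force $\phi_1\phi_2=0$ $\mu$-a.e. Combined with the other natural pair, $\phi_1$ versus $\phi_2$, equality of all responses would force both
\[
 \phi_1^2=\phi_2^2 \quad\text{and}\quad \phi_1\phi_2=0 \qquad \mu\text{-a.e.}
\]
These give $\phi_1^4=\phi_1^2\phi_2^2=0$, so $\phi_1=0$ $\mu$-a.e. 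This contradicts $\norm{\phi_1}_{L^2(\mu)}=1$, so the family is not a singleton.

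There is no real obstacle here. The only point needing care is that equality of measures $\psi^2\mu=\varphi^2\mu$ is equivalent to $\psi^2=\varphi^2$ $\mu$-a.e.; this holds because both are densities in $L^1(\mu)$ with respect to the same measure. One might alternatively phrase the argument through the envelope Proposition~\ref{prop:abstract-cluster-envelope}, via $\min=\max$ forcing $\beta_1(f)=\beta_r(f)$ for all $f$. However, that route would need the annihilator identity and a density argument, so the direct pointwise argument above is preferable.
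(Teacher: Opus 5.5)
Your proof is correct and is essentially the paper's argument. The paper uses the three unit vectors $u$, $v$, $(u+v)/\sqrt2$ to derive $u^2=v^2$ and $uv=0$ $\mu$-a.e., hence $u^4=0$; you get the cross term from $\psi_\pm=(\phi_1\pm\phi_2)/\sqrt2$ instead, which is an immaterial variation.
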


\begin{proof}
If $r=1$, unit eigenvectors differ only by sign.  Conversely, suppose $r\ge2$
and the active family is a singleton.  For orthonormal $u,v$ and
$(u+v)/\sqrt2$, equality of the centered measures cancels the same reference
term and gives
\[
 u^2\mu=v^2\mu=\frac{(u+v)^2}{2}\mu.
\]
Thus $uv=0$ and $u^2=v^2$ $\mu$-almost everywhere, whence
$u^4=u^2v^2=0$, contradicting normalization.
\end{proof}

For a simple positive eigenvalue $\Lambda_n^\mu$ and a real
$L^2(\mu)$-normalized eigenfunction $\phi_n^\mu$, define the centered response
measure
\begin{equation}
 \eta_n^\mu
 :=-\kappa\Lambda_n^\mu
 \Theta_\mu(\phi_n^\mu,\phi_n^\mu)
 =-\kappa\Lambda_n^\mu
 ((\phi_n^\mu)^2\mu-\rho).
 \label{eq:abstract-centered-response-measure}
\end{equation}
It is independent of the sign chosen for $\phi_n^\mu$.

\begin{corollary}
\label{cor:abstract-simple-response}
If $\Lambda_n^\mu$ is simple, then for every $f\in\Hreg$ the two one-sided
derivatives agree and
\begin{equation}
 \partial_f^-\Lambda_n^\mu
 =\partial_f^+\Lambda_n^\mu
 =:\partial_f\Lambda_n^\mu
 =-\kappa\Lambda_n^\mu
 \int_{\State} f(\phi_n^\mu)^2\,d\mu.
 \label{eq:abstract-simple-eigenvalue-response}
\end{equation}
If $\int_{\State}f\,d\rho=0$, then also
\begin{equation}
 \partial_f\Lambda_n^\mu=\int_{\State}f\,d\eta_n^\mu.
 \label{eq:abstract-simple-centered-response}
\end{equation}
\end{corollary}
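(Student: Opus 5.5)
This is a direct specialization of Propositions~\ref{prop:abstract-ordered-cluster-response} and~\ref{prop:abstract-cluster-envelope} to the case $r=1$.

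\emph{Step 1: the response operator.} Since $\Lambda=\Lambda_n^\mu$ is simple, $E_\mu(\Lambda)$ is one-dimensional and spanned by the real normalized eigenfunction $\phi_n^\mu$. By \eqref{eq:abstract-response-matrix}, $\Response_f^\Lambda(\mu)$ is the $1\times1$ matrix
\[
 -\kappa\Lambda_n^\mu\int_{\State}f(\phi_n^\mu)^2\,d\mu .
\]
This value does not depend on the sign chosen for $\phi_n^\mu$. Hence the single ordered response eigenvalue is $\beta_1(f)$, equal to this number.

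\emph{Step 2: one-sided derivatives.} Apply Proposition~\ref{prop:abstract-ordered-cluster-response} with $r=1$ and $j=1$. It gives $\partial_f^+\Lambda_n^\mu=\beta_1(f)$ and $\partial_f^-\Lambda_n^\mu=\beta_{1}(f)$, since $r-j+1=1$. So the two one-sided derivatives coincide, and their common value is \eqref{eq:abstract-simple-eigenvalue-response}. Alternatively, the analytic branch from Proposition~\ref{prop:abstract-cluster-response} is then the unique branch near $\Lambda$. It coincides with $\tau\mapsto\Lambda_n^{\mu_\tau}$ for small $|\tau|$, because the ordered labels are preserved by continuity in Lemma~\ref{lem:abstract-slice-borel}. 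The derivative is therefore two-sided.

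\emph{Step 3: the centered form.} If $\int f\,d\rho=0$, then
\[
 \int f\,d\eta_n^\mu=-\kappa\Lambda_n^\mu\left(\int f(\phi_n^\mu)^2\,d\mu-\int f\,d\rho\right),
\]
which equals the expression in Step~1. This gives \eqref{eq:abstract-simple-centered-response}. Equivalently, Proposition~\ref{prop:abstract-cluster-envelope} applies with $\mathcal N_\mu(\Lambda)=\{\eta_n^\mu\}$, a singleton by Lemma~\ref{lem:abstract-active-response-multiplicity}, so the min and the max coincide.

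No real obstacle is expected. The only point needing care is the identification of the ordered eigenvalue $\Lambda_n^{\mu_\tau}$ with the analytic branch, and Proposition~\ref{prop:abstract-ordered-cluster-response} already handles it.
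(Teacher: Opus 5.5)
Your proposal is correct and follows the paper's own proof: the paper likewise applies Proposition~\ref{prop:abstract-ordered-cluster-response} with $r=1$, which makes $\beta_1(f)=\beta_{r-j+1}(f)$ the single $1\times1$ response value. It then reads off \eqref{eq:abstract-simple-centered-response} from the definition \eqref{eq:abstract-centered-response-measure} of $\eta_n^\mu$ together with $\int_{\State}f\,d\rho=0$, exactly as in your Step~3; your alternative routes, through branch uniqueness and the singleton envelope, are valid but not needed.
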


\begin{proof}
Apply Proposition~\ref{prop:abstract-ordered-cluster-response} with $r=1$.
Equation~\eqref{eq:abstract-simple-centered-response} follows from
\eqref{eq:abstract-centered-response-measure} and
\eqref{eq:abstract-centering-invisible-direction}.
\end{proof}

\subsection{Optional full Cameron--Martin response}
\label{sec:abstract-full-response}

The slicing arguments use only the countable family $\mathcal Q$.  The two
applications also admit a stronger representation of the response
functionals by Green potentials, verified for LBM in
Proposition~\ref{prop:lbm-response-realization} and for Liouville--Cauchy in
Appendix~\ref{app:lcp-response}.
This subsection is not used in the proof of
Theorem~\ref{thm:intro-abstract-response}.

\begin{definition}
\label{def:response-realization}
We say that the response is \emph{potential-realizable} if, for every
admissible $\mu$, there exist a real vector space
$\mathcal M_{\rm resp}(\mu)$ of finite signed measures and a linear
map
\begin{equation}
 \Potential_\mu:\mathcal M_{\rm resp}(\mu)\longrightarrow\Hc
 \label{eq:abstract-response-potential}
\end{equation}
such that $\Theta_\mu(u,v)\in\mathcal M_{\rm resp}(\mu)$ whenever $u$ and $v$
belong to a common positive eigenspace of $A_\mu$, and
\begin{equation}
 \inner{\Potential_\mu\sigma}{f}_{\Hc}
 =\int_{\State}f\,d\sigma,
 \qquad
 \sigma\in\mathcal M_{\rm resp}(\mu),\qquad f\in\Hreg.
 \label{eq:abstract-response-reproducing}
\end{equation}
It is \emph{faithfully potential-realizable} if every $\Potential_\mu$ is
injective.
\end{definition}

For LBM, $\Potential_\mu$ is the Dirichlet Green potential on finite-energy
measures.  For Liouville--Cauchy, it is the centered Cauchy potential on the
zero-mass response measures.

\begin{proposition}
\label{prop:full-cm-extension}
For a potential-realizable response satisfying
\begin{equation}
 \overline{\Hreg}^{\Hc}=\Hc,
 \qquad
 \int_{\State}f\,d\rho=0\quad(f\in\Hreg),
 \label{eq:abstract-full-cm-direction-conditions}
\end{equation}
the map $f\mapsto\Response_f^\Lambda(\mu)$ extends uniquely from $\Hreg$ to a
continuous linear map on $\Hc$, characterized in any orthonormal basis
$(\phi_i)_{i=1}^r$ of $E_\mu(\Lambda)$ by
\begin{equation}
 \left(\inner{\Response_f^\Lambda(\mu)\phi_j}{\phi_i}
              _{L^2(\mu)}\right)_{i,j=1}^r
 =-\kappa\Lambda
 \left(\inner{\Potential_\mu\Theta_\mu(\phi_i,\phi_j)}{f}_{\Hc}
  \right)_{i,j=1}^r,
 \qquad f\in\Hc.
 \label{eq:abstract-full-cm-matrix}
\end{equation}
For $f\notin\Hreg$, this notation refers only to the continuous extension;
no analytic perturbation path is asserted.
\end{proposition}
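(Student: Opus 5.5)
Because the rank $r$ of $E_\mu(\Lambda)$ is finite, the extension problem reduces to $r^2$ scalar matrix coefficients. The plan is to write each coefficient, for $f\in\Hreg$, as a Cameron--Martin inner product against a fixed vector of $\Hc$, and then to extend by density of $\Hreg$.

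Fix a real orthonormal basis $(\phi_i)_{i=1}^r$ of $E_\mu(\Lambda)$. For $f\in\Hreg$, Proposition~\ref{prop:abstract-cluster-response} gives the matrix entries
\[
 \inner{\Response_f^\Lambda(\mu)\phi_j}{\phi_i}_{L^2(\mu)}
 =-\kappa\Lambda\int_{\State}f\phi_i\phi_j\,d\mu .
\]
The second condition in \eqref{eq:abstract-full-cm-direction-conditions}, $\int f\,d\rho=0$, lets us subtract the centering term freely. We therefore rewrite the integral as $\int f\,d\Theta_\mu(\phi_i,\phi_j)$, using \eqref{eq:centered-product-measure}. Since $\phi_i$ and $\phi_j$ lie in a common positive eigenspace, potential-realizability places $\Theta_\mu(\phi_i,\phi_j)$ in $\mathcal M_{\rm resp}(\mu)$. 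The reproducing identity \eqref{eq:abstract-response-reproducing} then converts the integral into $\inner{\Potential_\mu\Theta_\mu(\phi_i,\phi_j)}{f}_{\Hc}$. This gives \eqref{eq:abstract-full-cm-matrix} for $f\in\Hreg$.

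Each right-hand entry is a bounded linear functional of $f\in\Hc$, with norm at most $\kappa\Lambda\norm{\Potential_\mu\Theta_\mu(\phi_i,\phi_j)}_{\Hc}$. We define $\Response_f^\Lambda(\mu)$ for $f\in\Hc$ as the operator on $E_\mu(\Lambda)$ whose matrix in the basis $(\phi_i)$ is the right side of \eqref{eq:abstract-full-cm-matrix}. This map is linear in $f$, and it is continuous from $\Hc$ into $\mathcal L(E_\mu(\Lambda))$ because all norms on a finite-dimensional space are equivalent. It agrees with the original response on $\Hreg$. Uniqueness follows from the density condition $\overline{\Hreg}^{\Hc}=\Hc$: two continuous linear maps that agree on a dense subspace agree everywhere. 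Note that $f\in\Hc$ need not be bounded, so $\int f\phi_i\phi_j\,d\mu$ may not make sense for general $f$. This is why the extension has to be defined through the potentials and not through the integral.

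It remains to check that the characterization does not depend on the basis. Suppose $\phi'_i=\sum_k O_{ki}\phi_k$ for an orthogonal matrix $O$. Then $\Theta_\mu$ is bilinear and $\Potential_\mu$ is linear, so the right-hand matrix transforms as $O^\top(\cdot)O$, which is exactly how the matrix of an operator transforms under a change of orthonormal basis. Hence all bases produce the same operator. The only point needing care is that $\Theta_\mu(\phi_i,\phi_j)$ for $i\ne j$ genuinely lies in $\mathcal M_{\rm resp}(\mu)$. Definition~\ref{def:response-realization} provides this directly, since it covers all pairs $u,v$ in a common positive eigenspace, so no serious obstacle is expected.
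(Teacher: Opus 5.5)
Your proposal is correct and follows essentially the same route as the paper: define the extension through the potential matrix, check agreement on $\Hreg$ using $\int f\,d\rho=0$ and the reproducing identity, and get uniqueness from density of $\Hreg$ in $\Hc$. The differences are cosmetic. The paper proves continuity with an explicit Hilbert--Schmidt Lipschitz bound where you use equivalence of norms in finite dimensions. It obtains basis independence by observing that the right-hand side is a symmetric bilinear form on $E_\mu(\Lambda)$, where you check the $O^\top(\cdot)O$ transformation directly.
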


\begin{proof}
For $f\in\Hc$, first define $\Response_f^{\Lambda,\rm ext}(\mu)$ by
\begin{equation*}
 \inner{\Response_f^{\Lambda,\rm ext}(\mu)\phi_j}{\phi_i}_{L^2(\mu)}
 :=-\kappa\Lambda
 \inner{\Potential_\mu\Theta_\mu(\phi_i,\phi_j)}{f}_{\Hc}.
\end{equation*}
The right-hand side is a symmetric bilinear form on the finite-dimensional
eigenspace, so this defines a unique self-adjoint operator independently of the
chosen orthonormal basis.
For $f,g\in\Hc$,
\begin{align*}
 &\norm{\Response_f^{\Lambda,\rm ext}(\mu)-
       \Response_g^{\Lambda,\rm ext}(\mu)}_{\rm HS}^2\\
 &\quad=\kappa^2\Lambda^2\sum_{i,j=1}^r
 \left|\inner{\Potential_\mu\Theta_\mu(\phi_i,\phi_j)}{f-g}_{\Hc}\right|^2\\
 &\quad\le\kappa^2\Lambda^2\norm{f-g}_{\Hc}^2
 \sum_{i,j=1}^r
 \norm{\Potential_\mu\Theta_\mu(\phi_i,\phi_j)}_{\Hc}^2.
\end{align*}
For $f\in\Hreg$, orthonormality and
\eqref{eq:abstract-full-cm-direction-conditions} give
\begin{align*}
 \inner{\Response_f^{\Lambda,\rm ext}(\mu)\phi_j}{\phi_i}_{L^2(\mu)}
 &=-\kappa\Lambda\int_{\State}f\,d\Theta_\mu(\phi_i,\phi_j)\\
 &=-\kappa\Lambda\left(
   \int_{\State}f\phi_i\phi_j\,d\mu
   -\delta_{ij}\int_{\State}f\,d\rho\right)\\
 &=-\kappa\Lambda\int_{\State}f\phi_i\phi_j\,d\mu
 =\inner{\Response_f^\Lambda(\mu)\phi_j}{\phi_i}_{L^2(\mu)}.
\end{align*}
Thus the candidate agrees with the original response on $\Hreg$.  Since
$\Hreg$ is dense in $\Hc$ by
\eqref{eq:abstract-full-cm-direction-conditions}, the displayed
Hilbert--Schmidt estimate gives the unique continuous extension.
\end{proof}

\begin{corollary}
\label{cor:abstract-positive-response-gram}
For a faithfully potential-realizable response, let
$\Lambda_{n_1}^\mu,\ldots,\Lambda_{n_M}^\mu$ be distinct simple positive
eigenvalues whose centered response measures
$\eta_{n_i}^\mu$ are linearly independent in
$\mathcal M_{\rm resp}(\mu)$.  Then
\begin{equation}
 \left(
 \inner{\Potential_\mu\eta_{n_i}^\mu}
        {\Potential_\mu\eta_{n_j}^\mu}_{\Hc}
 \right)_{i,j=1}^M
 \label{eq:abstract-response-gram}
\end{equation}
defines a positive-definite matrix.
\end{corollary}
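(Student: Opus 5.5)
The plan is to recognize the matrix in \eqref{eq:abstract-response-gram} as the Gram matrix of the vectors $w_i:=\Potential_\mu\eta_{n_i}^\mu\in\Hc$, $1\le i\le M$. Positive-definiteness will then follow from two facts: these vectors are well defined, and they are linearly independent in $\Hc$.

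\emph{Step 1 (membership).} First I check that $\eta_{n_i}^\mu\in\mathcal M_{\rm resp}(\mu)$. By \eqref{eq:abstract-centered-response-measure}, $\eta_{n_i}^\mu=-\kappa\Lambda_{n_i}^\mu\Theta_\mu(\phi_{n_i}^\mu,\phi_{n_i}^\mu)$. Here $\phi_{n_i}^\mu$ lies in the positive eigenspace $E_\mu(\Lambda_{n_i}^\mu)$. Definition~\ref{def:response-realization} places $\Theta_\mu(\phi,\phi)$ in the vector space $\mathcal M_{\rm resp}(\mu)$ for such $\phi$. Since that space is closed under scalar multiples, $\eta_{n_i}^\mu$ belongs to it, and so $w_i$ is defined.

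\emph{Step 2 (Gram matrix is positive semidefinite).} For real $a\in\R^M$, bilinearity of the $\Hc$ inner product gives
\[
 \sum_{i,j=1}^M a_ia_j\inner{w_i}{w_j}_{\Hc}
 =\norm{\sum_{i=1}^M a_iw_i}_{\Hc}^2\ge0 .
\]
The matrix is also symmetric, since $\Hc$ is a real Hilbert space.

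\emph{Step 3 (definiteness).} Suppose the quadratic form vanishes at some $a$. Then $\sum_i a_iw_i=0$. Linearity of $\Potential_\mu$ rewrites this as $\Potential_\mu\bigl(\sum_i a_i\eta_{n_i}^\mu\bigr)=0$. Faithfulness, meaning injectivity of $\Potential_\mu$, gives $\sum_i a_i\eta_{n_i}^\mu=0$ in $\mathcal M_{\rm resp}(\mu)$. The assumed linear independence of the $\eta_{n_i}^\mu$ then forces $a=0$, which proves strict positivity.

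No step is a real obstacle. The only point needing care is Step 1: the vectors $w_i$ must be defined, which means using the membership clause of Definition~\ref{def:response-realization} together with the fact that $\mathcal M_{\rm resp}(\mu)$ is a vector space. The same vector-space structure is what allows $\Potential_\mu$ to be applied to the linear combination in Step 3. Distinctness and simplicity of the eigenvalues are used only so that the $\eta_{n_i}^\mu$ are well defined and independent of the sign choices of the eigenfunctions.
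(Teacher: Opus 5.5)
Your proposal is correct and follows the paper's own proof: the paper likewise expands the quadratic form as $\bigl\|\Potential_\mu\bigl(\sum_i a_i\eta_{n_i}^\mu\bigr)\bigr\|_{\Hc}^2$, then uses injectivity of $\Potential_\mu$ and the assumed linear independence of the $\eta_{n_i}^\mu$ to force $a=0$. Your Step~1, which checks that $\eta_{n_i}^\mu\in\mathcal M_{\rm resp}(\mu)$ via the membership clause of Definition~\ref{def:response-realization}, spells out a point the paper leaves implicit.
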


\begin{proof}
For $a\in\R^M$, linearity gives
\[
 \sum_{i,j=1}^Ma_ia_j
 \inner{\Potential_\mu\eta_{n_i}^\mu}
        {\Potential_\mu\eta_{n_j}^\mu}_{\Hc}
 =\norm{\Potential_\mu(\sum_i a_i\eta_{n_i}^\mu)}_{\Hc}^2.
\]
If this is zero, injectivity of $\Potential_\mu$ gives
$\sum_i a_i\eta_{n_i}^\mu=0$, and linear independence yields $a=0$.
\end{proof}

\subsection{Response transversality and first-order splitting}
\label{sec:abstract-transversality}

\subsubsection{First-order splitting of a multiple cluster}

\begin{proposition}
\label{prop:abstract-pairwise-nonscalar}
Let $\Lambda>0$ have multiplicity at least two and let $u,v$ be any orthonormal
pair in $E_\mu(\Lambda)$.  Under the direction space
\eqref{eq:countable-direction-space} and the annihilator identity
\eqref{eq:direction-annihilator},
there exists $q\in\mathcal Q$ such that
\begin{equation}
 \begin{pmatrix}
  \int qu^2\,d\mu&\int quv\,d\mu\\
  \int quv\,d\mu&\int qv^2\,d\mu
 \end{pmatrix}
 \label{eq:abstract-two-by-two-compression}
\end{equation}
is not a scalar matrix.
\end{proposition}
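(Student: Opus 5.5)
Argue by contradiction, using the annihilator identity \eqref{eq:direction-annihilator}. Suppose that for every $q\in\mathcal Q$ the matrix \eqref{eq:abstract-two-by-two-compression} is scalar. Then for all $q\in\mathcal Q$,
\[
 \int_{\State}q\,(u^2-v^2)\,d\mu=0,
 \qquad
 \int_{\State}q\,uv\,d\mu=0.
\]
Since $u,v\in L^2(\mu)$, the products $u^2-v^2$ and $uv$ lie in $L^1(\mu)$. Because $\mu$ is a finite positive Radon measure, the signed measures $\sigma_1:=(u^2-v^2)\mu$ and $\sigma_2:=uv\,\mu$ are finite signed Radon measures in $\Mc_{\rm fin}(\State)$. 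Every $q\in\mathcal Q\subset\Hreg$ is bounded Borel, so the integrals are well defined. Hence $\sigma_1,\sigma_2\in\mathcal Q^\perp=\operatorname{span}\{\rho\}$, and we can write $\sigma_1=c_1\rho$ and $\sigma_2=c_2\rho$ for real constants $c_1,c_2$.

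Next I identify the constants by testing against total mass. If $\rho=0$, then $c_1\rho=c_2\rho=0$ directly. If $\rho\in\mathcal P(\State)$, evaluate on $\State$: orthonormality gives $\sigma_1(\State)=\|u\|^2-\|v\|^2=0$ and $\sigma_2(\State)=\inner{u}{v}_{L^2(\mu)}=0$, while $\rho(\State)=1$, so $c_1=c_2=0$. In either case $u^2=v^2$ and $uv=0$ hold $\mu$-almost everywhere.

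Finally, this is the same contradiction as in Lemma~\ref{lem:abstract-active-response-multiplicity}: $u^4=u^2v^2=(uv)^2=0$ $\mu$-a.e., so $u=0$ in $L^2(\mu)$, contradicting $\|u\|_{L^2(\mu)}=1$. Thus some $q\in\mathcal Q$ makes the compression non-scalar.

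The only point needing care is the total-mass step: it requires the normalization $\rho\in\{0\}\cup\mathcal P(\State)$ in \eqref{eq:countable-direction-space}, together with the fact that $\sigma_1,\sigma_2$ are genuine finite Radon measures so that \eqref{eq:direction-annihilator} applies. Both follow immediately from finiteness of $\mu$ and $u,v\in L^2(\mu)$. I do not expect any step to be a real obstacle.
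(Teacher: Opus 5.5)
Your proposal is correct and matches the paper's proof essentially step for step. The scalar-matrix assumption puts $uv\,\mu$ and $(u^2-v^2)\mu$ in $\mathcal Q^\perp=\operatorname{span}\{\rho\}$; the zero total mass from orthonormality removes the multiple of $\rho$ whether $\rho=0$ or $\rho\in\mathcal P(\State)$; and $u^4=u^2v^2=(uv)^2=0$ contradicts normalization, with your check that these are genuine finite signed Radon measures being a detail the paper leaves implicit.
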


\begin{proof}
If the matrix were scalar for every $q\in\mathcal Q$, then
\[
 \int quv\,d\mu=0,
 \qquad
 \int q(u^2-v^2)\,d\mu=0
 \qquad(q\in\mathcal Q).
\]
Thus $uv\mu$ and $(u^2-v^2)\mu$ lie in
$\mathcal Q^\perp=\operatorname{span}\{\rho\}$.  Orthonormality gives
\[
 (uv\mu)(\State)=\int uv\,d\mu=0,
 \qquad
 ((u^2-v^2)\mu)(\State)=1-1=0.
\]
If $\rho$ is a probability measure, then for
$\sigma\in\{uv\mu,(u^2-v^2)\mu\}$,
\[
 \sigma=c\rho,\qquad
 0=\sigma(\State)=c\rho(\State)=c,
\]
so $\sigma=0$; if $\rho=0$, the same conclusion is immediate.  Consequently
\[
 uv=0,
 \qquad
 u^2=v^2
 \quad\mu\text{-almost everywhere}.
\]
It follows that $u^4=u^2v^2=(uv)^2=0$ almost everywhere, contrary to
$\norm{u}_{L^2(\mu)}=1$.
\end{proof}

\begin{lemma}
\label{lem:abstract-simple-matrix}
Let $E$ be a finite-dimensional real inner-product space and
$\mathcal A\subset\operatorname{Sym}(E)$ a linear subspace.  If
\begin{equation*}
 \forall L\subseteq E,\quad \dim L=2
 \quad\Longrightarrow\quad
 \exists B\in\mathcal A:\ \OrthProj_L B|_L\notin\R I_L,
\end{equation*}
where $I_L$ is the identity on $L$.  Then
\begin{equation*}
 \exists B_0\in\mathcal A:\qquad
 \#\Spec(B_0)=\dim E.
\end{equation*}
\end{lemma}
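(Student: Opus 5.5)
The plan is an extremal argument: pick $B_0\in\mathcal A$ with the largest possible number of distinct eigenvalues, and show by first-order perturbation that it must have simple spectrum. Since $\#\Spec(B)\le\dim E$ for every $B\in\operatorname{Sym}(E)$, the quantity $\#\Spec(B)$ is bounded on $\mathcal A$. So there is $B_0\in\mathcal A$ with
\[
 \#\Spec(B_0)=\max_{B\in\mathcal A}\#\Spec(B).
\]
(If $\dim E\le1$ the conclusion is trivial with $B_0=0$.) The claim is that $\#\Spec(B_0)=\dim E$. Suppose not. Since $B_0$ is symmetric, hence diagonalizable, some eigenvalue $\lambda$ of $B_0$ has an eigenspace $F:=\ker(B_0-\lambda)$ with $\dim F\ge2$.

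Next I choose the perturbation direction. Take any two-dimensional $L\subseteq F$. The hypothesis supplies $B\in\mathcal A$ with $\OrthProj_L B|_L\notin\R I_L$. Since $L\subseteq F$, we have $\OrthProj_L B|_L=\OrthProj_L(\OrthProj_F B|_F)|_L$. Hence the compression $C:=\OrthProj_F B|_F$ is not scalar on $F$, because the compression of a scalar operator would again be scalar. Being symmetric and non-scalar, $C$ has at least two distinct eigenvalues.

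Now consider the real-analytic symmetric family $B_0+\eps B\in\mathcal A$, using that $\mathcal A$ is a linear subspace. This is the finite-dimensional version of the cluster calculation in Proposition~\ref{prop:abstract-cluster-response}. By Rellich's theorem the eigenvalues issuing from $\lambda$ are analytic branches $\lambda+\eps c_j+O(\eps^2)$, where the $c_j$ run over the eigenvalues of $C$ with multiplicity. This follows from the same differentiation of the eigen-equation against $v\in F$ as in that proof. Fix disjoint small discs around the distinct eigenvalues of $B_0$. By continuity of eigenvalues, for small $\eps\ne0$ each disc contains at least one eigenvalue of $B_0+\eps B$. The disc around $\lambda$ contains at least two distinct ones, since two of the $c_j$ differ. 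Hence
\[
 \#\Spec(B_0+\eps B)\ge\#\Spec(B_0)+1,
\]
which contradicts maximality. Therefore $B_0$ has simple spectrum.

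The only delicate step is the first-order splitting: we need the eigenvalues in the $\lambda$-cluster to be genuinely distinct for small $\eps$, not merely to have distinct derivative sets. I would handle this by citing Rellich analytic selection for symmetric matrix families. Two branches with different derivatives $c_j\ne c_k$ at $0$ cannot coincide for small $\eps\ne0$. The rest of the argument is elementary bookkeeping.
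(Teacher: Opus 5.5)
Your proposal is correct and follows essentially the same route as the paper: choose $B_0\in\mathcal A$ maximizing the number of distinct eigenvalues, then split a multiple eigenvalue to first order along a direction whose compression to that eigenspace is non-scalar, contradicting maximality. The only differences are cosmetic. You invoke Rellich's analytic branches, whereas the paper builds an effective matrix $\alpha I+\eps C+O(\eps^2)$ via a Riesz-projection isometry and applies Weyl's inequality. You also take the splitting direction directly from the hypothesis, whereas the paper first shows every compression to the eigenspace would have to be scalar and then contradicts the hypothesis.
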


\begin{proof}
For $B\in\mathcal A$, let $N(B)$ be its number of distinct eigenvalues and
choose $B_0\in\mathcal A$ with $N(B_0)$ maximal.  Suppose $B_0$ has an
eigenvalue $\alpha$ with eigenspace $E_\alpha$ of dimension at least
two.  Fix $B\in\mathcal A$ and put
$C=\OrthProj_{E_\alpha}B|_{E_\alpha}$.

Let $\RieszProj_\alpha(\eps)$ be the Riesz projection of
$B_0+\eps B$ around $\alpha$.
For small $\eps$, the map
\begin{equation*}
 U_\alpha(\eps)
 :=\RieszProj_\alpha(\eps)\RieszProj_\alpha(0)
 \left(
  \RieszProj_\alpha(0)\RieszProj_\alpha(\eps)\RieszProj_\alpha(0)
  \big|_{E_\alpha}
 \right)^{-1/2}
\end{equation*}
is an analytic isometry from $E_\alpha$ onto the perturbed Riesz
range.  Define
\[
 B_\alpha^{\rm eff}(\eps)
 :=U_\alpha(\eps)^*(B_0+\eps B)U_\alpha(\eps).
\]
Since $U_\alpha(0)$ is the inclusion of $E_\alpha$ and
$B_0|_{E_\alpha}=\alpha I$,
\begin{align*}
 (B_\alpha^{\rm eff})'(0)
 &=C+\alpha\left.\frac d{d\eps}\right|_{\eps=0}
   \bigl[U_\alpha(\eps)^*U_\alpha(\eps)\bigr]\\
 &=C.
\end{align*}
Analyticity therefore gives
\begin{equation}
 B_\alpha^{\rm eff}(\eps)
 =\alpha I+\eps C+O(\eps^2)
 \quad\text{in operator norm}.
 \label{eq:abstract-effective-matrix-expansion}
\end{equation}

If $C$ is non-scalar, choose two of its ordered eigenvalues
$\xi_p<\xi_q$.  Weyl's inequality and
\eqref{eq:abstract-effective-matrix-expansion} give, for small $\eps>0$,
\[
 \zeta_q(\eps)-\zeta_p(\eps)
 \ge\eps(\xi_q-\xi_p)-O(\eps^2)>0,
\]
where $\zeta_j(\eps)$ are the corresponding cluster eigenvalues.  Other
clusters remain disjoint, so
$N(B_0+\eps B)>N(B_0)$, a contradiction.  Therefore every
$C=\OrthProj_{E_\alpha}B|_{E_\alpha}$ is scalar.  Every
two-dimensional
subspace of $E_\alpha$ then has scalar compression for every
$B\in\mathcal A$, contradicting the hypothesis.  Hence $B_0$ is simple.
\end{proof}

Proposition~\ref{prop:abstract-pairwise-nonscalar} and
Lemma~\ref{lem:abstract-simple-matrix} together produce a first-order splitting
direction in the fixed countable family.

\begin{theorem}
\label{thm:abstract-simple-splitting}
Under \eqref{eq:countable-direction-space} and
\eqref{eq:direction-annihilator}, for every multiple positive
eigenvalue $\Lambda$ of $A_\mu$ there exists $q\in\mathcal Q$ such that
\begin{equation}
 \Response_q^\Lambda(\mu)
 =-\kappa\Lambda\EigProj_\Lambda^\mu
 \Mult_q|_{E_\mu(\Lambda)}
 \label{eq:abstract-simple-splitting-matrix}
\end{equation}
has simple spectrum.  Writing $r=\dim E_\mu(\Lambda)$, let
$\lambda_1,\ldots,\lambda_r$ be the real-analytic eigenvalue branches issuing
from $\Lambda$ along $d\mu_\tau=e^{\kappa\tau q}\,d\mu$, so that
$\lambda_j(0)=\Lambda$ for $1\le j\le r$.  Then
\begin{equation*}
 \lambda_i'(0)\ne\lambda_j'(0),
 \qquad 1\le i<j\le r.
\end{equation*}
\end{theorem}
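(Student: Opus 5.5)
The plan is to apply Lemma~\ref{lem:abstract-simple-matrix} on the eigenspace and then read off the branch derivatives from Proposition~\ref{prop:abstract-cluster-response}.

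\textbf{Setting up the lemma.} Take $E=E_\mu(\Lambda)$, regarded as a real inner-product space with the $L^2(\mu)$ inner product; it has dimension $r\ge2$. Define
\[
 \mathcal A:=\{\Compress_q^\Lambda(\mu):q\in\mathcal Q\}.
\]
Each $\Compress_q^\Lambda(\mu)$ is symmetric, because its matrix entries are $\int quv\,d\mu$. The map $q\mapsto\Compress_q^\Lambda(\mu)$ is $\mathbb Q$-linear, so $\mathcal A$ is only a $\mathbb Q$-vector space. To meet the hypothesis of Lemma~\ref{lem:abstract-simple-matrix} literally, I would replace it by its real span $\mathcal A_\R$, which is a finite-dimensional subspace of $\operatorname{Sym}(E)$.

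\textbf{Verifying the hypothesis.} Let $L\subseteq E$ be two-dimensional, and pick an orthonormal basis $u,v$ of $L$. The matrix of $\OrthProj_L\Compress_q^\Lambda(\mu)|_L$ in this basis is exactly \eqref{eq:abstract-two-by-two-compression}. Proposition~\ref{prop:abstract-pairwise-nonscalar} gives some $q\in\mathcal Q$ for which this matrix is not scalar. Lemma~\ref{lem:abstract-simple-matrix} then yields $B_0\in\mathcal A_\R$ with $r$ distinct eigenvalues.

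\textbf{Passing back to $\mathcal Q$.} This is the one step requiring care. Since $\mathcal A_\R$ is finite-dimensional, pick a real basis $C_1,\ldots,C_p$ of it, with each $C_i=\Compress_{q_i}^\Lambda(\mu)$ for some $q_i\in\mathcal Q$. Write $B_0=\sum_i c_iC_i$ with $c\in\R^p$.

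Consider the map $c\mapsto\Disc\bigl(\sum_ic_iC_i\bigr)$, the discriminant of the characteristic polynomial. It is a polynomial in $c$, and it is nonzero at the chosen coefficient vector, so it is not identically zero. A nonzero polynomial cannot vanish on all of $\mathbb Q^p$, because $\mathbb Q^p$ is dense in $\R^p$. Hence some rational vector $c'$ has nonzero discriminant. Set $q:=\sum_ic_i'q_i\in\mathcal Q$; then $\Compress_q^\Lambda(\mu)=\sum_i c_i'C_i$ has simple spectrum.

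An alternative is to note that the set of matrices with simple spectrum is open and $\mathcal A$ is dense in $\mathcal A_\R$. The polynomial argument is cleaner.

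\textbf{Conclusion.} Multiplying by $-\kappa\Lambda\ne0$ preserves simplicity, since $\kappa>0$ and $\Lambda>0$. So $\Response_q^\Lambda(\mu)$ has simple spectrum, which gives \eqref{eq:abstract-simple-splitting-matrix}.

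Finally, $q\in\mathcal Q\subset\Hreg$, so Proposition~\ref{prop:abstract-cluster-response} applies along $d\mu_\tau=e^{\kappa\tau q}\,d\mu$. It identifies the multiset $\{\lambda_j'(0)\}_{j=1}^r$ with $\Spec(\Response_q^\Lambda(\mu))$. That spectrum consists of $r$ distinct values, so the branch derivatives are pairwise distinct.
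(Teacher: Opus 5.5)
Your proposal is correct and follows the paper's proof essentially step for step: you pass to the real span of the compressions, apply Proposition~\ref{prop:abstract-pairwise-nonscalar} and Lemma~\ref{lem:abstract-simple-matrix}, use the nonvanishing discriminant polynomial to move to rational coefficients so that $q\in\mathcal Q$, and read off distinct branch derivatives from \eqref{eq:abstract-branch-derivatives}. Choosing a basis of $\mathcal A_\R$ and invoking density of $\mathbb Q^p$ is only a cosmetic variant of the paper's step of approximating $a^*$ by a nearby rational vector $b$.
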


\begin{proof}
Let $\mathcal A$ be the real linear span of
$\{\Compress_q^\Lambda(\mu):q\in\mathcal Q\}$.  Proposition
\ref{prop:abstract-pairwise-nonscalar} and
Lemma~\ref{lem:abstract-simple-matrix} give a finite real linear combination
$f_* =\sum_{j=1}^Na_j^*q_j$ for which $\Compress_{f_*}^\Lambda(\mu)$ has simple
spectrum.  Define
\[
 \Delta(a):=\Disc\left(\sum_{j=1}^Na_j
 \Compress_{q_j}^\Lambda(\mu)\right),
 \qquad a\in\R^N.
\]
Then $\Delta(a^*)\ne0$.  Choose $b\in\mathbb Q^N$ sufficiently close to $a^*$ that
$\Delta(b)\ne0$.  Since $\mathcal Q$ is a rational vector space,
$q=\sum_jb_jq_j\in\mathcal Q$, and $\Compress_q^\Lambda(\mu)$ is simple.
Multiplication by $-\kappa\Lambda\ne0$ preserves simplicity.  By
Proposition~\ref{prop:abstract-cluster-response}, more precisely
\eqref{eq:abstract-branch-derivatives}, the eigenvalues of
$\Response_q^\Lambda(\mu)$ are exactly the derivatives
$\lambda_1'(0),\ldots,\lambda_r'(0)$, counted with multiplicity.  Hence these
derivatives are pairwise distinct.
\end{proof}

\subsubsection{A square criterion for response transversality}

The singular--absolutely-continuous decomposition below turns the square
identity into a Vandermonde system.  The following hypotheses are a
deterministic criterion for Definition~\ref{def:abstract-response-transversality},
not additional standing assumptions; Sections~\ref{sec:lbm-realization} and
\ref{sec:lcp-realization} verify them from their respective square identities.

\begin{theorem}
\label{thm:abstract-square-transversality}
Let $\nu_{\rm ref}$ be a sigma-finite positive Radon measure on $\State$, and let
$(\Lambda_i,\phi_i)_{i=1}^M$ be real eigenpairs of $A_\mu$ with continuous
representatives.  Assume that the following three conditions hold.
\begin{enumerate}[label=\textnormal{(\roman*)},leftmargin=*,itemsep=0pt,topsep=0.35em,parsep=0pt,partopsep=0pt]
\item The eigenpairs satisfy
\begin{equation}
 A_\mu\phi_i=\Lambda_i\phi_i,
 \qquad
 \Lambda_i>0,
 \qquad
 \Lambda_i\ne\Lambda_j\ (i\ne j),
 \qquad
 \norm{\phi_i}_{L^2(\mu)}=1.
 \label{eq:abstract-square-eigenpair-data}
\end{equation}
\item The measures satisfy
\begin{equation}
 \mu\perp\nu_{\rm ref},
 \qquad
 \operatorname{supp}\mu=\State,
 \qquad
 \rho=0\ \text{or}\ \rho\ll\nu_{\rm ref}.
 \label{eq:abstract-reference-singularity}
\end{equation}
\item There exist $c_0>0$, finite signed measures $b_i\ll\nu_{\rm ref}$, and a
linear map
\begin{equation}
 L:
 \operatorname{span}_{\R}\{\phi_1^2,\ldots,\phi_M^2\}
 \longrightarrow\Mc_{\rm fin}(\State),
 \label{eq:abstract-square-operator-domain}
\end{equation}
such that
\begin{equation}
 L(\phi_i^2)
 =c_0\Lambda_i\phi_i^2\mu-b_i,
 \qquad 1\le i\le M.
 \label{eq:abstract-square-identity}
\end{equation}
\end{enumerate}
Then, for every $a\in\R^M$,
\begin{align}
 \sum_{i=1}^Ma_i\Theta_\mu(\phi_i,\phi_i)=0
 &\Longrightarrow a=0,
 \label{eq:abstract-centered-square-independence}\\
 \sum_{i=1}^Ma_i\Lambda_i\Theta_\mu(\phi_i,\phi_i)=0
 &\Longrightarrow a=0.
 \label{eq:abstract-weighted-square-independence}
\end{align}
\end{theorem}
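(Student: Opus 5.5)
The plan is to reduce both implications to a Vandermonde system. The singular--absolutely-continuous splitting relative to $\nu_{\rm ref}$ does the work, and the linear map $L$ is used as a ``raising operator'' that multiplies each coefficient by $\Lambda_i$. It suffices to prove \eqref{eq:abstract-centered-square-independence}. Indeed, \eqref{eq:abstract-weighted-square-independence} follows by applying it to $a_i':=a_i\Lambda_i$, and since every $\Lambda_i>0$, $a'=0$ forces $a=0$.

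\emph{Step 1: removing the centering.} Suppose $\sum_i a_i\Theta_\mu(\phi_i,\phi_i)=0$. By \eqref{eq:centered-product-measure} and the normalization $\norm{\phi_i}_{L^2(\mu)}=1$, this reads
\[
 \Bigl(\sum_i a_i\phi_i^2\Bigr)\mu=\Bigl(\sum_i a_i\Bigr)\rho .
\]
The left side is absolutely continuous with respect to $\mu$. The right side is either $0$ or absolutely continuous with respect to $\nu_{\rm ref}$. Since $\mu\perp\nu_{\rm ref}$ by \eqref{eq:abstract-reference-singularity}, a measure that is both $\ll\mu$ and $\ll\nu_{\rm ref}$ vanishes, so both sides are zero. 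Thus $F_0:=\sum_i a_i\phi_i^2=0$ $\mu$-a.e. Because the $\phi_i$ are continuous representatives, $F_0$ is continuous. Since $\supp\mu=\State$, every nonempty open set has positive $\mu$-measure, so $F_0\equiv0$ pointwise as an element of $\operatorname{span}_{\R}\{\phi_1^2,\ldots,\phi_M^2\}$.

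\emph{Step 2: iterating $L$.} By induction on $k\ge0$, I will show that $F_k:=\sum_i a_i\Lambda_i^k\phi_i^2$ vanishes identically. Assume $F_k\equiv0$. Linearity of $L$ and \eqref{eq:abstract-square-identity} give
\[
 0=L(F_k)=c_0\Bigl(\sum_i a_i\Lambda_i^{k+1}\phi_i^2\Bigr)\mu-\sum_i a_i\Lambda_i^k b_i .
\]
The first term is $\ll\mu$ and the second is $\ll\nu_{\rm ref}$, so mutual singularity forces each to vanish. Since $c_0>0$, $F_{k+1}=0$ $\mu$-a.e., and continuity with full support upgrades this to $F_{k+1}\equiv0$.

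\emph{Step 3: Vandermonde.} Fix a point $s\in\State$ and set $y_i:=a_i\phi_i(s)^2$. The identities $\sum_i\Lambda_i^ky_i=0$ for $0\le k\le M-1$ form a Vandermonde system. Its matrix is invertible because the $\Lambda_i$ are pairwise distinct, so $y=0$ and hence $a_i\phi_i^2\equiv0$ for every $i$. Since $\norm{\phi_i}_{L^2(\mu)}=1$, $\phi_i$ does not vanish identically, and therefore $a_i=0$.

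The only delicate point is that $L$ is defined on a space of functions, not of $\mu$-classes. To apply $L$ to the zero element one must know that $F_k$ is genuinely the zero function in the span, not merely zero $\mu$-a.e. This is exactly why continuity of the representatives and $\supp\mu=\State$ are invoked at every step. I expect this bookkeeping, rather than the Vandermonde algebra, to be the main thing to get right.
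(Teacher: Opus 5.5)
Your proposal is correct and follows the paper's proof essentially step for step. The Lebesgue-decomposition argument removes the centering, and $L$ is iterated to show $\sum_i a_i\Lambda_i^k\phi_i^2\equiv0$ for $0\le k<M$, with continuity and full support upgrading $\mu$-a.e.\ vanishing to pointwise vanishing at each stage (exactly the bookkeeping you flag). The pointwise Vandermonde system then finishes, and the weighted implication follows by replacing $a_i$ with $a_i\Lambda_i$.
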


\begin{proof}
Suppose the left side of
\eqref{eq:abstract-centered-square-independence} vanishes.  Written without
centering, this says
\begin{equation*}
 \sum_{i=1}^Ma_i\phi_i^2\mu
 =\left(\sum_{i=1}^Ma_i\right)\rho.
\end{equation*}
The left side is singular with respect to $\nu_{\rm ref}$, whereas the right side is
absolutely continuous.  Uniqueness of the Lebesgue decomposition gives
\[
 \sum_{i=1}^Ma_i\phi_i^2\mu=0.
\]
Full support and continuity imply
$u_0:=\sum_i a_i\phi_i^2=0$ on $\State$.  Put
\begin{equation*}
 u_k:=\sum_{i=1}^Ma_i\Lambda_i^k\phi_i^2.
\end{equation*}
If $u_k=0$, linearity of $L$ and
\eqref{eq:abstract-square-identity} give
\begin{equation*}
 0=L u_k
 =c_0\sum_{i=1}^Ma_i\Lambda_i^{k+1}\phi_i^2\mu
   -\sum_{i=1}^Ma_i\Lambda_i^kb_i.
\end{equation*}
Again the singular and absolutely continuous parts vanish separately.  Full
support and continuity give $u_{k+1}=0$.  By induction,
$u_k=0$ for $0\le k<M$.

For every $y\in\State$,
\[
 \begin{pmatrix}
  1&\cdots&1\\
  \Lambda_1&\cdots&\Lambda_M\\
  \vdots&&\vdots\\
  \Lambda_1^{M-1}&\cdots&\Lambda_M^{M-1}
 \end{pmatrix}
 \begin{pmatrix}
  a_1\phi_1(y)^2\\ \vdots\\a_M\phi_M(y)^2
 \end{pmatrix}=0.
\]
The Vandermonde determinant is
\begin{equation*}
 \prod_{1\le i<j\le M}(\Lambda_j-\Lambda_i)\ne0.
\end{equation*}
Hence, for every $i$,
\[
 a_i\phi_i^2=0
 \quad\Longrightarrow\quad
 a_i=a_i\int_{\State}\phi_i^2\,d\mu=0.
\]
Since every $\Lambda_i>0$, replacing $a_i$ by $a_i\Lambda_i$ proves
\eqref{eq:abstract-weighted-square-independence}.
\end{proof}

Let $\Lambda_{n_1}^\mu,\ldots,\Lambda_{n_M}^\mu$ be distinct simple positive
eigenvalues with real normalized eigenfunctions
$\phi_{n_1}^\mu,\ldots,\phi_{n_M}^\mu$.  In terms of the centered response
measures \eqref{eq:abstract-centered-response-measure},
\eqref{eq:direction-annihilator} gives
\begin{equation}
 \ell_{n_i}^\mu(q)=\int_{\State}q\,d\eta_{n_i}^\mu,
 \qquad q\in\mathcal Q.
 \label{eq:abstract-response-functional-measure-form}
\end{equation}

The square criterion now gives the required linear independence on
$\mathcal Q$.

\begin{corollary}
\label{cor:square-implies-RT}
Let $M\ge1$, and let
$\Lambda_{n_1}^\mu,\ldots,\Lambda_{n_M}^\mu$ be distinct simple positive
eigenvalues with real normalized eigenfunctions
$\phi_{n_1}^\mu,\ldots,\phi_{n_M}^\mu$ whose eigenpairs satisfy the hypotheses
of Theorem~\ref{thm:abstract-square-transversality}.  Under
\eqref{eq:countable-direction-space} and
\eqref{eq:direction-annihilator}, for
$a\in\R^M$,
\begin{equation*}
 \sum_{i=1}^Ma_i\ell_{n_i}^\mu(q)=0\quad(q\in\mathcal Q)
 \qquad\Longrightarrow\qquad a=0.
\end{equation*}
\end{corollary}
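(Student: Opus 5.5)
The plan is to pass from the vanishing functional identity on $\mathcal Q$ to an identity of measures using the annihilator condition \eqref{eq:direction-annihilator}, and then to apply Theorem~\ref{thm:abstract-square-transversality}.

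\emph{Step 1: rewrite the functionals as measures.} Suppose $\sum_i a_i\ell_{n_i}^\mu(q)=0$ for every $q\in\mathcal Q$. By \eqref{eq:abstract-response-functional-measure-form}, each functional is integration against the centered response measure. So the finite signed measure
\[
 \sigma:=\sum_{i=1}^Ma_i\eta_{n_i}^\mu
 =-\kappa\sum_{i=1}^Ma_i\Lambda_{n_i}^\mu\Theta_\mu(\phi_{n_i}^\mu,\phi_{n_i}^\mu)
\]
annihilates $\mathcal Q$. It is finite because each $(\phi_{n_i}^\mu)^2\mu$ has total mass one and $\rho$ is either $0$ or a probability measure. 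Hence $\sigma\in\mathcal Q^\perp=\operatorname{span}\{\rho\}$, say $\sigma=c\rho$.

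\emph{Step 2: show $c=0$.} Compute total masses. Each $\Theta_\mu(\phi,\phi)=\phi^2\mu-\rho$ with $\norm{\phi}_{L^2(\mu)}=1$. If $\rho\in\mathcal P(\State)$, this measure has total mass $1-1=0$, so $\sigma(\State)=0$. Comparing with $\sigma(\State)=c\rho(\State)=c$ gives $c=0$. If $\rho=0$, then $\sigma=c\rho=0$ directly. In both cases $\sigma=0$, by the same device as in Proposition~\ref{prop:abstract-pairwise-nonscalar}.

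\emph{Step 3: apply the square criterion.} Since $\kappa>0$, dividing by $-\kappa$ gives $\sum_i a_i\Lambda_{n_i}^\mu\Theta_\mu(\phi_{n_i}^\mu,\phi_{n_i}^\mu)=0$. This is exactly the hypothesis of \eqref{eq:abstract-weighted-square-independence} in Theorem~\ref{thm:abstract-square-transversality}, whose hypotheses are assumed for these eigenpairs. Hence $a=0$.

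The only point needing care is Step 2: the annihilator identity alone leaves $\sigma$ as an unknown multiple of $\rho$. The total-mass computation uses the normalization of the eigenfunctions and the dichotomy $\rho\in\{0\}\cup\mathcal P(\State)$ from \eqref{eq:countable-direction-space}. Everything else is a direct citation.
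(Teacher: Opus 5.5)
Your proposal is correct and follows the paper's proof essentially step for step. You form $\sigma=\sum_i a_i\eta_{n_i}^\mu\in\mathcal Q^\perp=\operatorname{span}\{\rho\}$, eliminate the multiple of $\rho$ by the total-mass computation (immediate when $\rho=0$), and conclude $a=0$ from the weighted square independence \eqref{eq:abstract-weighted-square-independence}.
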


\begin{proof}
Suppose $\sum_i a_i\ell_{n_i}^\mu(q)=0$ for every $q\in\mathcal Q$.  Then
\[
 \sigma:=\sum_{i=1}^Ma_i\eta_{n_i}^\mu
 \in\mathcal Q^\perp=\operatorname{span}\{\rho\}.
\]
If $\rho=0$, then $\sigma=0$.  If $\rho$ is a probability measure, then
\eqref{eq:abstract-centered-response-measure} and normalization give
\[
 \eta_{n_i}^\mu(\State)
 =-\kappa\Lambda_{n_i}^\mu
   \left(\int_{\State}(\phi_{n_i}^\mu)^2\,d\mu-\rho(\State)\right)
 =-\kappa\Lambda_{n_i}^\mu(1-1)=0.
\]
Writing $\sigma=c\rho$ therefore gives
\[
 0=\sigma(\State)=c\rho(\State)=c,
 \qquad\text{hence}\qquad \sigma=0.
\]
By
\eqref{eq:abstract-centered-response-measure}, this is
\[
 -\kappa\sum_{i=1}^Ma_i\Lambda_{n_i}^\mu
 \Theta_\mu(\phi_{n_i}^\mu,\phi_{n_i}^\mu)=0.
\]
Weighted square transversality
\eqref{eq:abstract-weighted-square-independence} gives $a=0$.
\end{proof}

Linear independence on the countable direction set can be witnessed by finitely many directions.

\begin{lemma}
\label{lem:abstract-countable-submersion}
Let the selected response functionals satisfy \eqref{eq:abstract-RT}.  Then there
are $g_1,\ldots,g_M\in\mathcal Q$ such that
\begin{equation}
 \det(\ell_{n_i}^\mu(g_j))_{i,j=1}^M\ne0.
 \label{eq:abstract-nonsingular-response-tuple}
\end{equation}
\end{lemma}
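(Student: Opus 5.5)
The argument is pure linear algebra. We use only that each $\ell_{n_i}^\mu$ is a linear functional on the $\mathbb Q$-vector space $\mathcal Q$ (linear in $q$ by \eqref{eq:abstract-response-functionals-Q}), with real values, and that \eqref{eq:abstract-RT} says these functionals are linearly independent over $\R$. Consider the evaluation map
\[
 \ell:\mathcal Q\longrightarrow\R^M,
 \qquad
 \ell(q):=\bigl(\ell_{n_1}^\mu(q),\ldots,\ell_{n_M}^\mu(q)\bigr).
\]
Let $W\subset\R^M$ be the real linear span of $\ell(\mathcal Q)$.

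The first step is to show $W=\R^M$. If not, there is a nonzero $a\in\R^M$ orthogonal to $W$. Then $\sum_i a_i\ell_{n_i}^\mu(q)=0$ for every $q\in\mathcal Q$, and \eqref{eq:abstract-RT} forces $a=0$, a contradiction.

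The second step extracts $M$ vectors $\ell(g_1),\ldots,\ell(g_M)$, with $g_j\in\mathcal Q$, that form a basis of $\R^M$. Since $\ell(\mathcal Q)$ spans $\R^M$ over $\R$, a spanning set contains a basis, and we choose one greedily. Take $g_1$ with $\ell(g_1)\ne0$. Having chosen $g_1,\ldots,g_k$ with $\ell(g_1),\ldots,\ell(g_k)$ independent and $k<M$, their span is a proper subspace of $\R^M$. It therefore cannot contain all of $\ell(\mathcal Q)$, since otherwise $W$ would be proper. So some $g_{k+1}\in\mathcal Q$ has $\ell(g_{k+1})$ outside that span, and the enlarged family stays independent. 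After $M$ steps, the matrix whose $j$-th column is $\ell(g_j)$ is $(\ell_{n_i}^\mu(g_j))_{i,j}$, and it has nonzero determinant, which is \eqref{eq:abstract-nonsingular-response-tuple}.

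No step is a genuine obstacle. The one point needing care is that linear independence in \eqref{eq:abstract-RT} is over $\R$ while $\mathcal Q$ is only a $\mathbb Q$-vector space; the argument handles this by taking real spans of the image and never needs real combinations of elements of $\mathcal Q$. Countability of $\mathcal Q$ plays no role in this lemma.
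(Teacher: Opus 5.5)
Your proof is correct, but it takes a different route from the paper's. The paper extends each $\ell_{n_i}^\mu$ by the same integral formula to $\operatorname{span}_{\R}\mathcal Q$ and uses the linear independence to get surjectivity of $T=(\ell_{n_1}^\mu,\ldots,\ell_{n_M}^\mu)$ there. It then picks real combinations $f_j=\sum_k a_{jk}q_k$ with $T(f_j)$ equal to the standard basis vectors. Finally it replaces the coefficients $a_{jk}$ by nearby rationals $b_{jk}$, and continuity of the determinant keeps it nonzero.

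You instead never leave $\mathcal Q$. You take the real span of the image $\ell(\mathcal Q)\subset\R^M$, show by the annihilator argument that it is all of $\R^M$, and extract a basis directly from the spanning set $\ell(\mathcal Q)$. This gives two simplifications. First, no approximation or continuity step is needed. Second, you never have to evaluate $\ell_{n_i}^\mu$ on real (non-rational) combinations of directions. In fact your argument uses no linearity of the $\ell_{n_i}^\mu$ at all, only the implication \eqref{eq:abstract-RT}.

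The paper's version, in exchange, follows the same rational-approximation template it uses elsewhere, e.g.\ in Theorem~\ref{thm:abstract-simple-splitting}. It also yields directions $g_j$ for which $(\ell_{n_i}^\mu(g_j))_{i,j}$ is close to the identity, which is slightly more than the lemma requires.
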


\begin{proof}
Extend every $\ell_{n_i}^\mu$ from $\mathcal Q$ to
$\operatorname{span}_{\R}\mathcal Q$ by the same integral formula, and let
$T:\operatorname{span}_{\R}\mathcal Q\to\R^M$ be
$T(f)=(\ell_{n_1}^\mu(f),\ldots,\ell_{n_M}^\mu(f))$.  Condition
\eqref{eq:abstract-RT} says that the coordinate functionals are linearly
independent, hence $T$ is surjective.  Choose
$f_j\in\operatorname{span}_{\R}\mathcal Q$ with $T(f_j)$ equal to the
$j$th standard basis vector.  After taking the union of the finitely many directions
appearing in these combinations, write
\[
 f_j=\sum_{k=1}^K a_{jk}q_k,
 \qquad q_k\in\mathcal Q.
\]
Choose $b_{jk}\in\mathbb Q$ sufficiently close to $a_{jk}$ and set
\begin{equation*}
 g_j:=\sum_{k=1}^K b_{jk}q_k\in\mathcal Q.
\end{equation*}
Continuity of the determinant and
$\det(\ell_{n_i}^\mu(f_j))_{i,j}=1$ ensure that the rational coefficients can be
chosen so that \eqref{eq:abstract-nonsingular-response-tuple} holds.
\end{proof}

\section{Gaussian slicing and almost-sure simplicity}
\label{sec:slicing-simplicity}

Throughout Sections~\ref{sec:slicing-simplicity} and~\ref{sec:density}, the
five standing assumptions are in force.
Section~\ref{sec:density} additionally restricts to the response-transverse
event in Theorem~\ref{thm:intro-abstract-response}\textnormal{(ii)}; this is a
pathwise spectral property, not another standing model input.

\subsection{Local analytic splitting}

We first show that a response compression with simple spectrum separates the
corresponding multiple eigenvalue along an analytic slice.  This is a local
cluster condition, supplied in the final argument by
Theorem~\ref{thm:abstract-simple-splitting} from
the annihilator identity \eqref{eq:direction-annihilator}.

\begin{lemma}
\label{lem:abstract-local-line-splitting}
Fix an admissible measure $\mu_0$, a direction $g\in\Hreg$, and an eigenvalue
$\Lambda>0$ of $A_{\mu_0}$ of multiplicity $r\ge2$.  Define
\begin{equation*}
 d\mu_s=e^{\kappa sg}\,d\mu_0,\qquad s\in\R.
\end{equation*}
By \eqref{eq:abstract-admissible-orbit-closure}, each $\mu_s$ is admissible.
If $\Compress_g^\Lambda(\mu_0)$ has simple spectrum, then there are $\eps>0$ and a
bounded open interval $I\ni\Lambda$ such that
\begin{equation*}
 \overline I\cap\Spec(A_{\mu_0})=\{\Lambda\}
\end{equation*}
and, for $0<|s|<\eps$,
\begin{equation*}
 \#\bigl(\Spec(A_{\mu_s})\cap I\bigr)=r,
 \qquad
 \dim\ker(A_{\mu_s}-\lambda)=1
 \quad\bigl(\lambda\in\Spec(A_{\mu_s})\cap I\bigr).
\end{equation*}
\end{lemma}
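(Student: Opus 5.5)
We work throughout on the fixed Hilbert space $L^2(\mu_0)$ via the unitary transport $U_{s,g}$ of \eqref{eq:abstract-spectral-unitary}. Since $\widehat A_{s,g}=U_{s,g}A_{\mu_s}U_{s,g}^{-1}$ is unitarily equivalent to $A_{\mu_s}$, spectra and eigenspace dimensions can be read off from $\widehat A_{s,g}$.

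Because $A_{\mu_0}$ has compact resolvent (Proposition~\ref{prop:abstract-admissible-spectral-consequences}), $\Lambda$ is isolated in its spectrum. We choose a bounded open interval $I\ni\Lambda$ with $\overline I\cap\Spec(A_{\mu_0})=\{\Lambda\}$, and let $\Gamma$ be a circle centered at $\Lambda$ that meets $\R$ exactly at the endpoints of $I$. Proposition~\ref{prop:abstract-kato-slice}(iii), applied with $N=1$, $e_1=g$, $t_0=0$, then gives $\eps_0>0$ with the following properties for $|s|<\eps_0$:
\begin{itemize}
\item[(a)] $\Gamma$ lies in the resolvent set of $\widehat A_{s,g}$;
\item[(b)] the Riesz projection $\RieszProj_\Gamma(s)$ is real analytic in $s$ and has rank $r$.
\end{itemize}
Since $\widehat A_{s,g}$ is self-adjoint for real $s$, its spectrum is real. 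The part of the spectrum enclosed by $\Gamma$ is therefore exactly $\Spec(A_{\mu_s})\cap I$: no spectrum lies on $\Gamma$, in particular none at the endpoints of $I$. This part consists of eigenvalues with total multiplicity $r$.

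Next we identify these eigenvalues with analytic branches. By Proposition~\ref{prop:abstract-cluster-response}, or equivalently Rellich's theorem applied to the self-adjoint analytic family restricted to the analytic range of $\RieszProj_\Gamma(s)$, there are real-analytic functions $\lambda_1,\ldots,\lambda_r$ on $(-\eps_0,\eps_0)$ with $\lambda_j(0)=\Lambda$. They list, with multiplicity, the eigenvalues of $\widehat A_{s,g}$ inside $\Gamma$, possibly after shrinking $\eps_0$. By \eqref{eq:abstract-branch-derivatives}, the derivatives $\lambda_j'(0)$ are the eigenvalues of $\Response_g^\Lambda(\mu_0)=-\kappa\Lambda\Compress_g^\Lambda(\mu_0)$. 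These are pairwise distinct because $\Compress_g^\Lambda(\mu_0)$ has simple spectrum and $-\kappa\Lambda\ne0$.

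For $i\ne j$ we then have
\[
\lambda_i(s)-\lambda_j(s)=s\bigl(\lambda_i'(0)-\lambda_j'(0)\bigr)+O(s^2).
\]
Hence there is $\eps\in(0,\eps_0)$ such that all $r$ values $\lambda_j(s)$ are pairwise distinct for $0<|s|<\eps$. Since the $\lambda_j(s)$ list the enclosed eigenvalues with multiplicity and total multiplicity is $r$, $r$ distinct values means each has multiplicity exactly one. Thus $\#(\Spec(A_{\mu_s})\cap I)=r$ and each corresponding eigenspace is one-dimensional, which is the claim.

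The main point needing care is the middle step: that the analytic branches exhaust the spectrum in $I$ with the correct multiplicities. This follows from the rank constancy of $\RieszProj_\Gamma(s)$ together with self-adjointness of $\widehat A_{s,g}$ for real $s$, which makes the compression to $\operatorname{Ran}\RieszProj_\Gamma(s)$ an analytic Hermitian matrix family to which Rellich's theorem applies. The rest is elementary first-order Taylor expansion.
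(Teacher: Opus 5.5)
Your proposal is correct and follows essentially the same route as the paper. You isolate $\Lambda$ by an interval $I$ and take the $r$ analytic branches from Proposition~\ref{prop:abstract-cluster-response}. Their derivatives are the pairwise distinct eigenvalues of $-\kappa\Lambda\Compress_g^\Lambda(\mu_0)$, which separates the branches for small $s\neq0$. Constancy of the Riesz rank then shows the branches exhaust the spectrum in $I$ with multiplicity one. Your explicit contour and self-adjointness discussion simply spells out the exhaustion step that the paper states in one line.
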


\begin{proof}
Choose $I$ whose closure meets $\Spec(A_{\mu_0})$ only at $\Lambda$.
Proposition~\ref{prop:abstract-cluster-response} supplies, for
$|s|<\eps_0$, analytic branches
$\lambda_1,\ldots,\lambda_r$ listing the spectrum in $I$.
Their first derivatives are the eigenvalues of
$-\kappa\Lambda\Compress_g^\Lambda(\mu_0)$ and hence are pairwise distinct.
Put
\[
 \delta_*:=\min_{i<j}
 |\lambda_i'(0)-\lambda_j'(0)|>0.
\]
Since
\[
 \frac{\lambda_i(s)-\lambda_j(s)}s
 \longrightarrow
 \lambda_i'(0)-\lambda_j'(0),
\]
for some $0<\eps<\eps_0$,
\begin{equation*}
 |\lambda_i(s)-\lambda_j(s)|
 \ge\frac12\delta_*|s|>0,
 \qquad i<j,\quad0<|s|<\eps.
\end{equation*}
The corresponding Riesz projection has constant rank $r$, so these branches
exhaust the spectrum in $I$.
\end{proof}

The exceptional parameters are controlled by the following standard zero-set
theorem.

\begin{lemma}
\label{lem:abstract-analytic-zero-set}
Let $U\subset\R^N$ be connected and open.  If a real-analytic
$F:U\to\R$ is not identically zero, then
\begin{equation*}
 \mathcal L^N\{t\in U:F(t)=0\}=0.
\end{equation*}
\end{lemma}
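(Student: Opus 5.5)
The plan is induction on the dimension $N$, with Fubini and one-variable isolated zeros doing the work. Set $Z:=\{t\in U:F(t)=0\}$; it is closed in $U$, hence Borel, so Tonelli applies.

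For $N=1$, $U$ is an open interval. If $Z$ had an accumulation point in $U$, then all derivatives of $F$ would vanish there, since the Taylor expansion at a limit of zeros forces each coefficient to be zero in turn. The set of points where $F$ vanishes to infinite order is closed, and by analyticity it is also open. Connectedness would then give $F\equiv0$, which is excluded. So $Z$ is discrete in $U$, hence countable, and therefore Lebesgue null.

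For general $N$, the main obstacle is that slices of a connected set need not be connected, so the identity principle has to be applied carefully. I would first show that the set $E:=\{t\in U:\partial^\alpha F(t)=0\ \forall\alpha\}$ is empty, by the same open--closed argument: $E$ is closed, and it is open because $F$ equals its Taylor series on a small ball, so vanishing of all derivatives at one point gives vanishing on that ball. Then cover $U$ by countably many open cubes $Q=\prod_k(c_k-r,c_k+r)\subset U$. It suffices to show $\mathcal L^N(Z\cap Q)=0$ for each cube.

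On a cube, write $t=(t',t_N)$. For each $t'$, the slice $s\mapsto F(t',s)$ is analytic on an interval, so by the $N=1$ case its zero set is null unless the slice vanishes identically. The set $B$ of such $t'$ consists of those points where $\partial_N^jF(t',c_N)=0$ for all $j$. Hence $B=\bigcap_j\{t':G_j(t')=0\}$, where each $G_j:=\partial_N^jF(\cdot,c_N)$ is real analytic on the connected cube $Q'\subset\R^{N-1}$.

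Some $G_j$ is not identically zero on $Q'$. Otherwise every mixed derivative $\partial_{t'}^\beta\partial_N^jF$ would vanish at $(c',c_N)$, putting that point in $E$, which is empty. By induction that $G_j$ has a null zero set, so $\mathcal L^{N-1}(B)=0$. Tonelli then gives
\[
\mathcal L^N(Z\cap Q)=\int_{Q'}\mathcal L^1\{s:F(t',s)=0\}\,dt'=\int_{B}(\cdots)\,dt'+0=0.
\]
Summing over the countable family of cubes finishes the proof.
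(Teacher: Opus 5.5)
Your proof is correct and complete: the $N=1$ isolated-zeros argument, the preliminary fact that $F$ vanishes to infinite order nowhere on $U$, the use of a nonvanishing coefficient $\partial_N^jF(\cdot,c_N)$ on the cube $Q'$ to make the set of identically vanishing fibres $\mathcal L^{N-1}$-null, and the Tonelli computation all check out. The paper gives no argument of its own here, only a citation to Mityagin's note, and your induction on dimension via Fubini is the standard proof of that cited result, so there is nothing to add.
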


\begin{proof}
This is the zero-set theorem for non-trivial real-analytic functions; see
\cite{MityaginZeroSet}.
\end{proof}

For $K\ge1$, define the visible collision values at $\mu$ by
\begin{equation*}
 J_K(\mu)
 :=\{\alpha>0:\alpha=\Lambda_j^\mu=\Lambda_{j+1}^\mu
       \text{ for some }1\le j\le K\}.
\end{equation*}
For an eigenvalue $\alpha>0$, we say that a direction $g\in\Hreg$
\emph{splits $E_\mu(\alpha)$ to first order} when
\begin{equation*}
 \Compress_g^\alpha(\mu)\text{ has simple spectrum}.
\end{equation*}
Equivalently, $\Response_g^\alpha(\mu)$ has simple spectrum, because
$\Response_g^\alpha(\mu)=-\kappa\alpha\Compress_g^\alpha(\mu)$ and
$-\kappa\alpha\ne0$.

\begin{lemma}
\label{lem:abstract-local-window-nullity}
Let $\mu_0$ be admissible, and let $H_N\subset\Hreg$ be $N$-dimensional with
basis $e_1,\ldots,e_N$.  Define
\begin{equation*}
 d\mu_t=e^{\kappa\sum_{k=1}^Nt_ke_k}\,d\mu_0,
 \qquad t\in\R^N.
\end{equation*}
By \eqref{eq:abstract-admissible-orbit-closure}, every $\mu_t$ is admissible.
If every $\alpha\in J_K(\mu_0)$ is split to first order by some $g\in H_N$,
then some neighborhood $U$ of zero satisfies
\begin{equation*}
 \mathcal L^N\left\{t\in U:
 \Lambda_j^{\mu_t}=\Lambda_{j+1}^{\mu_t}
 \text{ for some }1\le j\le K\right\}=0.
\end{equation*}
\end{lemma}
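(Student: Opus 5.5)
The plan is to localize around each visible collision value separately and use the cluster discriminant from Proposition~\ref{prop:abstract-kato-slice}(iii) as a real-analytic function whose zero set contains the collisions. First, $J_K(\mu_0)$ is finite, since it consists of values among $\Lambda_1^{\mu_0},\ldots,\Lambda_{K+1}^{\mu_0}$; write $J_K(\mu_0)=\{\alpha_1,\ldots,\alpha_p\}$. Choose a contour $\Gamma$ in the resolvent set of $\widehat A_0=A_{\mu_0}$ enclosing exactly the eigenvalues $\Lambda_1^{\mu_0},\ldots,\Lambda_{K+1}^{\mu_0}$ (and the next ones if $\Lambda_{K+1}^{\mu_0}=\Lambda_{K+2}^{\mu_0}$, so that a full cluster is enclosed), together with disjoint small circles $\Gamma_\ell$ around each $\alpha_\ell$ that enclose no other eigenvalue. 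Apply Proposition~\ref{prop:abstract-kato-slice}(iii) to all of these contours, and take a connected neighborhood $U$ of $0$ (a small ball) on which all of them stay in the resolvent sets and all Riesz ranks are constant. Thus, for $t\in U$, the spectrum of $A_{\mu_t}$ inside $\Gamma$ consists of the first positive labels, with the same total count, plus the fixed kernel. The kernel is unchanged by the remark in Lemma~\ref{lem:abstract-slice-borel}, so it can be excluded from $\Gamma$ from the start, since $0$ is isolated.

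The key step shows that, for $t\in U$, any collision $\Lambda_j^{\mu_t}=\Lambda_{j+1}^{\mu_t}$ with $j\le K$ must occur inside a single $\Gamma_\ell$. After shrinking $U$, continuity of the ordered eigenvalues (Lemma~\ref{lem:abstract-slice-borel}) keeps every simple eigenvalue $\Lambda_j^{\mu_0}$, $j\le K+1$, isolated from its neighbours in a small disjoint window. Hence a collision among labels $\le K+1$ can only arise within the group issuing from some multiple $\alpha_\ell$. Multiple eigenvalues at label $\le K+1$ whose lower label is $\le K$ are exactly the elements of $J_K$. If the last label $K+1$ sits in a multiple cluster, that cluster's value lies in $J_K$ precisely when it contains labels $K,K+1$. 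Otherwise the collision $\Lambda_K=\Lambda_{K+1}$ would need $\Lambda_K^{\mu_0}=\Lambda_{K+1}^{\mu_0}$, which again puts the value in $J_K$. So the collision set in $U$ is contained in $\bigcup_\ell\{t\in U:\Delta_\ell(t)=0\}$, where $\Delta_\ell$ is the discriminant of the cluster polynomial of $\widehat A_t$ on $\operatorname{Ran}\RieszProj_{\Gamma_\ell}(t)$. By Proposition~\ref{prop:abstract-kato-slice}(iii), each $\Delta_\ell$ is real-analytic on $U$.

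It remains to show that no $\Delta_\ell$ vanishes identically; Lemma~\ref{lem:abstract-analytic-zero-set} then gives Lebesgue-null zero sets, and a finite union finishes the proof. By hypothesis there is $g=\sum_k c_ke_k\in H_N$ splitting $E_{\mu_0}(\alpha_\ell)$ to first order. Along the line $t=sc$ we have $d\mu_{sc}=e^{\kappa sg}d\mu_0$, so Lemma~\ref{lem:abstract-local-line-splitting} shows that for $0<|s|<\eps$ the $r$ eigenvalues in the window around $\alpha_\ell$ are distinct. Hence $\Delta_\ell(sc)\ne0$ for small $s\ne0$ with $sc\in U$, so $\Delta_\ell\not\equiv0$ on the connected set $U$.

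The main obstacle is the bookkeeping at the boundary label $K+1$. If $\Lambda_{K+1}^{\mu_0}$ belongs to a multiple cluster extending past $K+1$, that cluster must be fully enclosed, and we need it to be split. This holds because if it is not in $J_K$, then labels $K,K+1$ are not equal at $\mu_0$, and by continuity no collision involving $j\le K$ can occur in that cluster near $0$. Collisions with labels $\ge K+1$ only do not matter.
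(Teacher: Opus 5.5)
Your proposal is correct and matches the paper's proof. You isolate the multiplicity blocks carrying labels $1,\ldots,K+1$ and observe that near $t=0$ every collision with $j\le K$ lies in the block of some $\alpha\in J_K(\mu_0)$; you then show each real-analytic cluster discriminant from Proposition~\ref{prop:abstract-kato-slice}(iii) is not identically zero by restricting to the splitting line $t=sc$ via Lemma~\ref{lem:abstract-local-line-splitting}, and conclude with Lemma~\ref{lem:abstract-analytic-zero-set} and a finite union. Your extra bookkeeping at the boundary label $K+1$ (and the global contour $\Gamma$, which is not needed) only makes explicit what the paper compresses into ``block isolation.''
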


\begin{proof}
Choose pairwise disjoint intervals isolating the full multiplicity blocks of
$A_{\mu_0}$ that contain the labels $1,\ldots,K+1$.  Eigenvalue continuity and
constancy of the Riesz ranks give a neighborhood $U_0$ of zero on which these
intervals contain all collisions among those labels.

For each $\alpha\in J_K(\mu_0)$,
Proposition~\ref{prop:abstract-kato-slice} gives a real-analytic cluster
discriminant $\Delta_\alpha^{\rm eig}$ on a connected neighborhood in $U_0$,
vanishing exactly when that cluster has a repeated eigenvalue.
Choose $g_\alpha=\sum_kb_{\alpha,k}e_k\in H_N$ as in the hypothesis.  By
Lemma~\ref{lem:abstract-local-line-splitting},
\begin{equation*}
 \Delta_\alpha^{\rm eig}(sb_\alpha)\ne0
 \qquad(0<|s|<\eps_\alpha).
\end{equation*}
Hence $\Delta_\alpha^{\rm eig}\not\equiv0$, and
Lemma~\ref{lem:abstract-analytic-zero-set} makes its zero set null.  Since
$J_K(\mu_0)$ is finite, all discriminants are defined on a common
neighborhood $U\subset U_0$.  With
\[
 E_K(U):=\{t\in U:\Lambda_j^{\mu_t}=\Lambda_{j+1}^{\mu_t}
               \text{ for some }1\le j\le K\}.
\]
The block isolation gives
\[
 E_K(U)
 \subseteq
 \bigcup_{\alpha\in J_K(\mu_0)}
 \{t\in U:\Delta_\alpha^{\rm eig}(t)=0\}.
\]
Therefore
\[
 \mathcal L^N(E_K(U))
 \le\sum_{\alpha\in J_K(\mu_0)}
 \mathcal L^N\{t\in U:\Delta_\alpha^{\rm eig}(t)=0\}=0.
\]
\end{proof}

\subsection{Borel finite-window splitting events}

Fix $K\ge1$ and a finite-dimensional
$H_N=\operatorname{span}\{e_1,\ldots,e_N\}$ with a basis in $\mathcal Q$.
\begin{equation*}
 \mathcal C_{K,H_N}
 :=\left\{x\in\Omega_{\rm str}:
 \begin{array}{l}
  J_K(\mu_x)\ne\varnothing,\\[2pt]
  \displaystyle
  \forall\alpha\in J_K(\mu_x)\ \exists g\in H_N:\
  \#\Spec\bigl(\Compress_g^\alpha(\mu_x)\bigr)
  =\dim E_{\mu_x}(\alpha)
 \end{array}\right\}.
\end{equation*}

\begin{lemma}
\label{lem:abstract-borel-cluster-event}
The finite-window splitting event $\mathcal C_{K,H_N}$ defined above is
Borel measurable; that is,
\begin{equation*}
 \mathcal C_{K,H_N}\in\mathcal B(\Xspace).
\end{equation*}
\end{lemma}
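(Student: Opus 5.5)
The plan is to rewrite $\mathcal C_{K,H_N}$ as a countable Boolean combination of sets of the form $\{x\in\Omega_{\rm str}: F(x)\in B\}$, where $F$ is one of the measurable scalars in Assumption~\ref{ass:spectral-borel}. The existential quantifier over $g\in H_N$ is replaced by a quantifier over rational directions $g\in H_N\cap\mathcal Q$, namely $g=\sum_k b_ke_k$ with $b\in\mathbb Q^N$. The universal quantifier over $\alpha\in J_K(\mu_x)$ is replaced by a quantifier over labels $1\le j\le K$ and rational isolating windows.

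\emph{Step 1: rational directions suffice.} If some $g=\sum_k a_ke_k$ makes $\Compress_g^\alpha(\mu_x)$ simple, then $a\mapsto\Disc(\sum_ka_k\Compress_{e_k}^\alpha(\mu_x))$ is a polynomial that is nonzero at that $a$. So there is a nearby $b\in\mathbb Q^N$ at which it is also nonzero, exactly as in the proof of Theorem~\ref{thm:abstract-simple-splitting}. Since $\mathcal Q$ is a $\mathbb Q$-vector space, $g_b:=\sum_kb_ke_k\in\mathcal Q$.

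\emph{Step 2: encode clusters by windows.} For $x\in\Omega_{\rm str}$ and rational $0<a<b$, on the event that $(a,b)$ contains exactly one distinct eigenvalue $\alpha$, we have $\WinProj_{a,b}^x=\EigProj_\alpha^{\mu_x}$. In that case the matrix of $\WinProj_{a,b}^x\Mult_q\WinProj_{a,b}^x$ restricted to its range is $\Compress_q^\alpha(\mu_x)$, and $r=\rank\WinProj_{a,b}^x$. The event ``exactly one distinct eigenvalue in $(a,b)$'' is the union over $n$ of
\[
\{\Lambda_{n-1}<a<\Lambda_n=\Lambda_{n+r-1}<b<\Lambda_{n+r}\},
\]
which is Borel because the $\Lambda_n^{\mu_x}$ are measurable.

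Next, simplicity of the compression has to be expressed through the traces. The power sums $p_k=\Tr[(\WinProj\Mult_q\WinProj)^k]$ for $k\le r$ are measurable by Assumption~\ref{ass:spectral-borel}, and the nonzero-restricted spectrum is the spectrum of the $r\times r$ compression. By Newton's identities, the characteristic polynomial of this $r\times r$ compression has coefficients that are polynomials in $p_1,\dots,p_r$. Its discriminant is therefore a polynomial $D_r(p_1,\dots,p_r)$, hence a measurable function of $x$ on the event $\{\rank\WinProj_{a,b}^x=r\}$. Simplicity of $\Compress_q^\alpha$ is then $\{D_r\neq0\}$.

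\emph{Step 3: assemble.} The collision condition ``$\Lambda_j=\Lambda_{j+1}$'' is Borel. Any collision value $\alpha=\Lambda_j$ admits some rational window isolating its block, because the spectrum is discrete. Hence
\[
\mathcal C_{K,H_N}=\Omega_{\rm str}\cap\bigcup_{j\le K}\{\Lambda_j=\Lambda_{j+1}\}\cap\bigcap_{j\le K}\Bigl(\{\Lambda_j<\Lambda_{j+1}\}\cup\bigcup_{a<b\in\mathbb Q_{>0}}\bigcup_{r\ge2}\bigcup_{b'\in\mathbb Q^N}G_{j,a,b,r,b'}\Bigr).
\]
Here $G_{j,a,b,r,b'}$ is the event that $(a,b)$ isolates exactly one eigenvalue block, $\Lambda_j\in(a,b)$, $\rank\WinProj_{a,b}^x=r$, and $D_r(p_1,\dots,p_r)\ne0$ for the direction $q=g_{b'}$.

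The main obstacle is making sure this union does not depend on the choice of window. The point is that whenever $(a,b)$ isolates exactly the block of $\alpha$, the compression is the same operator. Hence if simplicity holds for one rational isolating window, it holds for all such windows, and Step 1 supplies the existential witness. Countable unions and intersections of Borel sets then give $\mathcal C_{K,H_N}\in\mathcal B(\Xspace)$. Here $\Omega_{\rm str}$ is Borel by Assumption~\ref{ass:coherent-orbit}, and the scalar functions are extended by zero off $\Omega_{\rm str}$.
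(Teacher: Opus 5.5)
Your proposal is correct and follows essentially the paper's proof. Both arguments use rational isolating windows on which $\WinProj_{a,b}^x$ is the eigenprojection, write the compression discriminant via Newton's identities as a polynomial in the measurable power traces (with a measurable rank condition), pass to rational directions by openness of the nonvanishing set of a polynomial, and assemble everything as a countable Boolean combination. The only difference is bookkeeping: you index collisions by labels $j\le K$ with the escape clause $\{\Lambda_j<\Lambda_{j+1}\}$, whereas the paper indexes full blocks $p,\ldots,\ell$ with $p\le K$, $\ell\ge p+1$, via $(\mathcal E^{\rm cl}_{p,\ell})^c\cup\mathcal S_{p,\ell}(H_N)$. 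The paper also sets $\Lambda_0=0$, which your window event implicitly needs for $n=1$.
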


\begin{proof}
Put $\Lambda_0^{\mu_x}=0$.  For $1\le p<\ell$, let
\begin{equation*}
 \mathcal E_{p,\ell}^{\rm cl}
 :=\Omega_{\rm str}\cap
 \{\Lambda_{p-1}^{\mu_x}<\Lambda_p^{\mu_x}
 =\cdots=\Lambda_\ell^{\mu_x}<\Lambda_{\ell+1}^{\mu_x}\}.
\end{equation*}
For rational $0<a<b$, set
\begin{equation*}
 \mathcal W_{p,\ell}^{a,b}
 :=\mathcal E_{p,\ell}^{\rm cl}\cap
 \{\Lambda_{p-1}^{\mu_x}<a<\Lambda_p^{\mu_x}
 =\Lambda_\ell^{\mu_x}<b<\Lambda_{\ell+1}^{\mu_x}\}.
\end{equation*}

For $c=(c_1,\ldots,c_N)\in\R^N$, put
$g_c=\sum_kc_ke_k\in H_N$ and define
\begin{equation*}
 \Delta_{p,\ell;a,b,c}^{\rm resp}(x)
 :=\begin{cases}
 \Disc\!\left(
 \WinProj_{a,b}^x\Mult_{g_c}\WinProj_{a,b}^x
 |_{\operatorname{Ran}\WinProj_{a,b}^x}\right),
 &x\in\Omega_{\rm str},\quad
  \rank\WinProj_{a,b}^x=\ell-p+1,\\
 0,&\text{otherwise}.
 \end{cases}
\end{equation*}

If $c\in\mathbb Q^N$, then $g_c\in\mathcal Q$.  The rank formula following
Assumption~\ref{ass:spectral-borel} makes the rank condition measurable.  By
Newton identities, the discriminant is a polynomial in the power traces in
\eqref{eq:minimal-borel-interface}; together with the zero-extension
convention, this shows that the map
\[
 x\longmapsto\Delta_{p,\ell;a,b,c}^{\rm resp}(x):
 (\Xspace,\mathcal B(\Xspace))\longrightarrow
 (\mathbb R,\mathcal B(\mathbb R))
\]
is measurable.  Therefore
\begin{equation*}
 \mathcal S_{p,\ell}(H_N)
 :=\bigcup_{\substack{a,b\in\mathbb Q_{>0},\,a<b\\c\in\mathbb Q^N}}
 \left(\mathcal W_{p,\ell}^{a,b}
       \cap\{\Delta_{p,\ell;a,b,c}^{\rm resp}\ne0\}\right)
\end{equation*}
belongs to $\mathcal B(\Xspace)$.  If $x\in\mathcal E_{p,\ell}^{\rm cl}$, choose
rational $a,b$ with
$x\in\mathcal W_{p,\ell}^{a,b}$.  The compression discriminant is then a
polynomial in $c\in\R^N$, and
\begin{align*}
 &\exists g\in H_N:\Compress_g^{\Lambda_p^{\mu_x}}(\mu_x)
   \text{ has simple spectrum}\\
 &\quad\Longleftrightarrow
   \exists c\in\R^N:\Delta_{p,\ell;a,b,c}^{\rm resp}(x)\ne0\\
 &\quad\Longleftrightarrow
   \exists c\in\mathbb Q^N:\Delta_{p,\ell;a,b,c}^{\rm resp}(x)\ne0.
\end{align*}

Indeed, the non-vanishing set of a nonzero polynomial is nonempty and open,
so it contains a rational point.  Consequently,
\begin{align}
 \mathcal C_{K,H_N}
 &=\Omega_{\rm str}
 \cap\left(\bigcup_{j=1}^K
      \{\Lambda_j^{\mu_x}=\Lambda_{j+1}^{\mu_x}\}\right)\notag\\
 &\quad\cap\bigcap_{p=1}^K\bigcap_{\ell=p+1}^{\infty}
   \bigl((\mathcal E_{p,\ell}^{\rm cl})^c
         \cup\mathcal S_{p,\ell}(H_N)\bigr),
 \label{eq:abstract-CKV-borel-representation}
\end{align}
which belongs to $\mathcal B(\Xspace)$.
\end{proof}

\begin{lemma}
\label{lem:abstract-fixed-splitting-event-null}
For every $K\ge1$ and every nonzero
finite-dimensional $H_N$ with a basis in $\mathcal Q$,
\begin{equation*}
 \mathbb P(\mathcal C_{K,H_N})=0.
\end{equation*}
\end{lemma}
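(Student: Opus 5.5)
We plan to use Gaussian slicing along $H_N$: on almost every fiber, the event $\mathcal C_{K,H_N}$ is a Lebesgue-null set of parameters, and the disintegration formula then gives probability zero. Let $e_1,\ldots,e_N\in\mathcal Q\subset\Hreg$ be the chosen basis. We apply Lemma~\ref{lem:abstract-gaussian-disintegration} with $F=\one_{\mathcal C_{K,H_N}}$, which is Borel by Lemma~\ref{lem:abstract-borel-cluster-event}. This gives
\[
 \mathbb P(\mathcal C_{K,H_N})
 =\int_{\Xspace}\int_{\R^N}\one_{\mathcal C_{K,H_N}}(x^\perp+v_t)\,
 \varphi_{H_N}(t)\,dt\,\mathbb P_{H_N}^\perp(dx^\perp).
\]
It therefore suffices to show that, for $\mathbb P_{H_N}^\perp$-almost every $x^\perp$, the section $C(x^\perp):=\{t:x^\perp+v_t\in\mathcal C_{K,H_N}\}$ is $\mathcal L^N$-null.

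Fix such a fiber together with the full-measure set $T_0(x^\perp)$ from Lemma~\ref{lem:abstract-coherent-fiber}, taking $\Omega_0=\Omega_{\rm str}$. Let $(\mu_t^{\rm coh})_{t\in\R^N}$ be the coherent admissible family; it agrees with $\mu_{x^\perp+v_t}$ for $t\in T_0$. Since $\mathcal C_{K,H_N}\subset\Omega_{\rm str}$, we have $C(x^\perp)\subset T_0(x^\perp)$. On this set the membership condition reads in terms of $\mu_t^{\rm coh}$: it has a visible collision among labels $\le K+1$, and every collision value $\alpha\in J_K(\mu_t^{\rm coh})$ is split to first order by some $g\in H_N$.

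Let $\mathcal E:=\{t\in\R^N:\Lambda_j^{\mu^{\rm coh}_t}=\Lambda_{j+1}^{\mu^{\rm coh}_t}\text{ for some }j\le K\}$. Then $C(x^\perp)\subset G:=\{t\in\mathcal E:\text{every }\alpha\in J_K(\mu_t^{\rm coh})\text{ is split by some }g\in H_N\}$. For each $t_0\in G$ we apply Lemma~\ref{lem:abstract-local-window-nullity} with base measure $\mu_0:=\mu^{\rm coh}_{t_0}$ and the same basis $e_k$. By the covariance \eqref{eq:abstract-fiber-covariance}, $\mu^{\rm coh}_{t_0+t}=e^{\kappa v_t}\mu^{\rm coh}_{t_0}$, so the lemma's tilted family is exactly the fiber family translated by $t_0$. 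The lemma therefore gives an open neighborhood $U_{t_0}$ of $t_0$ with $\mathcal L^N(\mathcal E\cap U_{t_0})=0$.

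The sets $U_{t_0}$ cover $G$. Since $\R^N$ is second countable, countably many of them suffice (Lindelöf), so $\mathcal L^N(G)=0$ provided we handle outer measure. Indeed $G$ need not be known Borel, but it is contained in the countable union $\bigcup_i(\mathcal E\cap U_{t_i})$ of null Borel sets; $\mathcal E$ is closed by the continuity in Lemma~\ref{lem:abstract-slice-borel}. Hence $C(x^\perp)$ is contained in a null set, while it is itself measurable by the product measurability in the disintegration. The inner integral therefore vanishes and $\mathbb P(\mathcal C_{K,H_N})=0$.

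The main obstacle is making sure the local lemma applies at every point of the fiber, not only at canonical points. This is the reason for working with the coherent family, which is admissible for all $t$ by \eqref{eq:abstract-coherent-fiber-admissibility} and is exactly an exponential tilt orbit along $H_N$. The rest is the Lindelöf covering argument and measurability bookkeeping.
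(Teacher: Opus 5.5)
Your proof is correct and follows the paper's argument. Both use Gaussian disintegration along $H_N$, then pass to the all-parameter coherent fiber from Lemma~\ref{lem:abstract-coherent-fiber}, re-center at each point to invoke Lemma~\ref{lem:abstract-local-window-nullity}, and finish with a Lindel\"of covering. The only difference is bookkeeping: you use the exact inclusion $C(x^\perp)\subset G\subset\bigcup_i(\mathcal E\cap U_{t_i})$ with $\mathcal E$ closed, which spares you the paper's separate proof that the coherent section $\mathcal Z^{\rm coh}_{x^\perp,H_N}$ is Borel and its a.e.\ comparison with the canonical section.
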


\begin{proof}
Choose a basis $e_1,\ldots,e_N\in\mathcal Q$ of $H_N$ and put
$v_t=\sum_kt_ke_k$.  Lemma~\ref{lem:abstract-gaussian-disintegration} gives
\begin{equation}
 x=x^\perp+v_t,
 \qquad
 d\mathbb P(x)=\varphi_{H_N}(t)\,dt\,
 d\mathbb P_{H_N}^\perp(x^\perp),
 \qquad \varphi_{H_N}(t)>0.
 \label{eq:abstract-simplicity-disintegration}
\end{equation}
The canonical fiber section
\begin{equation*}
 \mathcal Z_{x^\perp,H_N}^{\rm can}
 :=\{t:x^\perp+v_t\in\mathcal C_{K,H_N}\}
\end{equation*}
belongs to $\mathcal B(\R^N)$ by
Lemma~\ref{lem:abstract-borel-cluster-event}.

Apply Lemma~\ref{lem:abstract-coherent-fiber} with
$\Omega_0=\Omega_{\rm str}$ from \eqref{eq:abstract-structural-data}.  For
almost every $x^\perp$, it supplies a
full-measure Borel set $T_0(x^\perp)$ and an all-parameter coherent
admissible family $(\mu_t^{\rm coh})_{t\in\R^N}$ agreeing with the canonical
measures on $T_0(x^\perp)$.  Define
\begin{equation*}
 \mathcal Z_{x^\perp,H_N}^{\rm coh}
 :=\left\{t\in\R^N:
 \begin{array}{l}
  \Lambda_j^{\mu_t^{\rm coh}}=\Lambda_{j+1}^{\mu_t^{\rm coh}}
  \text{ for some }1\le j\le K,\\
  \text{for every }\alpha\in J_K(\mu_t^{\rm coh})
  \text{ some }g\in H_N\text{ splits }
  E_{\mu_t^{\rm coh}}(\alpha)\text{ to first order}
 \end{array}\right\}.
\end{equation*}
This set also belongs to $\mathcal B(\R^N)$.  Indeed,
Lemma~\ref{lem:abstract-slice-borel} makes the
ordered eigenvalues continuous.  On every rational isolating window, let
$\widehat\WinProj_{a,b}(t)$ denote the spectral projection transported to a fixed
Hilbert space.  Proposition~\ref{prop:abstract-kato-slice} makes this projection and
the power traces
\begin{equation*}
 t\longmapsto
 \Tr\left[
  (\widehat\WinProj_{a,b}(t)\Mult_q\widehat\WinProj_{a,b}(t))^k
 \right],
 \qquad q\in\mathcal Q,\quad k\ge1,
\end{equation*}
locally real analytic.  Newton identities make the compression discriminant
continuous, so the countable rational-window representation
\eqref{eq:abstract-CKV-borel-representation} proves the claim.

On $T_0(x^\perp)$ the canonical and coherent measures agree, hence
\begin{equation}
 \one_{\mathcal Z_{x^\perp,H_N}^{\rm can}}(t)
 =\one_{\mathcal Z_{x^\perp,H_N}^{\rm coh}}(t)
 \quad\text{for Lebesgue-almost every }t.
 \label{eq:abstract-canonical-coherent-collision-agreement}
\end{equation}
Fix $t_0\in\mathcal Z_{x^\perp,H_N}^{\rm coh}$.  Re-centering gives
\[
 d\mu_{t_0+z}^{\rm coh}
 =e^{\kappa\sum_kz_ke_k}\,d\mu_{t_0}^{\rm coh}.
\]
Lemma~\ref{lem:abstract-local-window-nullity} supplies a neighborhood
$U_{t_0}$ such that
\[
 \mathcal L^N(\mathcal Z_{x^\perp,H_N}^{\rm coh}\cap U_{t_0})=0.
\]
Since $\mathbb R^N$ is second countable (hence every subspace is Lindel\"of),
a countable subfamily $(U_{t_m})_{m\ge1}$ covers the coherent section, and
therefore
\begin{equation*}
 \mathcal L^N(\mathcal Z_{x^\perp,H_N}^{\rm coh})
 \le\sum_{m\ge1}
 \mathcal L^N(\mathcal Z_{x^\perp,H_N}^{\rm coh}\cap U_{t_m})=0.
\end{equation*}
Equation~\eqref{eq:abstract-canonical-coherent-collision-agreement} gives
$\mathcal L^N(\mathcal Z_{x^\perp,H_N}^{\rm can})=0$ for almost every fiber.
Finally, \eqref{eq:abstract-simplicity-disintegration} and Tonelli's theorem
give
\[
 \mathbb P(\mathcal C_{K,H_N})
 =\int_{\Xspace}\int_{\R^N}
 \one_{\mathcal Z_{x^\perp,H_N}^{\rm can}}(t)
 \varphi_{H_N}(t)\,dt\,d\mathbb P_{H_N}^\perp(x^\perp)=0.
\]
\end{proof}

\subsection{Final proof of Theorem~\ref{thm:intro-abstract-response}\textnormal{(i)}}

For $K\ge1$, let
\begin{equation*}
 \mathcal C_K
 :=\Omega_{\rm str}\cap
 \bigcup_{j=1}^K
 \{\Lambda_j^{\mu_x}=\Lambda_{j+1}^{\mu_x}\}.
\end{equation*}
The event $\Omega_{\rm str}$ from \eqref{eq:abstract-structural-data} and the
eigenvalue measurability in Assumption~\ref{ass:spectral-borel} give
$\mathcal C_K\in\mathcal B(\Xspace)$.
Let
\begin{equation*}
 \mathcal G
 :=\left\{\operatorname{span}\{q_1,\ldots,q_N\}:
 N\ge1,\ q_1,\ldots,q_N\in\mathcal Q\setminus\{0\}
 \text{ are linearly independent}\right\}
\end{equation*}
be the countable family of finite-dimensional subspaces generated by
directions in $\mathcal Q$.  For $x\in\mathcal C_K$, finiteness of
$J_K(\mu_x)$ and Theorem~\ref{thm:abstract-simple-splitting} give
some $H_N\in\mathcal G$ with $x\in\mathcal C_{K,H_N}$; the reverse inclusion is
immediate.  Hence
\begin{equation*}
 \mathcal C_K
 =\bigcup_{H_N\in\mathcal G}\mathcal C_{K,H_N}.
\end{equation*}
Hence Lemma~\ref{lem:abstract-fixed-splitting-event-null} gives
\begin{equation*}
 \mathbb P(\mathcal C_K)
 \le\sum_{H_N\in\mathcal G}\mathbb P(\mathcal C_{K,H_N})=0.
\end{equation*}
On $\Omega_{\rm str}$, nonsimplicity of the positive spectrum is exactly the
countable union of the finite-window collision events:
\[
 \{x\in\Omega_{\rm str}:
   \text{the positive spectrum of }A_{\mu_x}\text{ is not simple}\}
 =\bigcup_{K\ge1}\mathcal C_K.
\]
This union is $\mathbb P$-null, while
$\mathbb P(\Omega_{\rm str})=1$ by \eqref{eq:abstract-structural-data}.
Thus \eqref{eq:intro-abstract-simple-spectrum} follows.

\section{Joint eigenvalue densities by response submersion}
\label{sec:density}

\subsection{Borel events and fiberwise nullity}

We first encode response transversality by countably many measurable
submersion events and prove their fiberwise nullity.

Fix $M\ge1$, and let $\nu=(n_1,\ldots,n_M)$ with
$1\le n_1<\cdots<n_M$, and write
\begin{equation*}
 \Lambda_{\nu}(x)
 :=(\Lambda_{n_1}^{\mu_x},\ldots,\Lambda_{n_M}^{\mu_x}).
\end{equation*}
Whenever the selected eigenvalues are simple, use the response functionals
$\ell_{n_i}^{\mu_x}$ from
\eqref{eq:abstract-response-functionals-Q}.

\begin{lemma}
\label{lem:abstract-borel-submersion}
Let $g=(g_1,\ldots,g_M)\in\mathcal Q^M$ and define
\begin{equation*}
 \Omega_{\nu,g}^{\rm sub}
 :=
 \left\{
 x\in\Omega_{\rm str}:
 \begin{array}{l}
 \Lambda_{n_1}^{\mu_x},\ldots,\Lambda_{n_M}^{\mu_x}
 \text{ are simple},\\[1mm]
 \det(\ell_{n_i}^{\mu_x}(g_j))_{i,j=1}^M\ne0
 \end{array}
 \right\}
\end{equation*}
Then $\Omega_{\nu,g}^{\rm sub}\in\mathcal B(\Xspace)$.
\end{lemma}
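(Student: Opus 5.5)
The plan is to mimic the rational-window representation already used in the proof of Lemma~\ref{lem:abstract-slice-borel}, now on the canonical space $\Xspace$ instead of a slice. We put $\Lambda_0^{\mu_x}=0$. For rational vectors $\alpha=(\alpha_i)$, $\beta=(\beta_i)$ with $0<\alpha_i<\beta_i$, we define
\[
 \mathcal W_{\alpha,\beta}
 :=\Omega_{\rm str}\cap\bigcap_{i=1}^M
 \{\Lambda_{n_i-1}^{\mu_x}<\alpha_i<\Lambda_{n_i}^{\mu_x}<\beta_i<\Lambda_{n_i+1}^{\mu_x}\}.
\]
This set is Borel because $\Omega_{\rm str}\in\mathcal B(\Xspace)$ and each $x\mapsto\Lambda_n^{\mu_x}$ is measurable by Assumption~\ref{ass:spectral-borel}. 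The key observation is that $\Lambda_{n_i}^{\mu_x}$ is simple exactly when it is strictly separated from its neighbours $\Lambda_{n_i\pm1}^{\mu_x}$. Hence the simplicity locus equals $\bigcup_{\alpha,\beta}\mathcal W_{\alpha,\beta}$, which is a countable union by density of the rationals. On $\mathcal W_{\alpha,\beta}$ each window projection $\WinProj_{\alpha_i,\beta_i}^x$ is the rank-one projection onto $\operatorname{span}\{\phi_{n_i}^{\mu_x}\}$.

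On $\mathcal W_{\alpha,\beta}$ the response entries can be written through traces:
\[
 \ell_{n_i}^{\mu_x}(g_j)=-\kappa\Lambda_{n_i}^{\mu_x}
 \Tr\bigl(\WinProj_{\alpha_i,\beta_i}^x\Mult_{g_j}\WinProj_{\alpha_i,\beta_i}^x\bigr).
\]
This is the $k=1$ power trace in \eqref{eq:minimal-borel-interface}, with $q=g_j\in\mathcal Q$ and rational endpoints. It is therefore measurable on $\Omega_{\rm str}$, and we extend it by zero elsewhere. The identity holds because the trace of the rank-one compression is $\inner{g_j\phi}{\phi}_{L^2(\mu_x)}=\int g_j\phi^2\,d\mu_x$, and this value is independent of the sign of $\phi$. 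Consequently the function
\[
 D_{\alpha,\beta}(x):=\det\bigl(-\kappa\Lambda_{n_i}^{\mu_x}\Tr(\WinProj_{\alpha_i,\beta_i}^x\Mult_{g_j}\WinProj_{\alpha_i,\beta_i}^x)\bigr)_{i,j=1}^M
\]
is a polynomial in measurable functions, hence measurable. On $\mathcal W_{\alpha,\beta}$ it coincides with $\det(\ell_{n_i}^{\mu_x}(g_j))$.

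It then follows that
\[
 \Omega_{\nu,g}^{\rm sub}=\bigcup_{\alpha,\beta\in\mathbb Q^M,\ 0<\alpha_i<\beta_i}\bigl(\mathcal W_{\alpha,\beta}\cap\{D_{\alpha,\beta}\ne0\}\bigr),
\]
which is a countable union of Borel sets. For the inclusion $\supseteq$, a point of $\mathcal W_{\alpha,\beta}$ has the selected eigenvalues simple, and there $D_{\alpha,\beta}$ equals the response determinant. For $\subseteq$, simplicity of the selected eigenvalues gives rational windows isolating them.

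The only delicate point is that window separation must also handle $n_i=1$. This is covered by the convention $\Lambda_0=0$ together with $\alpha_i>0$, so that the window never captures $\ker A_{\mu_x}$. Apart from that, the argument is bookkeeping, and I do not expect a genuine obstacle.
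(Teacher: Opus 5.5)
Your proposal is correct and follows the paper's proof essentially verbatim. Both use rational isolating windows with the convention $\Lambda_0^{\mu_x}=0$, the rank-one window projections, the $k=1$ power trace from Assumption~\ref{ass:spectral-borel} to express $\ell_{n_i}^{\mu_x}(g_j)$, and a countable union of the resulting Borel sets.
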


\begin{proof}
Put $\Lambda_0^{\mu_x}=0$.  For
$a,b\in\mathbb Q^M$ with $0<a_i<b_i$, set
\[
 \mathcal W_{a,b}^{\nu}
 :=\Omega_{\rm str}\cap
 \bigcap_{i=1}^M
 \left\{
 \Lambda_{n_i-1}^{\mu_x}<a_i<\Lambda_{n_i}^{\mu_x}
 <b_i<\Lambda_{n_i+1}^{\mu_x}
 \right\}.
\]
On this event, $\WinProj_{a_i,b_i}^x$ has rank one and
\begin{equation*}
 \ell_{n_i}^{\mu_x}(g_j)
 =
 -\kappa\Lambda_{n_i}^{\mu_x}
 \Tr\!\left(
 \WinProj_{a_i,b_i}^x\Mult_{g_j}\WinProj_{a_i,b_i}^x
 \right).
\end{equation*}
Consequently,
\begin{align*}
 \Omega_{\nu,g}^{\rm sub}
 &=
 \bigcup_{\substack{a,b\in\mathbb Q^M\\0<a_i<b_i}}
 \left[
 \mathcal W_{a,b}^{\nu}
 \cap
 \left\{
 \det\!\left(
 -\kappa\Lambda_{n_i}^{\mu_x}
 \Tr(
 \WinProj_{a_i,b_i}^x\Mult_{g_j}\WinProj_{a_i,b_i}^x)
 \right)_{i,j=1}^M
 \ne0
 \right\}
 \right].
\end{align*}
The event $\Omega_{\rm str}$ from \eqref{eq:abstract-structural-data} and
Assumption~\ref{ass:spectral-borel} make every set in this countable union
Borel in $\Xspace$.
\end{proof}

Define
\begin{equation*}
 \Omega_{\nu}^{\rm RT}
 :=
 \bigcup_{g\in\mathcal Q^M}
 \Omega_{\nu,g}^{\rm sub}.
\end{equation*}
Since $\mathcal Q$ is countable,
Lemma~\ref{lem:abstract-borel-submersion} gives
$\Omega_{\nu}^{\rm RT}\in\mathcal B(\Xspace)$.
On the selected-simple locus, Lemma~\ref{lem:abstract-countable-submersion}
and finite-dimensional duality give
\begin{equation}
 x\in\Omega_{\nu}^{\rm RT}
 \quad\Longleftrightarrow\quad
 (\ell_{n_1}^{\mu_x},\ldots,\ell_{n_M}^{\mu_x})
 \text{ satisfies \eqref{eq:abstract-RT}}.
 \label{eq:abstract-RT-event-equivalence}
\end{equation}

\begin{lemma}
\label{lem:abstract-fixed-submersion-nullity}
Let $g\in\mathcal Q^M$.  If $E\in\mathcal B(\mathbb R^M)$ and
$\mathcal L^M(E)=0$, then
\begin{equation*}
 \Omega_{\nu,g}^{\rm sub}
 \cap\Lambda_{\nu}^{-1}(E)
 \in\mathcal B(\Xspace),
 \qquad
 \mathbb P\left(
 \Omega_{\nu,g}^{\rm sub}
 \cap\Lambda_{\nu}^{-1}(E)
 \right)=0.
\end{equation*}
\end{lemma}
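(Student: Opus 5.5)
Measurability is immediate: $\Omega_{\nu,g}^{\rm sub}$ is Borel by Lemma~\ref{lem:abstract-borel-submersion}, and $\Lambda_\nu$ is Borel on $\Xspace$ by Assumption~\ref{ass:spectral-borel} together with the zero-extension convention.  Hence $\Lambda_\nu^{-1}(E)\in\mathcal B(\Xspace)$, and so is the intersection.

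For nullity we slice along the $M$ directions $g_1,\ldots,g_M$, mirroring the proof of Lemma~\ref{lem:abstract-fixed-splitting-event-null}.  If the $g_j$ are linearly dependent, the determinant $\det(\ell_{n_i}^{\mu_x}(g_j))$ vanishes identically by linearity of $\ell$, so the event is empty.  We may therefore assume they are independent and take $H_M=\operatorname{span}\{g_1,\ldots,g_M\}$ with $e_j=g_j$.

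By Lemma~\ref{lem:abstract-gaussian-disintegration} and Tonelli, it suffices to show that for $\mathbb P_{H_M}^\perp$-a.e.\ $x^\perp$ the Borel fiber section
\[
 Z^{\rm can}:=\{t\in\R^M:\ x^\perp+v_t\in\Omega_{\nu,g}^{\rm sub}\cap\Lambda_\nu^{-1}(E)\}
\]
is Lebesgue-null.  Lemma~\ref{lem:abstract-coherent-fiber} with $\Omega_0=\Omega_{\rm str}$ supplies a coherent admissible family $(\mu_t^{\rm coh})$ that agrees with $\mu_{x^\perp+v_t}$ for $t\in T_0(x^\perp)$, a set of full measure.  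Hence $Z^{\rm can}$ coincides up to a null set with
\[
 Z^{\rm coh}:=\mathcal T_{\nu,g}\cap F^{-1}(E),
 \qquad
 F(t):=(\Lambda_{n_1}^{\mu_t^{\rm coh}},\ldots,\Lambda_{n_M}^{\mu_t^{\rm coh}}).
\]
This set is Borel by Lemma~\ref{lem:abstract-slice-borel}, since $F$ is continuous.

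The key step is a local inverse function argument on $\mathcal T_{\nu,g}$.  Fix $t_0\in\mathcal T_{\nu,g}$.  The selected eigenvalues are simple there, and remain simple and isolated nearby by eigenvalue continuity.  Proposition~\ref{prop:abstract-kato-slice}, after recentering at $t_0$ so that $\mu_{t_0+z}^{\rm coh}=e^{\kappa v_z}\mu_{t_0}^{\rm coh}$, makes each $F_i$ real analytic near $t_0$, via the rank-one Riesz projection and $\lambda=\Tr(\widehat A\RieszProj)$.  Corollary~\ref{cor:abstract-simple-response}, applied at each nearby base point, gives
\[
 \partial_{t_j}F_i(t)=\ell_{n_i}^{\mu_t^{\rm coh}}(g_j).
\]
Thus $\det DF(t_0)\ne0$, and by the inverse function theorem $F$ restricts to a $C^1$ diffeomorphism of some open $U_{t_0}\ni t_0$ onto an open set.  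A diffeomorphism maps null sets to null sets, so $\mathcal L^M(U_{t_0}\cap F^{-1}(E))=\mathcal L^M\bigl((F|_{U_{t_0}})^{-1}(E)\bigr)=0$.  By Lindelöf, countably many $U_{t_m}$ cover $\mathcal T_{\nu,g}$, so $\mathcal L^M(Z^{\rm coh})=0$.

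Integrating over fibers against $\varphi_{H_M}(t)\,dt$ then gives $\mathbb P=0$.  The main obstacle is justifying $DF=(\ell(g_j))$ with full differentiability rather than just directional derivatives.  This follows from real analyticity of $F$ near simple eigenvalues together with the Hellmann--Feynman formula \eqref{eq:abstract-form-hellmann-feynman} combined with Lemma~\ref{lem:abstract-derivative-collapse}.  One must also check that the normalized eigenfunctions in $\ell$ are the transported ones; this holds because multiplication commutes with $U_t$.
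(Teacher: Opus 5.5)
Your proposal is correct and follows the paper's proof essentially step for step. Both arguments dispose of linearly dependent $g_j$, slice along $\operatorname{span}\{g_1,\ldots,g_M\}$, and replace the canonical fiber section by the coherent one via Lemma~\ref{lem:abstract-coherent-fiber}. Both then identify $D\Lambda_\nu^{\rm coh}(t_0)=(\ell_{n_i}^{\mu_{t_0}^{\rm coh}}(g_j))$ from Corollary~\ref{cor:abstract-simple-response} after recentering, and finish with an inverse-function-theorem and Lindel\"of covering argument. The differences are cosmetic: you cover all of $\mathcal T_{\nu,g}$ rather than the coherent zero set, you use that $C^1$ diffeomorphisms preserve null sets instead of the paper's change of variables on a ball, and you spell out the Borel measurability that the paper leaves implicit.
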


\begin{proof}
If $\sum_{j=1}^Mc_jg_j=0$ for some $c\ne0$, then
\[
 \sum_{j=1}^Mc_j\ell_{n_i}^{\mu_x}(g_j)=0,
 \qquad 1\le i\le M,
\]
so $\Omega_{\nu,g}^{\rm sub}=\varnothing$.  Otherwise
$g_1,\ldots,g_M$ are linearly independent; set
\[
 H_M=\operatorname{span}\{g_1,\ldots,g_M\},
 \qquad
 v_t=\sum_{j=1}^Mt_jg_j.
\]
Gaussian disintegration gives
\begin{equation}
 x=x^\perp+v_t,
 \qquad
 d\mathbb P(x)=\varphi_{H_M}(t)\,dt\,
 d\mathbb P_{H_M}^\perp(x^\perp),
 \qquad \varphi_{H_M}(t)>0.
 \label{eq:abstract-density-disintegration}
\end{equation}
For the canonical realization define
\begin{equation*}
 \mathcal Z_{x^\perp,g,E}^{\rm can}
 :=
 \left\{
 t\in\mathbb R^M:
 x^\perp+v_t\in\Omega_{\nu,g}^{\rm sub},\quad
 \Lambda_{\nu}(x^\perp+v_t)\in E
 \right\}.
\end{equation*}
The affine map $t\mapsto x^\perp+v_t$ is continuous, so this set belongs to
$\mathcal B(\mathbb R^M)$ by Lemma~\ref{lem:abstract-borel-submersion},
Assumption~\ref{ass:spectral-borel}, and $E\in\mathcal B(\mathbb R^M)$.

Apply Lemma~\ref{lem:abstract-coherent-fiber} with
$\Omega_0=\Omega_{\rm str}$ from \eqref{eq:abstract-structural-data}.  For
almost every fixed fiber it gives a
full-measure Borel set $T_0(x^\perp)$ and an admissible coherent
family $(\mu_t^{\rm coh})_{t\in\mathbb R^M}$ such that
\begin{equation*}
 d\mu_t^{\rm coh}=e^{\kappa(v_t-v_s)}d\mu_s^{\rm coh},
 \qquad s,t\in\mathbb R^M,
\end{equation*}
and $\mu_t^{\rm coh}=\mu_{x^\perp+v_t}$ for
$t\in T_0(x^\perp)$.  Write
\[
 \Lambda_n^{\rm coh}(t):=\Lambda_n^{\mu_t^{\rm coh}},
 \qquad
 \Lambda_{\nu}^{\rm coh}(t)
 :=
 (\Lambda_{n_1}^{\rm coh}(t),\ldots,
  \Lambda_{n_M}^{\rm coh}(t))
\]
for the coherent ordered eigenvalue vector, and set
\begin{equation*}
 \mathcal Z_{x^\perp,g,E}^{\rm coh}
 :=
 \left\{
 t\in T_0(x^\perp):
 \begin{array}{l}
 \Lambda_{n_1}^{\rm coh}(t),\ldots,
 \Lambda_{n_M}^{\rm coh}(t)\text{ are simple},\\[1mm]
 \det(\ell_{n_i}^{\mu_t^{\rm coh}}(g_j))_{i,j=1}^M\ne0,\\[1mm]
 \Lambda_{\nu}^{\rm coh}(t)\in E
 \end{array}
 \right\}.
\end{equation*}
Since $T_0(x^\perp)\in\mathcal B(\mathbb R^M)$,
Lemma~\ref{lem:abstract-slice-borel} gives both the response-determinant
measurability and continuity of $\Lambda_\nu^{\rm coh}$.  Hence this set
belongs to $\mathcal B(\mathbb R^M)$.  On every such
fiber, canonical--coherent agreement and the definition of
$T_0(x^\perp)$ give
\begin{equation}
 \one_{\mathcal Z_{x^\perp,g,E}^{\rm can}}(t)
 =
 \one_{\mathcal Z_{x^\perp,g,E}^{\rm coh}}(t),
 \qquad \text{for Lebesgue-a.e. }t\in\mathbb R^M.
 \label{eq:abstract-density-canonical-coherent-agreement}
\end{equation}

Fix $t_0\in\mathcal Z_{x^\perp,g,E}^{\rm coh}$.  On a neighborhood
$U_{t_0}$, the selected eigenvalues remain simple and
$\Lambda_{\nu}^{\rm coh}$ is real analytic.  After
re-centering the coherent family at $t_0$,
Corollary~\ref{cor:abstract-simple-response} gives
\begin{equation*}
 D\Lambda_{\nu}^{\rm coh}(t_0)
 =
 \bigl(\ell_{n_i}^{\mu_{t_0}^{\rm coh}}(g_j)\bigr)_{i,j=1}^M,
 \qquad
 \det D\Lambda_{\nu}^{\rm coh}(t_0)\ne0.
\end{equation*}

The inverse function theorem therefore gives open sets
$\mathcal O_{t_0}\subset U_{t_0}$ and $\mathcal O'_{t_0}$ on which
\[
 \Lambda_{\nu}^{\rm coh}:
 \mathcal O_{t_0}\longrightarrow\mathcal O'_{t_0}
\]
is a $C^1$ diffeomorphism.  Choose a ball $B_{t_0}$ with
$t_0\in B_{t_0}$ and
$\overline B_{t_0}\subset\mathcal O_{t_0}$.  The change-of-variables
formula yields
\begin{align*}
 \mathcal L^M\!\left(
 \mathcal Z_{x^\perp,g,E}^{\rm coh}
 \cap B_{t_0}\right)
 &\le
 \mathcal L^M\!\left(
 B_{t_0}\cap
 (\Lambda_{\nu}^{\rm coh})^{-1}(E)\right)
 \notag\\
 &=
 \int_{E\cap
 \Lambda_{\nu}^{\rm coh}(B_{t_0})}
 \left|
 \det D(\Lambda_{\nu}^{\rm coh})^{-1}(y)
 \right|\,dy
 =0.
\end{align*}

Since $\mathbb R^M$ is second countable (hence every subspace is Lindel\"of),
the balls $\{B_{t_0}:t_0\in
\mathcal Z_{x^\perp,g,E}^{\rm coh}\}$ admit a countable subcover, so
\[
 \mathcal Z_{x^\perp,g,E}^{\rm coh}
 \subseteq\bigcup_{m\ge1}B_{t_m},
 \qquad
 \mathcal L^M(\mathcal Z_{x^\perp,g,E}^{\rm coh})
 \le
 \sum_{m\ge1}
 \mathcal L^M(
 \mathcal Z_{x^\perp,g,E}^{\rm coh}\cap B_{t_m})
 =0.
\]

Equations~\eqref{eq:abstract-density-canonical-coherent-agreement} and
\eqref{eq:abstract-density-disintegration} now give
\begin{align*}
 &\mathbb P\left(
 \Omega_{\nu,g}^{\rm sub}
 \cap\{\Lambda_{\nu}\in E\}\right)\\
 &\quad=
 \int_{\Xspace}\int_{\mathbb R^M}
 \one_{\mathcal Z_{x^\perp,g,E}^{\rm can}}(t)
 \varphi_{H_M}(t)\,dt\,
 d\mathbb P_{H_M}^\perp(x^\perp)
 =0.
\end{align*}
\end{proof}

\subsection{Final proof of Theorem~\ref{thm:intro-abstract-response}\textnormal{(ii)}}

The hypotheses in Theorem~\ref{thm:intro-abstract-response}\textnormal{(ii)}
and equivalence~\eqref{eq:abstract-RT-event-equivalence} give
\begin{equation*}
 \mathbb P(\Omega_{\nu}^{\rm RT})=1.
\end{equation*}
If $E\in\mathcal B(\mathbb R^M)$ and $\mathcal L^M(E)=0$, then
Lemma~\ref{lem:abstract-fixed-submersion-nullity} gives
\begin{align*}
 \mathbb P\bigl(\Lambda_{\nu}^{-1}(E)\bigr)
 &\le
 \mathbb P\bigl((\Omega_{\nu}^{\rm RT})^c\bigr)\\
 &\quad+
 \sum_{g\in\mathcal Q^M}
 \mathbb P\left(
 \Omega_{\nu,g}^{\rm sub}
 \cap\Lambda_{\nu}^{-1}(E)
 \right)
 =0.
\end{align*}
Hence $(\Lambda_{\nu})_\#\mathbb P\ll\mathcal L^M$, which is
\eqref{eq:intro-abstract-density}.

The next corollary is a model-ready sufficient criterion for
Definition~\ref{def:abstract-response-transversality}.  Its square-identity
event is a conclusion to be proved in each model, not an additional standing
assumption of the abstract theory.

\begin{corollary}
\label{cor:abstract-density-square-criterion}
Let $\Omega_{\rm sq}\in\mathcal B(\Xspace)$ be an event such that
\[
 \Omega_{\rm sq}\subset\Omega_{\rm str},
 \qquad
 \mathbb P(\Omega_{\rm sq})=1,
\]
and such that the hypotheses of
Theorem~\ref{thm:abstract-square-transversality} hold for every finite family
of normalized eigenpairs of $A_{\mu_x}$ with pairwise distinct positive
eigenvalues.  Then, for every $M\ge1$ and $1\le n_1<\cdots<n_M$,
\[
 (\Lambda_{n_1}^{\mu_x},\ldots,\Lambda_{n_M}^{\mu_x})_\#\mathbb P
 \ll\mathcal L^M.
\]
\end{corollary}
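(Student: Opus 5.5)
The plan is to reduce the corollary to Theorem~\ref{thm:intro-abstract-response}\textnormal{(ii)}. That theorem gives the density once we know that, for $\mathbb P$-almost every $x\in\Omega_{\rm str}$, the selected eigenvalues $\Lambda_{n_1}^{\mu_x},\ldots,\Lambda_{n_M}^{\mu_x}$ form a response-transverse family in the sense of Definition~\ref{def:abstract-response-transversality}. So we only have to verify response transversality on a full-measure event, and we will use
\[
 \Omega_*:=\Omega_{\rm sq}\cap\Bigl\{x\in\Omega_{\rm str}:
 \dim\ker(A_{\mu_x}-\Lambda_n^{\mu_x})=1\text{ for every }n\ge1\Bigr\}.
\]
By Theorem~\ref{thm:intro-abstract-response}\textnormal{(i)} the second event has probability one, and $\mathbb P(\Omega_{\rm sq})=1$ by hypothesis, so $\mathbb P(\Omega_*)=1$. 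The second event is Borel by Assumption~\ref{ass:spectral-borel}, but this is not even needed: the theorem only asks for transversality almost surely, which is a statement about a set containing a full-measure Borel set.

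Now fix $x\in\Omega_*$ and write $\mu=\mu_x$. The positive spectrum of $A_\mu$ is simple, so $\Lambda_{n_1}^\mu<\cdots<\Lambda_{n_M}^\mu$ are distinct simple positive eigenvalues. Choose real $L^2(\mu)$-normalized eigenfunctions $\phi_{n_i}^\mu$. Since $\mu\in\mathcal M_{\rm adm}$ and $A_\mu$ is real, real eigenfunctions exist; if $\psi$ is a complex eigenfunction, its real or imaginary part is a nonzero real eigenfunction. Because $x\in\Omega_{\rm sq}$, the hypotheses of Theorem~\ref{thm:abstract-square-transversality} hold for this family of eigenpairs with pairwise distinct positive eigenvalues. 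This includes the continuity of the representatives, the singularity and support conditions \eqref{eq:abstract-reference-singularity}, and the square identity \eqref{eq:abstract-square-identity}. Corollary~\ref{cor:square-implies-RT} applies under the standing annihilator identity \eqref{eq:direction-annihilator} of Assumption~\ref{ass:direction-separation}. It gives exactly \eqref{eq:abstract-RT}, that is, response transversality of $(\Lambda_{n_i}^\mu)_{i=1}^M$.

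Since this holds for every $x\in\Omega_*$, the hypothesis of Theorem~\ref{thm:intro-abstract-response}\textnormal{(ii)} is satisfied, and \eqref{eq:intro-abstract-density} yields the claimed absolute continuity for every $M$ and every choice of indices.

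The main point needing care is a subtle bookkeeping step. The response functionals $\ell_{n_i}^\mu$ in \eqref{eq:abstract-response-functionals-Q} are defined using the eigenfunctions chosen there, while the square hypotheses are stated for arbitrary normalized eigenpairs. Simplicity makes each $\phi_{n_i}^\mu$ unique up to sign, and $\ell_{n_i}^\mu$ depends only on $(\phi_{n_i}^\mu)^2$, so the two choices agree. Apart from this, the argument is a direct chaining of earlier results.
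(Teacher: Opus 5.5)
Your proof is correct and follows the paper's argument. You intersect $\Omega_{\rm sq}$ with the full-measure simplicity event from Theorem~\ref{thm:intro-abstract-response}\textnormal{(i)}, apply Corollary~\ref{cor:square-implies-RT} pointwise to get response transversality there, and then invoke Theorem~\ref{thm:intro-abstract-response}\textnormal{(ii)}; the paper phrases the middle step as the inclusion $\Omega_{\rm simp}\cap\Omega_{\rm sq}\subseteq\Omega_\nu^{\rm RT}$ via \eqref{eq:abstract-RT-event-equivalence}, which has the same content with the measurability made explicit.
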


\begin{proof}
Fix $M\ge1$ and $1\le n_1<\cdots<n_M$, put
$\nu=(n_1,\ldots,n_M)$, and set
\[
 \Omega_{\rm simp}
 :=\{x\in\Omega_{\rm str}:
       \Lambda_1^{\mu_x}<\Lambda_2^{\mu_x}<\cdots\}.
\]
Assumption~\ref{ass:spectral-borel} and
Theorem~\ref{thm:intro-abstract-response}\textnormal{(i)} give
$\Omega_{\rm simp}\in\mathcal B(\Xspace)$ and
$\mathbb P(\Omega_{\rm simp})=1$.  Corollary~\ref{cor:square-implies-RT} and
\eqref{eq:abstract-RT-event-equivalence} yield
\[
 \Omega_{\rm simp}\cap\Omega_{\rm sq}
 \subseteq\Omega_{\nu}^{\rm RT},
 \qquad
 \mathbb P(\Omega_{\nu}^{\rm RT})=1.
\]
Theorem~\ref{thm:intro-abstract-response}\textnormal{(ii)} now applies.
\end{proof}

\section{Dirichlet Liouville Brownian motion}
\label{sec:lbm-realization}

This section verifies the abstract framework for Dirichlet LBM and records its
pathwise spectral response.

Throughout this section,
\begin{equation}\label{eq:standing-domain-range}
 D\subset\R^2\ \text{bounded, open, and connected},
 \qquad \gamma\in(0,2).
\end{equation}
We write $\mathcal L^2$ for planar Lebesgue measure and use $z,w$ for points
of $D$; differential notation such as $dz$ always means integration against
$\mathcal L^2$.

We use the normalization
\begin{equation}\label{eq:standing-dirichlet-laplacian}
 \Lzero:=-\frac1{2\pi}\Delta_D,
 \qquad
 \Lzero\varphi_k^{(0)}=\lambda_k^{(0)}\varphi_k^{(0)},
 \qquad
 0<\lambda_1^{(0)}\le\lambda_2^{(0)}\le\cdots,
\end{equation}
where $(\varphi_k^{(0)})_{k\ge1}$ is a real orthonormal basis of
$L^2(D,\mathcal L^2)$.  We take
$\Hc=H_0^1(D)$ and $\Hreg=C_c^\infty(D)$.
The Cameron--Martin inner product is the Dirichlet energy,
\begin{equation}\label{eq:standing-energy-normalization}
 \langle u,v\rangle_\Hc=\E(u,v)
 :=\frac1{2\pi}\int_D\nabla u\cdot\nabla v\,dz.
\end{equation}
We choose a symmetric, jointly measurable version of the Dirichlet Green
kernel, normalized by
\begin{equation}\label{eq:standing-green-normalization}
 \int_DG_D(z,w)\Lzero\zeta(z)\,dz=\zeta(w),
 \qquad
 -\Delta_zG_D(z,w)=2\pi\delta_w
 \quad\text{in }\mathcal D'(D).
\end{equation}
Here $w\in D$ and $\zeta\in C_c^\infty(D)$.  The boundary condition is
variational, or equivalently quasi-everywhere; it is not imposed pointwise on
$\partial D$.  Notice also that the logarithmic singularity prevents
$G_D(\cdot,w)$ from belonging to $H_0^1(D)$.

No boundary regularity of $D$ is assumed.  If a ball $B$
contains $\overline D$, zero extension and Rellich compactness give
\[
 H_0^1(D)\hookrightarrow H_0^1(B)\hookrightarrow L^2(B),
 \qquad H_0^1(D)\hookrightarrow L^2(D,\mathcal L^2)\ \text{compactly}.
\]
Thus $\Lzero$ has compact resolvent.  Dirichlet domain monotonicity also gives
\[
 \lambda_k^{(0)}(D)\ge \lambda_k^{(0)}(B),\qquad k\ge1,
\]
and Weyl's law on $B$ makes the Gaussian series below convergent for every
$s_0>0$.
Fix $s_0>0$.  Here $H^{-s_0}(D)$ denotes the spectral Hilbert space defined by
the following norm:
\begin{equation}\label{eq:standing-abstract-wiener-space}
 \Xspace=H^{-s_0}(D),
 \qquad
 \|x\|_{-s_0}^2
 :=\sum_{k\ge1}(1+\lambda_k^{(0)})^{-s_0}
       |\langle x,\varphi_k^{(0)}\rangle|^2.
\end{equation}
On this space the zero-boundary GFF is the convergent series
\begin{equation}\label{eq:standing-gff-series}
 h=\sum_{k\ge1}\frac{\xi_k}{\sqrt{\lambda_k^{(0)}}}\varphi_k^{(0)},
 \qquad \xi_k\stackrel{\rm iid}{\sim}N(0,1),
 \qquad
 \mathbb E\|h\|_{-s_0}^2
 =\sum_{k\ge1}\frac{(1+\lambda_k^{(0)})^{-s_0}}{\lambda_k^{(0)}}<\infty.
\end{equation}
Let $\mathbb P$ denote its law and use
$(\Xspace,\mathcal B(\Xspace)^{\mathbb P},\mathbb P)$ as the canonical
probability space, with coordinate GFF $h(x)=x$.
Thus $(\Xspace,\Hc,\mathbb P)$ is the abstract Wiener triple used above; throughout
this section $x$ is a deterministic environment and $h(x)=x$ the coordinate
GFF.

\pagebreak[3]
In the notation of Section~\ref{sec:abstract-setting},
$\State=D$, $\kappa=\gamma$, and $\rho=0$; the objects
$\mu_x=M_x$ and $A_{\mu_x}=A_x$ are constructed below.  Perturbations are first taken in
$\Hreg=C_c^\infty(D)$; the extension to $\Hc$ is treated in
Appendix~\ref{app:lbm-finite-energy-response}.

Two conventions will be used repeatedly.  For
$u\in H_0^1(D)$, $\widetilde u$ denotes a fixed quasi-continuous version.
Also, whenever a completed full-probability event $\Omega$ occurs, we choose a
Borel subset $\Omega^{\rm Bor}$ such that
\begin{equation}\label{eq:borel-full-core-convention}
 \Omega^{\rm Bor}\in\mathcal B(\Xspace),\qquad
 \Omega^{\rm Bor}\subseteq\Omega,\qquad
 \mathbb P(\Omega^{\rm Bor})=1.
\end{equation}
Such a subset exists by the definition of the completed probability space.

\subsection{Coherent Gaussian multiplicative chaos under Cameron--Martin shifts}
\label{sec:lbm-shifts}

The measure entering the Liouville time change cannot be chosen only as an
almost-sure equivalence class.  We obtain the spectral measurability used in
Gaussian slicing from a measurable map $x\mapsto M_x$, while coherent analytic
perturbation requires the pointwise identity
\[
 M_{x+g}=e^{\gamma g}M_x.
\]
We construct such a version and then place its trace-form properties on one
measurable full-probability event.

Write $\Mc_{\rm fin}^+(D)$ for the finite positive Radon measures on $D$,
equipped with the narrow topology and its Borel $\sigma$-field, and fix a complete
compatible metric $d_{\rm w}$.  We first regularize distributions by the killed heat semigroup.

\begin{lemma}
\label{lem:lbm-borel-heat-smoothing}
For every $\eps>0$,
\[
 Q_\eps:=e^{\eps\Delta_D}
 =e^{-2\pi\eps\Lzero}
 \in\mathcal L(\Xspace,C_b(D)).
\]
The map
\[
 (x,z)\longmapsto Q_\eps x(z):
 (\Xspace\times D,\mathcal B(\Xspace)\otimes\mathcal B(D))
 \longrightarrow(\mathbb R,\mathcal B(\mathbb R))
\]
is measurable.
\end{lemma}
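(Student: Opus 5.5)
We work entirely in the spectral coordinates of $\Lzero$. For $x\in\Xspace$, write $x_k:=\langle x,\varphi_k^{(0)}\rangle$, so that $\sum_k(1+\lambda_k^{(0)})^{-s_0}x_k^2<\infty$. The killed heat semigroup then acts diagonally:
\[
 Q_\eps x=\sum_{k\ge1}e^{-2\pi\eps\lambda_k^{(0)}}x_k\varphi_k^{(0)}.
\]
The proof has two parts. First we show that this series converges in the uniform norm on $D$, with a bound linear in $\norm{x}_{-s_0}$; this gives $Q_\eps\in\mathcal L(\Xspace,C_b(D))$. Joint measurability then follows because $Q_\eps x(z)$ is continuous in each variable separately.

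\textbf{Step 1: uniform bounds on the eigenfunctions.} Since $D$ is an arbitrary bounded domain, pointwise bounds for $\varphi_k^{(0)}$ must come from the heat kernel, not from boundary regularity. Domain monotonicity of the Dirichlet heat kernel gives $p_t^D(z,w)\le(4\pi t)^{-1}$ for $\partial_t-\Delta$. Hence $e^{t\Delta_D}$ maps $L^2(D)\to L^\infty(D)$ with norm at most $(8\pi t)^{-1/2}$. Applying this to $\varphi_k^{(0)}=e^{2\pi t\lambda_k^{(0)}}e^{t\Delta_D}\varphi_k^{(0)}$ and choosing $t=1/(2\pi\lambda_k^{(0)})$ yields
\[
 \norm{\varphi_k^{(0)}}_\infty\le C\sqrt{\lambda_k^{(0)}}.
\]
Each $\varphi_k^{(0)}$ is smooth in the interior by elliptic regularity, so it is a bounded continuous function on $D$.

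\textbf{Step 2: absolute uniform convergence of the series.} By Cauchy--Schwarz, with $a_k:=e^{-2\pi\eps\lambda_k^{(0)}}$,
\[
 \sum_k a_k|x_k|\,\norm{\varphi_k^{(0)}}_\infty
 \le\norm{x}_{-s_0}
 \Bigl(\sum_k a_k^2(1+\lambda_k^{(0)})^{s_0}\,C^2\lambda_k^{(0)}\Bigr)^{1/2}.
\]
The second sum is finite. Domain monotonicity and Weyl's law on a ball $B\supset\overline D$, as recorded above, give $\lambda_k^{(0)}\ge\lambda_k^{(0)}(B)\ge ck$. The summand therefore decays faster than any power of $k$. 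Consequently the series converges absolutely and uniformly on $D$, its limit lies in $C_b(D)$, and $\norm{Q_\eps x}_\infty\le C_\eps\norm{x}_{-s_0}$. This is the only place where the irregularity of $\partial D$ needs care, and Steps 1--2 handle it without any assumption on the boundary.

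\textbf{Step 3: joint measurability.} For fixed $z$, the map $x\mapsto Q_\eps x(z)$ is a bounded linear functional on $\Xspace$, by Step 2, and hence continuous. For fixed $x$, the map $z\mapsto Q_\eps x(z)$ is continuous. So $(x,z)\mapsto Q_\eps x(z)$ is a Carathéodory function on the product of the separable metric space $\Xspace$ (a Hilbert space with a countable orthonormal basis) and $D$. Such functions are jointly Borel. Alternatively, one can argue directly: each partial sum $\sum_{k\le n}a_kx_k\varphi_k^{(0)}(z)$ is a finite sum of products of continuous functions of $x$ and of $z$, hence $\mathcal B(\Xspace)\otimes\mathcal B(D)$-measurable, and the partial sums converge pointwise by Step 2. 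The main obstacle is Step 1; the remaining steps are routine.
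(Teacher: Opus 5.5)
Your proof is correct. It shares the paper's overall structure: the spectral series for $Q_\eps x$, Cauchy--Schwarz against the $H^{-s_0}$ weights, domination of the killed heat kernel by the planar one (so no boundary regularity is needed), uniform convergence, and then measurability from continuity.

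The key uniform estimate is obtained differently. The paper keeps the sum in $\ell^2$ at a fixed point $z$. It bounds $\sum_k e^{-2\pi\eps\lambda_k^{(0)}}|\varphi_k^{(0)}(z)|^2$ by the diagonal heat kernel $q^D_{2\pi\eps}(z,z)$. This needs only $\lambda_k^{(0)}\to\infty$ for the tail, but it relies on the pointwise eigenfunction expansion of the Dirichlet heat-kernel diagonal on a rough domain.

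You instead bound each eigenfunction separately, $\|\varphi_k^{(0)}\|_\infty\le C\sqrt{\lambda_k^{(0)}}$, via the $L^2\to L^\infty$ ultracontractive bound. You then sum using the linear eigenvalue growth from Weyl's law on an enclosing ball. This is cruder, but it avoids the Mercer-type diagonal identity. It also gives absolute convergence with a summable majorant, so the paper's separate tail estimate is not needed.

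For measurability, your Carath\'eodory argument and the paper's continuity of evaluation on $C_b(D)\times D$ are interchangeable. In fact your Step~2 already shows that $(x,z)\mapsto Q_\eps x(z)$ is jointly continuous.
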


\begin{proof}
Write $q_t^D$ for the kernel of $e^{-t\Lzero}$.  The spectral representative is
\begin{equation}
 Q_\eps x(z)
 =\sum_{k\ge1}e^{-2\pi\eps\lambda_k^{(0)}}
 \langle x,\varphi_k^{(0)}\rangle\varphi_k^{(0)}(z).
 \label{eq:lbm-heat-smoothing-series}
\end{equation}
For $x\in\Xspace$, Cauchy--Schwarz gives
\begin{align*}
 |Q_\eps x(z)|^2
 &\le \|x\|_{-s_0}^2
 \sum_{k\ge1}e^{-4\pi\eps\lambda_k^{(0)}}
 (1+\lambda_k^{(0)})^{s_0}|\varphi_k^{(0)}(z)|^2\\
 &\le C_{\eps,s_0}\|x\|_{-s_0}^2
 \sum_{k\ge1}e^{-2\pi\eps\lambda_k^{(0)}}
 |\varphi_k^{(0)}(z)|^2\\
 &=C_{\eps,s_0}\|x\|_{-s_0}^2q_{2\pi\eps}^D(z,z)
 \le C'_{\eps,s_0}\|x\|_{-s_0}^2.
\end{align*}
The last inequality uses domination of the killed heat kernel by the planar
heat kernel.
If $R_N^\eps x$ denotes the tail after the $N$th term, the same estimate gives
\[
 \sup_{z\in D}|R_N^\eps x(z)|^2
 \le C_{\eps,s_0}\|x\|_{-s_0}^2
 e^{-\pi\eps\lambda_{N+1}^{(0)}}
 \sup_{z\in D}q_{2\pi\eps}^D(z,z)
 \xrightarrow[N\to\infty]{}0.
\]
Thus the series converges uniformly in $z$.  Each summand is continuous in
$D$, hence
$Q_\eps x\in C_b(D)$ and
\[
 \|Q_\eps x\|_\infty
 \le C_{\eps,s_0}\|x\|_{-s_0}.
\]
Thus $Q_\eps\in\mathcal L(\Xspace,C_b(D))$.  Since evaluation is
continuous on $C_b(D)\times D$, the displayed map is measurable.
\end{proof}

Put $h_\eps=Q_\eps h$.  For $x\in\Xspace$, define
\begin{equation}\label{eq:regularized-gmc}
 M_{x,\eps}(dz)
 =\exp\!\left(\gamma Q_\eps x(z)
 -\frac{\gamma^2}{2}\mathbb E[h_\eps(z)^2]\right)dz.
\end{equation}
For a countable convergence-determining family
$(\varphi_\ell)_{\ell\ge1}\subset C_b(D)$, every map
\[
 x\longmapsto \int_D\varphi_\ell\,dM_{x,\eps}:
 (\Xspace,\mathcal B(\Xspace))\longrightarrow
 (\mathbb R,\mathcal B(\mathbb R))
\]
is measurable.  Hence
\[
 x\longmapsto M_{x,\eps}:
 (\Xspace,\mathcal B(\Xspace))\longrightarrow
 (\Mc_{\rm fin}^+(D),\mathcal B(\Mc_{\rm fin}^+(D)))
\]
is Borel measurable with respect to the narrow Borel structure by
Lemma~\ref{lem:lbm-borel-heat-smoothing}.

For later shifts, let $g\in C_c^\infty(D)$.  The semigroup identity and the
$L^\infty$ contraction property give
\begin{equation}\label{eq:test-function-killed-semigroup-approximation}
 Q_tg-g=\int_0^tQ_s\Delta_Dg\,ds,
 \qquad
 \|Q_tg-g\|_\infty
 \le t\|\Delta_Dg\|_\infty.
\end{equation}
The first equality holds initially in $L^2(D,\mathcal L^2)$ and its right-hand side
provides the displayed bounded representative.  In particular, no continuity
of the killed semigroup at irregular boundary points is being assumed.

\subsubsection{A measurable GMC version and coherent Cameron--Martin lines}

The existence and approximation independence of subcritical GMC are standard
\cite[Corollary~18 and Theorem~25]{ShamovGMC}.  For the spectral construction
below, we choose a representative that depends measurably on every
$x\in\Xspace$ and obeys the Cameron--Martin cocycle whenever both sides are
defined by the chosen deterministic approximation.  The next lemma
constructs this map; Theorem~\ref{thm:gmc-cm-shift} proves the cocycle.  A
related measurable construction on compact manifolds appears in
\cite[Theorem~4.1]{DelloSchiavoHerryKopferSturm}.

\begin{lemma}
\label{lem:canonical-borel-gmc}
There exist a deterministic sequence $\eps_j\downarrow0$ and a set
\[
 \Omega_{\rm GMC}\in\mathcal B(\Xspace),
 \qquad \mathbb P(\Omega_{\rm GMC})=1,
\]
together with a measurable map
\begin{equation}\label{eq:canonical-borel-gmc-map}
 M^{\rm can}:(\Xspace,\mathcal B(\Xspace))
 \longrightarrow
 (\Mc_{\mathrm{fin}}^+(D),\mathcal B(\Mc_{\mathrm{fin}}^+(D)))
\end{equation}
such that
\begin{equation}\label{eq:canonical-borel-gmc-limit}
 M_{x,\eps_j}\Longrightarrow M^{\rm can}(x),
 \qquad x\in\Omega_{\rm GMC}.
\end{equation}
Here $\Longrightarrow$ denotes narrow convergence of finite measures.
Moreover, $M^{\rm can}(h)$ agrees almost surely with the subcritical GMC
measure.
\end{lemma}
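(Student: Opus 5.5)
The plan is to use the regularized measures $M_{x,\eps}$ from \eqref{eq:regularized-gmc} along a fixed deterministic sequence, obtain almost-sure narrow convergence from standard subcritical GMC theory, and then define $M^{\rm can}$ as the limit wherever it exists and as the zero measure elsewhere.

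First I will fix the sequence. By \cite[Corollary~18 and Theorem~25]{ShamovGMC}, $M_{h,\eps}\to M_h$ in probability in the narrow topology as $\eps\downarrow0$. The heat-kernel regularization $Q_\eps$ is a legitimate approximation scheme in Shamov's sense, since $h_\eps$ is the Gaussian field obtained by applying a contraction to $h$. Convergence in probability on the Polish space $(\Mc_{\rm fin}^+(D),d_{\rm w})$ allows me to extract a deterministic subsequence $\eps_j\downarrow0$ along which $M_{h,\eps_j}\to M_h$ almost surely.

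Next I will define the convergence set
\[
 \Omega_{\rm GMC}:=\{x\in\Xspace:(M_{x,\eps_j})_{j\ge1}\text{ is }d_{\rm w}\text{-Cauchy}\}.
\]
Each map $x\mapsto M_{x,\eps_j}$ is Borel, as noted after Lemma~\ref{lem:lbm-borel-heat-smoothing}. Hence each $x\mapsto d_{\rm w}(M_{x,\eps_j},M_{x,\eps_k})$ is Borel, and the Cauchy condition is a countable intersection and union of Borel sets, so $\Omega_{\rm GMC}\in\mathcal B(\Xspace)$. Completeness of $d_{\rm w}$ turns the Cauchy property into convergence. The previous step gives $\mathbb P(\Omega_{\rm GMC})=1$ directly as a Borel set, with no completion needed.

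Finally I will set $M^{\rm can}(x):=\lim_j M_{x,\eps_j}$ on $\Omega_{\rm GMC}$ and $M^{\rm can}(x):=0$ off it. Measurability follows because a pointwise limit of Borel maps into a metric space is Borel on the Borel set where the limit exists, and the extension by a constant on the complement preserves this. By construction $M^{\rm can}(h)=M_h$ almost surely.

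The main obstacle is the first step: confirming that the killed-heat-semigroup mollification of the zero-boundary GFF on an arbitrary, possibly irregular, domain falls within the hypotheses of Shamov's uniqueness theorem. This covers both the variance normalization and the requirement that $\mathbb E[h_\eps(z)h(w)]$ approximate the covariance suitably. If it does not apply directly, I would instead verify uniform integrability together with the martingale-type $L^1$ convergence for test functions $\varphi_\ell$, using Kahane convexity against a standard circle-average approximation on compact subsets of $D$.
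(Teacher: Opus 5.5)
Your outline matches the paper's: extract a deterministic sequence $\eps_j$ from convergence in probability, let $\Omega_{\rm GMC}$ be the Borel set on which $(M_{x,\eps_j})$ is $d_{\rm w}$-Cauchy, set $M^{\rm can}$ equal to the limit there and $0$ elsewhere, and identify it with the GMC by uniqueness of limits. That part is fine.

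The gap is the step you flag yourself. Convergence in probability $M_{h,\eps}\to M_h$ for the killed-heat regularization on an arbitrary bounded domain is the real content of the lemma, and you assert it rather than prove it. Observing that $Q_\eps$ is a contraction does not verify the hypotheses of \cite[Theorem~25]{ShamovGMC} in the form the paper uses it. Those hypotheses are:
\begin{itemize}
\item convergence of $K_{\eps,D}(z,w):=\Cov(Q_\eps h(z),Q_\eps h(w))$ to $G_D$ in measure, under a uniform logarithmic majorant;
\item convergence of the Cameron--Martin shifts;
\item uniform integrability of the total masses $M_{h,\eps}(D)$.
\end{itemize}
The last is not automatic and needs a quantitative moment bound.

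Your fallback, Kahane comparison with circle averages on compact subsets of $D$, does not close the gap. It gives no control of mass near $\partial D$. Narrow convergence in $\Mc_{\rm fin}^+(D)$ tests against $1\in C_b(D)$, so you need a bound on $M_{h,\eps}(D)$ uniform in $\eps$ over the whole, possibly irregular, domain.

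The paper obtains this bound as follows.
\begin{itemize}
\item Domain monotonicity for a disk $B\supset\overline D$ gives $K_{\eps,D}=\int_{4\pi\eps}^\infty q_t^D\,dt\le K_{\eps,B}$.
\item This yields both $K_{\eps,D}\uparrow G_D$ and the majorant $C_B+\log^+\bigl(1/(|z-w|\vee\sqrt\eps)\bigr)$ on all of $D$.
\item The majorant gives $K_{\eps,D}\le K^{\rm ex}_{\delta(\eps)}+C_0$ for an exact-scale cutoff on a square $S_0\supset\overline D$.
\item Kahane's inequality, the positive-moment theorem, and conditional Jensen for the $L^p$-closed exact-scale martingale then give $\sup_{0<\eps\le1}\mathbb E[M_{h,\eps}(D)^p]<\infty$ for $p\in(1,4/\gamma^2)$.
\item Finally, $Q_\eps g\to g$ in $L^2$ handles the shifts.
\end{itemize}

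Your contraction idea could be made rigorous another way. Write $h\overset{d}{=}Q_\eps h+Z_\eps$ with $Z_\eps$ independent of covariance $\int_0^{4\pi\eps}q_t^D\,dt$, and apply conditional Jensen. But that presupposes the corresponding factorization of the limiting chaos and a finite $p$-th moment of $M_h(D)$ on the rough domain. Both need the same Kahane/exact-scale input you have not supplied.
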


\begin{proof}
Let $q_t^U$ denote the heat kernel of $e^{t\Delta_U/(2\pi)}$.  If a disk $B$ contains
$\overline D$, domain monotonicity and the semigroup property give
\begin{equation}\label{eq:canonical-heat-covariance-comparison}
 K_{\eps,D}(z,w)
 :=\Cov(Q_\eps h(z),Q_\eps h(w))
 =\int_{4\pi\eps}^{\infty}q_t^D(z,w)\,dt
 \le\int_{4\pi\eps}^{\infty}q_t^B(z,w)\,dt
 =:K_{\eps,B}(z,w).
\end{equation}
By monotone convergence,
$K_{\eps,D}(z,w)\uparrow G_D(z,w)$ for almost every $(z,w)\in D^2$.  The disk
heat-kernel bound also yields
\begin{equation}\label{eq:canonical-heat-covariance-log-majorant}
 K_{\eps,D}(z,w)
 \le C_B+\log^+\frac1{|z-w|\vee\sqrt\eps},
 \qquad z,w\in D,\quad0<\eps\le1.
\end{equation}
For every $g\in\Hc$, strong continuity of the killed heat semigroup gives
\[
 Q_\eps g\longrightarrow g
 \quad\text{in }L^2(D,\mathcal L^2).
\]
Let $K_\delta^{\rm ex}$ be the covariance of the standard exact-scale
martingale cutoff on an auxiliary square $S_0\supset\overline D$.  The explicit exact-scale
construction \cite[Proposition~2.15]{RhodesVargasGMCReview} and
\eqref{eq:canonical-heat-covariance-log-majorant} give
$\delta(\eps)\downarrow0$ and $C_0<\infty$ such that
\begin{equation}
 K_{\eps,D}(z,w)
 \le K_{\delta(\eps)}^{\rm ex}(z,w)+C_0,
 \qquad z,w\in D,\quad 0<\eps\le1.
 \label{eq:canonical-exact-scale-covariance-comparison}
\end{equation}
If $M_{\gamma,\delta}^{\rm ex}$ and $M_\gamma^{\rm ex}$ denote the cutoff and
limiting exact-scale chaoses on $S_0$, then for every $p\in(1,4/\gamma^2)$ the
positive-moment theorem makes the martingale $L^p$-closed.  For its natural
cutoff filtration, the cutoff mass is therefore the conditional
expectation of the limiting mass given the field down to scale $\delta$, and
conditional Jensen gives
\[
 \mathbb E[(M_{\gamma,\delta}^{\rm ex}(S_0))^p]
 \le \mathbb E[(M_\gamma^{\rm ex}(S_0))^p].
\]
Kahane's inequality \cite[Theorem~2.1]{RhodesVargasGMCReview}, the positive
moment bound \cite[Theorem~2.11]{RhodesVargasGMCReview}, and the preceding
conditional-Jensen estimate give
\begin{equation}\label{eq:canonical-gmc-uniform-integrability}
 \begin{aligned}
 \sup_{0<\eps\le1}\mathbb E[M_{h,\eps}(D)^p]
 &\le e^{\frac{\gamma^2}{2}C_0p(p-1)}
       \sup_{\delta>0}\mathbb E[(M_{\gamma,\delta}^{\rm ex}(S_0))^p]\\
 &\le e^{\frac{\gamma^2}{2}C_0p(p-1)}
       \mathbb E[(M_\gamma^{\rm ex}(S_0))^p]<\infty.
 \end{aligned}
\end{equation}
The approximation theorem \cite[Theorem~25]{ShamovGMC} now applies: the
covariances converge to $G_D$ in measure, they satisfy
\eqref{eq:canonical-heat-covariance-log-majorant}, the Cameron--Martin shifts
converge in $L^2$, and the total masses are uniformly integrable by
\eqref{eq:canonical-gmc-uniform-integrability}.  Hence $M_{h,\eps}$ converges
in probability, for $d_{\rm w}$, to the subcritical GMC.

Choose a deterministic sequence $\eps_j\downarrow0$ so that
\[
 \mathbb P\!\left(
 d_{\mathrm w}(M_{h,\eps_{j+1}},M_{h,\eps_j})>2^{-j}
 \right)\le 2^{-j}.
\]
By Borel--Cantelli, almost surely and for all sufficiently large $j<k$,
\[
 d_{\mathrm w}(M_{h,\eps_k},M_{h,\eps_j})
 \le\sum_{\ell=j}^{k-1}2^{-\ell}
 \le2^{1-j}.
\]
Thus the convergence event satisfies
\[
 \Omega_{\rm GMC}
 :=\bigcap_{m\ge1}\bigcup_{N\ge1}\bigcap_{j,k\ge N}
 \left\{x\in\Xspace:
 d_{\mathrm w}(M_{x,\eps_j},M_{x,\eps_k})<m^{-1}\right\}
 \in\mathcal B(\Xspace),
 \qquad \mathbb P(\Omega_{\rm GMC})=1.
\]
Completeness of $d_{\mathrm w}$ permits the definition
\[
 M^{\rm can}(x)
 :=\begin{cases}
 \displaystyle\lim_{j\to\infty}M_{x,\eps_j},&x\in\Omega_{\rm GMC},\\[1ex]
 0,&x\notin\Omega_{\rm GMC}.
 \end{cases}
\]
The resulting map
\[
 M^{\rm can}:(\Xspace,\mathcal B(\Xspace))\longrightarrow
 (\Mc_{\rm fin}^+(D),\mathcal B(\Mc_{\rm fin}^+(D)))
\]
is Borel measurable with respect to the narrow Borel structure, being a pointwise
limit on $\Omega_{\rm GMC}$ and constant on its complement.  Since the same subsequence
converges in probability to the subcritical GMC, uniqueness identifies the
two limits almost surely.
\end{proof}

From now on set
\[
 M_x:=M^{\rm can}(x),
 \qquad M_h:=M^{\rm can}(h).
\]
The superscript ``${\rm can}$'' is retained only when comparison with another
GMC construction is needed.

\begin{theorem}
\label{thm:gmc-cm-shift}
For every $g\in\Hreg$,
\begin{equation}\label{eq:pointwise-gmc-orbit-compatibility}
 M_{x+g}=e^{\gamma g}M_x,
 \qquad x,x+g\in\Omega_{\rm GMC}.
\end{equation}
In particular, for each fixed $g\in\Hreg$,
\begin{equation}\label{eq:gmc-cm-shift-g}
        M_{h+g}=e^{\gamma g}M_h
        \qquad\text{almost surely}.
\end{equation}
\end{theorem}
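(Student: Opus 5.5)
The plan is a direct pathwise computation at the level of the regularized measures $M_{x,\eps_j}$, followed by a passage to the limit along the deterministic sequence $\eps_j$ of Lemma~\ref{lem:canonical-borel-gmc}. Fix $g\in\Hreg=C_c^\infty(D)$. Since $Q_\eps$ is linear, \eqref{eq:regularized-gmc} gives, for every $x\in\Xspace$ and every $\eps>0$,
\[
 M_{x+g,\eps}(dz)=e^{\gamma Q_\eps g(z)}\,M_{x,\eps}(dz).
\]
The normalizing term $\mathbb E[h_\eps(z)^2]$ is deterministic and does not depend on $x$, so it cancels. This identity holds for all $x$, with no probabilistic input.

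Next, the density $e^{\gamma Q_\eps g}$ converges uniformly to $e^{\gamma g}$. By \eqref{eq:test-function-killed-semigroup-approximation} we have $\|Q_\eps g-g\|_\infty\le\eps\|\Delta_Dg\|_\infty$. Because $g$ is bounded, the mean value theorem then gives
\[
 \|e^{\gamma Q_\eps g}-e^{\gamma g}\|_\infty\le C_g\eps,
\]
uniformly on $D$. This step is where irregular boundaries might look dangerous, but the bounded representative supplied by \eqref{eq:test-function-killed-semigroup-approximation} avoids any pointwise boundary issue.

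Now take $x,x+g\in\Omega_{\rm GMC}$, so that $M_{x,\eps_j}\Rightarrow M_x$ and $M_{x+g,\eps_j}\Rightarrow M_{x+g}$. For $\varphi\in C_b(D)$, split
\[
 \int\varphi\,dM_{x+g,\eps_j}
 =\int\varphi e^{\gamma g}\,dM_{x,\eps_j}
 +\int\varphi\bigl(e^{\gamma Q_{\eps_j}g}-e^{\gamma g}\bigr)dM_{x,\eps_j}.
\]
The first term converges to $\int\varphi e^{\gamma g}\,dM_x$, because $\varphi e^{\gamma g}\in C_b(D)$ ($g$ is smooth and bounded). The second term is bounded by
\[
 \|\varphi\|_\infty C_g\eps_j\,M_{x,\eps_j}(D),
\]
and the total masses $M_{x,\eps_j}(D)$ are bounded in $j$ since they converge narrowly (test with $\varphi\equiv1$), so this term tends to $0$. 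Narrow limits are unique, hence $M_{x+g}=e^{\gamma g}M_x$, which is \eqref{eq:pointwise-gmc-orbit-compatibility}.

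For the almost-sure statement \eqref{eq:gmc-cm-shift-g}, it suffices to show that $\mathbb P(h+g\in\Omega_{\rm GMC})=1$. The law of $h+g$ is equivalent to $\mathbb P$ by the Cameron--Martin theorem, since $g\in\Hc=H_0^1(D)$. Therefore $\Omega_{\rm GMC}-g$ has full measure, and intersecting it with $\Omega_{\rm GMC}$ gives the claim. No step is a serious obstacle. The only point needing care is that the cocycle must be established for the fixed subsequence $\eps_j$ and at every point of $\Omega_{\rm GMC}$, not merely almost surely, which the deterministic computation above provides.
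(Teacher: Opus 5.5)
Your proposal is correct and follows essentially the same route as the paper. The paper's proof likewise uses the exact identity $M_{x+g,\eps_j}=e^{\gamma Q_{\eps_j}g}M_{x,\eps_j}$, the uniform bound $\|Q_\eps g-g\|_\infty\le\eps\|\Delta_Dg\|_\infty$, passage to the narrow limit along the fixed sequence $\eps_j$ for $x,x+g\in\Omega_{\rm GMC}$, and Cameron--Martin equivalence for the almost-sure statement. You only add details the paper leaves implicit: the two-term split with bounded total masses, and why $h+g\in\Omega_{\rm GMC}$ almost surely.
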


\begin{proof}
For the Borel heat-semigroup version from
Lemma~\ref{lem:canonical-borel-gmc}, the fixed-shift identity follows directly
from
\[
 \|Q_\eps g-g\|_\infty
 \le\eps\|\Delta_Dg\|_\infty\longrightarrow0,
 \qquad
 M_{x+g,\eps_j}=e^{\gamma Q_{\eps_j}g}M_{x,\eps_j}.
\]
Indeed, for every $\psi\in C_b(D)$,
\[
 \int_D\psi\,dM_{x+g}
 =\lim_{j\to\infty}\int_D
   \psi e^{\gamma Q_{\eps_j}g}\,dM_{x,\eps_j}
 =\int_D\psi e^{\gamma g}\,dM_x.
\]
This proves \eqref{eq:pointwise-gmc-orbit-compatibility}.
Cameron--Martin equivalence gives \eqref{eq:gmc-cm-shift-g}.
\end{proof}

To obtain the full-quasi-support input for the trace form, we compare the
canonical GMC on $D$ with its counterpart on a smooth interior subdomain.
Fix a smooth connected $U\Subset D$.  By the domain Markov property, the
zero-boundary GFFs on $D$ and $U$ can be coupled so that
\begin{equation}\label{eq:gff-domain-markov-local-decomposition}
 h_D=h_U+\chi_{D,U}
 \qquad\text{in }U,
\end{equation}
where $h_U$ and $\chi_{D,U}$ are independent and $\chi_{D,U}$ is almost surely
harmonic.  On compact subsets of $U$, this harmonic correction will appear as
a bounded positive density between the two chaos measures.

\begin{lemma}
\label{lem:interior-dirichlet-gmc-comparison}
Let $M_{h_D}^{\rm can}$ and $M_{h_U}^{\rm can}$ be the canonical GMC measures
obtained from the construction of Lemma~\ref{lem:canonical-borel-gmc} on $D$
and $U$, respectively.  Under the coupling above, for every fixed $V\Subset U$,
almost surely,
\begin{equation}\label{eq:interior-dirichlet-gmc-comparison}
 M_{h_D}^{\rm can}|_V
 =w_{D,U}\,M_{h_U}^{\rm can}|_V,
 \qquad
 w_{D,U}(z)
 :=\exp\!\left(
   \gamma\chi_{D,U}(z)
   -\frac{\gamma^2}{2}\mathbb E[\chi_{D,U}(z)^2]
  \right),
\end{equation}
and
\begin{equation}\label{eq:interior-dirichlet-gmc-bounded-density}
 w_{D,U},\ w_{D,U}^{-1}\in C_b(V).
\end{equation}
In particular, the two measures in
\eqref{eq:interior-dirichlet-gmc-comparison} have the same null sets on $V$.
\end{lemma}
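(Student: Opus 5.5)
We compare the two canonical constructions at the level of the heat-regularized approximations $M_{x,\eps_j}$ from \eqref{eq:regularized-gmc}, and then pass to the limit along the subsequences supplied by Lemma~\ref{lem:canonical-borel-gmc}.

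\emph{Step 1: the harmonic correction.} Under the coupling \eqref{eq:gff-domain-markov-local-decomposition}, $\chi_{D,U}$ is a centered Gaussian field which is almost surely harmonic in $U$. Its covariance is $G_D-G_U$, which is smooth on $U\times U$. Hence $z\mapsto\mathbb E[\chi_{D,U}(z)^2]$ is continuous on $\overline V$. The sample path of $\chi_{D,U}$ is continuous on $\overline V$ and therefore bounded there. This gives \eqref{eq:interior-dirichlet-gmc-bounded-density} directly, since $w_{D,U}$ and $w_{D,U}^{-1}$ are exponentials of functions that are continuous and bounded on $V$.

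\emph{Step 2: comparison of the regularized measures.} The heat smoothings $Q^D_\eps h_D$ and $Q^U_\eps h_U$ are not linked by the exact decomposition, because the killed semigroups differ. On $V$, however, I would write
\[
 Q^D_\eps h_D=Q^U_\eps h_U+Q^U_\eps\chi_{D,U}+r_\eps ,
 \qquad
 r_\eps:=(Q^D_\eps-Q^U_\eps)h_D\ \text{ on }V .
\]
For the remainder $r_\eps$, the difference of the killed heat kernels $q^D_t-q^U_t$ on $V\times D$ is bounded by the probability that Brownian motion exits $U$ before time $t$. This is $O(e^{-c/t})$ uniformly for $z\in V$, and it is smooth. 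Consequently $r_\eps$ is a Gaussian field whose supremum over $V$ tends to zero in probability, and the variance corrections converge uniformly on $V$. Separately, $Q^U_\eps\chi_{D,U}\to\chi_{D,U}$ uniformly on $V$ by harmonicity, since the heat flow only moves a harmonic function through boundary effects, which are again $O(e^{-c/\eps})$ in the interior. Combining the two, the variance normalizations satisfy
\[
 \mathbb E[(Q^D_\eps h_D)^2]-\mathbb E[(Q^U_\eps h_U)^2]\to\mathbb E[\chi_{D,U}^2]
\]
uniformly on $V$. It follows that
\[
 M^D_{h_D,\eps}|_V=w_\eps\,M^U_{h_U,\eps}|_V,
 \qquad w_\eps\to w_{D,U}\ \text{ uniformly on }V\ \text{in probability}.
\]

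\emph{Step 3: passage to the limit.} Along the two deterministic sequences, $M^D_{h_D,\eps_j}\Rightarrow M^{\rm can}_{h_D}$ and $M^U_{h_U,\eps'_j}\Rightarrow M^{\rm can}_{h_U}$ almost surely. Both constructions also converge in probability along any sequence, so I would pass to a common subsequence on which everything converges almost surely. To handle the boundary of $V$, test against $\psi\in C_c(V)$, or apply the argument on a slightly larger $V'\Subset U$. This gives
\[
 \int\psi\,dM^{\rm can}_{h_D}=\int\psi\,w_{D,U}\,dM^{\rm can}_{h_U},
\]
because $w_\eps\to w_{D,U}$ uniformly and the total masses are bounded. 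Taking a countable determining family of test functions $\psi$ yields \eqref{eq:interior-dirichlet-gmc-comparison}. The equality of null sets then follows from the two-sided bounds on the density.

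The main obstacle is Step 2, specifically the uniform-on-$V$ control of $(Q^D_\eps-Q^U_\eps)h_D$, because $D$ has no boundary regularity. The first approach is to express the difference through the strong Markov property at the exit time from $U$. This involves only the behaviour near $\partial U$, which is smooth, so no regularity of $\partial D$ is needed. Gaussian supremum bounds via Dudley, using smoothness of this kernel in $z\in V$, then control $\sup_V|r_\eps|$. If this route fails, a fallback is to invoke Shamov's approximation independence applied to the field $h_U+\chi_{D,U}$ on $V$, together with a Girsanov-type identity for a continuous additive shift.
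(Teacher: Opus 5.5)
Your proposal is correct, but it takes a different route from the paper. The paper never compares the heat-kernel approximants on $D$ and $U$. Instead it convolves $h_D$, $h_U$ and $\chi_{D,U}$ with a single mollifier $\omega_\delta$, where $\delta<\frac12\operatorname{dist}(V,\partial U)$. On $V$ the convolution only sees $U$, so the domain Markov decomposition passes exactly to the regularized fields. Independence then splits the variance exactly, and the mollified chaoses satisfy $M^{(\omega)}_{h_D,\delta}=w_\delta\, M^{(\omega)}_{h_U,\delta}$ on $V$ with no error term. Shamov's approximation independence is used only at the end, to identify the two mollified limits with $M^{\rm can}_{h_D}$ and $M^{\rm can}_{h_U}$.

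You instead keep the canonical heat-kernel regularization on both domains. This avoids introducing a second regularization scheme. The price is that you must control the nonlocal discrepancy $(Q^D_\eps-Q^U_\eps)h_D$ through exit-time estimates and a Gaussian supremum bound, and you must also control $Q^U_\eps\chi_{D,U}$. You still need convergence in probability along arbitrary $\eps\to0$ (established via Shamov in the proof of Lemma~\ref{lem:canonical-borel-gmc}) to reconcile the two fixed deterministic sequences; your common-subsequence step handles this correctly.

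Two points in your Step~2 can be tightened.
\begin{enumerate}
\item The variance normalization needs no cross-term analysis. By \eqref{eq:canonical-heat-covariance-comparison} and its $U$-analogue,
\[
 \mathbb E[(Q^D_\eps h_D)^2](z)-\mathbb E[(Q^U_\eps h_U)^2](z)
 =\int_{4\pi\eps}^\infty(q^D_t-q^U_t)(z,z)\,dt
 \longrightarrow G_D(z,z)-G_U(z,z),
\]
uniformly on $V$.
\item The field $\chi_{D,U}$ is unbounded near $\partial U$. So the convergence $Q^U_\eps\chi_{D,U}\to\chi_{D,U}$ on $V$ should explicitly use two facts: that $\chi_{D,U}\in L^1(U)$ almost surely, and a bound, uniform in $z\in V$, on the part of the killed kernel coming from paths that leave a neighbourhood $U'\Subset U$ of $\overline V$.
\end{enumerate}
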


\begin{proof}
The orthogonal decomposition
\[
 H_0^1(D)
 =H_0^1(U)\mathbin\oplus
   \{v\in H_0^1(D):\E(v,\phi)=0\text{ for every }\phi\in H_0^1(U)\}
\]
gives the domain Markov decomposition
\eqref{eq:gff-domain-markov-local-decomposition}
\cite[Section~2.6]{SheffieldGFF}.  Its harmonic part satisfies
\[
 \mathbb E[\chi_{D,U}(z)\chi_{D,U}(w)]
 =G_D(z,w)-G_U(z,w),
\]
so interior elliptic regularity gives a smooth version of
$\chi_{D,U}$ and a continuous diagonal variance on $U$.  Choose a
standard mollifier $\omega$.  For
$0<\delta<\tfrac12\operatorname{dist}(V,\partial U)$, convolve the three
fields in \eqref{eq:gff-domain-markov-local-decomposition} with the same
$\omega_\delta$.  Independence then gives the exact identity on $V$
\begin{align*}
 M_{h_D,\delta}^{(\omega)}(dz)
 &=\exp\!\left(
   \gamma\chi_{D,U,\delta}^{(\omega)}(z)
   -\frac{\gamma^2}{2}
    \mathbb E[(\chi_{D,U,\delta}^{(\omega)}(z))^2]
  \right)M_{h_U,\delta}^{(\omega)}(dz),\\
 \chi_{D,U,\delta}^{(\omega)}
 &\longrightarrow\chi_{D,U}
 \quad\text{locally uniformly on }U.
\end{align*}
Approximation independence
\cite[Corollary~18 and Theorem~25]{ShamovGMC} identifies the two
GMC limits with $M_{h_D}^{\rm can}$ and $M_{h_U}^{\rm can}$.  Letting
$\delta\downarrow0$ yields \eqref{eq:interior-dirichlet-gmc-comparison}, and
continuity on $\overline V$ gives
$w_{D,U},w_{D,U}^{-1}\in C_b(V)$.
\end{proof}

The localization argument also needs continuity of the massive--massless correction.

\begin{lemma}
\label{lem:massive-massless-continuous-correction}
Fix a disk $U\Subset\mathbb R^2$ and $m_0>0$.  Write $\Delta_U$ for the
Dirichlet Laplacian on $U$ and set $\Lzero^U:=-(2\pi)^{-1}\Delta_U$.  Consider
a centered Gaussian field $Z_U$ with covariance operator
\begin{equation}
 \Cov(Z_U)=C_U^{(m_0)}
 :=(\Lzero^U)^{-1}-(\Lzero^U+m_0^2)^{-1}
 =m_0^2(\Lzero^U)^{-1}
   (\Lzero^U+m_0^2)^{-1}.
 \label{eq:massive-massless-correction-covariance}
\end{equation}
Then $Z_U$ has a continuous version.  More precisely, for every $K\Subset U$
there is $C_{K,U,m_0}<\infty$ such that
\begin{equation}
 \mathbb E|Z_U(z)-Z_U(z')|^2
 \le C_{K,U,m_0}r^2
 \left(1+\log\frac1r\right),
 \qquad
 z,z'\in K,\quad 0<r:=|z-z'|\le\frac12.
 \label{eq:massive-massless-sharp-increment-bound}
\end{equation}
\end{lemma}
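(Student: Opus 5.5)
We would work with the integral kernel of the correction covariance. In terms of the killed heat kernel $q_t^U$ of $e^{-t\Lzero^U}$, the operator identity $(\Lzero^U)^{-1}-(\Lzero^U+m_0^2)^{-1}=\int_0^\infty(1-e^{-m_0^2t})e^{-t\Lzero^U}\,dt$ represents the covariance kernel as
\[
 c(z,w)=\int_0^\infty\bigl(1-e^{-m_0^2t}\bigr)q_t^U(z,w)\,dt .
\]
The weight $1-e^{-m_0^2t}\le m_0^2t$ kills the $t\to0$ singularity that produces the logarithm in $G_U$. Equivalently, $c=G_U-G_U^{(m_0)}$, where $G_U^{(m_0)}$ is the massive Dirichlet Green kernel. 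Both kernels carry the same $\log\frac1{|z-w|}$ singularity, and the difference solves $-\tfrac1{2\pi}\Delta_z c(\cdot,w)=m_0^2G_U^{(m_0)}(\cdot,w)$. The right-hand side lies in $L^p_{\rm loc}$ for every $p<\infty$ and has only a logarithmic singularity at $w$.

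The increment bound \eqref{eq:massive-massless-sharp-increment-bound} reduces to a regularity estimate for $c$:
\[
 \mathbb E|Z_U(z)-Z_U(z')|^2=c(z,z)+c(z',z')-2c(z,z').
\]
\begin{enumerate}
\item \emph{Regularity of $c$.} Write $c(\cdot,w)=m_0^2\int_U G_U(\cdot,y)G_U^{(m_0)}(y,w)\,dy$. A convolution of two logarithmic kernels is $C^1$ in each variable, with second derivatives that are only log-divergent. Concretely, the local model is $\Phi(z-w)$ with $\Phi(x)=c_1|x|^2\log|x|+(\text{smooth})$, and $|x|^2\log|x|$ is exactly the fundamental solution of $\Delta$ applied to $\log|x|$. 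This suggests an explicit decomposition on $K\times K$:
\[
 c(z,w)=\alpha\,|z-w|^2\log|z-w|+R(z,w),
\]
with $R\in C^{1,1}(K\times K)$, or at least $R\in C^2$ with bounded second derivatives near the diagonal. We would obtain it by subtracting the explicit singular part and applying interior elliptic regularity to the remainder, whose Laplacian is bounded, or Hölder/$C^{1}$.
\item \emph{Insert the decomposition.} The singular part contributes $-2\alpha r^2\log r=2\alpha r^2\log\frac1r$. For the remainder, the second difference $R(z,z)+R(z',z')-2R(z,z')$ is $O(r^2)$ by a Taylor expansion of the symmetric function $R$ around the diagonal point $(z,z)$: the first-order terms cancel by symmetry. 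Together these give $C r^2(1+\log\frac1r)$.
\item \emph{Continuity.} Since the increment variance is at most $r^{2-\epsilon}$, Kolmogorov–Chentsov for Gaussian fields (hypercontractivity upgrades second moments to all moments) gives a continuous version on each $K\Subset U$. Exhausting $U$ by countably many compacts gives a continuous version on $U$.
\end{enumerate}

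The main obstacle is making the near-diagonal structure of $c$ rigorous with controlled constants on $K$. The first approach I would try is the heat-kernel route. Split the $t$-integral at $t=r^2$:
\begin{itemize}
\item For $t\le r^2$, bound the increment crudely by $m_0^2t\cdot(q_t(z,z)+q_t(z',z')+2q_t(z,z'))\lesssim m_0^2 t\cdot t^{-1}$. Integrating over $t\le r^2$ gives $O(r^2)$.
\item For $r^2\le t\le1$, use interior Gaussian gradient bounds on the Dirichlet heat kernel over $K$, $|\nabla^2 q_t|\lesssim t^{-2}$ (plus the exponentially small boundary effect). This gives a second-difference bound $r^2t^{-2}$, and $\int_{r^2}^1 m_0^2t\cdot r^2t^{-2}\,dt=m_0^2r^2\log\frac1{r^2}$, which is exactly the claimed logarithm.
\item For $t\ge1$, exponential decay of $q_t^U$ together with smoothness gives $O(r^2)$.
\end{itemize}
The interior second-derivative bounds on $q_t^U$ for the disk follow from comparison with the planar heat kernel plus a smooth correction on compacts, which is standard since $U$ is a disk.
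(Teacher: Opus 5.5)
Your argument is correct, but it follows a different route from the paper's proof.

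\textbf{The paper's route.} The paper never passes to the heat kernel. It uses the factorization $C_U^{(m_0)}(z,w)=m_0^2\int_U G_U(z,\zeta)G_U^{(m_0)}(\zeta,w)\,d\zeta$ to write the increment variance as $m_0^2\int_U[G_U(z,w)-G_U(z',w)][G_U^{(m_0)}(w,z)-G_U^{(m_0)}(w,z')]\,dw$. The second-order cancellation thus becomes a product of two first differences. It then splits in space rather than in time:
\begin{itemize}
\item on $B(z,4r)\cup B(z',4r)$ the two logarithmic singularities give $O(r^2)$;
\item on each dyadic annulus $\{2^\ell r\le|w-z|<2^{\ell+1}r\}$ the gradient bound $|\nabla_zG|\lesssim 1+|z-w|^{-1}$ makes each factor $O(2^{-\ell})$, hence $O(r^2)$ per annulus and $O(r^2\log\frac1r)$ over the $O(\log\frac1r)$ annuli;
\item the range $r\ge r_K$ is handled by the trivial diagonal bound.
\end{itemize}

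\textbf{How your route compares.} Your subordination formula $\int_0^\infty(1-e^{-m_0^2t})q_t^U\,dt$ with the cut at $t=r^2$ is the time-domain version of the same bookkeeping: the annulus at scale $2^\ell r$ corresponds to $t\approx 4^\ell r^2$. What it buys is transparency. The logarithm appears directly as $\int_{r^2}^1dt/t$, and the weight $1-e^{-m_0^2t}\le m_0^2t$ shows at once why the mass removes the singularity. What it costs is interior Gaussian bounds on mixed second derivatives of the killed heat kernel, uniformly on a compact set containing the segment $[z,z']$ (automatic here because $U$ is convex). The paper needs only first-order elliptic estimates for two Green kernels. You can reduce to first derivatives as well, by writing the second difference of $q_t^U$ as $\|q_{t/2}^U(z,\cdot)-q_{t/2}^U(z',\cdot)\|_{L^2(U)}^2$; this mirrors the paper's product structure.

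\textbf{A caution on the decomposition route.} If you instead pursue $c=\alpha|z-w|^2\log|z-w|+R$, the second difference of $R$ is controlled by the mixed derivative $\nabla_z\nabla_wR$, not by $\nabla_z^2R$. An interior elliptic estimate in $z$ alone, uniform in $w$, does not supply this. You would need joint regularity in $(z,w)$, for instance from Schauder estimates for $\Delta_zR+\Delta_wR$ on $\mathbb R^4$, using that $G_U^{(m_0)}(z,w)-\log\frac{1}{|z-w|}$ is jointly H\"older-differentiable near the diagonal.

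The Kolmogorov step is identical in both proofs.
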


\begin{proof}
Let $G_U^{(m_0)}$ be the kernel of
$(\Lzero^U+m_0^2)^{-1}$.  Then
\[
 C_U^{(m_0)}(z,w)
 =m_0^2\int_U
 G_U(z,\zeta)G_U^{(m_0)}(\zeta,w)\,d\zeta
\]
and, by symmetry,
\begin{align*}
 \mathbb E|Z_U(z)-Z_U(z')|^2
 &=
 m_0^2\int_U
 [G_U(z,w)-G_U(z',w)]
 \notag\\
 &\hspace{29mm}\times
 [G_U^{(m_0)}(w,z)
  -G_U^{(m_0)}(w,z')]\,dw.
\end{align*}

Set
\[
 r_K:=\min\left\{\frac18,
        \frac1{16}\operatorname{dist}(K,\partial U)\right\}.
\]
The integral representation and the local logarithmic bound give
$\sup_{\zeta\in K}C_U^{(m_0)}(\zeta,\zeta)<\infty$.  Hence, for
$r_K\le r\le1/2$,
\[
 \mathbb E|Z_U(z)-Z_U(z')|^2
 \le4\sup_{\zeta\in K}C_U^{(m_0)}(\zeta,\zeta)
 \le C_{K,U,m_0}r^2\left(1+\log\frac1r\right).
\]
It remains to consider $0<r<r_K$.  Put
$A_r=B(z,4r)\cup B(z',4r)$.  The common logarithmic singularity and bounded
local regular parts give
\[
 \int_{A_r}
 |G_U(z,w)-G_U(z',w)|
 |G_U^{(m_0)}(w,z)-G_U^{(m_0)}(w,z')|\,dw
 \le
 C\int_0^{5r}\left(1+\log^+\frac r s\right)^2s\,ds
 \le Cr^2.
\]

On $U\setminus A_r$, interior gradient estimates on the dyadic annuli
\[
 A_\ell
 :=\{w:2^\ell r\le |w-z|<2^{\ell+1}r\},
 \qquad
 4r\le2^\ell r\le2\operatorname{diam}(U),
\]
give
\[
 |G_U(z,w)-G_U(z',w)|
 +|G_U^{(m_0)}(w,z)-G_U^{(m_0)}(w,z')|
 \le C\frac r{2^\ell r},
 \qquad w\in A_\ell.
\]
Here we used, for poles in a fixed neighborhood of $K$,
\[
 |\nabla_zG_U(z,w)|
 +|\nabla_zG_U^{(m_0)}(z,w)|
 \le C_{K,U,m_0}\left(1+\frac1{|z-w|}\right).
\]
Since $|A_\ell|\le C(2^\ell r)^2$,
\[
\begin{aligned}
 &\sum_\ell\int_{A_\ell}
 |G_U(z,w)-G_U(z',w)|
 |G_U^{(m_0)}(w,z)-G_U^{(m_0)}(w,z')|\,dw\\
 &\qquad\le Cr^2\sum_{\ell\le C\log(1/r)}1
 \le Cr^2\left(1+\log\frac1r\right).
\end{aligned}
\]

This proves \eqref{eq:massive-massless-sharp-increment-bound}.  Hence, for
every $\beta<1$ and every $q\ge2$,
\[
 \mathbb E|Z_U(z)-Z_U(z')|^q
 \le C_{K,\beta,q}|z-z'|^{q\beta}.
\]
Choose $q\beta>2$ and apply Kolmogorov's continuity theorem on each compact
$K\Subset U$.  A compact exhaustion gives a continuous version on $U$.
\end{proof}

For the trace-form construction, we need two concrete potential-theoretic
properties of $M_h$.  We first recall the notation.  For an open set
$G\subset O\subset\mathbb R^2$, set
\begin{equation*}
 \operatorname{Cap}_{1,O}(G)
 :=\inf\left\{
  \frac1{2\pi}\int_O|\nabla v|^2\,dz+\int_O|v|^2\,dz:
  v\in H_0^1(O),\ v\ge1\ \mathcal L^2\text{-a.e. on }G
 \right\}.
\end{equation*}
For a Borel set $A\subset O$, define
\begin{equation*}
 \operatorname{Cap}_{1,O}(A)
 :=\inf\left\{\operatorname{Cap}_{1,O}(G):
 A\subset G\subset O,\ G\text{ open}\right\},
\end{equation*}
with $H_0^1(\mathbb R^2)=H^1(\mathbb R^2)$.  A set of zero $1$-capacity is
called \emph{polar}; a property holds $\E$-quasi-everywhere (q.e.) if it fails
only on a polar set.  Thus the next lemma says that $M_h$ ignores polar sets
and that vanishing $M_h$-almost everywhere forces an $H_0^1(D)$ function to
vanish quasi-everywhere.  We prove these properties first on disks and then
transfer them to $D$ using the local comparison above.

\begin{lemma}
\label{lem:rough-domain-full-quasi-support}
Almost surely, $M_h(D)<\infty$ and, for every Borel set $A\subset D$,
\[
 \operatorname{Cap}_{1,D}(A)=0
 \quad\Longrightarrow\quad
 M_h(A)=0.
\]
Moreover, for every $u\in H_0^1(D)$,
\begin{equation}\label{eq:rough-domain-full-quasi-support-test}
 \widetilde u=0\quad M_h\text{-a.e.}
 \quad\Longrightarrow\quad
 u=0\quad\E\text{-q.e. on }D.
\end{equation}
Consequently, $M_h$ is a finite smooth measure for $(\E,H_0^1(D))$ and has
full $\E$-quasi-support.
\end{lemma}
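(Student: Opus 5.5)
The plan is to localize everything to small disks, where known planar results apply, and then patch with a countable cover. Finiteness of $M_h(D)$ is immediate: Lemma~\ref{lem:canonical-borel-gmc} produces $M^{\rm can}(x)\in\Mc_{\rm fin}^+(D)$ for every $x$. Both remaining assertions are local and are preserved under multiplication by a density $w$ with $w,w^{-1}\in C_b(V)$.

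Fix a countable family of disks $B\Subset D$ with rational centres and radii, each carrying a smooth connected $U$ with $\overline B\subset V\Subset U\Subset D$. By Lemma~\ref{lem:interior-dirichlet-gmc-comparison}, almost surely for all these countably many triples, $M_h|_V$ and $M_{h_U}^{\rm can}|_V$ are mutually absolutely continuous with bounded continuous densities. I would then compare further with the massive field on a disk $U$: $h_U$ equals the massive field plus the correction $Z_U$ of covariance \eqref{eq:massive-massless-correction-covariance}. By Lemma~\ref{lem:massive-massless-continuous-correction}, $Z_U$ has a continuous version, so the same exponential-density argument as in Lemma~\ref{lem:interior-dirichlet-gmc-comparison} again gives a continuous positive density on $V$. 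This reduces both claims to GMC of a massive (or whole-plane/torus) field on a disk, where the Liouville Brownian motion literature supplies the needed inputs.

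For the polar-set statement, I use the energy bound. Since $\gamma<2$, choose $0<\alpha<2-\gamma^2/2$. The standard two-point estimate, bounding by $\int\!\!\int |z-w|^{-\alpha-\gamma^2}\,dz\,dw$, gives
\[
\mathbb E\iint_{V\times V}|z-w|^{-\alpha}\,M(dz)\,M(dw)<\infty ,
\]
so almost surely $M|_V$ has finite Riesz $\alpha$-energy. A set $A$ with $\operatorname{Cap}_{1,D}(A)=0$ has zero logarithmic capacity locally, hence zero Riesz $\alpha$-capacity. A finite-energy measure charges no set of zero $\alpha$-capacity, by the Frostman/energy argument applied to a compact inner approximation of $A$. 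Hence $M_h(A\cap V)=0$, and summing over the countable cover gives $M_h(A)=0$. This proves that $M_h$ is a finite smooth measure in the strict sense.

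For \eqref{eq:rough-domain-full-quasi-support-test}, I would use Fukushima's identification of the quasi-support of a smooth measure with the fine support of its positive continuous additive functional. It then suffices to show that the PCAF $F_t=\int_0^t\one\,dM_h(B_s)$ of killed Brownian motion has fine support equal to all of $D$. Concretely, for every starting point $z\in D$, $F$ should be strictly positive on every nontrivial interval before exit, almost surely. On each disk this is the strict-increase property of the LBM clock proved by Garban--Rhodes--Vargas for every starting point. The property transfers through the continuous positive densities above and glues over the cover, since a Brownian path spends positive time near where it currently is.

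Given full fine support, suppose $\widetilde u=0$ $M_h$-a.e. Then $\widetilde u=0$ on the quasi-support, which is $D$ up to a polar set, so $u=0$ q.e. The main obstacle I expect is this third step. The strict-increase statement must hold simultaneously for all starting points and for the specific canonical version $M^{\rm can}$, not merely for an a.s.\ equivalent measure. I would first try to handle this by running the GRV argument directly on the countable disk cover with the continuous-density comparisons, checking that the exceptional null sets are chosen independently of $z$.
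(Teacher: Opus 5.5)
Your localization (the interior Dirichlet comparison, then the continuous massive--massless correction $Z_U$) is the same as the paper's. The polar-set step, however, has a genuine gap.

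The bound $\mathbb E\iint_{V\times V}|z-w|^{-\alpha}M(dz)M(dw)<\infty$ does not follow from the two-point estimate for $\alpha<2-\gamma^2/2$. The integral $\iint|z-w|^{-\alpha-\gamma^2}\,dz\,dw$ converges only when $\alpha<2-\gamma^2$. More seriously, for $\gamma\in[\sqrt2,2)$ the bound fails for every $\alpha\ge0$. On $V\times V$ the weight $|z-w|^{-\alpha}$ is bounded below, while $\mathbb E[M(V)^2]=\infty$ as soon as $\gamma^2\ge2$, because moments of order $q\ge4/\gamma^2$ diverge. As written, smoothness is therefore established only for $\gamma<\sqrt2$.

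The energy idea itself survives, since any finite logarithmic energy suffices. It just has to be fed by moments of order $p\in(1,4/\gamma^2)$ close to $1$:
\begin{itemize}
\item $\mathbb E[M(Q)^p]\lesssim\ell(Q)^{\xi_\gamma(p)}$ with $\xi_\gamma(p)>2$, combined with Borel--Cantelli over dyadic squares, gives a uniform Frostman bound $M(B(z,r))\le Cr^\beta$;
\item hence $\sup_z\int\log^+|z-w|^{-1}\,M(dw)<\infty$, so $M$ has finite Green energy and charges no polar set.
\end{itemize}
This is exactly the content of Lemmas~\ref{lem:gmc-frostman-log-potential} and~\ref{lem:frostman-green-smoothing}, whose proofs do not use the present lemma. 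Alternatively, since you already invoke the Garban--Rhodes--Vargas clock started from every point, the existence of a PCAF in the strict sense with Revuz measure $M$ gives strict smoothness directly. That is the property the paper imports from Shin.

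For full quasi-support your route is correct but genuinely differs from the paper's. The paper cites Shin for full quasi-support of the massive whole-plane measure. It transfers this through $c_VM_{\Phi^{(m_0)}}\le M_{h_U}^{\rm can}\le C_VM_{\Phi^{(m_0)}}$ and localizes with cutoffs $\theta_j$ and the capacity comparison \eqref{eq:localized-capacity-comparison}.

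Your argument combines two facts: the fine support of a PCAF is a quasi-support of its Revuz measure, and the clock increases strictly from every starting point. It is sound, and it makes the localization automatic, since $P_z(R=0)=1$ involves only the path near $z$. The price is that you must identify the GRV clock, reweighted by the continuous densities, with the PCAF of killed Brownian motion whose Revuz measure is $M_h$.

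The obstacle you anticipate does not arise. The lemma is an almost-sure statement, and $M^{\rm can}(h)$ equals the standard GMC almost surely by Lemma~\ref{lem:canonical-borel-gmc}. Polar-null sets and quasi-supports depend only on the measure, not on how it was constructed. The passage to a single Borel event is handled afterwards in Proposition~\ref{prop:borel-trace-form-event}.
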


\begin{proof}
We use repeatedly the following local comparison of capacities.  If
$O_1\subset O_2$, then zero extension and multiplication by a cutoff give
\begin{equation}\label{eq:localized-capacity-comparison}
 \begin{aligned}
 \operatorname{Cap}_{1,O_2}(A)
 &\le \operatorname{Cap}_{1,O_1}(A),\\
 \operatorname{Cap}_{1,O_1}(A)
 &\le C_{K,O_1,O_2}\operatorname{Cap}_{1,O_2}(A),
 \qquad A\subset K\Subset O_1.
 \end{aligned}
\end{equation}

\emph{The disk case.}
Fix a disk $U\Subset\mathbb R^2$ and $m_0>0$, and set
$\Lzero^{\mathbb R^2}:=-(2\pi)^{-1}\Delta_{\mathbb R^2}$.  Let
$\Phi^{(m_0)}$ be the whole-plane massive field with covariance
$(\Lzero^{\mathbb R^2}+m_0^2)^{-1}$.  Its Liouville measure is smooth in the
strict sense and has full quasi-support
\cite[Theorem~3.5 and Remark~3.7]{ShinLBMDirichlet}.

Let $\Phi_U^{(m_0)}$ be the Dirichlet massive field on $U$, with covariance
$(\Lzero^U+m_0^2)^{-1}$.  On a common probability space we may write
\[
 \Phi^{(m_0)}|_U=\Phi_U^{(m_0)}+\chi_U^{(m_0)},
 \qquad
 h_U=\Phi_U^{(m_0)}+Z_U,
\]
with independent corrections.  The field $\chi_U^{(m_0)}$ is
massive-harmonic, while $Z_U$ has a continuous version by
Lemma~\ref{lem:massive-massless-continuous-correction}.  Hence exact
factorization of smooth approximants and approximation independence
\cite[Corollary~18 and Theorem~25]{ShamovGMC} imply that, for every
$V\Subset U$, there are random constants $0<c_V\le C_V<\infty$ such that
\begin{equation}\label{eq:local-gmc-two-sided-comparison}
 c_V\,M_{\Phi^{(m_0)}}|_V
 \le M_{h_U}^{\rm can}|_V
 \le C_V\,M_{\Phi^{(m_0)}}|_V.
\end{equation}
This comparison transfers both required properties from the massive
Liouville measure to $M_{h_U}^{\rm can}$.

For full quasi-support, choose $K_j\Subset V_j\Subset U$ exhausting $U$ and
$\theta_j\in C_c^\infty(V_j)$ with $\theta_j=1$ on $K_j$.  If
$u\in H_0^1(U)$ and $\widetilde u=0$ $M_{h_U}^{\rm can}$-a.e., then
\begin{equation*}
 \begin{aligned}
 \widetilde{\theta_ju}=0\quad M_{\Phi^{(m_0)}}\text{-a.e.}
 &\Longrightarrow
 \theta_ju=0\quad\mathbb R^2\text{-q.e.}\\
 &\Longrightarrow
 u=0\quad U\text{-q.e. on }K_j.
 \end{aligned}
\end{equation*}
Here the first implication uses \eqref{eq:local-gmc-two-sided-comparison}
and full quasi-support of $M_{\Phi^{(m_0)}}$, while the second uses
\eqref{eq:localized-capacity-comparison}.  Since the $K_j$ exhaust $U$,
$M_{h_U}^{\rm can}$ has full quasi-support.

\medskip
For the polar-null property, let $A\subset U$ be polar.  Then
$A\cap K_j$ is $\mathbb R^2$-polar by
\eqref{eq:localized-capacity-comparison}.  Smoothness of
$M_{\Phi^{(m_0)}}$ and \eqref{eq:local-gmc-two-sided-comparison} give
\[
 M_{h_U}^{\rm can}(A\cap K_j)=0,
 \qquad j\ge1.
\]
Letting $j\to\infty$ gives $M_{h_U}^{\rm can}(A)=0$.

\medskip
\emph{Localization to $D$.}
Choose disks $U_j\Subset D$ and open sets $V_j\Subset U_j$ with
$D=\bigcup_{j\ge1}V_j$, together with
$\theta_j\in C_c^\infty(U_j)$ satisfying $\theta_j=1$ on $V_j$.
Lemma~\ref{lem:interior-dirichlet-gmc-comparison} and the disk case give,
for $u\in H_0^1(D)$ with $\widetilde u=0$ $M_h$-a.e.,
\begin{equation*}
 \begin{aligned}
 \widetilde{\theta_ju}=0\quad M_{h_{U_j}}^{\rm can}\text{-a.e.}
 &\Longrightarrow
 \theta_ju=0\quad U_j\text{-q.e.}\\
 &\Longrightarrow
 u=0\quad D\text{-q.e. on }V_j.
 \end{aligned}
\end{equation*}
The last implication follows from
$\operatorname{Cap}_{1,D}\le\operatorname{Cap}_{1,U_j}$.  Since the $V_j$
cover $D$, this proves \eqref{eq:rough-domain-full-quasi-support-test}.

Similarly, if $A\subset D$ is Borel and
$\operatorname{Cap}_{1,D}(A)=0$, then for every $j$,
\[
 \operatorname{Cap}_{1,U_j}(A\cap V_j)=0
 \Longrightarrow
 M_{h_{U_j}}^{\rm can}(A\cap V_j)=0
 \Longrightarrow
 M_h(A\cap V_j)=0.
\]
Here we use \eqref{eq:localized-capacity-comparison}, the disk case, and
Lemma~\ref{lem:interior-dirichlet-gmc-comparison}.  Since $D=\bigcup_jV_j$,
we obtain $M_h(A)=0$.

Finally, $M_h(D)<\infty$ follows from
Lemma~\ref{lem:canonical-borel-gmc}.  For a finite Radon measure, the
polar-null property gives smoothness, while
\eqref{eq:rough-domain-full-quasi-support-test} is precisely full
$\E$-quasi-support.
\end{proof}

Lemma~\ref{lem:rough-domain-full-quasi-support} gives the required trace
properties almost surely.  For the measurable spectral construction below, we
need them on one fixed Borel set of environments.  We record such a set and the
corresponding time-changed form explicitly.

\begin{proposition}
\label{prop:borel-trace-form-event}
There exists a Borel set $\Omega_{\rm tr}\subseteq\Omega_{\rm GMC}$ with
$\mathbb P(\Omega_{\rm tr})=1$ such that the conclusions of
Lemma~\ref{lem:rough-domain-full-quasi-support} hold for every
$x\in\Omega_{\rm tr}$, with $M_h$ replaced by $M_x$.  For each such $x$, the
killed Brownian motion time-changed by $M_x$ is associated on $L^2(M_x)$ with
the closed, densely defined Dirichlet form $(\E,\Vd_x)$, where
\begin{equation}\label{eq:trace-form-domain}
 \begin{aligned}
  \E(u,v)&=\frac1{2\pi}\int_D\nabla u\cdot\nabla v\,dz,
  \qquad u,v\in\Vd_x,\\
  \Vd_x&=\{u\in H_0^1(D):\widetilde u\in L^2(M_x)\}.
 \end{aligned}
\end{equation}
\end{proposition}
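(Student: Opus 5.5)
The plan has two parts: first make the almost-sure event of Lemma~\ref{lem:rough-domain-full-quasi-support} Borel, then identify the time-changed form using standard Dirichlet-form theory for each fixed good environment.

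\emph{Step 1 (the Borel event).} Lemma~\ref{lem:rough-domain-full-quasi-support} holds for $M_h=M^{\rm can}(h)$ on an event $\Omega_1$ of full probability, possibly only in the completed $\sigma$-field. Since $M^{\rm can}(x)=M_x$ is defined for every $x$ and $h(x)=x$, the conclusions of the lemma hold for $M_x$ at every $x\in\Omega_1$. By the convention \eqref{eq:borel-full-core-convention}, choose a Borel $\Omega_1^{\rm Bor}\subseteq\Omega_1$ of full measure, and set
\[
 \Omega_{\rm tr}:=\Omega_1^{\rm Bor}\cap\Omega_{\rm GMC}.
\]
This set is Borel by Lemma~\ref{lem:canonical-borel-gmc} and has probability one. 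No measurability of the capacity statements themselves is needed, because they are simply inherited pointwise on a Borel subset of $\Omega_1$. So for each $x\in\Omega_{\rm tr}$, $M_x$ is a finite measure, charges no $\E$-polar set, and has full $\E$-quasi-support.

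\emph{Step 2 (the form for fixed $x$).} Fix $x\in\Omega_{\rm tr}$. The form $(\E,H_0^1(D))$ on $L^2(D,\mathcal L^2)$ is the regular, transient Dirichlet form of killed Brownian motion, with the paper's normalization. Since $M_x$ is smooth (polar-null and finite), the time-change theory of Fukushima--Oshima--Takeda and Chen--Fukushima applies. The time-changed process lives on the quasi-support of $M_x$, which is all of $D$, and is associated on $L^2(M_x)$ with the trace form. I expect the main obstacle here: one must check that the trace domain reduces to \eqref{eq:trace-form-domain}. In general the trace domain is $\{\widetilde u|_F: u\in\mathcal F_e,\ \widetilde u\in L^2(M_x)\}$, where $F$ is the quasi-support, and its energy is that of the harmonic extension. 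Here $F=D$ q.e., so the harmonic extension off $F$ is trivial. Transience gives $\mathcal F_e=H_0^1(D)$, using boundedness of $D$ and Poincar\'e. Full quasi-support \eqref{eq:rough-domain-full-quasi-support-test} shows that $u\mapsto\widetilde u$ is injective modulo $M_x$-null sets, so $\Vd_x\subset L^2(M_x)$ is well defined and the trace energy equals $\E(u,u)$.

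\emph{Step 3 (closedness and density).} Closedness of $(\E,\Vd_x)$ on $L^2(M_x)$ then follows from the general trace theorem. Alternatively, I would argue directly. Take a sequence Cauchy in $\norm{\cdot}_{\E,M_x}$. It converges in $H_0^1(D)$, and by Poincar\'e the energy controls the norm. A subsequence of the quasi-continuous versions converges q.e., hence $M_x$-a.e. because $M_x$ charges no polar set. Fatou identifies this limit with the $L^2(M_x)$ limit.

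For density, note that $C_c^\infty(D)\subset\Vd_x$ because $M_x$ is finite. Every $\psi\in C_c(D)$ is a uniform limit of such functions, and uniform limits converge in $L^2(M_x)$ since $M_x$ is finite. Finally, $C_c(D)$ is dense in $L^2(M_x)$ for a finite Radon measure on $D$. The Markov property of the form is standard.
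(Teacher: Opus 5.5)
Your proposal is correct, and your Steps~2--3 agree with the paper. The paper simply records that the extended Dirichlet space of killed Brownian motion is $H_0^1(D)$ and applies the Fukushima--Oshima--Takeda time-change theorem in the full-quasi-support case. Your explicit reduction of the trace domain and your direct closedness and density checks are consistent with that citation, though not needed.

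The real difference is Step~1. You read the ``almost surely'' in Lemma~\ref{lem:rough-domain-full-quasi-support} as already giving a full-measure set in the completed $\sigma$-field of $\Xspace$, and you take a Borel core of it. The paper does not take this for granted.

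\begin{itemize}
\item The proof of that lemma runs through countably many auxiliary fields: the domain-Markov pieces $h_{U_j}$ and $\chi_{D,U_j}$, the massive fields $\Phi^{(m_0)}$, and the correction $Z_U$. So what it actually establishes is a Borel event $E\subseteq\Xspace\times Z_{\rm aux}$ of full measure under the joint law $\overline{\mathbb P}$.
\item Since the conclusions depend on $x$ alone, $A:=\operatorname{proj}_{\Xspace}E$ lies inside the good set.
\item $A$ is analytic, hence $\mathbb P$-measurable, and $\mathbb P(A)=\overline{\mathbb P}(A\times Z_{\rm aux})\ge\overline{\mathbb P}(E)=1$.
\item The paper then takes a Borel core of $\Omega_{\rm GMC}\cap A$.
\end{itemize}

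This descent from the coupled space is the actual content of the paper's proof, and it is not automatic. Projections of null sets need not be null, so one must argue through $E$ itself rather than through its complement. A disintegration $\overline{\mathbb P}(dx,dz)=\mathbb P(dx)K(x,dz)$ would also work, giving the Borel set $\{x:K(x,E_x)=1\}$ directly.

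Your version is shorter and logically fine if the lemma is accepted as an almost-sure statement on the canonical space. However, it leaves that transfer from the coupling space implicit.
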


\begin{proof}
Only countably many auxiliary fields, localizations, and comparison events are
used in the proof of Lemma~\ref{lem:rough-domain-full-quasi-support}.  Put them
on a standard Borel space $Z_{\rm aux}$ and let $\overline{\mathbb P}$ be their
joint law with the canonical field.  Their common full-probability event may be
chosen so that
\begin{equation*}
 E\in\mathcal B(\Xspace\times Z_{\rm aux}),\qquad
 \overline{\mathbb P}(E)=1,\qquad
 (\operatorname{pr}_{\Xspace})_\#\overline{\mathbb P}=\mathbb P.
\end{equation*}
Set $A:=\operatorname{proj}_{\Xspace}E$.  The set $A$ is analytic and hence
measurable for the completed law $\mathbb P$.  Since $E\subseteq A\times
Z_{\rm aux}$,
\begin{equation*}
 1=\overline{\mathbb P}(E)
 \le \overline{\mathbb P}(A\times Z_{\rm aux})
 =\mathbb P(A),
\end{equation*}
so $\mathbb P(A)=1$.  Therefore
\begin{equation*}
 \Omega_{\rm tr}^{\rm an}:=\Omega_{\rm GMC}\cap A
 \qquad\text{satisfies}\qquad
 \mathbb P(\Omega_{\rm tr}^{\rm an})=1.
\end{equation*}
Choose a Borel full core
$\Omega_{\rm tr}\subseteq\Omega_{\rm tr}^{\rm an}$ as in
\eqref{eq:borel-full-core-convention}.  For $x\in\Omega_{\rm tr}$ the section
$E_x$ is nonempty, and any point of that section gives the conclusions of
Lemma~\ref{lem:rough-domain-full-quasi-support} for $M_x$.  Finally, the
extended Dirichlet space of killed Brownian motion is
$\Vd_{\rm e}=H_0^1(D)$; the time-change
theorem \cite[Theorem~6.2.1 and equation~(6.2.22)]{FukushimaOshimaTakeda}
then gives \eqref{eq:trace-form-domain}.
\end{proof}

Fix $x\in\Omega_{\rm tr}$ and $f\in\Hreg$.  Along this Cameron--Martin
direction we use the exponential tilt
\begin{equation}\label{eq:coherent-line-gmc}
 M_{x,\tau}^{f}:=e^{\gamma\tau f}M_x,
 \qquad \tau\in\mathbb R.
\end{equation}
Since $f$ is bounded, the density $e^{\gamma\tau f}$ is bounded above and below
by positive constants.  Thus all measures in \eqref{eq:coherent-line-gmc} are
finite and mutually equivalent, and they have the same polar null sets and full
$\E$-quasi-support as $M_x$.  The remaining point is their coherence and their
relation to the canonical GMC.

\begin{corollary}
\label{cor:coherent-gmc-lines}
The family \eqref{eq:coherent-line-gmc} satisfies
\begin{equation*}
 M_{x,\tau}^{f}
 =e^{\gamma(\tau-r)f}M_{x,r}^{f},
 \qquad r,\tau\in\mathbb R.
\end{equation*}
For the coordinate GFF $h$ and every fixed $\tau\in\mathbb R$,
\begin{equation}\label{eq:canonical-coherent-fixed-parameter}
 M_{h,\tau}^{f}=M_{h+\tau f}
 \qquad\text{almost surely}.
\end{equation}
\end{corollary}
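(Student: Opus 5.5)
The first identity is a direct consequence of the definition \eqref{eq:coherent-line-gmc} and involves no probability. Since $f$ is bounded, $e^{\gamma\tau f}=e^{\gamma(\tau-r)f}e^{\gamma rf}$ holds pointwise on $D$ as a product of bounded positive Borel densities. Multiplying $M_x$ by a product of densities is the same as multiplying successively, so
\[
 e^{\gamma(\tau-r)f}M_{x,r}^f=e^{\gamma(\tau-r)f}e^{\gamma rf}M_x=e^{\gamma\tau f}M_x=M_{x,\tau}^f .
\]

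For the almost-sure identity \eqref{eq:canonical-coherent-fixed-parameter}, fix $\tau$ and set $g:=\tau f\in\Hreg=C_c^\infty(D)$; this is a linear space, so $g$ is an admissible direction. I will apply the pointwise orbit identity \eqref{eq:pointwise-gmc-orbit-compatibility} of Theorem~\ref{thm:gmc-cm-shift} with this $g$. It gives $M_{x+g}=e^{\gamma g}M_x=M_{x,\tau}^f$ whenever both $x$ and $x+g$ lie in $\Omega_{\rm GMC}$. So it suffices to show
\[
 \mathbb P\{x:\ x\in\Omega_{\rm GMC},\ x+g\in\Omega_{\rm GMC}\}=1 .
\]
The first condition holds with probability one by Lemma~\ref{lem:canonical-borel-gmc}. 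For the second, note that $\{x+g\in\Omega_{\rm GMC}\}=\Omega_{\rm GMC}-g$. The Cameron--Martin theorem, with $g\in\Hc=H_0^1(D)$, says the law of $h+g$ is equivalent to $\mathbb P$. Since $\Omega_{\rm GMC}^c$ is $\mathbb P$-null, it is also null for the law of $h+g$, that is, $\mathbb P(\Omega_{\rm GMC}-g)=1$. Intersecting the two full-measure Borel sets gives the claim.

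This is essentially the second half of Theorem~\ref{thm:gmc-cm-shift}, specialized to $g=\tau f$, and the only substantive point is the Cameron--Martin quasi-invariance of null sets. Two minor points need checking. First, $\Omega_{\rm GMC}-g$ must be Borel; it is, as a translate of a Borel set in $\Xspace$. Second, the identity is stated only for each fixed $\tau$. The exceptional null set depends on $\tau$, and this version deliberately does not assert a single null set valid for all $\tau$. That stronger coherence is exactly what the all-parameter coherent family of Lemma~\ref{lem:abstract-coherent-fiber} provides later, so no uncountable union is needed here.
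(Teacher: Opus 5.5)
Your proposal is correct and follows the paper's proof: the first identity is the same factorization $e^{\gamma\tau f}=e^{\gamma(\tau-r)f}e^{\gamma rf}$ of the densities. For the second identity, the paper simply invokes the almost-sure form \eqref{eq:gmc-cm-shift-g} of Theorem~\ref{thm:gmc-cm-shift} with $g=\tau f$. You instead unpack that citation, combining the pointwise identity \eqref{eq:pointwise-gmc-orbit-compatibility} with Cameron--Martin quasi-invariance of the null set $\Omega_{\rm GMC}^c$; this is exactly the ``Cameron--Martin equivalence'' step the paper uses to prove \eqref{eq:gmc-cm-shift-g}.
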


\begin{proof}
For $r,\tau\in\mathbb R$, the definition \eqref{eq:coherent-line-gmc} gives
\begin{equation*}
 M_{x,\tau}^{f}
 =e^{\gamma\tau f}M_x
 =e^{\gamma(\tau-r)f}\,e^{\gamma r f}M_x
 =e^{\gamma(\tau-r)f}M_{x,r}^{f},
\end{equation*}
which proves the coherence identity.  For the coordinate field $h$ and a fixed
$\tau$, Theorem~\ref{thm:gmc-cm-shift} applied with $g=\tau f$ gives
\begin{equation*}
 M_{h+\tau f}=e^{\gamma\tau f}M_h=M_{h,\tau}^{f}
 \qquad\text{almost surely}.
\end{equation*}
Thus \eqref{eq:canonical-coherent-fixed-parameter} holds for each fixed
$\tau$.  In other words, for every prescribed value of $\tau$ there is a
probability-one event on which the identity holds.  We do not require the same
event to work for all $\tau\in\mathbb R$ at once.
\end{proof}

Along the coherent path \eqref{eq:coherent-line-gmc}, let
$A_{x,\tau}^{f}$ denote the trace-form generator associated with
$M_{x,\tau}^{f}$.

\subsubsection{Frostman control and a Borel Green-smoothing bound}
\label{subsec:early-frostman-green-package}

We establish the Frostman and Green-smoothing estimates needed below.

\begin{lemma}
\label{lem:green-kernel-regularity-package}
There exists a constant
$C_D\ge1$ such that
\begin{equation}\label{eq:green-kernel-global-log-bound}
 0\le G_D(z,w)
 \le C_D+\log^+\frac1{|z-w|},
 \qquad z,w\in D.
\end{equation}
Away from the diagonal, the Green kernel is locally uniformly continuous in
its first variable.  More precisely, for every compact $K\Subset D$ and every
$r_\ast>0$,
\begin{equation}\label{eq:green-kernel-uniform-offdiagonal-continuity}
 \begin{aligned}
 \omega_{K,r_\ast}(\delta)
 &:=\sup_{\substack{z,z'\in K,\ |z-z'|\le\delta\\
                    w\in D,\ |z-w|\ge r_\ast,\ |z'-w|\ge r_\ast}}
    |G_D(z,w)-G_D(z',w)|,
    \qquad \delta>0,\\
 \omega_{K,r_\ast}(\delta)
 &\xrightarrow[\delta\downarrow0]{}0.
 \end{aligned}
\end{equation}
\end{lemma}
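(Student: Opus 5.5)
The plan is to compare $G_D$ with the Green kernel of a disk containing $D$, and then to use interior regularity of the harmonic function $G_D(\cdot,w)$ away from its pole.

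For the global bound \eqref{eq:green-kernel-global-log-bound}, fix an open disk $B\supset\overline D$ of radius $R$. With the normalization \eqref{eq:standing-green-normalization}, the Green kernel of $B$ is
\[
 G_B(z,w)=\log\frac{R|1-\bar w' z'|}{R|z-w|}
\]
in rescaled coordinates $z'=(z-c)/R$. Since $|1-\bar w'z'|\le2$ for $z,w\in B$, this gives
\[
 G_B(z,w)\le\log\frac{2R}{|z-w|}\le\log^+(2R)+\log^+\frac1{|z-w|}.
\]
Domain monotonicity then yields $0\le G_D\le G_B$. I would not use a maximum principle here, because $\partial D$ is irregular. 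Instead I would use the heat-kernel representation
\[
 G_D(z,w)=\int_0^\infty q_t^D(z,w)\,dt,
\]
together with the comparison $q_t^D\le q_t^B$ already invoked in \eqref{eq:canonical-heat-covariance-comparison}. This holds for a.e. $(z,w)$, and also for every pair once the symmetric version is fixed as the monotone limit $\lim_{\eps\to0}K_{\eps,D}$. If the jointly measurable version chosen earlier is arbitrary, I would first redefine it as this canonical limit, which is lower semicontinuous and smooth off the diagonal. That choice gives $C_D=1+\log^+(2R)$.

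For \eqref{eq:green-kernel-uniform-offdiagonal-continuity}, take $K\Subset D$ and $r_\ast>0$, and let $d_K=\operatorname{dist}(K,\partial D)$. Set $\rho=\tfrac12\min\{r_\ast,d_K\}$ and consider $\delta<\rho/2$. Fix $w\in D$ with $|z-w|,|z'-w|\ge r_\ast$. The segment $[z,z']$ lies in the $\delta$-neighbourhood of $K$. The function $u=G_D(\cdot,w)$ is nonnegative and harmonic on the disk $B(\zeta,\rho)$ for every $\zeta$ on the segment, since the pole satisfies $|\zeta-w|\ge r_\ast-\delta>\rho$ and $\rho<d_K-\delta$.

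The interior gradient estimate gives $|\nabla u(\zeta)|\le(2/\rho)\sup_{B(\zeta,\rho)}u$. On that disk the points $y$ satisfy $|y-w|\ge\rho/2$, so by the first part
\[
 \sup_{B(\zeta,\rho)}u\le C_D+\log^+(2/\rho).
\]
Integrating along the segment gives
\[
 \omega_{K,r_\ast}(\delta)\le\frac{2\delta}{\rho}\bigl(C_D+\log^+(2/\rho)\bigr),
\]
which tends to $0$ as $\delta\downarrow0$. For $w$ near $\partial D$ no boundary regularity is needed, because only interior disks around $K$ are used.

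I expect the main obstacle to be the pointwise (not merely a.e.) validity of $G_D\le G_B$ and the harmonicity of the chosen version off the diagonal. I would handle both by fixing the canonical monotone heat-kernel version explicitly. Smoothness of $q_t^D$ in the interior then makes $G_D(\cdot,w)$ genuinely harmonic on $D\setminus\{w\}$, by Weyl's lemma applied to $-\Delta_z G_D(\cdot,w)=2\pi\delta_w$.
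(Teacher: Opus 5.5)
Your proposal is correct and follows the paper's proof: the logarithmic bound comes from comparison with an explicit disk Green function, and the modulus estimate comes from the interior gradient estimate for the harmonic function $G_D(\cdot,w)$ on interior disks, whose supremum is controlled by that same bound, giving a Lipschitz modulus of order $\delta$. Fixing the canonical heat-kernel version explicitly, so that the bound and harmonicity hold pointwise (the paper takes this for granted), is a useful addition; note only that your displayed disk kernel should read $\log\frac{R|1-\bar w'z'|}{|z-w|}$, without $R$ in the denominator, which is the form your next inequality actually uses.
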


\begin{proof}
Choose $R>0$ with $\overline D\subset B(0,R/2)$.  Domain monotonicity and the
disk Green function give
\[
 0\le G_D(z,w)\le G_{B(0,R)}(z,w)
 =\log\frac{|R^2-z\overline w|}{R|z-w|}
 \le \log^+\frac{5R}{4|z-w|}
 \le C_D+\log^+\frac1{|z-w|}.
\]
This is \eqref{eq:green-kernel-global-log-bound}.

Fix $K\Subset D$ and $r_\ast>0$, and set
\[
 r_0:=\frac18\min\{r_\ast,\operatorname{dist}(K,\partial D)\}.
\]
For $z\in K$, $|z-w|\ge r_\ast$, and $\zeta\in B(z,4r_0)$,
\[
 |\zeta-w|\ge r_\ast/2,
 \qquad
 \Delta_\zeta G_D(\zeta,w)=0.
\]
The interior gradient estimate and \eqref{eq:green-kernel-global-log-bound} yield
\[
 \sup_{\zeta\in B(z,2r_0)}|\nabla_\zeta G_D(\zeta,w)|
 \le\frac{C}{r_0}\sup_{\zeta\in B(z,4r_0)}G_D(\zeta,w)
 \le C_{D,K,r_\ast}.
\]
Thus, whenever $|z-z'|\le r_0$ and
$|z-w|\wedge|z'-w|\ge r_\ast$,
\[
 |G_D(z,w)-G_D(z',w)|
 \le C_{D,K,r_\ast}|z-z'|,
\]
which implies \eqref{eq:green-kernel-uniform-offdiagonal-continuity}.
\end{proof}

The random input is a uniform small-ball estimate for $M_h$.

\begin{lemma}
\label{lem:gmc-frostman-log-potential}
There exist a deterministic $\alpha=\alpha(\gamma)>0$ and a Borel set
$\Omega_{\rm Fr}\subseteq\Omega_{\rm GMC}$ such that
\[
 \mathbb P(\Omega_{\rm Fr})=1.
\]
For every $x\in\Omega_{\rm Fr}$, there is $C_x<\infty$ with
\begin{equation}\label{eq:gmc-uniform-frostman}
 M_x(B_D(z,r))\le C_x r^\alpha,
 \qquad z\in D,
 \quad 0<r\le1,
 \qquad B_D(z,r):=D\cap B(z,r).
\end{equation}
\end{lemma}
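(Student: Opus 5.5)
The plan is the standard moment-plus-dyadic-covering argument for subcritical GMC, followed by the same Borel-core bookkeeping used in Proposition~\ref{prop:borel-trace-form-event}. First I will establish a deterministic small-ball moment bound for the canonical chaos. Fix $p\in(1,4/\gamma^2)$, so that $\zeta(p):=(2+\gamma^2/2)p-\gamma^2p^2/2>2$. I will prove
\[
 \mathbb E\bigl[M_h(B_D(z,r))^p\bigr]\le C_p\,r^{\zeta(p)},
 \qquad z\in D,\ 0<r\le1 .
\]
The route follows Lemma~\ref{lem:canonical-borel-gmc}. By \eqref{eq:canonical-exact-scale-covariance-comparison}, the heat-regularized covariances are dominated by the exact-scale kernel plus $C_0$, uniformly in $\eps$. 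Kahane's inequality then compares $M_{h,\eps}(B_D(z,r))$ with the exact-scale chaos of the ball $B(z,r)\subset S_0$. For the exact-scale chaos, the standard scaling relation \cite[Proposition~2.15 and Theorem~2.11]{RhodesVargasGMCReview} gives the multifractal bound $Cr^{\zeta(p)}$ uniformly in the cutoff. Fatou's lemma along $\eps_j$, together with narrow lower semicontinuity on open balls, transfers the bound to $M_h=M^{\rm can}(h)$.

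Second, I will pass to a uniform almost-sure statement by dyadic covering. For $n\ge0$, cover $D$ by the $O(4^n)$ open balls $B(c,2^{1-n})$ with centers $c$ on the grid $2^{-n}\mathbb Z^2$ meeting a neighborhood of $D$. Markov's inequality and a union bound give
\[
 \mathbb P\Bigl(\max_c M_h(B_D(c,2^{1-n}))>2^{-n\alpha}\Bigr)
 \le C4^n2^{n\alpha p}2^{-n\zeta(p)} .
\]
This is summable as soon as $0<\alpha<(\zeta(p)-2)/p$, and I fix such a deterministic $\alpha$. By Borel--Cantelli, almost surely the maximum is eventually at most $2^{-n\alpha}$. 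Any ball $B_D(z,r)$ with $2^{-n-1}<r\le2^{-n}$ is contained in one grid ball of radius $2^{1-n}$. The finitely many exceptional scales are absorbed into $C_x$ using $M_x(D)<\infty$. This yields \eqref{eq:gmc-uniform-frostman}.

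Third, for Borel measurability I will run the covering argument pointwise in $x$, for the fixed measurable map $x\mapsto M_x$. Define
\[
 \Omega_{\rm Fr}:=\Omega_{\rm GMC}\cap\bigcup_{N}\bigcap_{n\ge N}\Bigl\{x:\max_c M_x(B_D(c,2^{1-n}))\le2^{-n\alpha}\Bigr\}.
\]
Each $x\mapsto M_x(O)$ with $O$ open is Borel for the narrow structure, being lower semicontinuous. So $\Omega_{\rm Fr}$ is Borel, and it has full measure by the second step. Every $x\in\Omega_{\rm Fr}$ satisfies \eqref{eq:gmc-uniform-frostman} by the deterministic covering argument above.

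The main obstacle is the first step, namely the uniform moment bound in the rough-domain heat-kernel regularization. Near $\partial D$ the field is only dominated by the disk field, and it is Kahane's inequality together with the one-sided comparison \eqref{eq:canonical-exact-scale-covariance-comparison} that makes boundary balls harmless. The exponent $p>1$ must be used with the convex function $x\mapsto x^p$, for which Kahane applies in the needed direction. I expect this step to require the most care.
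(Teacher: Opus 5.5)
Your proposal is correct and follows essentially the same route as the paper: a Kahane comparison with the exact-scale chaos via \eqref{eq:canonical-exact-scale-covariance-comparison}, Fatou along $\eps_j$ to reach $M_h$, Markov's inequality with a union bound over $O(4^n)$ cells, $0<\alpha<(\zeta(p)-2)/p$ and Borel--Cantelli, and a directly Borel event built from measurable mass evaluations. The differences are cosmetic: you use grid balls of radius $2^{1-n}$ with the portmanteau bound on open sets where the paper uses dyadic squares with a continuous cutoff $\chi_Q$, and you use a $\liminf$-type event where the paper uses $\{C_{\rm Fr}<\infty\}$. One small adjustment is needed: take $S_0$ with a margin around $\overline D$, as the paper does, so that your comparison balls lie inside the exact-scale scaling region.
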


\begin{proof}
The massive whole-plane analogue, including uniformity in the center and in the
approximating sequence, is \cite[Theorem~2.2]{GarbanRhodesVargasLBM}.  For the
killed-heat zero-boundary setting on a rough domain, we need the same estimate
on a Borel full-probability event.

\emph{Uniform moment bound.}
Choose $p\in(1,4/\gamma^2)$ sufficiently close to $1$ that
\[
 \xi_\gamma(p)
 :=\left(2+\frac{\gamma^2}{2}\right)p
   -\frac{\gamma^2}{2}p^2>2,
\]
and fix $0<\alpha<(\xi_\gamma(p)-2)/p$.  Choose a square $S_0$ such that
$\operatorname{dist}(\overline D,\partial S_0)>4$.  Let $\Psi^{\rm ex}$ be
the exact-scale field with covariance
$K^{\rm ex}(z,w)=\log^+(T/|z-w|)$, where $T$ is large enough that $S_0$ lies in
its scaling region \cite[Proposition~2.15]{RhodesVargasGMCReview}, and let
$M_\gamma^{\rm ex}$ be its limiting chaos.  Lemma~\ref{lem:green-kernel-regularity-package}
gives $G_D\le K^{\rm ex}+C_0$ on $D\times D$.

Fix a square $Q$ with $Q\cap D\ne\varnothing$ and $\ell(Q)\le1$.  Choose
$\chi_Q\in C_c(S_0)$ with $0\le\chi_Q\le1$, equal to one on $Q$, and supported
in the concentric square $Q^+$ of side length $3\ell(Q)$.  The choice of $S_0$
ensures that $Q^+\Subset S_0$.  Kahane's convexity
inequality \cite[Theorem~2.1]{RhodesVargasGMCReview}, applied to
\eqref{eq:canonical-exact-scale-covariance-comparison}, gives
\[
 \mathbb E\!\left[\left(\int_D\chi_Q\,dM_{h,\eps}\right)^p\right]
 \le e^{\frac{\gamma^2}{2}C_0p(p-1)}
      \mathbb E\!\left[
       \left(\int_{S_0}\chi_Q\,dM_{\gamma,\delta(\eps)}^{\rm ex}\right)^p
      \right].
\]
Along the deterministic subsequence defining $M_h$, narrow
convergence applies to the continuous test function $\chi_Q$.  Fatou's lemma,
$L^p$ convergence of the exact-scale chaos, the positive-moment theorem, and
exact stochastic scaling \cite[Theorems~2.11 and~2.16]{RhodesVargasGMCReview}
therefore give
\begin{equation}\label{eq:gmc-dyadic-square-moment}
 \begin{aligned}
 \mathbb E\!\left[M_h(Q\cap D)^p\right]
 &\le
 e^{\frac{\gamma^2}{2}C_0p(p-1)}
 \mathbb E\!\left[M_\gamma^{\rm ex}(Q^+)^p\right]\\
 &\le C_p\,\ell(Q)^{\xi_\gamma(p)}.
 \end{aligned}
\end{equation}
The constant is independent of the position of $Q$.

\emph{A single Borel event.}
Put $Q_{n,k}:=2^{-n}(k+[0,1)^2)$ and let
\[
 \mathcal D_n
 :=\bigl\{Q_{n,k}:k\in\mathbb Z^2,\ Q_{n,k}\cap D\ne\varnothing\bigr\}.
\]
Since $D$ is bounded, $\#\mathcal D_n\le C_{\rm grid}(D)2^{2n}$.  Markov's inequality and
\eqref{eq:gmc-dyadic-square-moment} give
\[
 \mathbb P\!\left(
  \max_{Q\in\mathcal D_n}M_h(Q\cap D)>2^{-n\alpha}
 \right)
 \le C2^{-n(\xi_\gamma(p)-2-\alpha p)}.
\]
The exponent is positive by the choice of $\alpha$, so Borel--Cantelli gives
the desired bound for all sufficiently large $n$.  The remaining finitely many
scales are bounded by $M_h(D)<\infty$.  Consequently,
\begin{equation}\label{eq:borel-frostman-constant}
 C_{\rm Fr}(x)
 :=\sup_{n\ge0}2^{n\alpha}
  \max_{Q\in\mathcal D_n}M_x(Q\cap D)
 \in[0,\infty]
\end{equation}
and
\[
 \mathbb P\bigl(C_{\rm Fr}(h)<\infty\bigr)=1.
\]
Each measure evaluation in
\eqref{eq:borel-frostman-constant} is measurable, and the supremum is
countable.  Hence
\[
 C_{\rm Fr}:(\Xspace,\mathcal B(\Xspace))
 \longrightarrow([0,\infty],\mathcal B([0,\infty]))
\]
is measurable.  Thus
\begin{equation}\label{eq:borel-frostman-event}
 \Omega_{\rm Fr}
 :=\Omega_{\rm GMC}\cap\{x\in\Xspace:C_{\rm Fr}(x)<\infty\}
\end{equation}
satisfies
\[
 \Omega_{\rm Fr}\in\mathcal B(\Xspace),
 \qquad \mathbb P(\Omega_{\rm Fr})=1.
\]
If
$2^{-(n+1)}<r\le2^{-n}$, then $B_D(z,r)$ meets at most a deterministic number
$N_0$ of squares in $\mathcal D_n$.  Hence, for $x\in\Omega_{\rm Fr}$,
\[
 M_x(B_D(z,r))
 \le N_0C_{\rm Fr}(x)2^{-n\alpha}
 \le N_0 2^\alpha C_{\rm Fr}(x)r^\alpha.
\]
This is \eqref{eq:gmc-uniform-frostman} with
$C_x=N_0 2^\alpha C_{\rm Fr}(x)$.
\end{proof}

The Frostman bound controls the logarithmic Green singularity.

\begin{lemma}
\label{lem:frostman-green-smoothing}
Let $\mu$ be a finite positive Radon measure on $D$ satisfying, for some
$C_\mu<\infty$ and $\alpha>0$,
\begin{equation}\label{eq:deterministic-frostman}
 \mu(B_D(z,r))\le C_\mu r^\alpha,
 \qquad z\in D,
 \quad 0<r\le1.
\end{equation}
Then $\mu(\{z\})=0$ for every $z\in D$.  For every $p>0$,
\begin{equation}\label{eq:gmc-uniform-log-moment}
 L_p(\mu)
 :=\sup_{z\in D}\int_D
 \left(1+\log^+\frac1{|z-w|}\right)^p\mu(dw)<\infty.
\end{equation}
Moreover, for every compact $K\Subset D$,
\begin{equation}\label{eq:green-kernel-L2-continuity}
 \sup_{\substack{z,z'\in K\\|z-z'|\le\delta}}
 \int_D|G_D(z,w)-G_D(z',w)|^2\,\mu(dw)
 \xrightarrow[\delta\downarrow0]{}0.
\end{equation}
In particular,
\begin{equation}\label{eq:deterministic-green-square-bound}
 \sup_{z\in D}\int_DG_D(z,w)^2\,\mu(dw)
 \le C_D^2L_2(\mu)<\infty.
\end{equation}
\end{lemma}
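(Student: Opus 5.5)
The proof is a layer-cake argument on dyadic annuli, followed by a near/far splitting of the Green kernel that uses Lemma~\ref{lem:green-kernel-regularity-package}.

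\emph{Atoms and log-moments.} Letting $r\downarrow0$ in \eqref{eq:deterministic-frostman} gives $\mu(\{z\})\le\mu(B_D(z,r))\le C_\mu r^\alpha\to0$, so $\mu$ has no atoms. For \eqref{eq:gmc-uniform-log-moment}, fix $z\in D$ and split $D$ into the far region $\{|z-w|\ge1\}$ and the annuli $A_k:=\{2^{-k-1}\le|z-w|<2^{-k}\}$, $k\ge0$. On the far region the integrand equals $1$, so that part is at most $\mu(D)$. On $A_k$ the integrand is at most $(1+(k+1)\log2)^p$, and $\mu(A_k)\le C_\mu2^{-k\alpha}$. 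Hence
\[
 \int_D\Bigl(1+\log^+\tfrac1{|z-w|}\Bigr)^p\mu(dw)
 \le\mu(D)+C_\mu\sum_{k\ge0}(1+(k+1)\log2)^p2^{-k\alpha},
\]
which is finite and independent of $z$. The point $w=z$ carries no mass because there are no atoms.

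\emph{Square bound.} By \eqref{eq:green-kernel-global-log-bound} and $C_D\ge1$, we have $G_D(z,w)\le C_D(1+\log^+|z-w|^{-1})$. Squaring and integrating gives \eqref{eq:deterministic-green-square-bound} with constant $C_D^2L_2(\mu)$.

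\emph{$L^2$ continuity.} This is the main step. Fix $K\Subset D$ and $r_\ast\in(0,1]$, and take $\delta\le r_\ast$. For $z,z'\in K$ with $|z-z'|\le\delta$, split the integral over $w$ into the near set $N:=B_D(z,2r_\ast)$ and its complement.
\begin{itemize}[leftmargin=*]
\item On $D\setminus N$ we have $|z-w|\ge2r_\ast$ and $|z'-w|\ge r_\ast$. Estimate \eqref{eq:green-kernel-uniform-offdiagonal-continuity} then bounds that part by $\omega_{K,r_\ast}(\delta)^2\mu(D)$.
\item On $N$, use $|a-b|^2\le2a^2+2b^2$. Note that $N\subset B_D(z',3r_\ast)$. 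So it suffices to bound $\sup_{y\in D}\int_{B_D(y,3r_\ast)}G_D(y,w)^2\mu(dw)$ by a function $\epsilon(r_\ast)$ with $\epsilon(r_\ast)\to0$ as $r_\ast\to0$.
\end{itemize}
For the near part, apply Cauchy--Schwarz with the log-moment for $p=4$:
\[
 \int_{B_D(y,3r_\ast)}G_D(y,w)^2\mu(dw)
 \le C_D^2L_4(\mu)^{1/2}\,\mu(B_D(y,3r_\ast))^{1/2}
 \le C_D^2L_4(\mu)^{1/2}C_\mu^{1/2}(3r_\ast)^{\alpha/2}.
\]
Alternatively, one can repeat the annular sum restricted to $k\gtrsim\log_2(1/r_\ast)$.

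To conclude, first choose $r_\ast$ so that the near contribution is below $\eta/2$. Then choose $\delta$ so that $\omega_{K,r_\ast}(\delta)^2\mu(D)<\eta/2$. This yields \eqref{eq:green-kernel-L2-continuity}. The only delicate point is that the near-diagonal bound must be uniform in the center, and this is exactly what the uniform Frostman hypothesis supplies.
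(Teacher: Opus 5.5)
Your proof is correct and follows essentially the same route as the paper: dyadic annuli give the uniform log-moments, the bound $G_D\le C_D(1+\log^+|z-w|^{-1})$ gives the square estimate, and a near/far splitting gives the $L^2$ continuity, with the far part controlled by the off-diagonal modulus $\omega_{K,r_\ast}$. The only variation is in making the near-diagonal piece small: you use Cauchy--Schwarz with $L_4(\mu)$ and the Frostman bound, which gives the explicit rate $(3r_\ast)^{\alpha/2}$ but requires $r_\ast\le1/3$ rather than $r_\ast\le1$, whereas the paper uses the annular tail $\varepsilon_\mu(s)$.
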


\begin{proof}
The analogous potential and $L^2$-kernel bounds appear in
\cite[Proposition~2.3]{GarbanRhodesVargasLBM} and in the proof of
\cite[Proposition~5.2]{AndresKajinoHeatKernel}.  We include the deterministic
argument because the local $L^2(\mu)$-continuity below is needed to make the
subsequent supremum measurable.
Letting $r\downarrow0$ in \eqref{eq:deterministic-frostman} gives
$\mu(\{z\})=0$.  Put
\[
 \ell_z(w):=1+\log^+\frac1{|z-w|}.
\]
For
$A_k(z):=B_D(z,2^{-k})\setminus B_D(z,2^{-k-1})$ and $k\ge0$, one has
$\ell_z\le C(k+1)$ on $A_k(z)$.  Hence
\begin{align*}
 \int_D\ell_z(w)^p\,\mu(dw)
 &\le C\mu(D)+C\sum_{k\ge0}(k+1)^p\mu(B_D(z,2^{-k}))\\
 &\le C\mu(D)+CC_\mu\sum_{k\ge0}(k+1)^p2^{-\alpha k},
\end{align*}
uniformly in $z$.  This proves \eqref{eq:gmc-uniform-log-moment}.  The same
annular decomposition yields
\begin{equation*}
 \varepsilon_\mu(s)
 :=\sup_{z\in D}\int_{B_D(z,s)}\ell_z(w)^2\,\mu(dw)
 \xrightarrow[s\downarrow0]{}0.
\end{equation*}
Indeed, if $2^{-n-1}<s\le2^{-n}$, then
\[
 \varepsilon_\mu(s)
 \le CC_\mu\sum_{k\ge n-1}(k+1)^2 2^{-\alpha k}.
\]

Fix $K\Subset D$, $0<s<1/6$, and $z,z'\in K$ with $|z-z'|\le s$.  Set
$A=B_D(z,2s)\cup B_D(z',2s)$.  Since $|z-z'|\le s$, one has
$A\subset B_D(z,3s)$ and $A\subset B_D(z',3s)$.  Hence
\eqref{eq:green-kernel-global-log-bound} gives
\begin{align*}
 \int_A|G_D(z,w)-G_D(z',w)|^2\,\mu(dw)
 &\le2\int_A\bigl(G_D(z,w)^2+G_D(z',w)^2\bigr)\,\mu(dw)\\
 &\le C\varepsilon_\mu(3s).
\end{align*}
On $A^c$, both distances from $w$ to $z$ and $z'$ are at least $2s$, whence
\[
 \int_{A^c}|G_D(z,w)-G_D(z',w)|^2\,\mu(dw)
 \le \mu(D)\,\omega_{K,2s}(|z-z'|)^2.
\]
Consequently, whenever $0<\delta\le s$,
\begin{equation*}
 \sup_{\substack{z,z'\in K\\|z-z'|\le\delta}}
 \|G_D(z,\cdot)-G_D(z',\cdot)\|_{L^2(\mu)}^2
 \le C\varepsilon_\mu(3s)+\mu(D)\omega_{K,2s}(\delta)^2.
\end{equation*}
First let $\delta\downarrow0$ and then $s\downarrow0$.  This proves
\eqref{eq:green-kernel-L2-continuity};
\eqref{eq:deterministic-green-square-bound} follows from
\eqref{eq:green-kernel-global-log-bound} and
\eqref{eq:gmc-uniform-log-moment}.
\end{proof}

Fix an increasing compact exhaustion $D=\bigcup_{m\ge1}K_m$ and a countable
dense set $Z_m\subset K_m$ for each $m$.

\begin{lemma}
\label{lem:green-smoothing-constant}
Define
\begin{equation}\label{eq:green-smoothing-measurable-version}
 K_G(x)
 :=\sup_{m\ge1}\sup_{z\in Z_m}
 \int_DG_D(z,w)^2\,M_x(dw)\in[0,\infty].
\end{equation}
Then $K_G:\Xspace\to[0,\infty]$ is measurable and, for every
$x\in\Omega_{\rm Fr}$,
\begin{equation}\label{eq:green-smoothing-measurable-sup}
 K_G(x)
 =\sup_{z\in D}\int_DG_D(z,w)^2\,M_x(dw)<\infty.
\end{equation}
For such $x$, the function
\[
 F_x(z):=\int_DG_D(z,w)^2\,M_x(dw),\qquad z\in D,
\]
is continuous.
\end{lemma}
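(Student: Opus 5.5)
The proof has two parts: measurability of $K_G$, which is soft, and the identification of $K_G(x)$ with the full supremum together with continuity of $F_x$ on $\Omega_{\rm Fr}$, which is deterministic given the Frostman bound.

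\emph{Measurability.} Fix $z\in D$. The map $w\mapsto G_D(z,w)^2$ is a non-negative Borel function; it is jointly measurable by the choice of version in \eqref{eq:standing-green-normalization}. Since $x\mapsto M_x$ is Borel into $\Mc_{\rm fin}^+(D)$ with the narrow Borel structure (Lemma~\ref{lem:canonical-borel-gmc}), the map $x\mapsto\int\psi\,dM_x$ is measurable for every non-negative Borel $\psi$. This holds for $\psi\in C_b(D)$ by definition, and extends to all non-negative Borel $\psi$ by a monotone class argument applied to truncations $\psi\wedge n$. Hence each map $x\mapsto\int_D G_D(z,w)^2\,M_x(dw)$ is measurable into $[0,\infty]$. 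The index set $\bigcup_m Z_m$ is countable, so $K_G$ is a countable supremum of measurable maps and is therefore measurable.

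\emph{Continuity and identification.} Fix $x\in\Omega_{\rm Fr}$. By Lemma~\ref{lem:gmc-frostman-log-potential}, $\mu=M_x$ satisfies \eqref{eq:deterministic-frostman}, so Lemma~\ref{lem:frostman-green-smoothing} applies. From \eqref{eq:deterministic-green-square-bound}, $\sup_{z\in D}F_x(z)\le C_D^2L_2(M_x)<\infty$. For continuity, fix $z_0\in D$ and a compact $K\Subset D$ containing a neighborhood of $z_0$. For $z,z'\in K$, write $F_x(z)-F_x(z')=\langle g_z-g_{z'},\,g_z+g_{z'}\rangle_{L^2(M_x)}$ with $g_z=G_D(z,\cdot)$. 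Cauchy--Schwarz then gives
\[
 |F_x(z)-F_x(z')|\le\|g_z-g_{z'}\|_{L^2(M_x)}\bigl(2\sup_{y\in D}F_x(y)\bigr)^{1/2}.
\]
By \eqref{eq:green-kernel-L2-continuity} the right-hand side tends to $0$ uniformly as $|z-z'|\to0$ within $K$. Thus $F_x$ is continuous, in fact locally uniformly continuous.

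Each $Z_m$ is dense in $K_m$ and $D=\bigcup_m K_m$, so $\bigcup_m Z_m$ is dense in $D$. A continuous function's supremum over a dense subset equals its supremum over the whole set, so $K_G(x)=\sup_{z\in D}F_x(z)<\infty$, which is \eqref{eq:green-smoothing-measurable-sup}.

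\emph{Main point of care.} The delicate step is the measurability of $x\mapsto\int G_D(z,w)^2\,M_x(dw)$ for an unbounded, non-continuous integrand, since narrow Borel structure is defined through continuous test functions. I would handle it by noting that the narrow Borel $\sigma$-field on finite measures coincides with the $\sigma$-field generated by the evaluations $\mu\mapsto\mu(B)$ for Borel $B$. With that identification, integrals of non-negative Borel functions are measurable by simple-function approximation and monotone convergence.
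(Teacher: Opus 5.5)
Your proof is correct and follows the paper's own argument essentially step for step. Measurability comes from writing $K_G$ as a countable supremum of the measurable maps $x\mapsto\int_D G_D(z,w)^2\,M_x(dw)$. Continuity of $F_x$ on $\Omega_{\rm Fr}$ comes from Cauchy--Schwarz together with the local $L^2(M_x)$-continuity \eqref{eq:green-kernel-L2-continuity}. The identity of the suprema then follows from density of $\bigcup_m Z_m$ in $D$.

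The only slip is a harmless constant: the correct bound is $\|g_z+g_{z'}\|_{L^2(M_x)}\le 2\bigl(\sup_{y\in D}F_x(y)\bigr)^{1/2}$, not $\bigl(2\sup_{y\in D}F_x(y)\bigr)^{1/2}$, as taking $z=z'$ shows.
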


\begin{proof}
For fixed $z\in D$, the map
\[
 \nu\longmapsto\int_DG_D(z,w)^2\,\nu(dw):
 (\Mc_{\rm fin}^+(D),\mathcal B(\Mc_{\rm fin}^+(D)))
 \longrightarrow([0,\infty],\mathcal B([0,\infty]))
\]
is Borel measurable with respect to the narrow Borel structure.  After composition
with $x\mapsto M_x$, the countable supremum in
\eqref{eq:green-smoothing-measurable-version} gives the measurability of
$K_G$.

Fix $x\in\Omega_{\rm Fr}$.  Lemma~\ref{lem:frostman-green-smoothing} gives
$C_x^{G}:=\sup_{z\in D}F_x(z)<\infty$ and local
$L^2(M_x)$-continuity of $z\mapsto G_D(z,\cdot)$.  Therefore
\[
 |F_x(z)-F_x(z')|
 \le 2(C_x^{G})^{1/2}
 \|G_D(z,\cdot)-G_D(z',\cdot)\|_{L^2(M_x)}
 \longrightarrow0
\]
whenever $z'\to z$ within a fixed compact subset of $D$.  Thus $F_x$ is
continuous and, by density of $Z_m$ in $K_m$,
\[
 \sup_{z\in Z_m}F_x(z)
 =\sup_{z\in K_m}F_x(z).
\]
Taking the supremum over $m$ and using
$D=\bigcup_{m\ge1}K_m$ proves
\eqref{eq:green-smoothing-measurable-sup}.
\end{proof}

\subsection{Green potentials and measurable spectral objects}
\subsubsection{Finite Green-energy measures}

This subsection has three steps.  We first construct Green potentials for
finite-energy signed measures.  We then identify the resulting integral Green
operator with $A_x^{-1}$.  Finally, we place these operators and their spectra
in a measurable-field framework.

Recall the non-negative Dirichlet Green kernel $G_D$ associated with $\E$.  For
finite positive Radon measures $\mu_1$ and $\mu_2$ on $D$, set
\[
 I_D(\mu_1,\mu_2)
 :=\iint_{D^2}G_D(z,w)\,\mu_1(dz)\mu_2(dw)\in[0,\infty],
 \qquad
 I_D(\mu_1):=I_D(\mu_1,\mu_1).
\]
A finite signed Radon measure $\nu=\nu^+-\nu^-$ has finite Green energy if
\[
 I_D(\nu^+)+I_D(\nu^-)<\infty.
\]
We write $\ME$ for the class of all such measures.

The finite-energy extension is standard
\cite[Lemma~2.2.1(ii), Lemma~2.2.3, and
Theorem~2.2.5]{FukushimaOshimaTakeda}.  What remains specific here is to
identify that potential with the normalized Dirichlet Green kernel on a rough
domain and to fix the representatives used later.

\begin{lemma}
\label{lem:positive-finite-energy-potential}
For every finite positive Radon measure $\mu$ with $I_D(\mu)<\infty$, there is
a unique potential $\Pot\mu\in\Hc$ characterized by
\[
 \E(\Pot\mu,v)=\int_D\widetilde v\,d\mu,
 \qquad v\in\Hc.
\]
Moreover,
\[
 \E(\Pot\mu,\Pot\mu)=I_D(\mu),
\]
and its quasi-continuous representative is given by
\begin{equation}
 \widetilde{\Pot\mu}(z)
 =\int_DG_D(z,w)\,\mu(dw)
 \qquad\text{for $\E$-q.e. }z\in D.
 \label{eq:finite-energy-green-representation}
\end{equation}
\end{lemma}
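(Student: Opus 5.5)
The plan has three steps: build the potential abstractly from the energy hypothesis, compute its energy, and then identify its quasi-continuous representative with the Green integral. For the first step, the linear functional $\ell_\mu(v):=\int_D\widetilde v\,d\mu$ on $\Hc=H_0^1(D)$ should be well defined and bounded once $I_D(\mu)<\infty$. This is the standard measure-of-finite-energy statement \cite[Lemma~2.2.1(ii), Lemma~2.2.3, Theorem~2.2.5]{FukushimaOshimaTakeda}: because $\mu$ charges no polar set, $\ell_\mu$ does not depend on the quasi-continuous version chosen. To get the bound, I would first take $v=G_D\zeta$ with $\zeta\ge0$ smooth; Fubini and the symmetry of $G_D$ then give $\int v\,d\mu=\int(G_D\mu)\zeta\,dz$. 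Cauchy--Schwarz for the positive-definite kernel $G_D$, i.e. $(\int\!\!\int G_D\,\mu\,\zeta)^2\le I_D(\mu)\,I_D(\zeta\,dz)$, yields $|\ell_\mu(v)|\le I_D(\mu)^{1/2}\E(v,v)^{1/2}$ on a dense class. The extension to all of $\Hc$ is the quasi-continuity/capacity argument cited above. Riesz representation in $(\Hc,\E)$ then gives a unique $\Pot\mu$ with $\E(\Pot\mu,v)=\ell_\mu(v)$.

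The second step is the energy identity together with the Green representation. Put $g(z):=\int_DG_D(z,w)\,\mu(dw)$; this is lower semicontinuous, finite q.e., and satisfies $\int g\,d\mu=I_D(\mu)<\infty$. To show $g$ is a version of $\Pot\mu$, I would approximate $\mu$ by mollified measures $\mu_\delta=(\mu*\omega_\delta)\one_{D_\delta}$, or alternatively by $\mu$ restricted to compact sets and then smoothed. For $\mu_\delta\in L^2(D)$, the normalization \eqref{eq:standing-green-normalization} gives $\Pot\mu_\delta=\Lzero^{-1}\mu_\delta=G_D\mu_\delta$ directly, and these satisfy $\E(\Pot\mu_\delta,\Pot\mu_\delta)=I_D(\mu_\delta)$. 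By superharmonicity, $G_D\mu_\delta\to G_D\mu$ pointwise, and the energies $I_D(\mu_\delta)$ converge to $I_D(\mu)$; together these should give $\Pot\mu_\delta\to\Pot\mu$ in $\E$-norm. Then $\E(\Pot\mu,\Pot\mu)=\lim I_D(\mu_\delta)=I_D(\mu)$. A subsequence converges q.e. to $\widetilde{\Pot\mu}$ \cite{FukushimaOshimaTakeda}, while the pointwise limit is $g$, which proves \eqref{eq:finite-energy-green-representation}. The alternative route to the energy identity is to take $v=\Pot\mu$ and integrate its representative $g$ against $\mu$.

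I expect the main obstacle to be the rough boundary. The reason is that $G_D$ is defined only through the variational normalization, so the usual pointwise limits near $\partial D$ are unavailable. I would avoid pointwise boundary behaviour entirely. All identities will be checked in the interior against $C_c^\infty(D)$ test functions, and the approximation uses measures supported compactly in $D$ via exhaustion $\mu\one_{K_m}$ and monotone convergence in $I_D$. The $H_0^1$ membership comes from the abstract Riesz construction rather than from boundary values.
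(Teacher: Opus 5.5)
Your argument is essentially sound, but it follows a different route from the paper's. The paper smooths $\mu$ with the killed heat semigroup rather than by Euclidean mollification. With $G_t=\int_t^\infty q_s^D\,ds$ and $u_t=\int_DG_t(\cdot,w)\,\mu(dw)=\Lzero^{-1}(P_t^D\mu)$, positivity of $q_s^D$ gives $G_t\uparrow G_D$, and the semigroup identity gives $\E(u_t,u_s)=I_{t+s}(\mu)$. Hence $(u_t)$ is Cauchy in $H_0^1(D)$, and its limit is $\Pot\mu$. The energy identity is $\lim I_{2t}(\mu)$. The bound $|\int_Dv\,d\mu|\le I_D(\mu)^{1/2}\E(v,v)^{1/2}$ on $C_c^\infty(D)$ is read off afterwards and extended by the Fukushima--Oshima--Takeda lemmas. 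A q.e.\ convergent subsequence, together with $u_{t_n}\uparrow G_D\mu$ everywhere, then gives the Green representation. Because $P_t^D$ is intrinsic to $D$, this needs no exhaustion, no compact support, no superharmonic representative of $G_D(z,\cdot)$, and no prior knowledge that $G_D$ is positive definite.

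Your route is the classical potential-theoretic one. You use positive definiteness of $G_D$ as an input for the Riesz step. You then use Euclidean mollification with a radial mollifier, applied to $\mu\one_{K_m}$, plus superharmonicity of $G_D(z,\cdot)$, to get $G_D\mu_\delta\uparrow G_D\mu$ and $I_D(\mu_\delta)\uparrow I_D(\mu)$. This works, but positive definiteness on mixed measures $\mu-c\zeta\,dz$ is close to what is being proved. Your own approximation supplies it: $I_D(f)=\E(\Lzero^{-1}f,\Lzero^{-1}f)\ge0$ for $f=\mu_\delta-c\zeta$, then let $\delta\downarrow0$. So it is cleaner to build $\Pot\mu$ as the limit first and read the bound off afterwards, as the paper does.

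Three small repairs are needed.
\begin{enumerate}
\item The core bound must be for signed $\zeta$, not $\zeta\ge0$. Every $v\in C_c^\infty(D)$ equals $G_D(\Lzero v)$ with $\Lzero v$ signed, and $C_c^\infty(D)$ is the special core the cited lemmas require.
\item Pointwise convergence plus convergence of energies yields $\Pot\mu_\delta\to\Pot\mu$ in norm only after you add $\E(\Pot\mu_\delta,\Pot\mu)=\int_DG_D\mu_\delta\,d\mu\uparrow I_D(\mu)$ and the weak convergence $\Pot\mu_\delta\rightharpoonup\Pot\mu$.
\item The exhaustion step needs the estimate $\norm{\Pot(\mu\one_{K_n})-\Pot(\mu\one_{K_m})}_{\Hc}^2=I_D(\mu\one_{K_n\setminus K_m})\le I_D(\mu\one_{D\setminus K_m})\to0$.
\end{enumerate}
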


\begin{proof}
\emph{Semigroup approximation.}
Let $P_t^D=e^{-t\Lzero}$ and write $q_t^D$ for its kernel.  For $t>0$ set
\begin{align*}
 G_t(z,w)&:=\int_t^\infty q_s^D(z,w)\,ds,
 &u_t(z)&:=\int_DG_t(z,w)\,\mu(dw),\\
 I_t(\mu)&:=\iint_{D^2}G_t(z,w)\,\mu(dz)\mu(dw).
\end{align*}
Heat-kernel domination for small times and the spectral gap for large times
make $G_t$ bounded for every $t>0$.  Equivalently,
if
\[
 g_t(z):=\int_Dq_t^D(z,w)\,\mu(dw),
\]
then $g_t\in L^\infty(D)\subset L^2(D,\mathcal L^2)$ and, by Tonelli's
theorem and the semigroup identity,
\[
 \Lzero^{-1}g_t(z)
 =\int_0^\infty P_s^Dg_t(z)\,ds
 =\int_t^\infty\!\int_Dq_r^D(z,w)\,\mu(dw)\,dr
 =u_t(z).
\]
Thus $u_t\in\Dom(\Lzero)$.  Interior regularity identifies the kernel
integral above with the continuous, hence quasi-continuous, representative of
$u_t$.  For $v\in C_c^\infty(D)$, symmetry and Tonelli's theorem give
\begin{equation}\label{eq:finite-energy-semigroup-core-pairing}
 \E(u_t,v)=\int_Dg_tv\,d\mathcal L^2
 =\int_DP_t^Dv\,d\mu.
\end{equation}
Moreover, for $s,t>0$,
\begin{equation}\label{eq:finite-energy-semigroup-energy}
 \E(u_t,u_s)
 =\int_Dg_tu_s\,d\mathcal L^2
 =I_{t+s}(\mu).
\end{equation}
Since $G_t\uparrow G_D$ as $t\downarrow0$,
$I_t(\mu)\uparrow I_D(\mu)$.  Hence
\[
 \|u_t-u_s\|_{\Hc}^2
 =I_{2t}(\mu)+I_{2s}(\mu)-2I_{t+s}(\mu)
 \longrightarrow0,
 \qquad s,t\downarrow0.
\]
\emph{Passage to the limit and Green representation.}
Let $u$ be the resulting limit in $H_0^1(D)$.  If
$v\in C_c^\infty(D)$, then
\[
 \|P_t^Dv-v\|_\infty\le t\|\Lzero v\|_\infty,
\]
and \eqref{eq:finite-energy-semigroup-core-pairing} yields
\begin{equation}\label{eq:finite-energy-core-bound}
 \E(u,v)=\int_Dv\,d\mu,
 \qquad
 \left|\int_Dv\,d\mu\right|
 \le I_D(\mu)^{1/2}\E(v,v)^{1/2}.
\end{equation}
Since $D$ is bounded, Poincar\'e's inequality makes the form transient and
identifies its extended Dirichlet space with $H_0^1(D)$.  The special-core
criterion \cite[Lemma~2.2.1(ii)]{FukushimaOshimaTakeda} extends
\eqref{eq:finite-energy-core-bound} from $C_c^\infty(D)$ to finite $0$-order
energy.  The $0$-order potential theorem and the polar-set criterion
\cite[Lemma~2.2.3 and Theorem~2.2.5]{FukushimaOshimaTakeda} then yield
\[
 \E(u,v)=\int_D\widetilde v\,d\mu,
 \qquad v\in H_0^1(D).
\]
Boundedness of $D$ also makes the $0$- and $1$-order capacities equivalent.

Choose $t_n\downarrow0$ and pass to a subsequence, not relabelled.  Strong
convergence in $H_0^1(D)$, hence in the $\E_1$-norm by Poincar\'e's inequality,
gives $u_{t_n}\to\widetilde u$ quasi-everywhere.  On the other hand, monotone
convergence of the continuous kernel representatives gives, for every $z\in D$,
\[
 u_{t_n}(z)\uparrow\int_DG_D(z,w)\,\mu(dw).
\]
Thus \eqref{eq:finite-energy-green-representation} holds with
$\Pot\mu=u$.  Finally,
\[
 \E(\Pot\mu,\Pot\mu)
 =\lim_{t\downarrow0}\E(u_t,u_t)
 =\lim_{t\downarrow0}I_{2t}(\mu)=I_D(\mu),
\]
and uniqueness follows by testing the difference of two solutions against
itself.
\end{proof}

For signed measures for which the Jordan-part integrals are finite, extend the
Green energy bilinearly by
\[
 \begin{aligned}
 I_D(\nu_1,\nu_2)
 &:=I_D(\nu_1^+,\nu_2^+)
   -I_D(\nu_1^+,\nu_2^-)
   -I_D(\nu_1^-,\nu_2^+)
   +I_D(\nu_1^-,\nu_2^-),\\
 I_D(\nu_j)&:=I_D(\nu_j,\nu_j),
 \qquad j=1,2.
 \end{aligned}
\]

\begin{lemma}
\label{lem:finite-energy-potential}
Let $\nu,\eta\in\ME$.  Then $I_D(|\nu|,|\eta|)<\infty$, so the mutual Green
energy $I_D(\nu,\eta)$ is well defined.  For each
$\sigma\in\{\nu,\eta\}$ there is a unique $\Pot\sigma\in\Hc$ characterized by
\begin{equation}
 \E(\Pot\sigma,v)=\int_D\widetilde v\,d\sigma,
 \qquad v\in\Hc.
 \label{eq:finite-energy-potential-pairing}
\end{equation}
Its quasi-continuous representative is
\begin{equation}
 \widetilde{\Pot\sigma}(z)
 =\int_DG_D(z,w)\,\sigma(dw)
 \qquad\text{for $\E$-q.e. }z\in D,
 \label{eq:signed-finite-energy-green-representation}
\end{equation}
where the signed integral is taken through the Jordan decomposition.  In
addition,
\begin{align}
 I_D(\nu,\eta)
 &=\E(\Pot\nu,\Pot\eta),
 \label{eq:green-mutual-energy}\\
 |I_D(\nu,\eta)|
 &\le I_D(\nu)^{1/2}I_D(\eta)^{1/2},
 \label{eq:green-energy-cs}\\
 \|\Pot\nu\|_{\Hc}^2
 &=I_D(\nu),
 \qquad
 \langle\Pot\nu,f\rangle_{\Hc}=\int_D\widetilde f\,d\nu,
 \quad f\in\Hc.
 \label{eq:response-potential-energy-identity}
\end{align}
\end{lemma}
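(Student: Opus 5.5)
The plan is to reduce everything to the positive case, Lemma~\ref{lem:positive-finite-energy-potential}, by working with the Jordan parts. Write $\nu=\nu^+-\nu^-$ and $\eta=\eta^+-\eta^-$. Each part is a finite positive Radon measure with $I_D(\nu^\pm)<\infty$ and $I_D(\eta^\pm)<\infty$. So Lemma~\ref{lem:positive-finite-energy-potential} supplies potentials $\Pot\nu^\pm,\Pot\eta^\pm\in\Hc$. These satisfy the pairing identity, the energy identity $\E(\Pot\mu,\Pot\mu)=I_D(\mu)$, and the Green representation quasi-everywhere.

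\textbf{Mutual energy of positive measures.} The first step is the identity $I_D(\mu_1,\mu_2)=\E(\Pot\mu_1,\Pot\mu_2)$ for positive finite-energy $\mu_1,\mu_2$.
\begin{itemize}
\item Apply the pairing of $\Pot\mu_1$ to $v=\Pot\mu_2$. This gives $\E(\Pot\mu_1,\Pot\mu_2)=\int\widetilde{\Pot\mu_2}\,d\mu_1$.
\item Substitute the representation \eqref{eq:finite-energy-green-representation}, which holds off a polar set. That polar set is $\mu_1$-null, because finite-energy positive measures charge no polar set (this is part of the FOT theory invoked in Lemma~\ref{lem:positive-finite-energy-potential}).
\item Tonelli and nonnegativity of $G_D$ then give $\int\!\int G_D\,d\mu_2\,d\mu_1=I_D(\mu_1,\mu_2)$.
\end{itemize}
Cauchy--Schwarz for $\E$ yields $I_D(\mu_1,\mu_2)\le I_D(\mu_1)^{1/2}I_D(\mu_2)^{1/2}<\infty$. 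Since $|\nu|=\nu^++\nu^-$, expanding bilinearly over the four Jordan pairs gives $I_D(|\nu|,|\eta|)<\infty$. This proves the first claim.

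\textbf{The signed potential.} Define $\Pot\sigma:=\Pot\sigma^+-\Pot\sigma^-$.
\begin{itemize}
\item The pairing identity \eqref{eq:finite-energy-potential-pairing} follows by linearity. Each integral $\int\widetilde v\,d\sigma^\pm$ is finite and well defined because $\sigma^\pm$ does not charge polar sets.
\item Uniqueness follows by testing the difference of two solutions against itself.
\item For the representation \eqref{eq:signed-finite-energy-green-representation}, the quasi-continuous version is $\widetilde{\Pot\sigma^+}-\widetilde{\Pot\sigma^-}$. Off the union of two polar sets both Green integrals are finite, so their difference is the Jordan-decomposed signed integral.
\end{itemize}

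\textbf{Energy identities.} Identity \eqref{eq:green-mutual-energy} follows by expanding $\E(\Pot\nu^+-\Pot\nu^-,\Pot\eta^+-\Pot\eta^-)$ into four terms and applying the positive identity to each. This matches the bilinear definition of $I_D(\nu,\eta)$. The inequality \eqref{eq:green-energy-cs} is then Cauchy--Schwarz in $\Hc$, since $\E(\Pot\nu,\Pot\nu)=I_D(\nu)$. The last display \eqref{eq:response-potential-energy-identity} is the case $\eta=\nu$, together with the pairing identity using $\langle\cdot,\cdot\rangle_\Hc=\E$.

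\textbf{Main obstacle.} The step I expect to need care is that finite-energy positive measures charge no $\E$-polar set, which is what makes $\int\widetilde v\,d\sigma$ independent of the quasi-continuous version. I would take it directly from \cite[Lemma~2.2.3]{FukushimaOshimaTakeda}, as already used in Lemma~\ref{lem:positive-finite-energy-potential}. The remaining care is that $I_D(\nu)$ as defined bilinearly is finite, which the cross-term bound above supplies.
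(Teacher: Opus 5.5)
Your proposal is correct and follows the paper's proof essentially step for step. Like the paper, you prove $I_D(\mu_1,\mu_2)=\E(\Pot\mu_1,\Pot\mu_2)$ for positive finite-energy measures from the pairing, the quasi-everywhere Green representation, and the fact that such measures charge no polar set; you bound $I_D(|\nu|,|\eta|)$ by Cauchy--Schwarz on the Jordan parts, set $\Pot\sigma=\Pot\sigma^+-\Pot\sigma^-$, and get the rest by bilinearity and Hilbert-space Cauchy--Schwarz. You are also more explicit than the paper that both Green integrals are finite off a polar set, which the signed representation needs.
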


\begin{proof}
For positive $\mu_1,\mu_2$ of finite self-energy,
Lemma~\ref{lem:positive-finite-energy-potential} shows that $\mu_2$ charges no
polar set.  Its Green representation and potential pairing therefore give
\begin{equation}\label{eq:positive-mutual-energy-pairing}
 \begin{aligned}
 I_D(\mu_1,\mu_2)
 &=\int_D\widetilde{\Pot\mu_1}\,d\mu_2
 =\E(\Pot\mu_1,\Pot\mu_2),\\
 I_D(\mu_1,\mu_2)
 &\le I_D(\mu_1)^{1/2}I_D(\mu_2)^{1/2}.
 \end{aligned}
\end{equation}
Applying this estimate to the Jordan parts gives
\[
 I_D(|\nu|,|\eta|)
 \le
 \bigl(I_D(\nu^+)^{1/2}+I_D(\nu^-)^{1/2}\bigr)
 \bigl(I_D(\eta^+)^{1/2}+I_D(\eta^-)^{1/2}\bigr)
 <\infty.
\]
For $\sigma\in\{\nu,\eta\}$, define
$\Pot\sigma:=\Pot(\sigma^+)-\Pot(\sigma^-)$.  Subtracting the positive
pairings and Green representations gives
\eqref{eq:finite-energy-potential-pairing} and
\eqref{eq:signed-finite-energy-green-representation}.  Bilinearity in
\eqref{eq:positive-mutual-energy-pairing} gives
\[
 I_D(\nu,\eta)
 =\E(\Pot\nu,\Pot\eta),
 \qquad
 I_D(\nu)=\|\Pot\nu\|_{\Hc}^2\ge0.
\]
Hilbert-space Cauchy--Schwarz now gives
\[
 |I_D(\nu,\eta)|
 \le I_D(\nu)^{1/2}I_D(\eta)^{1/2},
\]
which proves \eqref{eq:green-mutual-energy}--
\eqref{eq:response-potential-energy-identity}.
If $V\in\Hc$ also satisfies
\eqref{eq:finite-energy-potential-pairing}, then
\[
 \E(\Pot\sigma-V,v)=0\quad(v\in\Hc),
 \qquad v=\Pot\sigma-V
 \quad\Longrightarrow\quad V=\Pot\sigma.
\]
\end{proof}

The potential representation makes the signed Green energy positive definite.

\begin{lemma}
\label{lem:green-energy-strict-positive}
Every nonzero $\nu\in\ME$ satisfies
\begin{equation}\label{eq:strict-positive-green-energy}
        I_D(\nu)>0.
\end{equation}
\end{lemma}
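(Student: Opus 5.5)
The plan is to deduce strict positivity from the potential representation in Lemma~\ref{lem:finite-energy-potential}. That lemma gives $I_D(\nu)=\|\Pot\nu\|_{\Hc}^2\ge0$. So it suffices to show that $I_D(\nu)=0$ forces $\nu=0$, and we argue by contraposition.

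Suppose $I_D(\nu)=0$. Then $\Pot\nu=0$ in $\Hc=H_0^1(D)$. The pairing identity \eqref{eq:finite-energy-potential-pairing} then gives
\[
 \int_D\widetilde v\,d\nu=\E(\Pot\nu,v)=0
 \qquad\text{for every }v\in\Hc .
\]
We specialize to $v\in C_c^\infty(D)\subset\Hc$. Such $v$ are continuous, so $v$ itself is the quasi-continuous version $\widetilde v$. Hence $\int_Dv\,d\nu=0$ for all $v\in C_c^\infty(D)$.

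To conclude, we use that $C_c^\infty(D)$ is dense in $C_c(D)$ in the uniform norm, via mollification. Since $\nu$ is a finite signed Radon measure, $v\mapsto\int v\,d\nu$ is continuous for the uniform norm. Therefore $\int_Dv\,d\nu=0$ for every $v\in C_c(D)$. The Riesz representation theorem, in its uniqueness part for signed Radon measures on the locally compact space $D$, then gives $\nu=0$.

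The only point that needs care is the identification of $\widetilde v$ with $v$ for smooth test functions. This holds because continuous functions in $H_0^1(D)$ are quasi-continuous, and quasi-continuous versions are unique q.e. Moreover, $|\nu|$ charges no polar set, since the finite-energy parts $\nu^\pm$ are smooth; this was noted in the proof of Lemma~\ref{lem:finite-energy-potential}. Consequently the integral $\int\widetilde v\,d\nu$ does not depend on the chosen version, and the argument above goes through without further obstacle.
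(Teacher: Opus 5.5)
Your proposal is correct and matches the paper's proof: the identity $I_D(\nu)=\|\Pot\nu\|_{\Hc}^2$ forces $\Pot\nu=0$, so $\nu$ annihilates $C_c^\infty(D)$ and hence vanishes. The only difference is that you spell out the uniform-density/Riesz uniqueness step and the identification $\widetilde v=v$ for smooth $v$, both of which the paper leaves implicit.
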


\begin{proof}
If $I_D(\nu)=0$, then
\eqref{eq:response-potential-energy-identity} gives $\Pot\nu=0$ in $\Hc$.
Hence, for every $\phi\in C_c^\infty(D)$,
\[
 \int_D\phi\,d\nu
 =\langle\Pot\nu,\phi\rangle_{\Hc}=0.
\]
Thus $\nu=0$ as a distribution and therefore as a finite signed Radon measure.
\end{proof}

\subsubsection{The Liouville Green operator}

For $x\in\Omega_{\rm tr}$, let $A_x$ be the non-negative self-adjoint generator
of the time-changed form from Proposition~\ref{prop:borel-trace-form-event}; thus
\begin{equation}\label{eq:liouville-form-domain}
 \Vd_x=\{u\in H_0^1(D):\widetilde u\in L^2(M_x)\},
 \qquad
 \E(u,v)=\frac1{2\pi}\int_D\nabla u\cdot\nabla v\,dz.
\end{equation}

The Green-resolvent representation and its Hilbert--Schmidt consequences are
standard in LBM; see
\cite[(3.1), Lemma~3.1, and (3.5)--(3.6)]{MaillardRhodesVargasZeitouniHeatKernel},
\cite[Lemma~2.4, Definition~2.5, Corollary~2.7, and
Lemma~2.8]{BerestyckiSpectralGeometry}, and the proof of
\cite[Proposition~5.2]{AndresKajinoHeatKernel}.  Those references treat the
standard whole-plane, torus, compact-surface, or regular-domain models.  Here
we also need the inverse on a rough Dirichlet domain and a realization
measurable in the canonical GMC field.

Full support makes $L^2(M_x)$ infinite-dimensional, a fact used below to
obtain infinitely many positive Green eigenvalues.

\begin{lemma}
\label{lem:gmc-full-support}
For every $x\in\Omega_{\rm tr}$,
\begin{equation}\label{eq:gmc-full-support}
        M_x(O)>0
        \qquad\text{for every non-empty open }O\subset D.
\end{equation}
\end{lemma}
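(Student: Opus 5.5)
The plan is to derive full support from the full $\E$-quasi-support property \eqref{eq:rough-domain-full-quasi-support-test}, which holds for $M_x$ for every $x\in\Omega_{\rm tr}$ by Proposition~\ref{prop:borel-trace-form-event}. Suppose, for contradiction, that $O\subset D$ is non-empty and open with $M_x(O)=0$. Choose a closed disk $\overline{B(z_0,2r)}\subset O$ and a test function $u\in C_c^\infty(B(z_0,2r))\subset H_0^1(D)$ with $0\le u\le1$ and $u=1$ on $B(z_0,r)$. Since $u$ is continuous, it is its own quasi-continuous representative, so $\widetilde u=u$ quasi-everywhere. Because $\widetilde u$ vanishes outside $O$ and $M_x(O)=0$, we get $\widetilde u=0$ $M_x$-a.e.

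There is one subtlety at this step. The quasi-continuous version is only fixed up to polar sets, so the identity $\widetilde u=u$ holds off some polar set $N$. The polar-null part of Lemma~\ref{lem:rough-domain-full-quasi-support}, which also holds for $M_x$ on $\Omega_{\rm tr}$, gives $M_x(N)=0$. Hence $\widetilde u=0$ $M_x$-a.e. regardless of which version is used.

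Applying \eqref{eq:rough-domain-full-quasi-support-test}, we conclude that $u=0$ $\E$-q.e. on $D$. In particular $u=0$ quasi-everywhere on $B(z_0,r)$, where $u\equiv1$. So $B(z_0,r)$ would be polar. This is impossible: a polar set has zero capacity and therefore zero Lebesgue measure, since $\operatorname{Cap}_{1,D}(A)\ge\mathcal L^2(A)$ directly from the definition. Indeed, any admissible $v$ with $v\ge1$ a.e. on an open $G\supset A$ has $\int v^2\ge\mathcal L^2(G)$. The disk, however, has positive area. This contradiction gives \eqref{eq:gmc-full-support}.

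The only real point to watch is the quasi-continuous version bookkeeping in the second paragraph. Everything else is immediate from the already-established full quasi-support on the Borel event $\Omega_{\rm tr}$, and no new probabilistic input is needed. Since the event is fixed, the statement holds simultaneously for all open $O$.
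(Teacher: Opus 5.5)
Your proposal is correct and follows the same route as the paper: assume $M_x(O)=0$, take a nonzero test function supported in $O$, and use full $\E$-quasi-support (valid on $\Omega_{\rm tr}$ by Proposition~\ref{prop:borel-trace-form-event}) to reach a contradiction. The paper ends by noting that vanishing quasi-everywhere forces the function to be $0$ in $H_0^1(D)$, whereas you use $\operatorname{Cap}_{1,D}\ge\mathcal L^2$. Your handling of the quasi-continuous version through the polar-null property fills in a detail the paper leaves implicit.
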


\begin{proof}
Suppose that $M_x(O)=0$ for some non-empty open $O\subset D$, and choose
$0\ne\phi\in C_c^\infty(O)$.  Then $\widetilde\phi=0$ $M_x$-almost everywhere.
Full $\E$-quasi-support gives $\phi=0$ quasi-everywhere, hence
$\phi=0$ in $H_0^1(D)$, a contradiction.
\end{proof}

The smoothing estimate makes the Green kernel Hilbert--Schmidt against the
random speed measure.

Set
\begin{align}
 \Omega_{\rm HS}
 &:=\left\{x\in\Xspace:
   \iint_{D^2}G_D(z,w)^2\,M_x(dz)M_x(dw)<\infty\right\}.
 \label{eq:measurable-HS-event}\\
 \Omega_{\rm op}&:=\Omega_{\rm tr}\cap\Omega_{\rm HS}.
 \label{eq:measurable-operator-event}
\end{align}
For $x\in\Omega_{\rm HS}$, define the Green integral operator on $L^2(M_x)$ by
\begin{equation}\label{eq:green-operator-kh}
 \GreenOp_x\theta(z):=\int_DG_D(z,w)\theta(w)\,M_x(dw),
 \qquad \theta\in L^2(M_x),
\end{equation}
where the integral is defined for $M_x$-almost every $z$.  The defining
condition of $\Omega_{\rm HS}$ is exactly
$G_D\in L^2(M_x\otimes M_x)$, so \eqref{eq:green-operator-kh} defines a
Hilbert--Schmidt operator.

\begin{lemma}
\label{lem:green-hilbert-schmidt}
The inclusion $\Omega_{\rm Fr}\subseteq\Omega_{\rm HS}$ holds.  For every
$x\in\Omega_{\rm HS}$, the operator $\GreenOp_x$ is positive, self-adjoint,
and Hilbert--Schmidt, with
\begin{equation}\label{eq:green-kernel-HS}
 \|\GreenOp_x\|_{\rm HS}^2
 =\iint_{D^2}G_D(z,w)^2\,M_x(dz)M_x(dw)<\infty.
\end{equation}
For all $\theta,\eta\in L^2(M_x)$,
\begin{align}
 \iint_{D^2}|G_D(z,w)\theta(z)\eta(w)|\,M_x(dz)M_x(dw)
 &\le \|\GreenOp_x\|_{\mathrm{HS}}\|\theta\|_2\|\eta\|_2,              \label{eq:green-kernel-absolute}\\
 I_D(\theta M_x,\eta M_x)
 &=\langle\theta,\GreenOp_x\eta\rangle_{L^2(M_x)},                       \label{eq:green-operator-bilinear}\\
 |I_D(\theta M_x,\eta M_x)|
 &\le \|\GreenOp_x\|_{\mathrm{HS}}\|\theta\|_2\|\eta\|_2.            \label{eq:energy-L2-control}
\end{align}
Consequently, $\theta M_x\in\ME$ for every $\theta\in L^2(M_x)$, and
\begin{equation}\label{eq:green-operator-strict-positive}
 \theta\ne0
 \quad\Longrightarrow\quad
 \langle\theta,\GreenOp_x\theta\rangle_{L^2(M_x)}
 =I_D(\theta M_x)>0.
\end{equation}
In particular, $\ker\GreenOp_x=\{0\}$.
\end{lemma}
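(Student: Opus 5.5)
The plan is to dispose of the inclusion first and then derive the operator properties from the kernel. For $x\in\Omega_{\rm Fr}$, Lemma~\ref{lem:frostman-green-smoothing} applies to $\mu=M_x$ with the Frostman bound \eqref{eq:gmc-uniform-frostman}. Then \eqref{eq:deterministic-green-square-bound} gives $\sup_z\int G_D(z,w)^2M_x(dw)<\infty$, and integrating in $z$ against the finite measure $M_x$ gives $G_D\in L^2(M_x\otimes M_x)$, so $x\in\Omega_{\rm HS}$.

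Now fix $x\in\Omega_{\rm HS}$. Since $G_D\in L^2(M_x\otimes M_x)$, the operator $\GreenOp_x$ is Hilbert--Schmidt and \eqref{eq:green-kernel-HS} holds; symmetry of $G_D$ makes it self-adjoint. The bound \eqref{eq:green-kernel-absolute} follows by applying Cauchy--Schwarz on $D^2$ to $|G_D|\cdot|\theta\otimes\eta|$, using $\|\theta\otimes\eta\|_{L^2(M_x\otimes M_x)}=\|\theta\|_2\|\eta\|_2$.

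Next I would prove \eqref{eq:green-operator-bilinear}. Split $\theta$ and $\eta$ into positive and negative parts, so that the measures $\theta^\pm M_x$ and $\eta^\pm M_x$ are positive. The bound \eqref{eq:green-kernel-absolute}, applied to $|\theta|,|\eta|$, shows that every mutual energy $I_D(\theta^\pm M_x,\eta^\pm M_x)$ is finite; in particular each part has finite self-energy, so $\theta M_x\in\ME$. Fubini (justified by the same absolute bound) identifies the bilinear combination with $\langle\theta,\GreenOp_x\eta\rangle$, and \eqref{eq:energy-L2-control} follows directly from \eqref{eq:green-kernel-absolute}.

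Positivity and strict positivity then come from Lemma~\ref{lem:finite-energy-potential} and Lemma~\ref{lem:green-energy-strict-positive}. We have $\langle\theta,\GreenOp_x\theta\rangle=I_D(\theta M_x)=\|\Pot(\theta M_x)\|_{\Hc}^2\ge0$. If $\theta\ne0$ in $L^2(M_x)$, then the measure $\theta M_x$ is nonzero, so $I_D(\theta M_x)>0$. For the kernel: if $\GreenOp_x\theta=0$, then $\langle\theta,\GreenOp_x\theta\rangle=0$, hence $\theta=0$.

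The main subtlety is in \eqref{eq:green-operator-bilinear}, namely checking that the signed energy defined through Jordan decompositions agrees with the one built from $\theta^\pm$. This holds by bilinearity, since every cross term is absolutely integrable. No further obstacle is expected.
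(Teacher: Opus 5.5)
Your proposal is correct and follows the paper's proof essentially step by step. The inclusion comes from the uniform Green-square bound times $M_x(D)$, the Hilbert--Schmidt property and self-adjointness come from the kernel, Cauchy--Schwarz and Fubini give the bilinear identities, domination by $|\theta|M_x$ gives membership in $\ME$, and Lemma~\ref{lem:green-energy-strict-positive} gives strict positivity and injectivity. The subtlety you flag is vacuous: $\theta^+M_x$ and $\theta^-M_x$ are mutually singular, so they already form the Jordan decomposition of $\theta M_x$.
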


\begin{proof}
For $x\in\Omega_{\rm Fr}$, Lemma~\ref{lem:green-smoothing-constant} gives
\begin{equation}\label{eq:green-HS-from-smoothing}
 \iint_{D^2}G_D(z,w)^2\,M_x(dz)M_x(dw)
 \le M_x(D)K_G(x)<\infty,
\end{equation}
which proves $\Omega_{\rm Fr}\subseteq\Omega_{\rm HS}$.

Fix $x\in\Omega_{\rm HS}$.  The Hilbert--Schmidt kernel formula gives
\eqref{eq:green-kernel-HS}, and symmetry of $G_D$ gives self-adjointness.  For
$\theta,\eta\in L^2(M_x)$, Cauchy--Schwarz on
$L^2(M_x\otimes M_x)$ yields
\[
 \iint_{D^2}|G_D(z,w)\theta(z)\eta(w)|\,M_x(dz)M_x(dw)
 \le \|\GreenOp_x\|_{\rm HS}\|\theta\|_2\|\eta\|_2.
\]
This proves \eqref{eq:green-kernel-absolute}; Fubini then gives
\eqref{eq:green-operator-bilinear}, and
\eqref{eq:energy-L2-control} follows immediately.

Taking both arguments equal to $|\theta|$ gives
$I_D(|\theta|M_x)<\infty$.  The positive and negative parts of $\theta M_x$
are dominated by $|\theta|M_x$, hence $\theta M_x\in\ME$.  By
Lemma~\ref{lem:green-energy-strict-positive},
\[
 \langle\theta,\GreenOp_x\theta\rangle_{L^2(M_x)}
 =I_D(\theta M_x)\ge0,
\]
with strict inequality when $\theta\ne0$.  Thus $\GreenOp_x$ is positive and
\eqref{eq:green-operator-strict-positive} holds; injectivity follows.
\end{proof}

The resulting integral operator is the inverse of the trace-form generator.

\begin{lemma}
\label{lem:green-form-inverse}
Fix $x\in\Omega_{\rm op}$ and $\theta\in L^2(M_x)$.  The potential
$u:=\Pot(\theta M_x)$ belongs to $\Vd_x$ and is the unique solution of
\begin{equation}
 \E(u,v)=\int_D\theta\widetilde v\,dM_x,
 \qquad v\in\Vd_x.
 \label{eq:green-form-inverse-weak}
\end{equation}
Its $L^2(M_x)$ representative and its energy are given by
\begin{align}
 \widetilde u&=\GreenOp_x\theta\quad\text{in }L^2(M_x),
 \label{eq:green-form-inverse-L2-identity}\\
 I_D(\theta M_x)&=\langle\theta,\GreenOp_x\theta\rangle_{L^2(M_x)}.
 \label{eq:green-form-inverse-energy}
\end{align}
Equivalently,
\[
 \GreenOp_x\theta\in\Dom(A_x),
 \qquad A_x\GreenOp_x\theta=\theta.
\]
\end{lemma}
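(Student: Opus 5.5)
The plan is to build $u$ from the finite-energy potential theory of Lemmas~\ref{lem:finite-energy-potential} and~\ref{lem:green-hilbert-schmidt}, and then check that it lies in the trace-form domain $\Vd_x$. First, since $x\in\Omega_{\rm op}\subset\Omega_{\rm HS}$, Lemma~\ref{lem:green-hilbert-schmidt} gives $\theta M_x\in\ME$. Lemma~\ref{lem:finite-energy-potential} then supplies $u=\Pot(\theta M_x)\in H_0^1(D)$ satisfying
\[
 \E(u,v)=\int_D\widetilde v\,\theta\,dM_x,\qquad v\in H_0^1(D),
\]
with quasi-continuous representative $\widetilde u(z)=\int_DG_D(z,w)\theta(w)\,M_x(dw)$ for q.e.\ $z$. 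The signed integral is understood through the Jordan parts of $\theta M_x$.

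Next I would identify $\widetilde u$ with $\GreenOp_x\theta$ in $L^2(M_x)$. Since $x\in\Omega_{\rm tr}$, $M_x$ charges no polar set by Lemma~\ref{lem:rough-domain-full-quasi-support} and Proposition~\ref{prop:borel-trace-form-event}. Hence the q.e.\ identity above holds $M_x$-a.e. At $M_x$-a.e.\ $z$ the right-hand side is exactly $\GreenOp_x\theta(z)$, and it is absolutely convergent by \eqref{eq:green-kernel-absolute} with Fubini. This is the one delicate point: the Jordan-part integrals must be finite at the same points, which follows because $|\theta|M_x$ has finite energy. It gives \eqref{eq:green-form-inverse-L2-identity}. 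Since $\GreenOp_x$ is Hilbert--Schmidt, $\widetilde u\in L^2(M_x)$, and therefore $u\in\Vd_x$.

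The weak equation \eqref{eq:green-form-inverse-weak} is the restriction of the $H_0^1(D)$ pairing to $v\in\Vd_x\subset H_0^1(D)$. Both sides are finite there, since $\theta,\widetilde v\in L^2(M_x)$. For uniqueness, if $u_1,u_2\in\Vd_x$ both solve it, test the difference against itself. This gives $\E(u_1-u_2,u_1-u_2)=0$, and Poincar\'e's inequality (transience on bounded $D$) then gives $u_1=u_2$ in $H_0^1(D)$. The energy identity \eqref{eq:green-form-inverse-energy} is \eqref{eq:green-operator-bilinear} with $\eta=\theta$, or equivalently $\E(u,u)=\int\theta\widetilde u\,dM_x=\langle\theta,\GreenOp_x\theta\rangle$.

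Finally I would translate to operators. By the first representation theorem, $w\in\Dom(A_x)$ with $A_xw=\theta$ exactly when $w\in\Vd_x$ and $\E(w,v)=\langle\theta,\widetilde v\rangle_{L^2(M_x)}$ for all $v\in\Vd_x$. Here the element of $L^2(M_x)$ attached to $u\in\Vd_x$ is $\widetilde u=\GreenOp_x\theta$. Therefore $\GreenOp_x\theta\in\Dom(A_x)$ and $A_x\GreenOp_x\theta=\theta$.

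I expect the main obstacle to be the representative issue in the second step. The fix is to use smoothness of $M_x$ to pass from the q.e.\ Green representation to an $M_x$-a.e.\ statement, and to use \eqref{eq:green-kernel-absolute} to make the integral defining $\GreenOp_x\theta$ absolutely convergent $M_x$-a.e.
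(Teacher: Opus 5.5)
Your argument is correct, but the key step, identifying $\widetilde u$ with $\GreenOp_x\theta$, goes through a different mechanism from the paper's.

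You use the \emph{pointwise} half of Lemma~\ref{lem:finite-energy-potential}, namely the quasi-everywhere Green representation \eqref{eq:signed-finite-energy-green-representation}. You push it to $M_x$-a.e.\ points by smoothness of $M_x$ and read off $\widetilde u=\GreenOp_x\theta$ directly. $L^2(M_x)$-integrability, and hence $u\in\Vd_x$, then follows from the Hilbert--Schmidt property. The only bookkeeping is that both Jordan-part integrals are finite $M_x$-a.e., which you handle correctly.

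The paper never evaluates the potential pointwise. It uses only the \emph{variational} identities \eqref{eq:finite-energy-potential-pairing}, \eqref{eq:green-mutual-energy} and \eqref{eq:green-operator-bilinear}. With the truncations $b_n=\operatorname{sgn}(\widetilde u)(|\widetilde u|\wedge n)$ it obtains $\|b_n\|_{L^2(M_x)}^2\le\int_D\widetilde u\,b_n\,dM_x=\E(\Pot(b_nM_x),u)=\langle b_n,\GreenOp_x\theta\rangle_{L^2(M_x)}$. Monotone convergence then gives $\widetilde u\in L^2(M_x)$, and the class is identified by testing against all $b\in L^\infty\cap L^2(M_x)$.

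Your route is shorter and more concrete. The paper's touches the kernel only through the mutual energy $I_D$, so it is the standard Dirichlet-form duality argument and does not need the q.e.\ kernel formula for $\widetilde{\Pot\sigma}$. The weak equation, uniqueness, energy identity and operator reformulation are handled the same way in both.
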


\begin{proof}
Set $u:=\Pot(\theta M_x)$.  Lemma~\ref{lem:green-hilbert-schmidt} gives
$\theta M_x\in\ME$, so $u\in\Hc$ is well defined by
Lemma~\ref{lem:finite-energy-potential}.  In view of
\eqref{eq:liouville-form-domain}, it remains first to prove
$\widetilde u\in L^2(M_x)$; duality against bounded test functions will then
identify its $L^2(M_x)$ class with $\GreenOp_x\theta$.

\emph{$L^2$ integrability.}
Since $M_x$ is smooth, $\widetilde u$ is finite $M_x$-almost everywhere.  Put
\[
 b_n:=\operatorname{sgn}(\widetilde u)(|\widetilde u|\wedge n).
\]
Because $M_x(D)<\infty$, $b_n\in L^2(M_x)$ and
$b_nM_x\in\ME$.  Equations
\eqref{eq:finite-energy-potential-pairing} and
\eqref{eq:green-operator-bilinear} yield
\begin{align*}
 \|b_n\|_2^2
 \le
 \int_D\widetilde u\,b_n\,dM_x
 &=\E(\Pot(b_nM_x),u)\\
 &=\langle b_n,\GreenOp_x\theta\rangle_{L^2(M_x)}
 \le\|b_n\|_2\|\GreenOp_x\theta\|_2.
\end{align*}
Thus $\|b_n\|_2\le\|\GreenOp_x\theta\|_2$, and monotone convergence gives
$\widetilde u\in L^2(M_x)$.

\emph{Identification and uniqueness.}
For every $b\in L^\infty(M_x)\cap L^2(M_x)$, the same identities give
\[
 \int_Db\widetilde u\,dM_x
 =\E(\Pot(bM_x),u)
 =I_D(bM_x,\theta M_x)
 =\langle b,\GreenOp_x\theta\rangle_{L^2(M_x)}.
\]
Density of $L^\infty\cap L^2$ in $L^2$ proves
$\widetilde u=\GreenOp_x\theta$ in $L^2(M_x)$, and therefore
$u\in\Vd_x$.  Equation \eqref{eq:green-form-inverse-weak} is
\eqref{eq:finite-energy-potential-pairing}.  If $u_1,u_2\in\Vd_x$ both satisfy
\eqref{eq:green-form-inverse-weak}, then, with $w=u_1-u_2$,
\[
 \E(w,v)=0\quad(v\in\Vd_x),
 \qquad v=w\quad\Longrightarrow\quad \E(w,w)=0
 \quad\Longrightarrow\quad w=0.
\]
Thus the solution is unique.  Finally,
\eqref{eq:green-form-inverse-energy} follows from
\eqref{eq:green-operator-bilinear}.
\end{proof}

\subsubsection{Measurable-field tools}
\label{subsubsec:measurable-field-tools}

The Hilbert space $L^2(D,\mu)$ changes with the measure $\mu$, so ordinary
operator-valued measurability is not directly available.  We therefore test
vectors and operators against one fixed countable family of functions.  The
next three lemmas record the resulting measurable-field facts.  They are used
here to verify Assumption~\ref{ass:spectral-borel} for LBM and will be reused in
Section~\ref{sec:lcp-realization}.

Choose $(\chi_j)_{j\ge1}\subset\Hreg$ whose rational span is uniformly dense in
$C_0(D)$.  For a finite positive Radon measure $\mu$, set $H_\mu=L^2(D,\mu)$ and
$s_j(\mu)=[\chi_j]_\mu$.  A map $\mu\mapsto\xi(\mu)\in H_\mu$ is called measurable when
\[
 \mu\longmapsto\langle s_j(\mu),\xi(\mu)\rangle_{H_\mu}
 \quad\text{is measurable for every }j.
\]
An operator field $T_\mu$ is called measurable when
\begin{equation}\label{eq:measurable-operator-field-definition}
 \mu\longmapsto
 \langle s_i(\mu),T_\mu s_j(\mu)\rangle_{H_\mu}
 \quad\text{is measurable for every }i,j.
\end{equation}

For each $\mu$, apply Gram--Schmidt to $(s_j(\mu))_{j\ge1}$ by
\begin{equation}\label{eq:measurable-frame-gram-schmidt}
 v_j=s_j-\sum_{\ell<j}\langle s_j,e_\ell\rangle e_\ell,
 \qquad
 e_j=
 \begin{cases}
 v_j/\|v_j\|,&\|v_j\|>0,\\
 0,&\|v_j\|=0.
 \end{cases}
\end{equation}
We shall use the notation
\begin{equation}\label{eq:measurable-frame-definitions}
 \mathcal E_\mu
 :=\{e_j(\mu):j\ge1,\ e_j(\mu)\ne0\}.
\end{equation}
For $N\ge1$, define the finite-rank operator $J_N(\mu)$ on $H_\mu$ by
\begin{equation}\label{eq:measurable-frame-projection}
 J_N(\mu)u
 :=\sum_{j=1}^N
   \langle u,e_j(\mu)\rangle_{H_\mu}\,e_j(\mu),
 \qquad u\in H_\mu.
\end{equation}

\begin{lemma}
\label{lem:measurable-Hilbert-field-calculus}
From the preceding definitions we obtain the following three properties.
\begin{enumerate}[label=\textnormal{(\roman*)},leftmargin=*,itemsep=0pt,topsep=2pt,parsep=0pt]
 \item For every $j\ge1$, the map $\mu\mapsto e_j(\mu)\in H_\mu$ is
 measurable.
 \item For every $\mu$, $\mathcal E_\mu$ is an orthonormal basis of $H_\mu$.
 In particular, $J_N(\mu)$ is the orthogonal projection onto
 $\operatorname{span}\{e_1(\mu),\ldots,e_N(\mu)\}$ and
 \begin{equation}\label{eq:measurable-frame-density}
  \|J_N(\mu)u-u\|_{H_\mu}\longrightarrow0
  \qquad\text{for every }u\in H_\mu.
 \end{equation}
 \item An operator field $T$ is measurable in the sense of
 \eqref{eq:measurable-operator-field-definition} if and only if
 \[
  \mu\longmapsto
  \langle e_i(\mu),T_\mu e_j(\mu)\rangle_{H_\mu}
  \quad\text{is measurable for every }i,j.
 \]
\end{enumerate}
\end{lemma}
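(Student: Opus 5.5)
We prove the three items in order, using only the Gram--Schmidt recursion \eqref{eq:measurable-frame-gram-schmidt}, the density of the rational span of $(\chi_j)$ in $C_0(D)$, and the measurability of the basic Gram entries $\mu\mapsto\langle s_i(\mu),s_j(\mu)\rangle_{H_\mu}=\int_D\chi_i\chi_j\,d\mu$. These entries are measurable for the narrow Borel structure because $\chi_i\chi_j\in C_b(D)$.

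For (i) we prove by induction on $j$ a stronger statement. There are measurable coefficients $c_{j\ell}(\mu)$, $\ell\le j$, with $e_j(\mu)=\sum_{\ell\le j}c_{j\ell}(\mu)s_\ell(\mu)$. Indeed, $v_j=s_j-\sum_{\ell<j}\langle s_j,e_\ell\rangle e_\ell$. By the induction hypothesis each inner product $\langle s_j,e_\ell\rangle=\sum_m c_{\ell m}\langle s_j,s_m\rangle$ is measurable. Hence $v_j$ has measurable coefficients in $s_1,\dots,s_j$, and $\|v_j\|^2$ is a measurable quadratic expression in the Gram entries. The case split $\|v_j\|>0$ versus $=0$ is over a measurable set, and on it division by $\|v_j\|$ is measurable. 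Hence $\langle s_i,e_j\rangle=\sum_\ell c_{j\ell}\langle s_i,s_\ell\rangle$ is measurable for every $i$, which is (i).

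For (ii), Gram--Schmidt with zero vectors discarded gives an orthonormal family. Its span equals the span of $\{s_j(\mu)\}$, i.e.\ the $\mu$-classes of the rational span of $(\chi_j)$. Completeness then reduces to density of that class in $L^2(\mu)$. We argue in three steps. $C_c(D)$ is dense in $L^2(D,\mu)$ for a finite Radon measure on the locally compact second-countable space $D$. Every $\psi\in C_c(D)$ is a uniform limit of rational combinations of the $\chi_j$. Finally $\mu(D)<\infty$, so uniform convergence implies $L^2(\mu)$ convergence. With completeness in hand, $J_N(\mu)$ is the orthogonal projection onto the first $N$ vectors, and \eqref{eq:measurable-frame-density} is Parseval.

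For (iii), both directions come from expressing one family in terms of the other with measurable coefficients. If $T$ is measurable in the $s$-sense, then by (i) $\langle e_i,Te_j\rangle=\sum_{\ell,m}\overline{c_{i\ell}}\,c_{jm}\langle s_\ell,Ts_m\rangle$, a finite sum, so it is measurable; the conjugation is harmless since everything is real. Conversely, expand $s_j=\sum_{e\in\mathcal E_\mu}\langle s_j,e\rangle e$, which is a finite expansion: $s_j$ lies in the span of $e_1,\dots,e_j$, so $s_j=\sum_{\ell\le j}\langle s_j,e_\ell\rangle e_\ell$, using $e_\ell=0$ terms harmlessly. Then $\langle s_i,Ts_j\rangle=\sum_{\ell\le i,m\le j}\langle s_i,e_\ell\rangle\langle e_m,s_j\rangle\langle e_\ell,Te_m\rangle$, again a finite measurable combination.

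The main obstacle is the bookkeeping in (i): the measurable case distinction when $\|v_j\|=0$, and confirming that no infinite series appears. The finiteness in (iii) relies on the triangular structure of Gram--Schmidt, so no boundedness of $T$ is needed there.
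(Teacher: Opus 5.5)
Your proposal is correct and follows the paper's route: measurable triangular Gram--Schmidt coefficients in both directions give (i) and (iii) as finite measurable combinations, and (ii) reduces to density of the classes of the rational span of $(\chi_j)$ in $L^2(D,\mu)$. Your three-step density argument (density of $C_c(D)$ in $L^2(\mu)$, uniform approximation by rational combinations, finiteness of $\mu$) just fills in a step the paper states without proof.
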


\begin{proof}
The Gram matrix is measurable because
\[
 \langle s_i(\mu),s_j(\mu)\rangle_{H_\mu}
 =\int_D\chi_i\chi_j\,d\mu.
\]
Induction in \eqref{eq:measurable-frame-gram-schmidt} therefore gives measurable
coefficients $a_{j\ell},b_{j\ell}$ such that
\[
 e_j=\sum_{\ell\le j}a_{j\ell}s_\ell,
 \qquad
 s_j=\sum_{\ell\le j}b_{j\ell}e_\ell.
\]
Thus every $e_j$ is measurable, and
\[
 \overline{\operatorname{span}}\{e_j(\mu):j\ge1\}
 =\overline{\operatorname{span}}\{s_j(\mu):j\ge1\}
 =\overline{\{[\chi]_\mu:
   \chi\in\operatorname{span}_{\mathbb Q}\{\chi_j:j\ge1\}\}}
 =L^2(D,\mu).
\]
The nonzero $e_j(\mu)$ are orthonormal by construction, so they form an
orthonormal basis of $H_\mu$; consequently
$\|J_N(\mu)u-u\|_{H_\mu}\to0$ for every $u\in H_\mu$.
The two matrix criteria are equivalent because
\begin{align*}
 \langle e_i,Te_j\rangle
 &=\sum_{r\le i}\sum_{s\le j}
   a_{ir}a_{js}\langle s_r,Ts_s\rangle,\\
 \langle s_i,Ts_j\rangle
 &=\sum_{r\le i}\sum_{s\le j}
   b_{ir}b_{js}\langle e_r,Te_s\rangle.
\end{align*}
\end{proof}

Fix bounded operator fields
$S=(S_\mu)_\mu$ and $T=(T_\mu)_\mu$.  Relative to the frame constructed
above, their adjoint and product have matrix coefficients
\begin{align}
 \langle e_i(\mu),S_\mu^*e_j(\mu)\rangle_{H_\mu}
 &=\langle S_\mu e_i(\mu),e_j(\mu)\rangle_{H_\mu},
 \label{eq:measurable-adjoint-matrix}\\
 \langle e_i(\mu),S_\mu T_\mu e_j(\mu)\rangle_{H_\mu}
 &=\lim_{N\to\infty}\sum_{r=1}^N
   \langle e_i(\mu),S_\mu e_r(\mu)\rangle_{H_\mu}
   \langle e_r(\mu),T_\mu e_j(\mu)\rangle_{H_\mu}.
 \label{eq:measurable-product-matrix}
\end{align}
When $T_\mu$ is positive, self-adjoint, and compact for every $\mu$, define
its finite-dimensional compression by
\begin{equation}\label{eq:measurable-finite-dimensional-compression}
 T_{\mu,N}:=J_N(\mu)T_\mu J_N(\mu),
 \qquad N\ge1.
\end{equation}
We write
\begin{equation}\label{eq:measurable-ordered-compact-eigenvalues}
 \lambda_1(T_\mu)\ge\lambda_2(T_\mu)\ge\cdots\ge0
\end{equation}
for the eigenvalues of $T_\mu$ in non-increasing order, counted with
multiplicity and continued by zeros when the nonzero spectrum is finite.

\begin{lemma}
\label{lem:measurable-operator-field-calculus}
From the preceding definitions we obtain the following three properties.
\begin{enumerate}[label=\textnormal{(\roman*)},leftmargin=2.1em,itemsep=2pt,topsep=3pt]
 \item If $S$ and $T$ are measurable, then
 \[
  S^*,\ ST\quad\text{are measurable operator fields}.
 \]
 Consequently, measurable bounded operator fields are closed under adjoints,
 products, and polynomials.

 \item Let $(T^{(\ell)})_{\ell\ge1}$ be measurable bounded operator fields.  If
 \begin{equation*}
  T_\mu^{(\ell)}\xrightarrow[\ell\to\infty]{\rm s}T_\mu
  \qquad\text{for every }\mu,
 \end{equation*}
 then $T=(T_\mu)_\mu$ is measurable.

 \item Suppose $T$ is measurable and every $T_\mu$ is positive,
 self-adjoint, and compact.  Then, for every $F\in C_b([0,\infty))$, every
 open interval $I\subset(0,\infty)$, and every $n\ge1$,
 \begin{equation}\label{eq:measurable-functional-calculus}
  \begin{gathered}
   F(T),\ \one_I(T)\quad\text{are measurable operator fields},\\
   \mu\longmapsto\lambda_n(T_\mu)
   \quad\text{is measurable}.
  \end{gathered}
 \end{equation}
\end{enumerate}
\end{lemma}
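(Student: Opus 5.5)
We argue in the fixed frame of Lemma~\ref{lem:measurable-Hilbert-field-calculus}, where every measurability statement reduces to measurability of countably many scalar matrix coefficients $\langle e_i(\mu),\,\cdot\,e_j(\mu)\rangle_{H_\mu}$.

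For \textnormal{(i)}, the adjoint is immediate from \eqref{eq:measurable-adjoint-matrix}. The coefficient $\langle S_\mu e_i,e_j\rangle$ is the complex conjugate of $\langle e_j,S_\mu e_i\rangle$, which is measurable, and part~\textnormal{(iii)} of Lemma~\ref{lem:measurable-Hilbert-field-calculus} converts this back to the defining criterion. For the product, we use \eqref{eq:measurable-product-matrix}. Each partial sum is a finite sum of products of measurable functions. The limit exists for every $\mu$ by Parseval in the orthonormal basis $\mathcal E_\mu$: we expand $T_\mu e_j$ and use $S_\mu^*e_i\in H_\mu$. Indices with $e_r(\mu)=0$ contribute zero, so a pointwise limit of measurable functions is measurable. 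Polynomials then follow by induction, using linear combinations and the identity field, which is trivially measurable.

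For \textnormal{(ii)}, strong convergence gives, for each $i,j$,
\[
\langle e_i,T^{(\ell)}_\mu e_j\rangle\to\langle e_i,T_\mu e_j\rangle
\]
pointwise in $\mu$. A pointwise limit of measurable functions is measurable.

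For \textnormal{(iii)}, the plan has three steps.
\begin{enumerate}
\item First we treat polynomials $F$ via \textnormal{(i)}.
\item We then extend to $F\in C_b([0,\infty))$. Since $0\le T_\mu\le\|T_\mu\|$, and $\|T_\mu\|$ is measurable as a countable supremum of $|\langle u,T_\mu u\rangle|$ over rational combinations of the $e_j$, we localize. On the measurable set $\{\|T_\mu\|\le R\}$ we approximate $F$ uniformly on $[0,R]$ by polynomials (Weierstrass), obtaining norm, hence strong, convergence. Part \textnormal{(ii)} and a countable union over $R\in\mathbb N$ then give measurability of $F(T)$.
\item For $\one_I(T)$ with $I$ open, we take continuous $F_k\uparrow\one_I$ pointwise with $0\le F_k\le 1$. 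The functional calculus gives $F_k(T_\mu)\to\one_I(T_\mu)$ strongly by dominated convergence in the spectral measure, and \textnormal{(ii)} applies.
\end{enumerate}

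For the eigenvalues we use the min--max characterization through the compressions \eqref{eq:measurable-finite-dimensional-compression}. The matrix of $T_{\mu,N}$ on $\operatorname{span}\{e_1,\dots,e_N\}$ has measurable entries, with zero rows and columns where $e_j=0$. Its ordered eigenvalues are continuous functions of the entries and are therefore measurable. Since $J_N(\mu)\to I$ strongly and $T_\mu$ is compact, $T_{\mu,N}\to T_\mu$ in operator norm. The standard argument is that $\|(J_N-I)T_\mu\|\to0$ for compact $T_\mu$, and likewise on the other side by self-adjointness. Weyl's inequality then gives $\lambda_n(T_{\mu,N})\to\lambda_n(T_\mu)$, so $\lambda_n(T_\mu)$ is a pointwise limit of measurable functions.

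The main obstacle is this last convergence, where two points need care. The ordered-eigenvalue convention must match: both sides are continued by zeros, and $T_{\mu,N}$ is viewed on all of $H_\mu$. Compactness must be used to upgrade strong convergence to norm convergence. If the norm-convergence route fails, the fallback is the direct min--max formula, taking a supremum over $n$-dimensional subspaces spanned by rational combinations of the $e_j$, which is again a countable sup--inf of measurable quantities.
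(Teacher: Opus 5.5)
Your proposal is correct and follows the paper's proof essentially step by step: matrix coefficients in the Gram--Schmidt frame for (i)--(ii), polynomial approximation of $F$ on measurable sets where $\|T_\mu\|$ is bounded, continuous approximants of $\one_I$ combined with (ii), and norm convergence of the compressions $T_{\mu,N}$ together with Weyl/min--max for the eigenvalues. The only cosmetic difference is how you get measurability of $\|T_\mu\|$: you take a countable supremum of the normalized quadratic forms $|\langle u,T_\mu u\rangle|/\|u\|^2$ over rational combinations $u$ of the $e_j$ (the normalization is needed), whereas the paper obtains it as $\lim_N\|T_{\mu,N}\|$ from the same compression argument it uses for the eigenvalues.
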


\begin{proof}
For (i), \eqref{eq:measurable-adjoint-matrix} is immediate.  Since
$J_N(\mu)\to I_{H_\mu}$ strongly by
Lemma~\ref{lem:measurable-Hilbert-field-calculus},
\[
 \langle e_i(\mu),S_\mu T_\mu e_j(\mu)\rangle_{H_\mu}
 =\lim_{N\to\infty}
   \langle S_\mu^*e_i(\mu),J_N(\mu)T_\mu e_j(\mu)\rangle_{H_\mu},
\]
which is \eqref{eq:measurable-product-matrix}.  Lemma~\ref{lem:measurable-Hilbert-field-calculus}
then gives the measurability of $S^*$ and $ST$; the polynomial statement
follows by iteration.

For (ii), fiberwise strong convergence gives, for every $i,j$ and $\mu$,
\[
 \langle e_i(\mu),T_\mu e_j(\mu)\rangle_{H_\mu}
 =\lim_{\ell\to\infty}
   \langle e_i(\mu),T_\mu^{(\ell)}e_j(\mu)\rangle_{H_\mu}.
\]
Thus all matrix coefficients of $T$ are measurable, and
Lemma~\ref{lem:measurable-Hilbert-field-calculus} applies again.

For (iii), compactness together with $J_N(\mu)\to I_{H_\mu}$ strongly gives
\begin{equation*}
 \|T_{\mu,N}-T_\mu\|\longrightarrow0.
\end{equation*}
Each $T_{\mu,N}$ is represented by a finite matrix of measurable
coefficients.  Hence its ordered eigenvalues are measurable, and the min--max
principle yields
\begin{equation}\label{eq:measurable-eigenvalue-compression-limit}
 \lambda_n(T_\mu)
 =\lim_{N\to\infty}\lambda_n(T_{\mu,N}),
 \qquad
 \|T_\mu\|=\lim_{N\to\infty}\|T_{\mu,N}\|.
\end{equation}
In particular, $\mu\mapsto\lambda_n(T_\mu)$ and $\mu\mapsto\|T_\mu\|$ are
measurable.  Therefore the sets
\[
 E_m:=\{\mu:m-1\le\|T_\mu\|<m\},
 \qquad m\ge1,
\]
are measurable.  If $p_{m,\ell}\to F$ uniformly on $[0,m]$, then
\[
 F(T)|_{E_m}
 =\operatorname*{s-lim}_{\ell\to\infty}
   p_{m,\ell}(T)|_{E_m},
\]
so (ii) gives the measurability of $F(T)$.  Finally, set
\[
 F_{I,\ell}(\lambda)
 :=\min\{1,\ell\,\operatorname{dist}(\lambda,I^c)\}.
\]
Then $F_{I,\ell}\uparrow\one_I$ and the spectral theorem gives
$F_{I,\ell}(T_\mu)\to\one_I(T_\mu)$ strongly in every fiber.  A final
application of (ii) proves the measurability of $\one_I(T)$.
\end{proof}

Fix a measurable finite-rank projection field $\Pi=(\Pi_\mu)_\mu$ and a
measurable finite-rank operator field $B=(B_\mu)_\mu$.  For $N,k\ge1$, define
\begin{align}
 d_N^\Pi(\mu)
 &:=\sum_{j=1}^N
   \langle e_j(\mu),\Pi_\mu e_j(\mu)\rangle_{H_\mu},
 \label{eq:measurable-rank-partial-sum}\\
 t_{k,N}^B(\mu)
 &:=\sum_{j=1}^N
   \langle e_j(\mu),B_\mu^k e_j(\mu)\rangle_{H_\mu}.
 \label{eq:measurable-trace-partial-sum}
\end{align}

\begin{lemma}
\label{lem:measurable-finite-rank-invariants}
From the preceding definitions we obtain the following three properties.
\begin{enumerate}[label=\textnormal{(\roman*)},leftmargin=2.1em,itemsep=2pt,topsep=3pt]
 \item For every $\mu$,
 \begin{equation}\label{eq:abstract-measurable-rank}
  d_N^\Pi(\mu)\uparrow\rank\Pi_\mu,
  \qquad
  \mu\longmapsto\rank\Pi_\mu\quad\text{is measurable}.
 \end{equation}

 \item For every $k\ge1$ and every $\mu$,
 \begin{equation}\label{eq:abstract-measurable-trace}
  t_{k,N}^B(\mu)\longrightarrow\Tr(B_\mu^k),
  \qquad
  \mu\longmapsto\Tr(B_\mu^k)\quad\text{is measurable}.
 \end{equation}

 \item Let $Y$ be a standard Borel space.  If the matrix coefficients of
 $\Pi_{\mu,y}$ and $B_{\mu,y}$ are measurable in $(\mu,y)$, then, for every
 $k\ge1$,
 \begin{equation}\label{eq:measurable-finite-rank-joint-map}
  (\mu,y)\longmapsto
  \bigl(\rank\Pi_{\mu,y},\Tr(B_{\mu,y}^k)\bigr)
  \quad\text{is measurable into }\mathbb N_0\times\mathbb R.
 \end{equation}
\end{enumerate}
\end{lemma}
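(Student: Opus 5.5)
For each fixed $\mu$, all three statements follow from Lemma~\ref{lem:measurable-Hilbert-field-calculus}(ii): $\mathcal E_\mu$ is an orthonormal basis (the zero vectors $e_j(\mu)=0$ contribute nothing), and each partial sum is then measurable as a finite sum of measurable matrix coefficients. A pointwise limit of measurable functions is measurable, so the plan is to prove the convergences fiberwise and deduce measurability from them.

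For (i), note that $\langle e_j,\Pi_\mu e_j\rangle=\|\Pi_\mu e_j\|^2\ge0$, so $d_N^\Pi(\mu)$ is nondecreasing in $N$. Its limit is $\sum_j\|\Pi_\mu e_j\|^2=\|\Pi_\mu\|_{\rm HS}^2$. For an orthogonal projection this equals $\Tr\Pi_\mu=\rank\Pi_\mu$, computed in any orthonormal basis. Each $d_N^\Pi$ is measurable because the coefficients $\langle e_j,\Pi e_j\rangle$ are measurable by Lemma~\ref{lem:measurable-Hilbert-field-calculus}(iii). The rank is the monotone limit, with values in $\mathbb N_0$, so it is measurable. (Since the values are integers, the limit is in fact attained at some finite $N$ depending on $\mu$, but we do not need this.)

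For (ii), Lemma~\ref{lem:measurable-operator-field-calculus}(i) shows that $B^k$ is a measurable operator field, so each $t_{k,N}^B$ is measurable. The operator $B_\mu^k$ is finite rank and hence trace class. For a trace-class operator, $\sum_j\langle e_j,Ce_j\rangle$ converges absolutely in every orthonormal basis to $\Tr C$. The sketch is to write $C=\sum_{m\le r}\sigma_m\langle\cdot,y_m\rangle x_m$ and use $\sum_j|\langle e_j,x_m\rangle\langle y_m,e_j\rangle|\le\|x_m\|\|y_m\|$ together with Parseval. This gives $t_{k,N}^B(\mu)\to\Tr(B_\mu^k)$, and measurability follows.

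For (iii), I repeat the argument with the parameter space replaced by the product space of pairs $(\mu,y)$. The Gram--Schmidt frame depends only on $\mu$, hence is measurable in $(\mu,y)$. The product formula \eqref{eq:measurable-product-matrix} expresses the coefficients of $B_{\mu,y}^k$ as iterated limits of finite sums of jointly measurable coefficients. Hence $d_N^\Pi$ and $t_{k,N}^B$ are jointly measurable, and their pointwise limits give the map in \eqref{eq:measurable-finite-rank-joint-map}, whose components are measurable into $\mathbb N_0$ and $\mathbb R$ respectively. The only point needing care is the trace convergence in (ii) for non-self-adjoint $B$. Absolute summability via the finite-rank decomposition settles it, so I expect no real obstacle.
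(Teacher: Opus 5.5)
Your proposal is correct and follows essentially the same route as the paper. Part (i) uses the monotone partial sums $\sum_{j\le N}\|\Pi_\mu e_j(\mu)\|^2\uparrow\Tr\Pi_\mu=\rank\Pi_\mu$; part (ii) uses absolute convergence of the diagonal sum for the finite-rank, hence trace-class, $B_\mu^k$, and your rank-one decomposition simply unpacks the paper's bound $\sum_j|\langle e_j(\mu),B_\mu^ke_j(\mu)\rangle|\le\|B_\mu^k\|_1$; part (iii) repeats the finite-sum measurability argument with the parameter $y$ adjoined.
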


\begin{proof}
For (i), orthonormality of the frame gives
\[
 d_N^\Pi(\mu)
 =\sum_{j=1}^N\|\Pi_\mu e_j(\mu)\|_{H_\mu}^2
 \uparrow\Tr\Pi_\mu=\rank\Pi_\mu.
\]
Each $d_N^\Pi$ is measurable, so the limit is measurable.

For (ii), $B_\mu^k$ is finite-rank and therefore trace class.  Hence
\[
 \sum_{j\ge1}
 \bigl|\langle e_j(\mu),B_\mu^k e_j(\mu)\rangle_{H_\mu}\bigr|
 \le\|B_\mu^k\|_1<\infty,
\]
and the trace formula gives
$t_{k,N}^B(\mu)\to\Tr(B_\mu^k)$.  By
Lemma~\ref{lem:measurable-operator-field-calculus}, each finite sum
$t_{k,N}^B$ is measurable, hence so is its limit.

For (iii), the same finite-sum formulas, now with the parameter $y$, show
that $d_N^\Pi(\mu,y)$ and $t_{k,N}^B(\mu,y)$ are jointly measurable.  Their
pointwise limits are therefore jointly measurable as well.
\end{proof}

\subsubsection{Canonical measurable spectral objects}

The Green operator constructed above is pathwise.  We now apply the preceding
measurable-field tools to the canonical Green operator using the canonical
Borel GMC map of Lemma~\ref{lem:canonical-borel-gmc}.
Define $\GreenOp_x$ by \eqref{eq:green-operator-kh} on
$\Omega_{\rm op}$ and as the zero operator on its complement.  Using
\eqref{eq:measurable-ordered-compact-eigenvalues}, set
\begin{equation}\label{eq:canonical-green-ordered-eigenvalues}
 r_n(x):=\lambda_n(\GreenOp_x),
 \qquad n\ge1.
\end{equation}
Thus $r_1(x)\ge r_2(x)\ge\cdots\ge0$ on all of $\Xspace$.

Recall the sets $\Omega_{\rm HS}$ and $\Omega_{\rm op}$ defined in
\eqref{eq:measurable-HS-event}--\eqref{eq:measurable-operator-event}.

\begin{proposition}
\label{prop:canonical-measurable-liouville-data}
The two sets $\Omega_{\rm HS}$ and $\Omega_{\rm op}$ defined in
\eqref{eq:measurable-HS-event}--\eqref{eq:measurable-operator-event} satisfy
\begin{equation}
 \Omega_{\rm HS},\Omega_{\rm op}\in\mathcal B(\Xspace),
 \qquad
 \mathbb P(\Omega_{\rm op})=1.
 \label{eq:canonical-operator-event-properties}
\end{equation}
Moreover, $x\mapsto\GreenOp_x$ is a measurable field of positive self-adjoint
compact operators, and each
\begin{equation}
 r_n:\Xspace\longrightarrow[0,\infty),
 \qquad n\ge1,
 \label{eq:canonical-green-eigenvalue-borel-map}
\end{equation}
is measurable.  On $\Omega_{\rm op}$,
\[
 \GreenOp_x=A_x^{-1},
 \qquad r_n(x)>0\quad(n\ge1).
\]
\end{proposition}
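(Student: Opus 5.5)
The proof splits into three parts: measurability of the events, measurability of the Green-operator field and its eigenvalues, and the identification $\GreenOp_x=A_x^{-1}$ with positive eigenvalues.

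\emph{Measurability of the events.} For $\Omega_{\rm HS}$, the map $\nu\mapsto\iint_{D^2}G_D^2\,d(\nu\otimes\nu)\in[0,\infty]$ is Borel on $\Mc_{\rm fin}^+(D)$ with the narrow structure. I would write $G_D^2=\sup_k G_k$ with $G_k:=\min\{G_D^2,k\}\cdot\one_{\{|z-w|\ge 1/k\}}$ smoothed to bounded continuous kernels, or simply invoke the monotone class theorem, which makes $\nu\mapsto\iint F\,d\nu\otimes\nu$ measurable for every bounded Borel $F$. Monotone convergence then handles $G_D^2$. Composing with the Borel map $x\mapsto M_x$ of Lemma~\ref{lem:canonical-borel-gmc} gives $\Omega_{\rm HS}\in\mathcal B(\Xspace)$. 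Since $\Omega_{\rm tr}$ is Borel by Proposition~\ref{prop:borel-trace-form-event}, so is $\Omega_{\rm op}$. For the probability, Lemma~\ref{lem:green-hilbert-schmidt} gives $\Omega_{\rm Fr}\subseteq\Omega_{\rm HS}$, and Lemma~\ref{lem:gmc-frostman-log-potential} gives $\mathbb P(\Omega_{\rm Fr})=1$. Together with $\mathbb P(\Omega_{\rm tr})=1$, this yields $\mathbb P(\Omega_{\rm op})=1$.

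\emph{Measurable field and eigenvalues.} By Lemma~\ref{lem:measurable-Hilbert-field-calculus}(iii) it suffices to show that, for every $i,j$, the coefficient
\[
 x\mapsto\one_{\Omega_{\rm op}}(x)\iint_{D^2}\chi_i(z)G_D(z,w)\chi_j(w)\,M_x(dz)M_x(dw)
\]
is measurable. The integral is absolutely convergent on $\Omega_{\rm HS}$ by \eqref{eq:green-kernel-absolute}. I would split $G_D=G_D\wedge k+(G_D-k)^+$. The truncated part is handled by the bounded-Borel measurability above. The remainder is controlled by dominated convergence, since $|\chi_i\chi_j|\,G_D\one_{G_D>k}\le C\,G_D^2/k$ and $\iint G_D^2\,d(M_x\otimes M_x)<\infty$ on $\Omega_{\rm HS}$. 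Hence each coefficient is a pointwise limit of measurable functions. Positivity, self-adjointness and compactness hold on $\Omega_{\rm op}$ by Lemma~\ref{lem:green-hilbert-schmidt}, and trivially for the zero operator off $\Omega_{\rm op}$. Lemma~\ref{lem:measurable-operator-field-calculus}(iii) then gives measurability of every $r_n=\lambda_n(\GreenOp_x)$. The fields there are indexed by $\mu$, so one composes with $x\mapsto M_x$, or reruns the argument with $x$ as the index; the frame coefficients are measurable in $x$ for the same reason.

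\emph{Identification $\GreenOp_x=A_x^{-1}$ and positivity.} Fix $x\in\Omega_{\rm op}$. Lemma~\ref{lem:green-form-inverse} gives $\GreenOp_x\theta\in\Dom(A_x)$ and $A_x\GreenOp_x\theta=\theta$ for every $\theta\in L^2(M_x)$, so $A_x$ is surjective. Injectivity of $A_x$ follows from the uniqueness in \eqref{eq:green-form-inverse-weak}: if $A_xu=0$, then $\E(u,u)=0$, hence $u=0$ in $H_0^1(D)$ by Poincar\'e, and therefore $\widetilde u=0$ q.e. Because $M_x$ is smooth, this means $u=0$ in $L^2(M_x)$. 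Thus $A_x$ is bijective and $A_x^{-1}=\GreenOp_x$.

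For positivity of the eigenvalues, $\ker\GreenOp_x=\{0\}$ by Lemma~\ref{lem:green-hilbert-schmidt}. Moreover $L^2(M_x)$ is infinite-dimensional: by Lemma~\ref{lem:gmc-full-support} and Lemma~\ref{lem:frostman-green-smoothing} (no atoms, full support), one can choose infinitely many disjoint open sets of positive mass, whose indicators are linearly independent in $L^2(M_x)$. A compact, injective, positive operator on an infinite-dimensional space has infinitely many positive eigenvalues, so $r_n(x)>0$ for all $n$.

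The step I expect to require the most care is the measurability of the unbounded-kernel coefficients, handled by the truncation and domination argument above. Everything else is direct assembly of earlier lemmas.
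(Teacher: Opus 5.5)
Your proposal is correct and follows the paper's proof essentially step for step: monotone-class measurability of the truncated Hilbert--Schmidt integrals, truncation plus dominated convergence for the Green matrix coefficients, Lemma~\ref{lem:green-form-inverse} together with uniqueness (your injectivity of $A_x$) for $\GreenOp_x=A_x^{-1}$, and disjoint balls of positive mass plus $\ker\GreenOp_x=\{0\}$ for $r_n(x)>0$. The one blemish is the appeal to Lemma~\ref{lem:frostman-green-smoothing} for absence of atoms. That lemma needs the Frostman bound, which is not guaranteed for $x\in\Omega_{\rm op}\setminus\Omega_{\rm Fr}$, but it is also unnecessary, since full support alone already gives every disjoint ball positive mass.
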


\begin{proof}
Put
\[
 J_{\rm HS}(x):=\sup_{N\ge1}\iint_{D^2}
       (G_D(z,w)^2\wedge N)\,M_x(dz)M_x(dw).
\]
The functional monotone-class theorem gives
\[
 J_{\rm HS}^{-1}(B)\in\mathcal B(\Xspace)
 \quad\bigl(B\in\mathcal B([0,\infty])\bigr),
 \qquad
 \Omega_{\rm HS}=\{J_{\rm HS}<\infty\},
 \qquad
 \Omega_{\rm op}=\Omega_{\rm tr}\cap\{J_{\rm HS}<\infty\}
 \in\mathcal B(\Xspace).
\]
Since $\Omega_{\rm tr}\in\mathcal B(\Xspace)$ by
Proposition~\ref{prop:borel-trace-form-event}, both sets in
\eqref{eq:canonical-operator-event-properties} are Borel.  In addition,
Lemma~\ref{lem:green-hilbert-schmidt} gives
$\Omega_{\rm Fr}\subseteq\Omega_{\rm HS}$, and hence
\[
 \Omega_{\rm tr}\cap\Omega_{\rm Fr}\subseteq\Omega_{\rm op}.
\]
Together with
$\mathbb P(\Omega_{\rm tr})=\mathbb P(\Omega_{\rm Fr})=1$, this gives
$\mathbb P(\Omega_{\rm op})=1$ and completes
\eqref{eq:canonical-operator-event-properties}.

For every bounded Borel $c$, the map $x\mapsto s_c(M_x):=[c]_{M_x}$ is measurable
because
\[
 \langle s_j(M_x),s_c(M_x)\rangle
 =\int_D\chi_jc\,dM_x
\]
depends measurably on $x$.  Moreover,
\begin{equation}
 \langle s_i(M_x),\GreenOp_xs_j(M_x)\rangle
 =\lim_{m\to\infty}\one_{\Omega_{\rm op}}(x)
   \iint_{D^2}\chi_i(z)(G_D(z,w)\wedge m)\chi_j(w)\,
   M_x(dz)M_x(dw).
 \label{eq:measurable-field-green-matrix}
\end{equation}
The truncated integrals are measurable and, for $x\in\Omega_{\rm op}$,
\eqref{eq:green-kernel-absolute} gives
\[
 \iint_{D^2}|\chi_i(z)\chi_j(w)|G_D(z,w)\,
 M_x(dz)M_x(dw)<\infty.
\]
Hence dominated convergence identifies
\eqref{eq:measurable-field-green-matrix}.  The operator field is measurable by
\eqref{eq:measurable-operator-field-definition}, positive Hilbert--Schmidt by
Lemma~\ref{lem:green-hilbert-schmidt}, and therefore compact; its ordered
eigenvalues are measurable by
Lemma~\ref{lem:measurable-operator-field-calculus}.

For $x\in\Omega_{\rm op}$ and $\theta\in L^2(M_x)$,
Lemma~\ref{lem:green-form-inverse} gives
\[
 \GreenOp_x\theta\in\Dom(A_x),
 \qquad
 A_x\GreenOp_x\theta=\theta.
\]
Conversely, if $u\in\Dom(A_x)$, the same lemma and uniqueness in
\eqref{eq:green-form-inverse-weak} give
$\GreenOp_xA_xu=u$.  Hence $\GreenOp_x=A_x^{-1}$.

Finally, if $x\in\Omega_{\rm op}$, choose pairwise
disjoint non-empty balls $B_j\Subset D$.  Lemma~\ref{lem:gmc-full-support}
and \eqref{eq:green-operator-strict-positive} give
\[
 \left\{\frac{\one_{B_j}}{M_x(B_j)^{1/2}}:j\ge1\right\}
 \text{ orthonormal in }L^2(M_x),
 \qquad
 \ker\GreenOp_x=\{0\}.
\]
A compact injective operator on this infinite-dimensional fiber has
$r_n(x)>0$ for every $n$.
\end{proof}

For Borel completion on all of $\Xspace$, set
\[
 \Lambda_n^x:=
 \begin{cases}
  r_n(x)^{-1},&x\in\Omega_{\rm op},\\
  0,&x\notin\Omega_{\rm op}.
 \end{cases}
\]
The zero value on $\Omega_{\rm op}^c$ is only a measurable completion convention
and does not affect spectral laws or almost-sure assertions.  For
$0<a<b$, put
\begin{equation}\label{eq:measurable-data-window}
 \WinProj_{a,b}^x:=\one_{(1/b,1/a)}(\GreenOp_x).
\end{equation}

\begin{corollary}
\label{cor:canonical-borel-liouville-data}
For every $n\ge1$ and $0<a<b$, the maps
\begin{equation}
 x\longmapsto\Lambda_n^x,
 \qquad
 x\longmapsto\WinProj_{a,b}^x
 \label{eq:canonical-borel-spectral-maps}
\end{equation}
are measurable, and $\WinProj_{a,b}^x$ is a finite-rank projection for every
$x\in\Xspace$.  On $\Omega_{\rm op}$,
\begin{equation}\label{eq:measurable-data-window-identification}
 \GreenOp_x=A_x^{-1},
 \qquad
 \WinProj_{a,b}^x=\one_{(a,b)}(A_x).
\end{equation}
\end{corollary}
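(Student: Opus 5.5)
The plan is to derive everything from Proposition~\ref{prop:canonical-measurable-liouville-data} together with the measurable functional calculus in Lemma~\ref{lem:measurable-operator-field-calculus}. Since $x\mapsto M_x$ is Borel (Lemma~\ref{lem:canonical-borel-gmc}), every measurable operator field indexed by $\mu$ becomes a measurable field indexed by $x$ after composition. Once the field $x\mapsto\GreenOp_x$ is known to be measurable, the remaining claims are consequences of the compact spectral theorem and of the identification $\GreenOp_x=A_x^{-1}$ on $\Omega_{\rm op}$.

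\textbf{Eigenvalue maps.} Each $r_n$ is measurable and strictly positive on $\Omega_{\rm op}$, and $\Omega_{\rm op}$ is Borel. Hence
\[
 \Lambda_n^x=\one_{\Omega_{\rm op}}(x)\,r_n(x)^{-1}
\]
is measurable, with the convention $0$ off $\Omega_{\rm op}$.

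\textbf{Window projections.} The interval $I=(1/b,1/a)$ is open in $(0,\infty)$. On $\Omega_{\rm op}^c$ the field is the zero operator, and the zero operator is positive, self-adjoint and compact. Thus the field satisfies the hypotheses of Lemma~\ref{lem:measurable-operator-field-calculus}(iii) on all of $\Xspace$, and that lemma gives measurability of $\one_I(\GreenOp)$.

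\textbf{Finite rank.} On $\Omega_{\rm op}^c$, $\one_I(0)=0$ because $0\notin I$. For general $x$, $\GreenOp_x$ is compact, so its spectrum in $I\subset[1/b,\infty)$, a set bounded away from $0$, consists of finitely many eigenvalues, each of finite multiplicity. The spectral projection $\one_I(\GreenOp_x)$ is therefore finite-rank.

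\textbf{Identification on $\Omega_{\rm op}$.} Here $\GreenOp_x=A_x^{-1}$ with $A_x>0$ injective, and the spectral mapping $\lambda\mapsto1/\lambda$ is a bijection of $(0,\infty)$ sending $(a,b)$ onto $(1/b,1/a)$. The functional calculus for the inverse then gives
\[
 \one_{(1/b,1/a)}(A_x^{-1})=\one_{(a,b)}(A_x).
\]
Concretely, both sides are orthogonal projections onto the span of the eigenvectors of $A_x$ with eigenvalues in $(a,b)$, since $A_x$ has compact resolvent.

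The only mildly delicate point is the first step: checking that the composition of the $\mu$-indexed calculus with $x\mapsto M_x$ respects the frame definitions. This is routine, because all frame coefficients are integrals of fixed $\chi_j$ against $M_x$.
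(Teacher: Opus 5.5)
Your proposal is correct and follows essentially the same route as the paper: measurability of $\Lambda_n^x$ and $\WinProj_{a,b}^x$ comes from Proposition~\ref{prop:canonical-measurable-liouville-data} together with Lemma~\ref{lem:measurable-operator-field-calculus}(iii), finite rank comes from compactness of $\GreenOp_x$, and $\WinProj_{a,b}^x=\one_{(a,b)}(A_x)$ follows by spectral inversion of $\GreenOp_x=A_x^{-1}$ on $\Omega_{\rm op}$. One small precision: $\GreenOp_x$ is set to zero off $\Omega_{\rm op}$, so it is not a function of $M_x$ alone; the measurable calculus should therefore be applied directly to the $x$-indexed field (with frame $e_j(M_x)$) rather than by composing a $\mu$-indexed field with $x\mapsto M_x$, which is what your closing remark effectively does anyway.
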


\begin{proof}
Proposition~\ref{prop:canonical-measurable-liouville-data} and
Lemma~\ref{lem:measurable-operator-field-calculus} give the measurability in
\eqref{eq:canonical-borel-spectral-maps}.  Compactness of $\GreenOp_x$ makes
$\WinProj_{a,b}^x$ finite-rank, and spectral inversion gives
\eqref{eq:measurable-data-window-identification} on $\Omega_{\rm op}$.
\end{proof}

The remaining measurability required in
Assumption~\ref{ass:spectral-borel} concerns finite-rank compressions by
multiplication operators.

\begin{corollary}
\label{cor:canonical-borel-compression-traces}
For every $0<a<b$, bounded real Borel function $v$, and $k\ge1$, the map
\begin{equation}\label{eq:measurable-compression-traces}
 x\longmapsto
 \Tr\!\left[(\WinProj_{a,b}^x\Mult_v\WinProj_{a,b}^x)^k\right]
 \in\mathbb R
\end{equation}
is measurable.  In particular, for rational $0<a<b$ and $v=q\in\mathcal Q_D$,
these are the power traces appearing in
\eqref{eq:minimal-borel-interface}.
\end{corollary}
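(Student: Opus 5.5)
The plan is to show that the multiplication field $x\mapsto\Mult_v$ (acting on $L^2(M_x)$) is a measurable bounded operator field. Once that is known, the conclusion follows from the operator-field calculus of Lemmas~\ref{lem:measurable-operator-field-calculus} and~\ref{lem:measurable-finite-rank-invariants}, together with the measurability of $x\mapsto\WinProj_{a,b}^x$ from Corollary~\ref{cor:canonical-borel-liouville-data}.

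First I will check the measurability criterion \eqref{eq:measurable-operator-field-definition} with $\mu=M_x$. For each pair $i,j$,
\[
 \langle s_i(M_x),\Mult_v s_j(M_x)\rangle_{H_{M_x}}=\int_D\chi_i\,v\,\chi_j\,dM_x .
\]
The integrand $\chi_iv\chi_j$ is a bounded Borel function. By Lemma~\ref{lem:canonical-borel-gmc} the map $x\mapsto M_x$ is Borel into the narrow Borel structure. The functional monotone-class theorem then makes $\nu\mapsto\int c\,d\nu$ Borel for every bounded Borel $c$, exactly as was used for $s_c(M_x)$ in the proof of Proposition~\ref{prop:canonical-measurable-liouville-data}. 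Composing, each matrix coefficient is measurable in $x$. Moreover $\|\Mult_v\|\le\|v\|_\infty$ uniformly, so $\Mult_v$ is a measurable bounded operator field.

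Next set $B_x:=\WinProj_{a,b}^x\Mult_v\WinProj_{a,b}^x$. This is a product of measurable bounded fields, so Lemma~\ref{lem:measurable-operator-field-calculus}(i) makes it measurable. Corollary~\ref{cor:canonical-borel-liouville-data} says $\WinProj_{a,b}^x$ is a finite-rank projection for every $x$, including off $\Omega_{\rm op}$, where $\GreenOp_x=0$ and the window $(1/b,1/a)\subset(0,\infty)$ gives $\WinProj_{a,b}^x=0$. Hence $B_x$ is finite-rank for every $x$. Lemma~\ref{lem:measurable-finite-rank-invariants}(ii) then shows that $x\mapsto\Tr(B_x^k)$ is measurable, with real values because $B_x$ is self-adjoint for real $v$. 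One point needs care: the measurable-field lemmas were stated on the space of measures $\mu$. I will apply them to the composed family $x\mapsto M_x$, since all coefficient functions are compositions of measurable maps with the Borel map $x\mapsto M_x$.

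The last clause is then immediate. For rational $0<a<b$ and $v=q\in\mathcal Q_D\subset\Hreg$, which is bounded, \eqref{eq:measurable-data-window-identification} identifies $\WinProj_{a,b}^x$ with $\one_{(a,b)}(A_x)$ on $\Omega_{\rm op}$. So these traces are exactly those in \eqref{eq:minimal-borel-interface}.

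The only step I expect to be delicate is the measurability of $\nu\mapsto\int c\,d\nu$ for bounded Borel, rather than continuous, $c$. The plan is to handle it by monotone class starting from $C_b(D)$, or from indicators of open sets approximated by continuous functions increasing to them.
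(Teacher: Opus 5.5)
Your proposal is correct and follows the paper's proof essentially verbatim: the paper likewise reads off measurability of $\Mult_v$ from the coefficients $\int_D\chi_i v\chi_j\,dM_x$, then invokes Corollary~\ref{cor:canonical-borel-liouville-data}, Lemma~\ref{lem:measurable-operator-field-calculus}(i), and Lemma~\ref{lem:measurable-finite-rank-invariants}(ii). Your extra checks only make explicit what the paper leaves implicit: the monotone-class step for bounded Borel integrands, $\WinProj_{a,b}^x=0$ off $\Omega_{\rm op}$, and applying the field lemmas with parameter $x$ rather than $\mu$.
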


\begin{proof}
For bounded real Borel $v$,
\[
 \langle s_i(M_x),\Mult_v s_j(M_x)\rangle
 =\int_D\chi_i v\chi_j\,dM_x,
\]
so $\Mult_v$ is a measurable bounded operator field.  By
Corollary~\ref{cor:canonical-borel-liouville-data} and
Lemma~\ref{lem:measurable-operator-field-calculus},
$\WinProj_{a,b}^x\Mult_v\WinProj_{a,b}^x$ and all its powers are measurable
finite-rank operator fields.  Lemma~\ref{lem:measurable-finite-rank-invariants}
then gives \eqref{eq:measurable-compression-traces}.
\end{proof}

\subsubsection{Compact resolvent}

The measurable Green operator now gives the usual discrete spectral
realization pathwise.

\begin{corollary}
\label{cor:lbm-compact-resolvent}
For every $x\in\Omega_{\rm op}$,
$A_x^{-1}=\GreenOp_x$ is a positive Hilbert--Schmidt operator.  Consequently,
$A_x$ has compact resolvent and
\begin{equation}\label{eq:liouville-spectrum-list}
        0<\Lambda_1^x\le \Lambda_2^x\le\cdots\uparrow\infty,
\end{equation}
where the eigenvalues are repeated according to multiplicity.
\end{corollary}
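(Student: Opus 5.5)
The plan is to read everything off Proposition~\ref{prop:canonical-measurable-liouville-data} and Lemma~\ref{lem:green-hilbert-schmidt}, using the spectral theorem for compact self-adjoint operators. Fix $x\in\Omega_{\rm op}$. Proposition~\ref{prop:canonical-measurable-liouville-data} identifies $A_x^{-1}=\GreenOp_x$. In particular $0$ lies in the resolvent set of $A_x$, and the inverse is the integral operator \eqref{eq:green-operator-kh}. Lemma~\ref{lem:green-hilbert-schmidt} (valid since $\Omega_{\rm op}\subseteq\Omega_{\rm HS}$) says this operator is positive, self-adjoint and Hilbert--Schmidt, with norm \eqref{eq:green-kernel-HS}. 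That gives the first assertion. Compactness of one resolvent $(A_x-0)^{-1}$ gives compact resolvent for every $\zeta$ in the resolvent set, by the first resolvent identity
\[
 (A_x-\zeta)^{-1}=A_x^{-1}\bigl[I+\zeta(A_x-\zeta)^{-1}\bigr].
\]

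For the spectral list, I would apply the compact spectral theorem to $\GreenOp_x$. Its nonzero eigenvalues have finite multiplicity and accumulate only at $0$, and the eigenvectors span $(\ker\GreenOp_x)^\perp$. Lemma~\ref{lem:green-hilbert-schmidt} gives $\ker\GreenOp_x=\{0\}$, so these eigenvectors span all of $L^2(M_x)$. Proposition~\ref{prop:canonical-measurable-liouville-data} gives $r_n(x)>0$ for every $n$. That positivity rests on $L^2(M_x)$ being infinite-dimensional, which follows from full support (Lemma~\ref{lem:gmc-full-support}), together with injectivity. Hence
\[
 r_1(x)\ge r_2(x)\ge\cdots>0,\qquad r_n(x)\downarrow0 .
\]
Now $\GreenOp_x\psi=r\psi$ with $r>0$ holds exactly when $\psi\in\Dom(A_x)$ and $A_x\psi=r^{-1}\psi$. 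So the eigenvalues of $A_x$, with multiplicity, are $\Lambda_n^x=r_n(x)^{-1}$, and these increase to $\infty$. Positivity of $\Lambda_1^x$ is automatic: $A_x$ has no kernel, because $A_x u=0$ gives $u=\GreenOp_xA_xu=0$. This matches the convention for $\Lambda_n^x$ set just before Corollary~\ref{cor:canonical-borel-liouville-data}.

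There is no real obstacle here; the argument is bookkeeping. The only point needing care is that the listing \eqref{eq:liouville-spectrum-list} exhausts the spectrum of $A_x$. That is where $\ker\GreenOp_x=\{0\}$ enters: it rules out a missing zero mode. It is also where the infinite dimensionality of $L^2(M_x)$ enters: it guarantees that the list is infinite.
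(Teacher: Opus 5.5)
Your proposal is correct and follows the paper's proof: it combines Proposition~\ref{prop:canonical-measurable-liouville-data} and Lemma~\ref{lem:green-hilbert-schmidt} to get $\GreenOp_x=A_x^{-1}=\GreenOp_x^*\ge0$, $\norm{\GreenOp_x}_{\rm HS}<\infty$ and $r_n(x)>0$, and then applies the compact spectral theorem and inversion to obtain $\Lambda_n^x=r_n(x)^{-1}\uparrow\infty$. Your extra bookkeeping — the resolvent identity, exhaustion of the spectrum via $\ker\GreenOp_x=\{0\}$, and the absence of a zero mode — fills in details that the paper leaves implicit.
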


\begin{proof}
Fix $x\in\Omega_{\rm op}$.
Proposition~\ref{prop:canonical-measurable-liouville-data} and
Lemma~\ref{lem:green-hilbert-schmidt} give
\[
 \GreenOp_x=A_x^{-1}=\GreenOp_x^*\ge0,\qquad
 \norm{\GreenOp_x}_{\rm HS}<\infty,\qquad
 r_n(x)>0\quad(n\ge1).
\]
Hence the compact spectral theorem and inversion yield
\[
 r_n(x)\downarrow0,\qquad
 \Lambda_n^x=r_n(x)^{-1}\uparrow\infty,
\]
which is \eqref{eq:liouville-spectrum-list}.
\end{proof}

For $x\in\Omega_{\rm op}$, choose the eigenfunctions used below real and
normalized so that
\begin{equation}\label{eq:eigen-normalization}
 A_x\phi_n^x=\Lambda_n^x\phi_n^x,
 \qquad
 \int_D(\phi_n^x)^2\,dM_x=1.
\end{equation}
They satisfy
\begin{equation}\label{eq:eigen-weak}
        \E(\phi_n^x,v)
        =\Lambda_n^x\int_D\phi_n^x v\,dM_x,
        \qquad v\in\Vd_x.
\end{equation}

\subsection{Green smoothing and spectral transversality}

Recall the sets $\Omega_{\rm Fr}$ and $\Omega_{\rm op}$ defined in
\eqref{eq:borel-frostman-event} and \eqref{eq:measurable-operator-event},
respectively.  For each $x\in\Omega_{\rm Fr}\cap\Omega_{\rm op}$, Green smoothing gives
\[
 \GreenOp_x:L^2(M_x)\longrightarrow C_b(D).
\]
The almost-sure singularity of $M_h$ and the distributional identity for
eigenfunction squares then give spectral transversality.

\subsubsection{Green smoothing and continuous eigenfunctions}

\begin{proposition}
\label{prop:green-potential-L2-Cb}
Fix $x\in\Omega_{\rm Fr}\cap\Omega_{\rm op}$.  For
$\theta\in L^2(M_x)$, define
\[
 u_\theta(z):=\int_DG_D(z,w)\theta(w)\,M_x(dw),
 \qquad z\in D.
\]
Then $u_\theta\in C_b(D)$ and
\begin{equation}\label{eq:green-potential-Linfty-bound}
 [u_\theta]_{M_x}=\GreenOp_x\theta,
 \qquad
 \|u_\theta\|_\infty
 \le K_G(x)^{1/2}\|\theta\|_{L^2(M_x)}.
\end{equation}
Moreover, the continuous representative is unique:
\[
 v\in C_b(D),\quad [v]_{M_x}=\GreenOp_x\theta
 \quad\Longrightarrow\quad
 v=u_\theta\quad\text{on }D.
\]
\end{proposition}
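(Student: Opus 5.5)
The plan is to prove the three assertions in the order in which they build on one another: the sup bound, then continuity, then the identification with $\GreenOp_x\theta$ and uniqueness. All the analytic work has already been done in Lemmas~\ref{lem:frostman-green-smoothing} and \ref{lem:green-smoothing-constant}, so what remains is a Cauchy--Schwarz argument.

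\emph{Pointwise definition and bound.} Fix $z\in D$. By Cauchy--Schwarz in $L^2(M_x)$,
\[
 \int_D|G_D(z,w)\theta(w)|\,M_x(dw)\le F_x(z)^{1/2}\|\theta\|_{L^2(M_x)},
\]
where $F_x(z)=\int_DG_D(z,w)^2\,M_x(dw)$. Since $x\in\Omega_{\rm Fr}$, Lemma~\ref{lem:green-smoothing-constant} gives $\sup_zF_x(z)=K_G(x)<\infty$. Hence $u_\theta(z)$ is defined by an absolutely convergent integral for \emph{every} $z$, not just $M_x$-a.e., and the $L^\infty$ bound in \eqref{eq:green-potential-Linfty-bound} holds.

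\emph{Continuity.} For $z,z'$ in a compact $K\Subset D$, Cauchy--Schwarz gives
\[
 |u_\theta(z)-u_\theta(z')|\le\|G_D(z,\cdot)-G_D(z',\cdot)\|_{L^2(M_x)}\|\theta\|_{L^2(M_x)}.
\]
The right side tends to zero by \eqref{eq:green-kernel-L2-continuity} of Lemma~\ref{lem:frostman-green-smoothing}, applied with $\mu=M_x$; the Frostman hypothesis \eqref{eq:deterministic-frostman} holds by Lemma~\ref{lem:gmc-frostman-log-potential}. Since every point of $D$ has a compact neighbourhood in $D$, $u_\theta$ is continuous. Together with the sup bound, $u_\theta\in C_b(D)$.

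\emph{Identification.} $\GreenOp_x\theta$ was defined in \eqref{eq:green-operator-kh} by the same integral, for $M_x$-a.e.\ $z$. On the set where both are defined they coincide, so $[u_\theta]_{M_x}=\GreenOp_x\theta$.

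\emph{Uniqueness of the continuous representative.} Suppose $v\in C_b(D)$ and $[v]_{M_x}=\GreenOp_x\theta$. Then $w:=v-u_\theta$ is continuous and vanishes $M_x$-a.e. Suppose $w(z_0)\ne0$ for some $z_0$. By continuity, $w\ne0$ on an open ball $O\ni z_0$. Since $x\in\Omega_{\rm op}\subseteq\Omega_{\rm tr}$, Lemma~\ref{lem:gmc-full-support} gives $M_x(O)>0$, which contradicts $w=0$ $M_x$-a.e. Hence $v=u_\theta$ everywhere on $D$.

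The only point needing care is continuity near the boundary. This is why the argument works locally on compact subsets, where \eqref{eq:green-kernel-L2-continuity} is available, rather than claiming uniform continuity on $D$. No boundary regularity is required.
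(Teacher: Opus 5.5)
Your proposal is correct and follows essentially the same route as the paper. It uses Cauchy--Schwarz with $K_G(x)$ for the pointwise bound, and the local $L^2(M_x)$-continuity \eqref{eq:green-kernel-L2-continuity} of Green rows on compact subsets for continuity. It identifies the $M_x$-class directly from \eqref{eq:green-operator-kh}, and it obtains uniqueness of the continuous representative from the full support of $M_x$ (Lemma~\ref{lem:gmc-full-support}).
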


\begin{proof}
For every $z\in D$, Cauchy--Schwarz gives
\[
 |u_\theta(z)|
 \le\|G_D(z,\cdot)\|_{L^2(M_x)}\|\theta\|_{L^2(M_x)}
 \le K_G(x)^{1/2}\|\theta\|_{L^2(M_x)}.
\]
If $z_n\to z$ in $D$, then all $z_n$ eventually lie in a compact
$K\Subset D$, and
Lemma~\ref{lem:frostman-green-smoothing} gives
\[
 |u_\theta(z_n)-u_\theta(z)|
 \le\|G_D(z_n,\cdot)-G_D(z,\cdot)\|_{L^2(M_x)}
      \|\theta\|_{L^2(M_x)}
 \longrightarrow0.
\]
Thus $u_\theta\in C_b(D)$, and its $L^2(M_x)$ class is
$\GreenOp_x\theta$ by \eqref{eq:green-operator-kh}.
Finally, $x\in\Omega_{\rm op}\subseteq\Omega_{\rm tr}$, so
Lemma~\ref{lem:gmc-full-support} gives $\operatorname{supp}M_x=D$.  If
$v\in C_b(D)$ satisfies $[v]_{M_x}=\GreenOp_x\theta$, then, for every
$\varepsilon>0$,
\[
 \{|v-u_\theta|>\varepsilon\}\text{ is open and has }M_x\text{-mass }0,
\]
hence it is empty.  Therefore $v=u_\theta$ on $D$.
\end{proof}

The inverse representation yields a continuous version of every eigenfunction;
compare \cite[equation~(2.13)]{BerestyckiSpectralGeometry} in the standard LQG
setting.  Fix $x\in\Omega_{\rm Fr}\cap\Omega_{\rm op}$ and a real normalized
eigenpair
\[
 A_x\phi=\Lambda\phi,
 \qquad
 \|\phi\|_{L^2(M_x)}=1,
 \qquad \Lambda>0.
\]
Define its Green representative by
\[
 \phi^{\rm c}(z)
 :=\Lambda\int_DG_D(z,w)\phi(w)\,M_x(dw),
 \qquad z\in D.
\]

\begin{lemma}
\label{lem:green-potential-eigenfunction-regularity}
For the eigenpair above, the following properties hold.
\begin{enumerate}[label=\textnormal{(\roman*)},leftmargin=2.1em,itemsep=2pt,topsep=3pt]
 \item In $H_0^1(D)$,
 \begin{equation}\label{eq:eigenfunction-green-representative}
  \phi=\Lambda\Pot(\phi M_x).
 \end{equation}

 \item The Green representative satisfies
 \[
  \phi^{\rm c}\in C_b(D),
  \qquad
  [\phi^{\rm c}]_{M_x}=[\phi]_{M_x},
  \qquad
  \phi^{\rm c}=\widetilde\phi\quad\E\text{-q.e. on }D.
 \]

 \item Its sup norm obeys
 \begin{equation}\label{eq:eigenfunction-green-linfty-bound}
  \|\phi^{\rm c}\|_\infty\le\Lambda K_G(x)^{1/2}.
 \end{equation}
\end{enumerate}
\end{lemma}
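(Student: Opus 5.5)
For (i), I would start from the eigenvalue equation and the Green inverse. The eigenfunction $\phi$ lies in $\Vd_x\subset H_0^1(D)$, and the weak equation \eqref{eq:eigen-weak} reads
\[
 \E(\phi,v)=\Lambda\int_D\phi\widetilde v\,dM_x,\qquad v\in\Vd_x.
\]
By Lemma~\ref{lem:green-hilbert-schmidt}, $\phi M_x\in\ME$, so Lemma~\ref{lem:green-form-inverse} (with $\theta=\Lambda\phi$) says that $u:=\Lambda\Pot(\phi M_x)$ lies in $\Vd_x$ and is the \emph{unique} solution of the same weak equation on $\Vd_x$. Uniqueness then gives $\phi=u$ in $H_0^1(D)$, which is \eqref{eq:eigenfunction-green-representative}. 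An equivalent route is to note that $\GreenOp_x=A_x^{-1}$ (Proposition~\ref{prop:canonical-measurable-liouville-data}) gives $\phi=\Lambda\GreenOp_x\phi$ in $L^2(M_x)$, and to lift this identity to $H_0^1$ via the same lemma.

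For (ii), I would apply Proposition~\ref{prop:green-potential-L2-Cb} with $\theta=\Lambda\phi$. This shows that $\phi^{\rm c}=u_{\Lambda\phi}\in C_b(D)$ and that its $M_x$-class is $\Lambda\GreenOp_x\phi=\phi$. For the quasi-everywhere identification, the signed Green representation \eqref{eq:signed-finite-energy-green-representation} gives
\[
 \widetilde{\Pot(\phi M_x)}(z)=\int_D G_D(z,w)\phi(w)\,M_x(dw)\quad\text{for }\E\text{-q.e. }z.
\]
Multiplying by $\Lambda$ and using (i) yields $\widetilde\phi=\phi^{\rm c}$ q.e. One technical point needs care here: the signed integral is taken through the Jordan parts $(\phi^\pm)M_x$, and it coincides with the pointwise integral defining $\phi^{\rm c}$. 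This holds because each Jordan part is finite everywhere, by the Cauchy--Schwarz bound $\int G_D(z,w)|\phi|\,dM_x\le K_G(x)^{1/2}<\infty$ from Lemma~\ref{lem:green-smoothing-constant}.

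For (iii), the bound is immediate from \eqref{eq:green-potential-Linfty-bound} with $\theta=\Lambda\phi$ and $\|\phi\|_{L^2(M_x)}=1$.

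The main obstacle is the q.e. identification in (ii). The Green representation of a signed potential holds only quasi-everywhere, whereas $\phi^{\rm c}$ is defined at every point. The argument therefore needs two facts: that both Jordan-part potentials are finite at every point, so that their difference is the everywhere-defined integral, and that the exceptional polar set is harmless. Both follow from the Frostman smoothing bound.
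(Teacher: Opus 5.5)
Your proposal is correct and follows the paper's proof essentially step for step. Part (i) comes from uniqueness in Lemma~\ref{lem:green-form-inverse}; the paper phrases this as $\E(w,w)=0$ for $w=\phi-\Lambda\Pot(\phi M_x)$. Part (ii) comes from the Green representation of Lemma~\ref{lem:finite-energy-potential} combined with Proposition~\ref{prop:green-potential-L2-Cb}, and part (iii) from \eqref{eq:green-potential-Linfty-bound}.

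Your Cauchy--Schwarz remark that both Jordan-part potentials are finite everywhere is a harmless clarification that the paper leaves implicit. One small correction: the polar exceptional set needs no Frostman input, because the claimed identity is itself only quasi-everywhere.
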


\begin{proof}
Set $w:=\phi-\Lambda\Pot(\phi M_x)$.  By
Lemma~\ref{lem:green-form-inverse} and the eigenvalue equation,
\[
 \E(w,v)=0\qquad(v\in\Vd_x).
\]
Since $w\in\Vd_x$, taking $v=w$ gives $\E(w,w)=0$, hence $w=0$ in
$H_0^1(D)$.  This proves~\textnormal{(i)}.

For~\textnormal{(ii)}, Lemma~\ref{lem:finite-energy-potential} and
\textnormal{(i)} give
\[
 \widetilde\phi(z)
 =\Lambda\int_DG_D(z,w)\phi(w)\,M_x(dw)
 =\phi^{\rm c}(z)
 \qquad\text{for }\E\text{-q.e. }z\in D.
\]
Proposition~\ref{prop:green-potential-L2-Cb}, applied with $\theta=\phi$,
shows that $\phi^{\rm c}\in C_b(D)$ and represents the same $L^2(M_x)$ class
as $\phi$.  The same proposition and $\|\phi\|_{L^2(M_x)}=1$ give
\textnormal{(iii)}.
\end{proof}

Henceforth, eigenfunctions on $\Omega_{\rm Fr}\cap\Omega_{\rm op}$ are
identified with this bounded continuous representative.

\subsubsection{Square-transversality inputs}

For $u,v\in H_0^1(D)$, set
\begin{equation}\label{eq:carre-du-champ-definition}
 \Gamma_\E(u,v)=\frac1{2\pi}\nabla u\cdot\nabla v,
 \qquad \Gamma_\E(u)=\Gamma_\E(u,u).
\end{equation}

The first square-transversality input compares the canonical Liouville measure
with planar Lebesgue measure.  Recall that $M_h=M^{\rm can}(h)$ by the
convention following Lemma~\ref{lem:canonical-borel-gmc}, while $\mathcal L^2$
denotes planar Lebesgue measure.

\begin{lemma}
\label{lem:gmc-singular-lebesgue}
The two measures are almost surely mutually singular:
\begin{equation}\label{eq:gmc-singular-lebesgue}
 \mathbb P\!\left(M_h\perp\mathcal L^2\right)=1.
\end{equation}
\end{lemma}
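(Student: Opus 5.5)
The plan is to exhibit a Borel set $S\subset D$, depending measurably on the field, that carries all of $M_h$ and has zero Lebesgue measure. The natural choice is the set of $\gamma$-thick points. Fix the heat-smoothed fields $h_{\eps}=Q_{\eps}h$ from Lemma~\ref{lem:lbm-borel-heat-smoothing}, with variance $v_\eps(z):=\mathbb E[h_\eps(z)^2]=\log(1/\sqrt\eps)+O_z(1)$, locally uniformly on compacts of $D$ by \eqref{eq:canonical-heat-covariance-log-majorant} and the comparison $G_D\le K^{\rm ex}+C_0$. Along the deterministic subsequence $\eps_j$ of Lemma~\ref{lem:canonical-borel-gmc}, thinned further if needed so that $\sum_j e^{-c v_{\eps_j}}<\infty$, set
\[
 S:=\Bigl\{z\in D:\lim_{j\to\infty}\frac{h_{\eps_j}(z)}{v_{\eps_j}(z)}=\gamma\Bigr\}.
\]
This set is Borel in $(x,z)$ by the joint measurability in Lemma~\ref{lem:lbm-borel-heat-smoothing}.

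\emph{Step 1: $\mathcal L^2(S)=0$ almost surely.} For each fixed $z$, the variable $h_{\eps}(z)/v_\eps(z)$ is centered Gaussian with variance $1/v_\eps(z)\to0$. A Borel--Cantelli argument along $\eps_j$ then gives $h_{\eps_j}(z)/v_{\eps_j}(z)\to0$ almost surely, so $z\notin S$ almost surely. Fubini on $\Xspace\times D$ yields $\mathbb E[\mathcal L^2(S)]=0$.

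\emph{Step 2: $M_h(D\setminus S)=0$ almost surely.} Here I will use the rooted (Peyrière) measure $\mathbb Q(dx,dz):=M_x(dz)\,\mathbb P(dx)$, restricted to a compact $K\Subset D$. By Cameron--Martin, or by Girsanov applied at the level of the approximations $M_{x,\eps}$ and then passed to the limit via the $L^1$ convergence supplied by \eqref{eq:canonical-gmc-uniform-integrability}, the following holds under $\mathbb Q$ conditioned on the marked point $z$: the field has the law of $h+\gamma G_D(\cdot,z)$. Consequently
\[
 h_\eps(z)\overset{d}{=}h_\eps(z)+\gamma\,\Cov(h_\eps(z),h(z))=h_\eps(z)+\gamma v_\eps(z)+O(1).
\]
Applying the Borel--Cantelli argument of Step 1 to the Gaussian part shows that $z\in S$ for $\mathbb Q$-almost every $(x,z)$. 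Therefore $\mathbb E[M_h(K\setminus S)]=\mathbb Q(K\setminus S)=0$, and exhausting $D$ by compacts $K$ gives the claim. To avoid interchanging limits, it is cleanest to compute $\mathbb E\int\one_{\{|h_{\eps_j}(z)/v_{\eps_j}(z)-\gamma|>\delta\}}\,M_{h,\eps_k}(dz)$ for fixed $j$ and $k\ge j$. The indicator is a function of the field at a coarser scale, and by the exponential tilt its expectation equals the Gaussian probability computed with shift $\gamma\Cov(h_{\eps_j}(z),h_{\eps_k}(z))$. For this I need covariance estimates of the form $\Cov(h_{\eps_j},h_{\eps_k})(z)=v_{\eps_j}(z)+O(1)$ uniformly for $k\ge j$. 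I then let $k\to\infty$, using that the indicator of an open condition is lower semicontinuous in $z$, so that narrow convergence and a portmanteau bound apply. Summing over $j$ gives the result.

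Combining the two steps, $M_h$ is carried by the Lebesgue-null set $S$, so \eqref{eq:gmc-singular-lebesgue} follows. The main obstacle I expect is the uniform covariance comparison for the killed heat-kernel regularization on a rough domain. On compacts $K\Subset D$ it should follow from interior heat-kernel estimates together with $K_{\eps,D}\le K_{\eps,B}$ and the lower bound $K_{\eps,D}\ge K_{\eps,B}-C_K$, the latter coming from the harmonic correction $G_B-G_D$ being bounded on $K\times K$. Boundary behaviour is irrelevant, because the argument is carried out on compact exhaustions.
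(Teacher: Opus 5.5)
Your argument is correct, but it takes a different route from the paper. The paper never proves a thick-point statement directly. It covers $D$ by balls $V_j\Subset W_j\Subset U_j\Subset D$ and uses the domain-Markov comparison of Lemma~\ref{lem:interior-dirichlet-gmc-comparison} to write $M_h|_{V_j}$ as a bounded continuous density times $M^{\rm can}_{h_{U_j}}|_{V_j}$. It then cites the Rhodes--Vargas exact-dimensionality theorem for the covariance $\log^+(T_j/|z-w|)+g_j$ on $W_j$, transferred to the canonical version by Shamov's approximation independence. This yields a carrying set of Hausdorff dimension $2-\gamma^2/2<2$.

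You instead prove thick-point concentration directly for the heat-kernel regularization that defines $M^{\rm can}$.
\begin{itemize}
\item Step~1 is Gaussian concentration plus Fubini.
\item Step~2, in its fixed-$j$, $k\ge j$ form, is a first-moment Girsanov identity,
$\mathbb E\bigl[F(h_{\eps_j}(z))e^{\gamma h_{\eps_k}(z)-\frac{\gamma^2}{2}v_{\eps_k}(z)}\bigr]=\mathbb E\bigl[F(h_{\eps_j}(z)+\gamma\Cov(h_{\eps_j}(z),h_{\eps_k}(z)))\bigr]$.
It is followed by portmanteau on the open sets $\{|h_{\eps_j}/v_{\eps_j}-\gamma|>\delta\}\cap U$, then Fatou, then Borel--Cantelli in $j$ after thinning so that $\log(1/\eps_j)\ge j$.
\end{itemize}
That fixed-$j$ computation is the real proof. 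You can drop the rooted-measure sentence before it, which uses the undefined $\Cov(h_\eps(z),h(z))$. You can also drop the appeal to uniform integrability, since Fatou needs only first moments.

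The obstacle you anticipate is also milder than you fear. Since $\Cov(h_{\eps_j}(z),h_{\eps_k}(z))=\int_{2\pi(\eps_j+\eps_k)}^\infty q_t^D(z,z)\,dt$, its difference from $v_{\eps_j}(z)$ lies in $[0,\tfrac12\log 2]$ uniformly on all of $D$, by $q_t^D\le q_t^{\mathbb R^2}$. Compacts are needed only for the lower bound $v_\eps\ge\tfrac12\log(1/\eps)-C_K$, which your harmonic-correction remark supplies.

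What your route buys: it is self-contained and elementary, needs neither the domain-Markov coupling nor approximation independence, and is insensitive to the roughness of $\partial D$. It also produces a carrying set that is jointly Borel in $(x,z)$. What the paper's route buys: brevity by citation, the sharper dimension information, and reuse of the localization machinery it already built for the quasi-support lemma.
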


\begin{proof}
It suffices to construct, on a probability-one event, a Borel set
$N_h\subset D$ such that
\[
 \mathcal L^2(N_h)=0,
 \qquad
 M_h(D\setminus N_h)=0.
\]
Choose balls
\[
 V_j\Subset W_j\Subset U_j\Subset D,
 \qquad D=\bigcup_{j\ge1}V_j,
\]
and let $h_{U_j}$ be the zero-boundary component in the domain-Markov coupling
of Lemma~\ref{lem:interior-dirichlet-gmc-comparison}.  On
$\overline W_j\times\overline W_j$ its covariance has the form
\begin{equation*}
 G_{U_j}(z,w)
 =\log^+\frac{T_j}{|z-w|}+g_j(z,w),
 \qquad g_j\in C_b(\overline W_j\times\overline W_j)
\end{equation*}
for some $T_j>\operatorname{diam}(U_j)$.  The exact-dimensionality theorem for
subcritical GMC \cite[Theorems~4.1 and~4.2]{RhodesVargasGMCReview}, applied on
$W_j$, together with approximation independence
\cite[Corollary~18 and Theorem~25]{ShamovGMC}, gives, for every $j\ge1$,
almost surely,
\[
 \exists\,N_j\in\mathcal B(W_j):\qquad
 M_{h_{U_j}}^{\rm can}(W_j\setminus N_j)=0,
 \qquad
 \dim_{\rm H}N_j=2-\frac{\gamma^2}{2}<2.
\]
In particular,
\[
 \dim_{\rm H}N_j<2
 \quad\Longrightarrow\quad
 \mathcal L^2(N_j)=0.
\]
Equations \eqref{eq:interior-dirichlet-gmc-comparison} and
\eqref{eq:interior-dirichlet-gmc-bounded-density} give a density $w_j$ with
$w_j,w_j^{-1}\in C_b(V_j)$, and hence
\[
 M_h|_{V_j}=w_jM_{h_{U_j}}^{\rm can}|_{V_j}
 \quad\Longrightarrow\quad
 M_h(V_j\setminus N_j)=0.
\]
After intersecting the countably many full-probability events, set
\[
 N_h:=\bigcup_{j\ge1}(N_j\cap V_j)\in\mathcal B(D).
\]
Then
\[
 D\setminus N_h\subseteq\bigcup_{j\ge1}(V_j\setminus N_j),
 \qquad
 M_h(D\setminus N_h)
 \le\sum_{j\ge1}M_h(V_j\setminus N_j)=0,
 \qquad \mathcal L^2(N_h)=0,
\]
which is \eqref{eq:gmc-singular-lebesgue}.
\end{proof}

For $u\in H_0^1(D)$, we use the distributional extension of the background
generator $\Lzero$, namely
\begin{equation}
 \langle\Lzero u,\zeta\rangle=\E(u,\zeta),
 \qquad \zeta\in C_c^\infty(D).
 \label{eq:lbm-energy-laplacian-convention}
\end{equation}

Fix $x\in\Omega_{\rm Fr}\cap\Omega_{\rm op}$ and a real normalized eigenpair
\begin{equation*}
 A_x\phi=\Lambda\phi,
 \qquad
 \|\phi\|_{L^2(M_x)}=1,
 \qquad
 \Lambda>0.
\end{equation*}

\begin{lemma}
\label{lem:eigenfunction-square-laplacian}
The square $\phi^2$ belongs to $H_0^1(D)$ and satisfies
\begin{equation}\label{eq:eigenfunction-square-laplacian}
 \Lzero(\phi^2)
 =2\Lambda\phi^2M_x-2\Gamma_\E(\phi)\,\mathcal L^2
 \qquad\text{in }\mathcal D'(D).
\end{equation}
Equivalently, for every $\zeta\in C_c^\infty(D)$,
\begin{equation}\label{eq:eigenfunction-square-laplacian-test}
 \E(\phi^2,\zeta)
 =2\Lambda\int_D\zeta\phi^2\,dM_x
  -2\int_D\zeta\Gamma_\E(\phi)\,dz.
\end{equation}
\end{lemma}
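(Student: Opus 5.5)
The plan is a product-rule computation in $H_0^1(D)$ against test functions. It uses the bounded continuous representative of Lemma~\ref{lem:green-potential-eigenfunction-regularity} and the weak eigenvalue equation \eqref{eq:eigen-weak} with the test function $v=\zeta\phi\in\Vd_x$.

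\emph{Step 1: $\phi^2\in H_0^1(D)$.}
By Lemma~\ref{lem:green-potential-eigenfunction-regularity}, $\phi\in H_0^1(D)$ and its quasi-continuous version $\widetilde\phi=\phi^{\rm c}$ is bounded, with $\|\phi^{\rm c}\|_\infty\le\Lambda K_G(x)^{1/2}$. The Lebesgue class of $\phi$ therefore lies in $L^\infty(D,\mathcal L^2)$: a set of zero capacity has Lebesgue measure zero, so $\phi=\phi^{\rm c}$ Lebesgue-a.e. Since $H_0^1(D)\cap L^\infty$ is an algebra, $\phi^2\in H_0^1(D)$ with $\nabla(\phi^2)=2\phi\nabla\phi$. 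To make this rigorous I would approximate $\phi$ in $H_0^1$ by $C_c^\infty$ functions $\psi_n$ and truncate them by a smooth bounded function equal to the identity on $[-\|\phi\|_\infty,\|\phi\|_\infty]$. The squares of the truncations then converge to $\phi^2$ in $H_0^1$ by dominated convergence. Quasi-continuity is also preserved: $(\phi^{\rm c})^2$ is a quasi-continuous version of $\phi^2$.

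\emph{Step 2: the test identity.}
Fix $\zeta\in C_c^\infty(D)$. The product rule gives
\[
 \E(\phi^2,\zeta)=\frac1{2\pi}\int_D2\phi\nabla\phi\cdot\nabla\zeta\,dz
 =\frac1{\pi}\int_D\nabla\phi\cdot\nabla(\zeta\phi)\,dz
 -\frac1{\pi}\int_D\zeta|\nabla\phi|^2\,dz ,
\]
where I used $\phi\nabla\zeta=\nabla(\zeta\phi)-\zeta\nabla\phi$. The last term equals $-2\int\zeta\Gamma_\E(\phi)\,dz$. The first term equals $2\E(\phi,\zeta\phi)$.

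Next I check that $\zeta\phi\in\Vd_x$. Multiplication by $\zeta\in\Hreg$ preserves $\Vd_x$ by Assumption~\ref{ass:regular-form-multipliers}. Alternatively, $\zeta\phi\in H_0^1(D)$, and its quasi-continuous version $\zeta\phi^{\rm c}$ is bounded, hence lies in $L^2(M_x)$ because $M_x(D)<\infty$. The weak equation \eqref{eq:eigen-weak} then gives
\[
 \E(\phi,\zeta\phi)=\Lambda\int_D\zeta\phi^2\,dM_x .
\]
Combining the two displays yields \eqref{eq:eigenfunction-square-laplacian-test}.

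\emph{Step 3: measure identity.}
By the convention \eqref{eq:lbm-energy-laplacian-convention}, the test identity of Step 2 is exactly \eqref{eq:eigenfunction-square-laplacian} in $\mathcal D'(D)$. Both right-hand sides are finite signed measures: $\phi^2M_x$ is finite because $\phi$ is bounded and $M_x(D)<\infty$, and $\Gamma_\E(\phi)\in L^1(dz)$ because $\phi\in H_0^1(D)$.

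The main point needing care is Step 1, namely the identification of the Lebesgue class, the quasi-continuous class and the $M_x$ class of $\phi$ with the single bounded function $\phi^{\rm c}$. This is what makes the product rule legitimate. It uses Lemma~\ref{lem:green-potential-eigenfunction-regularity}(ii) together with the fact that polar sets are Lebesgue-null. Everything else is routine.
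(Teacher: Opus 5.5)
Your proposal is correct and follows the paper's proof essentially line by line: boundedness from Lemma~\ref{lem:green-potential-eigenfunction-regularity} gives $\phi^2\in H_0^1(D)$ via the chain rule, and then the product rule together with the weak eigenvalue equation tested against $\zeta\phi\in\Vd_x$ yields \eqref{eq:eigenfunction-square-laplacian-test}. Of your two justifications for $\zeta\phi\in\Vd_x$, rely on the direct one (the paper simply notes $\|\zeta\phi\|_{L^2(M_x)}\le\|\zeta\|_\infty\|\phi\|_{L^2(M_x)}$): Assumption~\ref{ass:regular-form-multipliers} is only verified for the admissible class $\mathcal M_{\rm LBM}$, not for an arbitrary $x\in\Omega_{\rm Fr}\cap\Omega_{\rm op}$.
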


\begin{proof}
By Lemma~\ref{lem:green-potential-eigenfunction-regularity},
$\phi\in H_0^1(D)\cap L^\infty(D)$.  The Sobolev chain rule gives
\[
 \phi^2\in H_0^1(D),
 \qquad
 \nabla(\phi^2)=2\phi\nabla\phi.
\]
Moreover,
$\zeta\phi\in H_0^1(D)$ and
\[
 \|\zeta\phi\|_{L^2(M_x)}
 \le\|\zeta\|_\infty\|\phi\|_{L^2(M_x)},
\]
so $\zeta\phi\in\Vd_x$.  The chain rule, the product rule, and the eigenvalue
equation now give
\begin{align*}
 \E(\phi^2,\zeta)
 &=2\int_D\phi\Gamma_\E(\phi,\zeta)\,dz\\
 &=2\E(\phi,\zeta\phi)-2\int_D\zeta\Gamma_\E(\phi)\,dz\\
 &=2\Lambda\int_D\zeta\phi^2\,dM_x
   -2\int_D\zeta\Gamma_\E(\phi)\,dz,
\end{align*}
which is \eqref{eq:eigenfunction-square-laplacian-test} and hence
\eqref{eq:eigenfunction-square-laplacian}.
\end{proof}

Since $\mathbb P(h\in\Omega_{\rm Fr}\cap\Omega_{\rm op})=1$,
Lemma~\ref{lem:eigenfunction-square-laplacian} applies pathwise to every real
$L^2(M_h)$-normalized eigenfunction $\phi$.  For each such $\phi$,
\[
 \phi^2M_h\ll M_h,
 \qquad
 \Gamma_\E(\phi)\,\mathcal L^2\ll\mathcal L^2.
\]
Combining these two absolute-continuity relations with
Lemma~\ref{lem:gmc-singular-lebesgue} gives, almost surely,
\[
 \phi^2M_h\perp\Gamma_\E(\phi)\,\mathcal L^2.
\]
Thus the two measures on the right-hand side of
\eqref{eq:eigenfunction-square-laplacian} are mutually singular.  This is the
square-transversality input used in the next subsection.

\subsection{LBM realization and pathwise response}

\subsubsection{LBM realization of the abstract framework}

We first choose the countable direction space and the admissible LBM orbit.
We then identify them with the abstract framework of
Theorem~\ref{thm:intro-abstract-response}.

Choose once and for all a countable set
$\mathcal A_0\subset C_c^\infty(D)$ containing an $H_0^1(D)$-dense subset
and a uniformly dense subset of $C_0(D)$, and set
\[
 \mathcal Q_D:=\operatorname{span}_{\mathbb Q}\mathcal A_0.
\]

\begin{lemma}
\label{lem:lbm-abstract-direction-interface}
The space $\mathcal Q_D$ has the following three properties.
\begin{enumerate}[label=\textnormal{(\roman*)},leftmargin=*,itemsep=0pt,topsep=0.35em,parsep=0pt,partopsep=0pt]
\item $\mathcal Q_D$ is a nonzero countable rational vector space contained in
$C_c^\infty(D)=\Hreg$.
\item It is dense in the Cameron--Martin space:
\[
 \overline{\mathcal Q_D}^{\,H_0^1(D)}=H_0^1(D).
\]
\item Its annihilator among finite signed measures is trivial:
\begin{equation}
 \int_Dq\,d\sigma=0\quad(q\in\mathcal Q_D)
 \quad\Longrightarrow\quad
 \sigma=0,
 \qquad \sigma\in\Mc_{\rm fin}(D).
 \label{eq:lbm-direction-annihilator}
\end{equation}
\end{enumerate}
\end{lemma}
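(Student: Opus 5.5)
The lemma is essentially a bookkeeping statement about the chosen countable set $\mathcal A_0$, so the plan is to verify the three items directly from the two density properties built into $\mathcal A_0$.

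For (i), $\mathcal Q_D=\operatorname{span}_{\mathbb Q}\mathcal A_0$ is by construction a $\mathbb Q$-vector space. It is countable, since it is the set of finite rational combinations of elements of a countable set. It lies in $C_c^\infty(D)=\Hreg$ because $\mathcal A_0\subset C_c^\infty(D)$ and $C_c^\infty(D)$ is a real vector space. It is nonzero because $\mathcal A_0$ contains an $H_0^1(D)$-dense subset, and $H_0^1(D)\ne\{0\}$ ($D$ is a nonempty open set), so $\mathcal A_0$ contains a nonzero element.

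For (ii), I use $\mathcal A_0\subset\mathcal Q_D\subset H_0^1(D)$. Hence $\overline{\mathcal Q_D}^{\,H_0^1(D)}$ contains the closure of the $H_0^1(D)$-dense subset of $\mathcal A_0$, which is all of $H_0^1(D)$.

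For (iii), let $\sigma\in\Mc_{\rm fin}(D)$ annihilate every $q\in\mathcal Q_D$. The functional $\psi\mapsto\int_D\psi\,d\sigma$ is continuous on $C_0(D)$ for the sup norm, with bound $|\sigma|(D)$. It vanishes on $\mathcal A_0$, hence on the uniformly dense subset of $C_0(D)$ contained in $\mathcal A_0$, and therefore by continuity on all of $C_0(D)$. By the uniqueness part of the Riesz--Markov representation theorem on the locally compact second-countable space $D$, a finite signed Radon measure is determined by its integrals against $C_0(D)$, so $\sigma=0$. This is exactly the annihilator identity \eqref{eq:direction-annihilator} with $\rho=0$, matching the dictionary \eqref{eq:intro-model-dictionary}.

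The only point needing any care is (iii), namely that the $C_0(D)$-density is uniform (sup-norm) density, so that integration against a finite signed measure passes to the limit. Since this density property is part of the choice of $\mathcal A_0$, I expect no genuine obstacle. If a justification for the existence of such an $\mathcal A_0$ is wanted, I would note that $C_c^\infty(D)$ is dense in both $H_0^1(D)$ and $C_0(D)$, and both spaces are separable, so countable dense subsets of $C_c^\infty(D)$ can be chosen for each and their union taken.
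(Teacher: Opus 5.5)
Your proposal is correct and follows essentially the same route as the paper: items (i) and (ii) are read off from the construction of $\mathcal A_0$, and (iii) uses sup-norm approximation from the uniformly dense part of $\mathcal A_0$, the bound $\left|\int_D\psi\,d\sigma\right|\le\norm{\psi}_\infty|\sigma|(D)$, and uniqueness in the Riesz representation theorem. Your extra remarks on why $\mathcal Q_D\ne 0$ and on the existence of such an $\mathcal A_0$ are harmless additions that the paper leaves implicit.
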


\begin{proof}
Properties \textnormal{(i)} and \textnormal{(ii)} follow directly from the
construction.  For \textnormal{(iii)}, suppose that
$\sigma\in\Mc_{\rm fin}(D)$ annihilates $\mathcal Q_D$.  For $f\in C_0(D)$,
choose $q_k\in\mathcal Q_D$ with $\norm{q_k-f}_\infty\to0$.  Then
\[
 \left|\int_D f\,d\sigma\right|
 \le
 \left|\int_D(f-q_k)\,d\sigma\right|
 +\left|\int_Dq_k\,d\sigma\right|
 \le \norm{f-q_k}_\infty\,|\sigma|(D)
 \longrightarrow0.
\]
Hence $\int_D f\,d\sigma=0$ for every $f\in C_0(D)$, and the Riesz
representation theorem yields $\sigma=0$.
\end{proof}

Fix a uniformly dense sequence $(\psi_j)_{j\ge1}$ in the unit ball of
$C_0(D)$.  Since each integral below is measurable,
\[
 \|M_x-\mathcal L^2\|_{\rm TV}
 =\sup_{j\ge1}
    \left|\int_D\psi_j\,d(M_x-\mathcal L^2)\right|
\]
shows that $x\mapsto\|M_x-\mathcal L^2\|_{\rm TV}$ is measurable.  Moreover,
singularity is characterized by
\[
 M_x\perp\mathcal L^2
 \quad\Longleftrightarrow\quad
 \|M_x-\mathcal L^2\|_{\rm TV}
 =M_x(D)+\mathcal L^2(D).
\]
Consequently,
\[
 \{x\in\Xspace:M_x\perp\mathcal L^2\}
 \in\mathcal B(\Xspace).
\]
Define
\begin{equation}
 \Omega_{\rm LBM}
 :=
   \Omega_{\rm Fr}\cap\Omega_{\rm op}
   \cap\{x:M_x\perp\mathcal L^2\}
 \label{eq:lbm-structural-event-abstract}
\end{equation}
\begin{equation}
 \Omega_{\rm LBM}\in\mathcal B(\Xspace),
 \qquad
 \mathbb P(\Omega_{\rm LBM})=1,
 \qquad
 \Omega_{\rm LBM}
 \subseteq\Omega_{\rm op}
 \subseteq\Omega_{\rm tr}
 \subseteq\Omega_{\rm GMC}.
 \label{eq:lbm-event-inclusions}
\end{equation}
The admissible class is the full regular-tilt orbit
\begin{equation}
\mathcal M_{\rm LBM}
:=\left\{e^{\gamma f}M_x:
 x\in\Omega_{\rm LBM},\ f\in C_c^\infty(D)\right\}.
\label{eq:lbm-admissible-orbit}
\end{equation}
For $\mu=e^{\gamma f}M_x$ in this class, set
$c_f:=e^{\gamma\|f\|_\infty}$.  Then
\begin{equation}
 \begin{gathered}
  c_f^{-1}M_x\le\mu\le c_fM_x,
  \qquad \Vd_\mu=\Vd_x,\\
  c_f^{-1/2}\|u\|_{L^2(M_x)}
  \le \|u\|_{L^2(\mu)}
  \le c_f^{1/2}\|u\|_{L^2(M_x)}.
 \end{gathered}
 \label{eq:lbm-admissible-comparison}
\end{equation}
Proposition~\ref{prop:borel-trace-form-event},
Corollary~\ref{cor:lbm-compact-resolvent}, and
\eqref{eq:lbm-admissible-comparison} transfer closedness, density, and the
compact form embedding from $M_x$ to every admissible $\mu$; full support
makes $L^2(\mu)$ infinite-dimensional.  Finally, for $g\in C_c^\infty(D)$,
\begin{equation}
 e^{\gamma g}\mu=e^{\gamma(f+g)}M_x
 \in\mathcal M_{\rm LBM}.
 \label{eq:lbm-admissible-tilt-closure}
\end{equation}

For $\mu\in\mathcal M_{\rm LBM}$, recall the form norm
\[
 \norm{u}_{\E,\mu}^2=\E(u,u)+\norm{u}_{L^2(\mu)}^2,
 \qquad u\in\Vd_\mu.
\]

The next lemma verifies the multiplier property for the regular directions
$\Hreg=C_c^\infty(D)$.

\begin{lemma}
\label{lem:lbm-regular-form-multipliers}
There exists $C_{\rm mult}(D)<\infty$ such that, for every
$\mu\in\mathcal M_{\rm LBM}$ and $g\in\Hreg$,
\[
 g\Vd_\mu\subset\Vd_\mu,
\]
and, for every $u\in\Vd_\mu$,
\begin{equation}
 \norm{gu}_{\E,\mu}
 \le C_{\rm mult}(D)
 \bigl(\norm{g}_\infty+\norm{\nabla g}_\infty\bigr)
 \norm{u}_{\E,\mu}.
 \label{eq:lbm-form-multiplier-bound}
\end{equation}
\end{lemma}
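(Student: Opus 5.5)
The plan is to check the two ingredients of $\norm{gu}_{\E,\mu}$ separately, the $L^2(\mu)$ part and the energy part. Both should be deterministic consequences of the Sobolev product rule and of the quasi-continuous calculus on $H_0^1(D)$. Fix $\mu=e^{\gamma f}M_x\in\mathcal M_{\rm LBM}$ and $g\in C_c^\infty(D)$. By \eqref{eq:lbm-admissible-comparison} we have $\Vd_\mu=\Vd_x=\{u\in H_0^1(D):\widetilde u\in L^2(M_x)\}$, so it suffices to show $gu\in H_0^1(D)$ and $\widetilde{gu}\in L^2(\mu)$, with the stated bound.

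First, the energy part. For $u\in H_0^1(D)$ and $g\in C_c^\infty(D)$ the product rule gives $gu\in H_0^1(D)$ and $\nabla(gu)=g\nabla u+u\nabla g$. Hence
\[
 \E(gu,gu)\le\frac{1}{\pi}\Bigl(\norm{g}_\infty^2\int_D|\nabla u|^2\,dz+\norm{\nabla g}_\infty^2\int_D|u|^2\,dz\Bigr).
\]
The Lebesgue $L^2$ term cannot be bounded by $\norm{u}_{L^2(\mu)}$, since $\mu\perp\mathcal L^2$. Instead I will control it by the energy alone, using the Poincar\'e inequality on the bounded domain $D$: $\int_D|u|^2\,dz\le(\lambda_1^{(0)})^{-1}\E(u,u)$. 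This is where the uniformity of $C_{\rm mult}(D)$ comes from. The constant depends only on $\lambda_1^{(0)}(D)$, and through domain monotonicity only on a ball containing $\overline D$. It does not depend on $x$ or $f$.

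Second, the measure part. I need the quasi-continuous identity $\widetilde{gu}=g\widetilde u$ quasi-everywhere. This holds because $g$ is continuous, so $g\widetilde u$ is quasi-continuous and agrees with $gu$ almost everywhere; two quasi-continuous versions agreeing a.e.\ coincide q.e. Since $\mu$ is smooth, it charges no polar set: $M_x$ is smooth by Lemma~\ref{lem:rough-domain-full-quasi-support} and Proposition~\ref{prop:borel-trace-form-event}, and $\mu$ has bounded density with respect to $M_x$. So the identity holds $\mu$-a.e., and $\norm{gu}_{L^2(\mu)}\le\norm{g}_\infty\norm{u}_{L^2(\mu)}$. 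Combining the two parts gives \eqref{eq:lbm-form-multiplier-bound} with $C_{\rm mult}(D)^2=\max\{2/\pi,\ 1,\ 2/\lambda_1^{(0)}\}$ or a similar constant, and $gu\in\Vd_\mu$ follows.

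I expect the main obstacle to be the step above: making the identification $\widetilde{gu}=g\widetilde u$ $\mu$-a.e.\ precise without any boundary regularity of $D$. I would argue it within $H_0^1(D)$ using the capacity $\operatorname{Cap}_{1,D}$, so that no trace on $\partial D$ is involved, and using that $g$ has compact support in $D$.
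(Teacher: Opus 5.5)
Your proposal is correct and follows the paper's proof. You use the product rule for the energy, the Dirichlet Poincar\'e inequality to absorb the Lebesgue $L^2$ term (since $\mu\perp\mathcal L^2$ rules out using $\norm{u}_{L^2(\mu)}$), and the bound $\norm{gu}_{L^2(\mu)}\le\norm{g}_\infty\norm{u}_{L^2(\mu)}$; your explicit q.e.\ identification $\widetilde{gu}=g\widetilde u$ fills in a step the paper leaves implicit. Only your constant bookkeeping is slightly off: $\frac1\pi\int_D|\nabla u|^2\,dz=2\E(u,u)$, so one may take $C_{\rm mult}(D)^2=\max\{2,(\pi\lambda_1^{(0)})^{-1}\}$.
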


\begin{proof}
For $u\in\Vd_\mu$, multiplication by $g\in C_c^\infty(D)$ gives
$gu\in H_0^1(D)$, while
\[
 \norm{gu}_{L^2(\mu)}\le\norm{g}_\infty\norm{u}_{L^2(\mu)}.
\]
Hence $gu\in\Vd_\mu$.  The Dirichlet Poincar\'e inequality gives
\begin{align*}
 \E(gu,gu)^{1/2}
 &\le
 \norm{g}_\infty\,\E(u,u)^{1/2}
 +(2\pi)^{-1/2}\norm{\nabla g}_\infty
   \norm{u}_{L^2(D,\mathcal L^2)}\\
 &\le
 C_{\rm mult}(D)\bigl(\norm{g}_\infty+\norm{\nabla g}_\infty\bigr)
 \E(u,u)^{1/2}.
\end{align*}
Combining this estimate with the preceding $L^2(\mu)$ bound proves
\eqref{eq:lbm-form-multiplier-bound}.
\end{proof}

The GFF realization in
\eqref{eq:standing-abstract-wiener-space}--
\eqref{eq:standing-gff-series} is the abstract Wiener space used here.  For LBM,
set
\begin{equation}
 \Omega_{\rm can}:=\Omega_{\rm GMC},
 \qquad
 \Omega_{\rm str}:=\Omega_{\rm LBM},
 \label{eq:lbm-abstract-events}
\end{equation}
and use the identifications
\begin{equation}
 \State=D,\qquad \mu_x=M_x,\qquad
 (\Hc,\Hreg,\kappa,\rho,\mathcal Q)
 =\bigl(H_0^1(D),C_c^\infty(D),\gamma,0,\mathcal Q_D\bigr).
 \label{eq:lbm-abstract-dictionary}
\end{equation}
Under \eqref{eq:lbm-abstract-dictionary}, the abstract operator and spectral
symbols agree with the LBM notation already used in this section.

\subsubsection{LBM pathwise response and transversality}
\label{subsec:lbm-response-consequences}

The constructions above provide the LBM inputs for
Assumptions~\ref{ass:admissible-forms}--\ref{ass:spectral-borel}.  We now turn
to the additional pathwise condition in
Definition~\ref{def:abstract-response-transversality}.  Its verification uses
the square criterion of Theorem~\ref{thm:abstract-square-transversality} and
the LBM response formulas below.  Fix $x\in\Omega_{\rm LBM}$ throughout this
subsubsection.

First fix real $L^2(M_x)$-normalized eigenpairs
$(\Lambda_i,\phi_i)_{i=1}^M$ of $A_x$ with pairwise distinct positive
eigenvalues.  Define
\[
 L:=\Lzero\big|_{\operatorname{span}_{\R}\{\phi_1^2,\ldots,\phi_M^2\}},
 \qquad
 b_i:=2\Gamma_\E(\phi_i)\,\mathcal L^2,\quad 1\le i\le M,
\]
where $L$ is initially understood distributionally.

\begin{corollary}
\label{cor:lbm-square-interface}
The map $L$ takes values in $\Mc_{\rm fin}(D)$, and each $b_i$ is a finite
positive Radon measure.  Moreover,
\begin{equation}
 M_x\perp\mathcal L^2,\qquad \operatorname{supp}M_x=D,\qquad b_i\ll\mathcal L^2,\qquad L(\phi_i^2)=2\Lambda_i\phi_i^2M_x-b_i,\quad 1\le i\le M.
 \label{eq:lbm-square-interface}
\end{equation}
\end{corollary}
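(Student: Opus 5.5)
The corollary is essentially a repackaging of facts already established for $x\in\Omega_{\rm LBM}$, so the plan is to read each clause of \eqref{eq:lbm-square-interface} off an earlier result. We take the continuous bounded representatives of the $\phi_i$ throughout.

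The two measure-theoretic clauses come first. Mutual singularity $M_x\perp\mathcal L^2$ holds by the definition \eqref{eq:lbm-structural-event-abstract} of $\Omega_{\rm LBM}$. This is the point of having put the Borel singularity event into $\Omega_{\rm LBM}$, rather than using only the almost-sure statement of Lemma~\ref{lem:gmc-singular-lebesgue}. Full support, $\operatorname{supp}M_x=D$, follows from Lemma~\ref{lem:gmc-full-support}, because $\Omega_{\rm LBM}\subseteq\Omega_{\rm tr}$ by \eqref{eq:lbm-event-inclusions}.

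Next, the measures $b_i$. Each $\phi_i\in H_0^1(D)$, so $\Gamma_\E(\phi_i)=(2\pi)^{-1}|\nabla\phi_i|^2\in L^1(D,\mathcal L^2)$ and is nonnegative. Hence $b_i=2\Gamma_\E(\phi_i)\mathcal L^2$ is a finite positive Radon measure, with total mass $2\E(\phi_i,\phi_i)=2\Lambda_i$, and $b_i\ll\mathcal L^2$ trivially.

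The identity itself is Lemma~\ref{lem:eigenfunction-square-laplacian}. It applies because $x\in\Omega_{\rm Fr}\cap\Omega_{\rm op}$, and it gives
\[
 \Lzero(\phi_i^2)=2\Lambda_i\phi_i^2M_x-b_i \quad\text{in }\mathcal D'(D).
\]
The right-hand side is a finite signed Radon measure. For the first term, $\phi_i^2M_x$ has mass $1$. For the second, $b_i$ was handled above. Thus the distribution $L(\phi_i^2)$ is represented by a finite signed measure.

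It remains to show that $L$ takes values in $\Mc_{\rm fin}(D)$ on the whole span. This follows by linearity: the distributional operator $\Lzero$ is linear, so $L(\sum_i c_i\phi_i^2)=\sum_i c_i\bigl(2\Lambda_i\phi_i^2M_x-b_i\bigr)$, which is a finite linear combination of finite signed measures. We should also check that $L$ is well defined as a map on the span. This is automatic, because $L$ is the restriction of the single linear operator $\Lzero$ acting on distributions, and so it does not depend on how an element of the span is written.

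The only step needing care is the bookkeeping identification of a distribution with a measure. A distribution that agrees with a finite signed Radon measure on $C_c^\infty(D)$ determines that measure uniquely, by density of $C_c^\infty(D)$ in $C_0(D)$ and the Riesz representation theorem, as in Lemma~\ref{lem:lbm-abstract-direction-interface}(iii). So this step presents no real obstacle.
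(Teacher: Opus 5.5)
Your proposal is correct and follows the paper's proof essentially line by line. Singularity comes from the definition of $\Omega_{\rm LBM}$, full support from Lemma~\ref{lem:gmc-full-support} via $\Omega_{\rm LBM}\subseteq\Omega_{\rm tr}$, finiteness and absolute continuity of $b_i$ from $\Gamma_\E(\phi_i)\in L^1(D,\mathcal L^2)$, and the identity from Lemma~\ref{lem:eigenfunction-square-laplacian}. Your extra remarks — that $b_i$ has total mass $2\Lambda_i$, and that $L$ extends to the span by linearity of $\Lzero$ — are correct and make explicit what the paper leaves implicit.
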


\begin{proof}
Since $x\in\Omega_{\rm LBM}$,
\eqref{eq:lbm-structural-event-abstract} gives
$M_x\perp\mathcal L^2$, while
$x\in\Omega_{\rm op}\subseteq\Omega_{\rm tr}$ by
\eqref{eq:lbm-event-inclusions}.  Hence
Lemma~\ref{lem:gmc-full-support} gives
$\operatorname{supp}M_x=D$.

By Lemma~\ref{lem:eigenfunction-square-laplacian},
\[
 \Lzero(\phi_i^2)
 =2\Lambda_i\phi_i^2M_x
  -2\Gamma_\E(\phi_i)\,\mathcal L^2.
\]
Moreover, $\Gamma_\E(\phi_i)\in L^1(D,\mathcal L^2)$, so
$b_i=2\Gamma_\E(\phi_i)\mathcal L^2$ is a finite positive Radon measure with
$b_i\ll\mathcal L^2$.  Thus $L(\phi_i^2)\in\Mc_{\rm fin}(D)$ for every $i$,
which proves \eqref{eq:lbm-square-interface} and the asserted range of $L$.
\end{proof}

We next record the response formulas in LBM notation.  For
$f\in C_c^\infty(D)$ and $\tau\in\R$, set
\[
 M_{x,\tau}^{f}:=e^{\gamma\tau f}M_x,
 \qquad A_{x,\tau}^{f}:=\text{the associated trace-form generator},
\]
and let $\Lambda_n^{x;f}(\tau)$ be the $n$th positive ordered eigenvalue of
$A_{x,\tau}^{f}$.  Define
\begin{equation}
 \partial_f^\pm\Lambda_n^x
 :=
 \lim_{\tau\to0^\pm}
 \frac{\Lambda_n^{x;f}(\tau)-\Lambda_n^x}{\tau}.
 \label{eq:lbm-ordered-directional-derivatives}
\end{equation}
For a positive eigenvalue $\Lambda$ of $A_x$, set
$E_x(\Lambda):=\ker(A_x-\Lambda)$ and
\begin{equation}
 \Response_f^\Lambda(M_x)
 :=-\gamma\Lambda\EigProj_\Lambda^x
       \Mult_f\big|_{E_x(\Lambda)}.
 \label{eq:lbm-response-operator}
\end{equation}

Fix a positive eigenvalue $\Lambda$ of multiplicity $r$, choose a real
$L^2(M_x)$-orthonormal basis $(\phi_i)_{i=1}^r$ of $E_x(\Lambda)$, and fix
$f\in C_c^\infty(D)$.  Proposition~\ref{prop:abstract-cluster-response}
provides $\varepsilon>0$ and real-analytic branches
\[
 \lambda_j:(-\varepsilon,\varepsilon)\to\R,
 \qquad 1\le j\le r,
\]
whose values list, with multiplicity, the eigenvalue cluster of
$A_{x,\tau}^{f}$ issuing from $\Lambda$ for $|\tau|<\varepsilon$.

\begin{proposition}
\label{prop:lbm-cluster-response}
For these branches, the response operator has matrix
\begin{equation}
 \left(\inner{\Response_f^\Lambda(M_x)\phi_j}{\phi_i}
              _{L^2(M_x)}\right)_{i,j=1}^r
 =-\gamma\Lambda
 \left(\int_Df\phi_i\phi_j\,dM_x\right)_{i,j=1}^r,
 \label{eq:lbm-cluster-response-matrix}
\end{equation}
and its eigenvalues are precisely the branch derivatives:
\begin{equation}
 \{\lambda_j'(0):1\le j\le r\}
 =\Spec\bigl(\Response_f^\Lambda(M_x)\bigr)
 \quad\text{as multisets}.
 \label{eq:lbm-cluster-branch-derivatives}
\end{equation}
\end{proposition}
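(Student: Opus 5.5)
This is a direct specialization of Proposition~\ref{prop:abstract-cluster-response} under the dictionary \eqref{eq:lbm-abstract-dictionary}. The plan is to check that its hypotheses hold at $\mu=M_x$, $f\in C_c^\infty(D)$, and then read off both displays.

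\textbf{Step 1: the hypotheses.} Since $x\in\Omega_{\rm LBM}$, the measure $M_x$ lies in $\mathcal M_{\rm LBM}$ (take $f=0$ in \eqref{eq:lbm-admissible-orbit}).
\begin{itemize}
\item Assumption~\ref{ass:admissible-forms} at $M_x$: closedness and density come from Proposition~\ref{prop:borel-trace-form-event}. Compactness of the form embedding follows because $A_x^{-1}=\GreenOp_x$ is Hilbert--Schmidt (Corollary~\ref{cor:lbm-compact-resolvent}). Infinite dimensionality comes from full support (Lemma~\ref{lem:gmc-full-support}).
\item Assumption~\ref{ass:regular-form-multipliers}: this is Lemma~\ref{lem:lbm-regular-form-multipliers}.
\item Closure under tilts: this is \eqref{eq:lbm-admissible-tilt-closure}.
\end{itemize}
By Proposition~\ref{prop:abstract-common-domain}, $A_{x,\tau}^f$ is unitarily equivalent, via $U_{\tau,f}$, to the fixed-space operator $\widehat A_{\tau,f}$ on $L^2(M_x)$. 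Hence the two have the same spectrum, and the branches $\lambda_j$ list the cluster of $A_{x,\tau}^f$ as stated.

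\textbf{Step 2: the matrix identity \eqref{eq:lbm-cluster-response-matrix}.} This holds by the definition \eqref{eq:lbm-response-operator} together with \eqref{eq:abstract-cluster-compression}. Because $(\phi_i)$ is orthonormal in $E_x(\Lambda)$, we have $\langle \EigProj_\Lambda^x \Mult_f\phi_j,\phi_i\rangle=\int f\phi_i\phi_j\,dM_x$. This is exactly \eqref{eq:abstract-response-matrix} with $\kappa=\gamma$.

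\textbf{Step 3: the multiset equality \eqref{eq:lbm-cluster-branch-derivatives}.} This is \eqref{eq:abstract-branch-derivatives}. The Rellich selection and differentiation of the weak eigen-equation have already been carried out there: Lemma~\ref{lem:abstract-derivative-collapse} computes $\dot a_{0,f}(\phi_i,\phi_j)=-\gamma\Lambda\int f\phi_i\phi_j\,dM_x$, using the symmetry of $\E$ and $f\phi_i\in\Vd_x$.

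The only point needing care is Step 1, and specifically matching the branches supplied here with those of the abstract proposition. The branches are defined for the transported operators, so I would note explicitly that conjugation by the unitary $U_{\tau,f}$ preserves eigenvalues with multiplicity. The multiset equality then transfers verbatim, and no further argument is needed.
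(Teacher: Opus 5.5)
Your proposal is correct and matches the paper's proof. The paper likewise puts the tilt in the setting of Proposition~\ref{prop:abstract-cluster-response} via the comparison \eqref{eq:lbm-admissible-comparison}, the tilt closure \eqref{eq:lbm-admissible-tilt-closure} and Lemma~\ref{lem:lbm-regular-form-multipliers}, computes the matrix entries directly, and reads off the multiset identity from \eqref{eq:abstract-branch-derivatives}. One small precision: the identity $\langle \EigProj_\Lambda^x\Mult_f\phi_j,\phi_i\rangle=\int_D f\phi_i\phi_j\,dM_x$ rests on $\phi_i\in E_x(\Lambda)$, so that $\EigProj_\Lambda^x\phi_i=\phi_i$, rather than on orthonormality.
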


\begin{proof}
The comparison \eqref{eq:lbm-admissible-comparison}, the orbit closure
\eqref{eq:lbm-admissible-tilt-closure}, and
Lemma~\ref{lem:lbm-regular-form-multipliers} place this tilt in the setting of
Proposition~\ref{prop:abstract-cluster-response}.  Since
$\phi_i,\phi_j\in E_x(\Lambda)$,
\[
 \inner{\Response_f^\Lambda(M_x)\phi_j}{\phi_i}_{L^2(M_x)}
 =-\gamma\Lambda\inner{f\phi_j}{\phi_i}_{L^2(M_x)}
 =-\gamma\Lambda\int_D f\phi_i\phi_j\,dM_x.
\]
The branch-derivative conclusion of Proposition~\ref{prop:abstract-cluster-response}
then gives
\[
 \{\lambda_j'(0):1\le j\le r\}
 =\Spec\bigl(\Response_f^\Lambda(M_x)\bigr),
\]
which proves both assertions.
\end{proof}

Assume now that $\Lambda$ occupies the ordered labels
$n,\ldots,n+r-1$, and write
\[
 \beta_1(f)\le\cdots\le\beta_r(f)
\]
for the eigenvalues of $\Response_f^\Lambda(M_x)$.

\begin{corollary}
\label{cor:lbm-ordered-cluster-response}
The ordered one-sided derivatives satisfy
\begin{equation}
 \partial_f^+\Lambda_{n+j-1}^x=\beta_j(f),
 \qquad
 \partial_f^-\Lambda_{n+j-1}^x=\beta_{r-j+1}(f),
 \qquad 1\le j\le r.
 \label{eq:lbm-ordered-cluster-response}
\end{equation}
\end{corollary}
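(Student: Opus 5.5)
The plan is to deduce Corollary~\ref{cor:lbm-ordered-cluster-response} directly from the abstract Proposition~\ref{prop:abstract-ordered-cluster-response}, applied with $\mu=M_x$ and $\kappa=\gamma$. The first step is to check that the coherent line $M_{x,\tau}^f=e^{\gamma\tau f}M_x$ is an abstract tilt line in the sense of Section~\ref{sec:pathwise-response}. Since $x\in\Omega_{\rm LBM}$, the measure $M_x$ belongs to $\mathcal M_{\rm LBM}$ (take $f=0$ in \eqref{eq:lbm-admissible-orbit}). The closure \eqref{eq:lbm-admissible-tilt-closure} makes every $M_{x,\tau}^f$ admissible. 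Lemma~\ref{lem:lbm-regular-form-multipliers} supplies Assumption~\ref{ass:regular-form-multipliers}, and \eqref{eq:lbm-admissible-comparison} together with Corollary~\ref{cor:lbm-compact-resolvent} supplies Assumption~\ref{ass:admissible-forms}. With these in place, Propositions~\ref{prop:abstract-common-domain}, \ref{prop:abstract-kato-slice} and~\ref{prop:abstract-cluster-response} apply verbatim. Under the dictionary \eqref{eq:lbm-abstract-dictionary}, the generator $A_{x,\tau}^f$ is exactly $A_{\mu_\tau}$, and $\Lambda_n^{x;f}(\tau)$ is the abstract $\Lambda_n^{\mu_\tau}$.

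The second step identifies the branches. Proposition~\ref{prop:lbm-cluster-response} shows that the branch derivatives $\lambda_j'(0)$ are exactly the eigenvalues $\beta_1(f)\le\cdots\le\beta_r(f)$ of $\Response_f^\Lambda(M_x)$. After relabelling, this gives
\[
\lambda_a(\tau)=\Lambda+\tau\beta_a(f)+o(\tau).
\]

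The third step passes from branches to ordered labels. I would isolate $\Lambda$ in a window $I$ with $\overline I\cap\Spec(A_x)=\{\Lambda\}$. Constancy of the Riesz rank (Proposition~\ref{prop:abstract-kato-slice}(iii)) and continuity of the lower ordered eigenvalues (Lemma~\ref{lem:abstract-slice-borel}) then show that, for small $|\tau|$, the labels $n,\ldots,n+r-1$ of $A_{x,\tau}^f$ are exactly the increasing rearrangement of $\lambda_1(\tau),\ldots,\lambda_r(\tau)$. The increasing rearrangement is $1$-Lipschitz in the maximum norm. For $\tau>0$ the increasing order of $\Lambda+\tau\beta_a$ is $\beta_1,\ldots,\beta_r$, while for $\tau<0$ it reverses. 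Dividing by $\tau$ therefore gives \eqref{eq:lbm-ordered-cluster-response}.

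The only step needing care is the label identification: no eigenvalue from outside the cluster may enter positions $n,\ldots,n+r-1$. This is exactly what the isolating window and the rank constancy guarantee, as in the proof of Proposition~\ref{prop:abstract-ordered-cluster-response}. In practice it therefore suffices to cite that proposition once the dictionary has been checked.
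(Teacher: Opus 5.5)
Your proposal is correct and takes the same route as the paper. The paper's proof is the one-line citation of Proposition~\ref{prop:abstract-ordered-cluster-response} under the dictionary \eqref{eq:lbm-abstract-dictionary}, together with the response matrix \eqref{eq:lbm-cluster-response-matrix}; your checks of admissibility, the multiplier bound, and the branch identification via Proposition~\ref{prop:lbm-cluster-response} are what that citation implicitly relies on, and your explicit label identification (isolating window, constant Riesz rank, continuity of the neighbouring ordered eigenvalues) spells out a step the abstract proof leaves implicit.
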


\begin{proof}
This is Proposition~\ref{prop:abstract-ordered-cluster-response} under the
dictionary \eqref{eq:lbm-abstract-dictionary} and the response matrix
\eqref{eq:lbm-cluster-response-matrix}.
\end{proof}

For a simple positive eigenvalue $\Lambda_n^x$, choose a real
$L^2(M_x)$-normalized eigenfunction $\phi_n^x$ and define
\begin{equation}
 \eta_n^x:=-\gamma\Lambda_n^x(\phi_n^x)^2M_x.
 \label{eq:lbm-spectral-response-measure}
\end{equation}
This measure is independent of the sign of $\phi_n^x$.  When $x=h$, we write
$\eta_n^h$.

\begin{corollary}
\label{cor:lbm-simple-eigenvalue-response}
For every $f\in C_c^\infty(D)$,
\begin{equation}
 \partial_f^-\Lambda_n^x
 =\partial_f^+\Lambda_n^x
 =:\partial_f\Lambda_n^x
 =-\gamma\Lambda_n^x\int_Df(\phi_n^x)^2\,dM_x
 =\int_Df\,d\eta_n^x.
 \label{eq:lbm-simple-eigenvalue-response}
\end{equation}
\end{corollary}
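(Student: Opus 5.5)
This is the LBM specialization of Corollary~\ref{cor:abstract-simple-response}, so the plan is to verify the hypotheses of that abstract result and then translate its conclusion. Fix $x\in\Omega_{\rm LBM}$ and $f\in C_c^\infty(D)$, and set $\mu=M_x$.

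First, we place the tilt $M_{x,\tau}^f=e^{\gamma\tau f}M_x$ in the abstract setting. Under the dictionary \eqref{eq:lbm-abstract-dictionary}, $\mu$ lies in $\mathcal M_{\rm LBM}$. By \eqref{eq:lbm-admissible-tilt-closure}, every tilted measure $M_{x,\tau}^f$ is again admissible. Lemma~\ref{lem:lbm-regular-form-multipliers} supplies the multiplier property (Assumption~\ref{ass:regular-form-multipliers}). The comparison \eqref{eq:lbm-admissible-comparison} together with Corollary~\ref{cor:lbm-compact-resolvent} gives Assumption~\ref{ass:admissible-forms} for $\mu$ and all its tilts. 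These are exactly the inputs used in Propositions~\ref{prop:abstract-cluster-response} and~\ref{prop:abstract-ordered-cluster-response}, as already recorded in the proof of Proposition~\ref{prop:lbm-cluster-response}.

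Next, we treat the simple eigenvalue. Take $r=1$ in Proposition~\ref{prop:lbm-cluster-response}. The response operator $\Response_f^{\Lambda_n^x}(M_x)$ then acts on the one-dimensional space spanned by $\phi_n^x$. Its only eigenvalue is
\[
\beta_1(f)=-\gamma\Lambda_n^x\int_D f(\phi_n^x)^2\,dM_x .
\]
Corollary~\ref{cor:lbm-ordered-cluster-response} with $r=1$ gives $\partial_f^+\Lambda_n^x=\beta_1(f)$. The same corollary also gives $\partial_f^-\Lambda_n^x=\beta_{r-j+1}(f)=\beta_1(f)$, since $r=j=1$. So the two one-sided limits in \eqref{eq:lbm-ordered-directional-derivatives} exist and coincide, which justifies the notation $\partial_f\Lambda_n^x$.

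Finally, we rewrite the derivative as an integral against $\eta_n^x$. Here $\rho=0$, so the centering hypothesis $\int f\,d\rho=0$ in Corollary~\ref{cor:abstract-simple-response} is automatic. By the definition \eqref{eq:lbm-spectral-response-measure},
\[
\int_D f\,d\eta_n^x=-\gamma\Lambda_n^x\int_D f(\phi_n^x)^2\,dM_x .
\]
This identity is legitimate because $f$ is bounded and $(\phi_n^x)^2M_x$ is a finite measure, by normalization.

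The only point needing care is that the ordered label $n$ matches the analytic branch issuing from $\Lambda_n^x$. For $|\tau|$ small, the simple cluster stays isolated by Proposition~\ref{prop:abstract-kato-slice}(iii), so the label is preserved. That isolation is already contained in the proof of Proposition~\ref{prop:abstract-ordered-cluster-response}. I therefore expect no genuine obstacle: the statement is a dictionary translation.
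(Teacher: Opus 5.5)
Your proposal is correct and matches the paper's proof: specialize Corollary~\ref{cor:lbm-ordered-cluster-response} to $r=1$, so that both one-sided derivatives equal the single eigenvalue $\langle\Response_f^{\Lambda_n^x}(M_x)\phi_n^x,\phi_n^x\rangle_{L^2(M_x)}$, and then unwind \eqref{eq:lbm-response-operator} and \eqref{eq:lbm-spectral-response-measure}. Your extra hypothesis-checking and label-matching remarks are already absorbed into Proposition~\ref{prop:lbm-cluster-response} and Proposition~\ref{prop:abstract-ordered-cluster-response}, so they are harmless but not needed.
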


\begin{proof}
With $r=1$, Corollary~\ref{cor:lbm-ordered-cluster-response} gives
\[
 \partial_f^-\Lambda_n^x=\partial_f^+\Lambda_n^x
 =\inner{\Response_f^{\Lambda_n^x}(M_x)\phi_n^x}{\phi_n^x}_{L^2(M_x)}.
\]
Using \eqref{eq:lbm-response-operator} and
\eqref{eq:lbm-spectral-response-measure},
\[
 \inner{\Response_f^{\Lambda_n^x}(M_x)\phi_n^x}{\phi_n^x}_{L^2(M_x)}
 =-\gamma\Lambda_n^x\int_Df(\phi_n^x)^2\,dM_x
 =\int_Df\,d\eta_n^x.
\]
\end{proof}

For the square-independence step, fix $N\ge1$ and real
$L^2(M_x)$-normalized eigenpairs $(\Lambda_i,\phi_i)_{i=1}^N$ of $A_x$ with
pairwise distinct positive eigenvalues.

\begin{theorem}
\label{thm:lbm-square-transversality}
For every $a=(a_1,\ldots,a_N)\in\R^N$,
\begin{equation}
 \sum_{i=1}^Na_i\phi_i^2M_x=0
 \quad\Longrightarrow\quad
 a_1=\cdots=a_N=0,
 \label{eq:lbm-square-transversality}
\end{equation}
and
\begin{equation}
 \sum_{i=1}^Na_i\Lambda_i\phi_i^2M_x=0
 \quad\Longrightarrow\quad
 a_1=\cdots=a_N=0.
 \label{eq:lbm-weighted-square-transversality}
\end{equation}
\end{theorem}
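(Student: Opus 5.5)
The plan is to deduce Theorem~\ref{thm:lbm-square-transversality} directly from the abstract square criterion, Theorem~\ref{thm:abstract-square-transversality}. We take $\mu=M_x$, $\State=D$, $\nu_{\rm ref}=\mathcal L^2$, $\rho=0$ and $c_0=2$. We also use the map $L=\Lzero|_{\operatorname{span}\{\phi_i^2\}}$ and the measures $b_i=2\Gamma_\E(\phi_i)\mathcal L^2$ exactly as in Corollary~\ref{cor:lbm-square-interface}. Since $\rho=0$, the centered measure satisfies $\Theta_{M_x}(\phi_i,\phi_i)=\phi_i^2M_x$. The conclusions \eqref{eq:abstract-centered-square-independence} and \eqref{eq:abstract-weighted-square-independence} are then literally \eqref{eq:lbm-square-transversality} and \eqref{eq:lbm-weighted-square-transversality}.

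We check the hypotheses one at a time. For (i), the eigenpairs are real and $L^2(M_x)$-normalized, with pairwise distinct positive eigenvalues, by assumption. For continuity, we use that $x\in\Omega_{\rm LBM}\subseteq\Omega_{\rm Fr}\cap\Omega_{\rm op}$. Lemma~\ref{lem:green-potential-eigenfunction-regularity} then identifies each $\phi_i$ with its bounded continuous Green representative $\phi_i^{\rm c}$, and this choice does not change $\phi_i^2M_x$. For (ii), Corollary~\ref{cor:lbm-square-interface} gives $M_x\perp\mathcal L^2$ and $\operatorname{supp}M_x=D$, and $\rho=0$ is allowed. For (iii), the same corollary gives that $L$ is linear with values in $\Mc_{\rm fin}(D)$, that $b_i\ll\mathcal L^2$ are finite, and that $L(\phi_i^2)=2\Lambda_i\phi_i^2M_x-b_i$.

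One consistency point needs care. In the induction step of the abstract proof, the identity $u_k=0$, which is a statement about continuous functions on $D$, is fed into $L$. This requires $\sum_i a_i\Lambda_i^k\phi_i^2$ to vanish as an element of $H_0^1(D)$, so that its distributional $\Lzero$ is zero. Lemma~\ref{lem:green-potential-eigenfunction-regularity}(ii) gives $\phi_i^{\rm c}=\widetilde\phi_i$ quasi-everywhere. Hence a continuous function vanishing everywhere represents the zero element of $H_0^1(D)$, since a quasi-continuous version that vanishes q.e.\ is zero. So linearity of $L$ on the span is legitimate.

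The main obstacle is exactly this identification of the pointwise square of the continuous representative with the Sobolev square used in Lemma~\ref{lem:eigenfunction-square-laplacian}. I expect it to be handled by the q.e.\ agreement just described; everything else is a direct citation.
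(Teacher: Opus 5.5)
Your proposal is correct and follows the paper's proof: apply Corollary~\ref{cor:lbm-square-interface} and then Theorem~\ref{thm:abstract-square-transversality} with $\nu_{\rm ref}=\mathcal L^2$, $c_0=2$ and $\rho=0$, so that $\Theta_{M_x}(\phi_i,\phi_i)=\phi_i^2M_x$. Your extra check that $u_k=0$ may be fed into $L$ is a legitimate detail the paper leaves implicit. Lebesgue-a.e.\ agreement of $(\phi_i^{\rm c})^2$ with the Sobolev square already suffices there, since elements of $H_0^1(D)$ are Lebesgue classes.
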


\begin{proof}
Apply Corollary~\ref{cor:lbm-square-interface} to the present eigenpairs and
then Theorem~\ref{thm:abstract-square-transversality}.  Since $\rho=0$,
\[
 \Theta_{M_x}(\phi_i,\phi_i)=\phi_i^2M_x.
\]
Thus \eqref{eq:abstract-centered-square-independence} and
\eqref{eq:abstract-weighted-square-independence} are exactly
\eqref{eq:lbm-square-transversality} and
\eqref{eq:lbm-weighted-square-transversality}.
\end{proof}

Finally, choose $1\le n_1<\cdots<n_N$ such that
$\Lambda_{n_1}^x,\ldots,\Lambda_{n_N}^x$ are simple, and use the response
measures from \eqref{eq:lbm-spectral-response-measure}.

\begin{corollary}
\label{cor:lbm-response-transversality}
The measures $\eta_{n_1}^x,\ldots,\eta_{n_N}^x$ are linearly independent.
Consequently, the functionals
\begin{equation*}
  q\longmapsto\int_Dq\,d\eta_{n_i}^x,
  \qquad q\in\mathcal Q_D,\quad 1\le i\le N,
\end{equation*}
are linearly independent, and the family
$(\Lambda_{n_i}^x)_{i=1}^N$ is response-transverse in the sense of
Definition~\ref{def:abstract-response-transversality}.
\end{corollary}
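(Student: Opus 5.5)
The corollary follows from Theorem~\ref{thm:lbm-square-transversality} combined with the trivial annihilator property of $\mathcal Q_D$. I would carry it out in three short steps.

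\emph{Step 1: linear independence of the measures.} Since $\Lambda_{n_1}^x,\ldots,\Lambda_{n_N}^x$ are simple and the labels are distinct, these are pairwise distinct positive eigenvalues. The eigenfunctions $\phi_{n_i}^x$ are real and $L^2(M_x)$-normalized, so Theorem~\ref{thm:lbm-square-transversality} applies with $(\Lambda_i,\phi_i)=(\Lambda_{n_i}^x,\phi_{n_i}^x)$. Suppose
\[
 \sum_i a_i\eta_{n_i}^x=0.
\]
By \eqref{eq:lbm-spectral-response-measure} this reads $-\gamma\sum_i a_i\Lambda_{n_i}^x(\phi_{n_i}^x)^2M_x=0$. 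Since $\gamma\ne0$, the weighted statement \eqref{eq:lbm-weighted-square-transversality} gives $a=0$.

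\emph{Step 2: independence of the functionals on $\mathcal Q_D$.} Suppose
\[
 \sum_i a_i\int_D q\,d\eta_{n_i}^x=0 \qquad (q\in\mathcal Q_D).
\]
The measure $\sigma:=\sum_i a_i\eta_{n_i}^x$ is a finite signed Radon measure, because each $\phi_{n_i}^x$ is bounded and continuous by Lemma~\ref{lem:green-potential-eigenfunction-regularity} and $M_x(D)<\infty$. The hypothesis says that $\sigma$ annihilates $\mathcal Q_D$, so \eqref{eq:lbm-direction-annihilator} in Lemma~\ref{lem:lbm-abstract-direction-interface} gives $\sigma=0$. Step~1 then yields $a=0$.

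\emph{Step 3: response transversality.} Under the dictionary \eqref{eq:lbm-abstract-dictionary} we have $\kappa=\gamma$ and $\rho=0$. Hence the abstract functional $\ell_{n_i}^{M_x}(q)$ of \eqref{eq:abstract-response-functionals-Q} equals $\int_D q\,d\eta_{n_i}^x$; this is also Corollary~\ref{cor:lbm-simple-eigenvalue-response}. Step~2 is therefore exactly condition \eqref{eq:abstract-RT}. Alternatively, Corollary~\ref{cor:square-implies-RT} applied to Corollary~\ref{cor:lbm-square-interface} gives the same conclusion directly.

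There is no real obstacle. The one point to check is that the hypotheses of Theorem~\ref{thm:abstract-square-transversality} hold for $x\in\Omega_{\rm LBM}$: mutual singularity, full support, continuity of the representatives, and the square identity. These are supplied by Corollary~\ref{cor:lbm-square-interface} and Lemma~\ref{lem:green-potential-eigenfunction-regularity}, and that work is already done inside Theorem~\ref{thm:lbm-square-transversality}.
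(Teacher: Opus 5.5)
Your proposal is correct and matches the paper's proof: weighted square transversality \eqref{eq:lbm-weighted-square-transversality} gives independence of the measures $\eta_{n_i}^x$, and the trivial annihilator \eqref{eq:lbm-direction-annihilator} of $\mathcal Q_D$ transfers this to the functionals. Your Step~3 only makes explicit the identification $\ell_{n_i}^{M_x}(q)=\int_D q\,d\eta_{n_i}^x$ under $\kappa=\gamma$, $\rho=0$, which the paper leaves implicit.
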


\begin{proof}
For $a=(a_1,\ldots,a_N)\in\R^N$,
\[
 \sum_{i=1}^Na_i\eta_{n_i}^x=0
 \quad\Longrightarrow\quad
 \sum_{i=1}^Na_i\Lambda_{n_i}^x(\phi_{n_i}^x)^2M_x=0,
\]
so \eqref{eq:lbm-weighted-square-transversality} gives $a=0$.  If
\[
 \sum_{i=1}^Na_i\int_Dq\,d\eta_{n_i}^x=0
 \qquad(q\in\mathcal Q_D),
\]
then \eqref{eq:lbm-direction-annihilator} gives
$\sum_i a_i\eta_{n_i}^x=0$, and hence again $a=0$.
\end{proof}

\subsection{Final proof of Theorem~\ref{thm:intro-lbm-spectrum}}

Use the identifications
\eqref{eq:lbm-abstract-events}--\eqref{eq:lbm-abstract-dictionary}.  The five
assumptions of Theorem~\ref{thm:intro-abstract-response} are verified as
follows.
\begin{itemize}[label=$\bullet$]
 \item Assumption~\ref{ass:admissible-forms}.  Given
 $\mu\in\mathcal M_{\rm LBM}$, choose $x\in\Omega_{\rm LBM}$ and
 $f\in C_c^\infty(D)$ such that $\mu=e^{\gamma f}M_x$.  Then
 \eqref{eq:lbm-admissible-comparison} gives
 \[
  \Vd_\mu=\Vd_x,
  \qquad
  \|\cdot\|_{L^2(\mu)}\asymp
  \|\cdot\|_{L^2(M_x)}.
 \]
 By \eqref{eq:lbm-event-inclusions},
 $x\in\Omega_{\rm op}\subseteq\Omega_{\rm tr}$.
 Proposition~\ref{prop:borel-trace-form-event} therefore gives a closed densely
 defined trace form for $M_x$, while
 Corollary~\ref{cor:lbm-compact-resolvent} gives
 \[
  \|u\|_{\E,M_x}^2
  =\sum_{n\ge1}(1+\Lambda_n^x)
    |\langle u,\phi_n^x\rangle_{L^2(M_x)}|^2,
  \qquad \Lambda_n^x\uparrow\infty.
 \]
 Hence $\Vd_x\hookrightarrow L^2(M_x)$ is compact.  The comparison in
 \eqref{eq:lbm-admissible-comparison} transfers this compact embedding to
 $\mu$.  Full support makes $L^2(\mu)$ infinite-dimensional.

 \item Assumption~\ref{ass:regular-form-multipliers}.  By
 Lemma~\ref{lem:lbm-regular-form-multipliers}, for
 $g\in\Hreg=C_c^\infty(D)$,
 \[
  g\Vd_\mu\subset\Vd_\mu,
  \qquad
  \|gu\|_{\E,\mu}
  \le C_{\rm mult}(D)
  \bigl(\|g\|_\infty+\|\nabla g\|_\infty\bigr)\|u\|_{\E,\mu}.
 \]

 \item Assumption~\ref{ass:coherent-orbit}.
 Lemma~\ref{lem:canonical-borel-gmc},
 \eqref{eq:lbm-event-inclusions}, and
 \eqref{eq:lbm-abstract-events} give Borel events satisfying
 \[
  \Omega_{\rm str}=\Omega_{\rm LBM}
  \subseteq\Omega_{\rm GMC}=\Omega_{\rm can},
  \qquad \mathbb P(\Omega_{\rm str})=1.
 \]
 By \eqref{eq:lbm-admissible-orbit},
 $M_x\in\mathcal M_{\rm LBM}$ for $x\in\Omega_{\rm str}$, so
 \eqref{eq:abstract-structural-data} holds.  Moreover,
 Theorem~\ref{thm:gmc-cm-shift} verifies the orbit identity
 \eqref{eq:pointwise-abstract-orbit}:
 \[
  M_{x+g}=e^{\gamma g}M_x,
  \qquad g\in\Hreg,\quad x,x+g\in\Omega_{\rm GMC}.
 \]
 Finally, \eqref{eq:lbm-admissible-tilt-closure} is the closure clause
 \eqref{eq:abstract-admissible-orbit-closure} of this assumption.

 \item Assumption~\ref{ass:direction-separation}.  Lemma
 \ref{lem:lbm-abstract-direction-interface} provides a nonzero countable
 rational space
 $\mathcal Q_D\subset C_c^\infty(D)=\Hreg$, dense in $H_0^1(D)$.  Together
 with $\rho=0$ from \eqref{eq:lbm-abstract-dictionary}, this verifies
 \eqref{eq:countable-direction-space}.  The same lemma gives
 \[
  \int_Dq\,d\sigma=0\quad(q\in\mathcal Q_D)
  \quad\Longrightarrow\quad \sigma=0,
  \qquad \sigma\in\Mc_{\rm fin}(D).
 \]
 Since $\rho=0$, this is \eqref{eq:direction-annihilator}.

 \item Assumption~\ref{ass:spectral-borel}.
 Corollaries~\ref{cor:canonical-borel-liouville-data} and
 \ref{cor:canonical-borel-compression-traces} verify
 \eqref{eq:minimal-borel-interface} on
 $\Omega_{\rm str}=\Omega_{\rm LBM}$ for
 $n,k\ge1$, rational $0<a<b$, and $q\in\mathcal Q_D$.
\end{itemize}
Corollary~\ref{cor:lbm-compact-resolvent} and
Theorem~\ref{thm:intro-abstract-response}\textnormal{(i)} give, almost surely,
\[
 0<\Lambda_1^h<\Lambda_2^h<\cdots\uparrow\infty,
\]
which is \eqref{eq:intro-lbm-simple-spectrum}.

Fix $1\le n_1<\cdots<n_N$.  On this event, Corollaries
\ref{cor:lbm-simple-eigenvalue-response} and
\ref{cor:lbm-response-transversality}, evaluated at $x=h$, give
\begin{align*}
 \ell_{n_i}^{M_h}(q)
 &=-\gamma\Lambda_{n_i}^h\int_Dq(\phi_{n_i}^h)^2\,dM_h
   =\int_Dq\,d\eta_{n_i}^h,
   \qquad q\in\mathcal Q_D,\\
 \sum_{i=1}^N a_i\ell_{n_i}^{M_h}(q)=0\quad(q\in\mathcal Q_D)
 &\Longrightarrow a_1=\cdots=a_N=0.
\end{align*}
Thus Definition~\ref{def:abstract-response-transversality} holds, and
Theorem~\ref{thm:intro-abstract-response}\textnormal{(ii)} yields
\eqref{eq:intro-lbm-density}.  This proves
Theorem~\ref{thm:intro-lbm-spectrum}.  The stronger potential realization in
Definition~\ref{def:response-realization} is established separately in
Appendix~\ref{app:lbm-finite-energy-response}.

\section{The Liouville--Cauchy operator}
\label{sec:lcp-realization}

This section verifies the abstract response framework for the
Liouville--Cauchy operator and establishes the nonlocal eigenfunction-square
identity used for response transversality.

\subsection{The circle Cauchy form and boundary chaos}
Recall that $\LCircle=\R/(2\pi\mathbb Z)$ and
$m(d\theta)=d\theta/(2\pi)$ is normalized Haar measure.
Write $d_{\LCircle}$ for the geodesic distance and
\[
 B_{\LCircle}(\theta,r)
 :=\{\varphi\in\LCircle:d_{\LCircle}(\theta,\varphi)<r\}.
\]

For $u\in L^2(m)$ and for a finite signed Radon measure $\sigma$ on
$\LCircle$, use the Fourier conventions
\[
 \widehat u(n)=\int_{\LCircle}u(\theta)e^{-in\theta}\,m(d\theta),
 \qquad
 \widehat\sigma(n)=\int_{\LCircle}e^{-in\theta}\,\sigma(d\theta).
\]

All forms and Hilbert spaces below are taken over $\mathbb R$; complex Fourier
notation is used only to express their standard sesquilinear complexifications.
The mean-zero Cameron--Martin space is
\begin{equation}\label{eq:H-half-definition}
 \Hc=\dot H^{1/2}(\LCircle)
 :=\left\{u\in L^2(m):\widehat u(0)=0,\ 
       \sum_{n\ne0}|n|\,|\widehat u(n)|^2<\infty\right\},
 \qquad
 \norm{u}_{\Hc}^2=\sum_{n\ne0}|n|\,|\widehat u(n)|^2.
\end{equation}

On $H^{1/2}(\LCircle)=\Hc\oplus\R\one$, the circle Cauchy form is
\begin{align}
 \E(u,v)
 &=\sum_{n\ne0}|n|\widehat u(n)\overline{\widehat v(n)}
 \label{eq:cauchy-form-fourier}\\
 &=\frac12\iint_{\LCircle\times\LCircle}
   (u(\theta)-u(\varphi))(v(\theta)-v(\varphi))
   J(\theta,\varphi)\,m(d\theta)m(d\varphi),
 \label{eq:cauchy-form-jump}
\end{align}
where
\begin{equation}\label{eq:cauchy-jump-kernel}
 J(\theta,\varphi)=\frac{2}{|e^{i\theta}-e^{i\varphi}|^2}.
\end{equation}
For $u\in H^{1/2}(\LCircle)$, $\widetilde u$ denotes a fixed
quasi-continuous representative for this Dirichlet form.

The associated un-time-changed generator is the Fourier multiplier
\begin{equation}\label{eq:cauchy-generator-fourier}
 A_0e^{in\theta}=|n|e^{in\theta},
 \qquad n\in\mathbb Z.
\end{equation}

The inverse of the Fourier multiplier $|n|$ on mean-zero functions is
represented off the diagonal by
\begin{equation}\label{eq:circle-pseudo-green}
 G(\theta,\varphi)
 =\sum_{n\ne0}\frac{e^{in(\theta-\varphi)}}{|n|}
 =-2\log\left(2\left|\sin\frac{\theta-\varphi}{2}\right|\right),
 \qquad \theta\ne\varphi.
\end{equation}
Set $G(\theta,\theta)=+\infty$; the resulting extended kernel is measurable
with respect to $\mathcal B(\LCircle^2)$.  It is
normalized by
\begin{equation}\label{eq:pseudo-green-zero-average}
 \int_{\LCircle}G(\theta,\varphi)m(d\varphi)=0.
\end{equation}

\paragraph{The canonical boundary chaos.}

Fix $s>0$ and realize the mean-zero circle field as the coordinate map
\[
 h:\Xspace_{\rm C}\longrightarrow\Xspace_{\rm C},
 \qquad h(x)=x,
 \qquad \Xspace_{\rm C}=\dot H^{-s}(\LCircle),
\]
with Gaussian law $\mathbb P_{\rm C}$.  Its covariance is the kernel
\eqref{eq:circle-pseudo-green}, its Cameron--Martin space is
\eqref{eq:H-half-definition}, and
\begin{equation}
 \kappa:=\frac\gamma2,
 \qquad 0<\gamma<2.
 \label{eq:boundary-beta}
\end{equation}
We retain $\gamma$ in the GMC estimates and use $\kappa=\gamma/2$ in the
abstract tilt and response formulas.
Let $P_\eps=e^{-\eps A_0}$ be the Poisson semigroup on mean-zero
distributions and set
\begin{equation}
 M_{x,\eps}^{\partial}(d\theta)
 :=\exp\left(
   \kappa P_\eps x(\theta)
   -\frac{\kappa^2}{2}
       \mathbb E_{\rm C}[(P_\eps h(\theta))^2]
  \right)m(d\theta).
 \label{eq:lcp-regularized-gmc}
\end{equation}

\begin{lemma}
\label{lem:lcp-canonical-boundary-gmc}
There exist a deterministic sequence $\eps_j\downarrow0$, a Borel set
\[
 \Omega_{\rm GMC}^{\partial}\subset\Xspace_{\rm C},
 \qquad
 \mathbb P_{\rm C}(\Omega_{\rm GMC}^{\partial})=1,
\]
and a measurable map
\begin{equation}
 \Xspace_{\rm C}\ni x\longmapsto M_x^{\partial}
 \in\Mc_{\rm fin}^+(\LCircle).
 \label{eq:lcp-canonical-borel-gmc}
\end{equation}
They satisfy the following properties.
\begin{enumerate}[label=\textnormal{(\roman*)},leftmargin=*,itemsep=0pt,topsep=0.35em,parsep=0pt,partopsep=0pt]
\item For every $x\in\Omega_{\rm GMC}^{\partial}$,
\[
 M_{x,\eps_j}^{\partial}\Longrightarrow M_x^{\partial},
\]
where $\Longrightarrow$ denotes narrow convergence of finite measures.
\item For the coordinate field $h$, $M_h^\partial$ agrees almost surely with
subcritical boundary GMC.
\item If $f\in C^\infty(\LCircle)\cap\Hc$ and
$x,x+f\in\Omega_{\rm GMC}^{\partial}$, then
\begin{equation}
 M_{x+f}^{\partial}=e^{\kappa f}M_x^{\partial}.
 \label{eq:boundary-gmc-coherent-shift}
\end{equation}
\end{enumerate}
\end{lemma}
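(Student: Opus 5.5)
The plan is to transcribe the proofs of Lemma~\ref{lem:canonical-borel-gmc} and Theorem~\ref{thm:gmc-cm-shift} to the circle, with the Poisson semigroup $P_\eps=e^{-\eps A_0}$ in place of the killed heat semigroup.

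\emph{Step 1: Borel regularization.} As in Lemma~\ref{lem:lbm-borel-heat-smoothing}, write $P_\eps x(\theta)=\sum_{n\ne0}e^{-\eps|n|}\widehat x(n)e^{in\theta}$. Cauchy--Schwarz against the $\dot H^{-s}$ norm gives $\|P_\eps x\|_\infty\le C_{\eps,s}\|x\|_{-s}$, since $\sum_n e^{-2\eps|n|}|n|^{2s}<\infty$. The series converges uniformly, so $P_\eps\in\mathcal L(\Xspace_{\rm C},C(\LCircle))$ and $(x,\theta)\mapsto P_\eps x(\theta)$ is jointly measurable. The variance $\mathbb E_{\rm C}[(P_\eps h(\theta))^2]=\sum_{n\ne0}e^{-2\eps|n|}/|n|=-2\log(1-e^{-2\eps})$ is a constant. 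Hence $x\mapsto M^\partial_{x,\eps}$ is Borel into $\Mc_{\rm fin}^+(\LCircle)$, tested against a countable dense family in $C(\LCircle)$.

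\emph{Step 2: convergence in probability and the Borel limit.} The covariance of $P_\eps h$ is $G_\eps(\theta,\varphi)=-2\log|1-e^{-2\eps}e^{i(\theta-\varphi)}|$. It increases to $G$ as $\eps\downarrow0$ and is bounded by $2\log^+(1/(|e^{i\theta}-e^{i\varphi}|\vee\eps))+C$. With the scaling $\kappa=\gamma/2$ applied to a kernel $2\log$, the chaos is subcritical exactly when $\kappa^2\cdot2<2$, i.e.\ $\gamma<2$. This is the standard boundary GMC in dimension one, so positive moments $p\in(1,4/\gamma^2)$ are available.

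I would then compare with an exact-scale one-dimensional kernel via Kahane's inequality, as in \eqref{eq:canonical-exact-scale-covariance-comparison}--\eqref{eq:canonical-gmc-uniform-integrability}. This gives uniform $L^p$ bounds on total masses. Cameron--Martin shifts $P_\eps g\to g$ in $\Hc$ hold by Fourier dominated convergence, so Shamov's approximation theorem \cite[Theorem~25]{ShamovGMC} yields convergence in probability to boundary GMC. A fast subsequence with Borel--Cantelli, the Borel Cauchy event $\Omega^\partial_{\rm GMC}$, and completeness of $d_{\rm w}$ then give the measurable map, (i), and, by uniqueness of limits in probability, (ii). These are verbatim from Lemma~\ref{lem:canonical-borel-gmc}. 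I expect the main obstacle to be checking the uniform integrability hypothesis, namely the log-majorant and exact-scale comparison for the Poisson kernel. I would first try to verify it directly, since $G_\eps$ is explicit.

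\emph{Step 3: cocycle (iii).} By linearity, $M^\partial_{x+f,\eps_j}=e^{\kappa P_{\eps_j}f}M^\partial_{x,\eps_j}$, because the renormalizing variance does not depend on $x$. For smooth mean-zero $f$, $\|P_\eps f-f\|_\infty\le\eps\|A_0f\|_\infty\to0$, with $A_0f$ smooth. Testing against $\psi\in C(\LCircle)$ and using narrow convergence at both endpoints in $\Omega^\partial_{\rm GMC}$ gives $\int\psi\,dM^\partial_{x+f}=\int\psi e^{\kappa f}\,dM^\partial_x$. The same total-variation argument on the bounded continuous factor as in Theorem~\ref{thm:gmc-cm-shift} then proves \eqref{eq:boundary-gmc-coherent-shift}.
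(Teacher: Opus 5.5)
Your proposal follows the paper's proof essentially step for step: Borel measurability of $x\mapsto M^\partial_{x,\eps}$ from the smoothing $P_\eps$, uniform $L^p$ bounds on total masses via Kahane comparison with a one-dimensional exact-scale chaos (effective parameter $\gamma/\sqrt2$), Shamov's approximation theorem, a fast subsequence with a Borel Cauchy event, and the cocycle from $M^\partial_{x+f,\eps_j}=e^{\kappa P_{\eps_j}f}M^\partial_{x,\eps_j}$ together with uniform convergence $P_{\eps_j}f\to f$. One small correction: the Poisson-regularized covariance $-2\log|1-e^{-2\eps}e^{i(\theta-\varphi)}|$ does not increase to $G$ off the diagonal (for fixed $\theta\ne\varphi$ it eventually decreases as $\eps\downarrow0$). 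You never use monotonicity, though: off-diagonal and $L^1$ convergence together with your logarithmic majorant are all that Shamov's theorem requires.
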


\begin{proof}
For every fixed $\eps>0$, the Poisson semigroup is smoothing:
\[
 P_\eps:\dot H^{-s}(\LCircle)\longrightarrow C^\infty(\LCircle)
\]
is continuous.  Hence the map $x\mapsto M_{x,\eps}^{\partial}$ is Borel when
$\Mc_{\rm fin}^+(\LCircle)$ is equipped with the narrow topology.

Write
\[
 K_\eps^\partial(\theta,\varphi)
 :=\Cov(P_\eps h(\theta),P_\eps h(\varphi))
 =\sum_{n\ne0}\frac{e^{-2\eps|n|}e^{in(\theta-\varphi)}}{|n|}.
\]
Then $K_\eps^\partial\to G$ off the diagonal and in
$L^1(\LCircle^2,m\otimes m)$.  Moreover, for $0<\eps\le1$,
\[
 K_\eps^\partial(\theta,\varphi)
 \le C+2\log^+\frac1{d_{\LCircle}(\theta,\varphi)\vee\eps},
 \qquad \theta,\varphi\in\LCircle,
\]
and $P_\eps f\to f$ in $\Hc$ for every $f\in\Hc$.

It remains to record the uniform integrability required by the approximation
theorem.  Choose $q\in(1,4/\gamma^2)$.  Cover $\LCircle$ by finitely many
arcs $I_1,\ldots,I_R$ of length less than $1/2$.  On each arc, Kahane's
convexity inequality and the one-dimensional subcritical positive-moment
estimate, applied with effective GMC parameter $\gamma/\sqrt2$, give a
bound uniform in the exact-scale cutoff (the cutoff masses form an
$L^q$-bounded martingale):
\[
 \sup_{0<\eps\le1}
 \mathbb E_{\rm C}\!\left[M_{h,\eps}^{\partial}(I_r)^q\right]<\infty,
 \qquad 1\le r\le R;
\]
see \cite[Theorems~2.1 and~2.11]{RhodesVargasGMCReview}.  Since the cover is
finite,
\[
 \sup_{0<\eps\le1}
 \mathbb E_{\rm C}\!\left[
   M_{h,\eps}^{\partial}(\LCircle)^q\right]<\infty.
\]
Thus the total masses are uniformly integrable.  The covariance convergence
in measure, Cameron--Martin convergence, and uniform integrability now verify
the hypotheses of Shamov's approximation theorem
\cite[Theorem~25]{ShamovGMC}; uniqueness
\cite[Corollary~18]{ShamovGMC} identifies the limit with the subcritical
boundary GMC.  Consequently $M_{h,\eps}^{\partial}$ converges in probability,
for the narrow topology, to that GMC.

Let $d_{\rm w}$ be a complete compatible metric on
$\Mc_{\rm fin}^+(\LCircle)$ and choose $\eps_j\downarrow0$ so that
\[
 \mathbb P_{\rm C}\!\left(
 d_{\rm w}(M_{h,\eps_{j+1}}^\partial,M_{h,\eps_j}^\partial)>2^{-j}
 \right)\le2^{-j}.
\]
By Borel--Cantelli, for all sufficiently large $k>j$,
\[
 d_{\rm w}(M_{h,\eps_k}^\partial,M_{h,\eps_j}^\partial)
 \le\sum_{\ell=j}^{k-1}2^{-\ell}
 \le2^{1-j}\longrightarrow0.
\]
Therefore
\[
 \Omega_{\rm GMC}^{\partial}
 :=\left\{x\in\Xspace_{\rm C}:
 (M_{x,\eps_j}^{\partial})_{j\ge1}
 \text{ is Cauchy in }d_{\rm w}\right\}
 \in\mathcal B(\Xspace_{\rm C}),
 \qquad
 \mathbb P_{\rm C}(\Omega_{\rm GMC}^{\partial})=1.
\]
Completeness of $d_{\rm w}$ permits the definition
\[
 M_x^{\partial}
 :=\begin{cases}
 \displaystyle\lim_{j\to\infty}M_{x,\eps_j}^{\partial},
   &x\in\Omega_{\rm GMC}^{\partial},\\[1ex]
 0,&x\notin\Omega_{\rm GMC}^{\partial}.
 \end{cases}
\]
The resulting map is measurable, being a pointwise limit of measurable maps
on the Borel convergence event and constant on its complement.  This proves
\eqref{eq:lcp-canonical-borel-gmc}.

For smooth $f$, $P_{\eps_j}f\to f$ uniformly and
\[
 M_{x+f,\eps_j}^{\partial}
 =e^{\kappa P_{\eps_j}f}M_{x,\eps_j}^{\partial}.
\]
Hence, for every $\psi\in C(\LCircle)$,
\[
 \int_{\LCircle}\psi\,dM_{x+f}^{\partial}
 =\lim_{j\to\infty}\int_{\LCircle}\psi\,dM_{x+f,\eps_j}^{\partial}
 =\lim_{j\to\infty}\int_{\LCircle}
   \psi e^{\kappa P_{\eps_j}f}\,dM_{x,\eps_j}^{\partial}
 =\int_{\LCircle}\psi e^{\kappa f}\,dM_x^{\partial}.
\]
This proves \eqref{eq:boundary-gmc-coherent-shift} whenever both endpoints
belong to $\Omega_{\rm GMC}^{\partial}$.
\end{proof}

For the coordinate field $h$, we henceforth write $M_h^\partial$ for the
evaluation at $x=h$ of the map \eqref{eq:lcp-canonical-borel-gmc}.

The canonical boundary chaos admits a single structural event on which all
properties needed below hold simultaneously.

\begin{proposition}
\label{prop:lcp-canonical-structural-event}
There exists a Borel set
\[
 \Omega_{\rm str}^{\partial}\subset\Omega_{\rm GMC}^{\partial},
 \qquad
 \mathbb P_{\rm C}(\Omega_{\rm str}^{\partial})=1,
\]
with the following properties for every $x\in\Omega_{\rm str}^{\partial}$.
\begin{enumerate}[label=\textnormal{(\roman*)},leftmargin=*,itemsep=0pt,topsep=0.35em,parsep=0pt,partopsep=0pt]
\item The measure $M_x^\partial$ is finite, nonzero, has full support, is
singular with respect to $m$, and satisfies
\begin{equation}
 M_x^{\partial}(B_{\LCircle}(\theta,r))
 \le C_xr^{\alpha_x},
 \qquad \theta\in\LCircle,\quad 0<r\le1,
 \label{eq:standing-boundary-gmc-event}
\end{equation}
for some $C_x<\infty$ and $\alpha_x>0$.
\item The measure $M_x^\partial$ is smooth with full quasi-support for the
Cauchy form.  Its time-change form is $(\E,\Vd_x)$ with
\begin{equation}
 \Vd_x
 =\{u\in H^{1/2}(\LCircle):\widetilde u\in L^2(M_x^{\partial})\}.
 \label{eq:lcp-trace-form}
\end{equation}
\item The conclusions in \textnormal{(i)}--\textnormal{(ii)} remain valid
with $M_x^\partial$ replaced by $e^{\kappa f}M_x^\partial$ for every
$f\in C^\infty(\LCircle)\cap\Hc$.
\end{enumerate}
\end{proposition}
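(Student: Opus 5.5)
We prove each property on a Borel full-probability event and then intersect, following the LBM pattern of Lemmas~\ref{lem:gmc-frostman-log-potential}, \ref{lem:rough-domain-full-quasi-support}, \ref{lem:gmc-singular-lebesgue} and Proposition~\ref{prop:borel-trace-form-event}.

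\emph{Frostman bound and finiteness.} We rerun the dyadic argument of Lemma~\ref{lem:gmc-frostman-log-potential} on arcs of length $2^{-n}$. By Kahane's inequality against the one-dimensional exact-scale field, with effective parameter $\gamma/\sqrt2$, we obtain a moment bound $\mathbb E_{\rm C}[M_h^\partial(I)^p]\le C|I|^{\xi(p)}$ with $\xi(p)>1$ for $p$ close to $1$. Markov's inequality and Borel--Cantelli then give a measurable constant $C_{\rm Fr}(x)$ and a deterministic $\alpha>0$. The event $\{C_{\rm Fr}<\infty\}\cap\Omega_{\rm GMC}^\partial$ is Borel of full probability. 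Finiteness is built into $\Omega^\partial_{\rm GMC}$. Nonzero mass is measurable as $\{M_x^\partial(\LCircle)>0\}$ and holds almost surely for GMC.

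\emph{Full support and singularity.} For full support we use a countable basis of arcs $I_k$. The event $\{M_x^\partial(I_k)>0\ \forall k\}$ is Borel, since $M^\partial_x(I_k)=\sup$ of integrals of continuous minorants. Positivity of GMC mass of each fixed arc is standard. Singularity follows exactly as in Lemma~\ref{lem:gmc-singular-lebesgue}: exact dimensionality gives a carrier of dimension $1-\gamma^2/4<1$, and the TV characterization $\|M^\partial_x-m\|_{\rm TV}=M^\partial_x(\LCircle)+1$ makes the event Borel.

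\emph{Smoothness, full quasi-support and the trace form.} For the Cauchy form on $\LCircle$, $1$-capacity-zero sets have logarithmic capacity zero. The Frostman bound \eqref{eq:standing-boundary-gmc-event} gives finite logarithmic energy $\iint G\,dM\,dM<\infty$ via the annular estimate of Lemma~\ref{lem:frostman-green-smoothing}. This makes $M_x^\partial$ a finite-energy, hence smooth, measure charging no polar set, deterministically on the Frostman event. Full quasi-support is the main obstacle. We would first try a deterministic route: if $\widetilde u=0$ $M^\partial_x$-a.e., then $\widetilde u=0$ q.e.\ on the closed support, since for a finite-energy measure of full topological support the fine support (the set of points where $M$ is not thin) should be all of $\LCircle$. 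Frostman-type lower mass bounds would be needed for this, and these are not available deterministically. The fallback mirrors the LBM proof. We compare $M^\partial_h$ locally with a massive or exact-scale boundary chaos whose full quasi-support is known (the analogue of \cite{ShinLBMDirichlet}, or \cite[Chapter~4]{BaverezThesis}) via a continuous density. We then transfer the resulting almost-sure statement to a Borel event by the analytic-projection argument of Proposition~\ref{prop:borel-trace-form-event}. Once smoothness and full quasi-support hold, the time-change theorem \cite[Theorem~6.2.1]{FukushimaOshimaTakeda} with extended space $H^{1/2}(\LCircle)$ (recurrent, constants included) gives \eqref{eq:lcp-trace-form}.

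\emph{Stability under tilts.} For $f\in C^\infty(\LCircle)\cap\Hc$, $e^{\kappa f}$ is bounded above and below. Each property in (i)--(ii) is therefore invariant: support, singularity, finiteness, and null and polar sets are unchanged, and the Frostman constant is multiplied by $e^{\kappa\|f\|_\infty}$. Hence (iii) holds deterministically on the same event, and $\Omega^\partial_{\rm str}$ is the intersection of the Borel events above, chosen inside $\Omega^\partial_{\rm GMC}$.
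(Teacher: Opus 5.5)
Your proposal is correct, and most steps match the paper's proof: the dyadic Frostman argument with effective parameter $\gamma/\sqrt2$, positivity of the mass of a countable family of arcs for full support, the carrier of dimension $1-\gamma^2/4$ for singularity, the time-change theorem for \eqref{eq:lcp-trace-form}, and bounded-density invariance for (iii).

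The routes differ in two places.

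\emph{Smoothness.} You argue deterministically on the Frostman event. The uniform logarithmic bound gives $\iint G\,dM_x^\partial\,dM_x^\partial<\infty$. The Abel argument behind \eqref{eq:lcp-fourier-energy-measure}, which never uses the zero mode, turns this into $\sum_{n\ne0}|\widehat{M_x^\partial}(n)|^2/|n|<\infty$. Hence $M_x^\partial$ has finite $1$-order energy integral and charges no polar set by \cite[Lemma~2.2.3]{FukushimaOshimaTakeda}. The paper instead cites Baverez's PCAF construction for smoothness and full quasi-support together, so your version removes one literature dependence.

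\emph{Full quasi-support.} The paper also relies on \cite[Chapter~4]{BaverezThesis}, but applies it directly to boundary GMC. By subcritical uniqueness that chaos agrees almost surely with $M_h^\partial$, up to at most a positive scalar normalization, which changes neither null sets nor quasi-support (Remark~\ref{rem:lcp-no-unit-volume-normalization}). So no local comparison with a massive or exact-scale chaos is needed. An analogue of \cite{ShinLBMDirichlet} for the boundary Cauchy form would itself require proof, so Baverez is the citation that actually carries this step.

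\emph{Measurability.} Because no auxiliary fields enter, the paper does not need the analytic-projection argument. It intersects the completed full-probability events and takes a Borel full core. This also spares you from checking that each event is Borel individually.
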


\begin{proof}
For the Frostman bound in \eqref{eq:standing-boundary-gmc-event}, Kahane
comparison and the subcritical positive-moment estimate imply that, for
$1<q<4/\gamma^2$ and every arc $I$ of length at most $1/2$,
\begin{equation*}
 \mathbb E_{\rm C}[M_h^{\partial}(I)^q]
 \le C_q|I|^{\zeta_\gamma(q)},
 \qquad
 \zeta_\gamma(q)
 =\left(1+\frac{\gamma^2}{4}\right)q
  -\frac{\gamma^2}{4}q^2.
\end{equation*}
Equivalently, the logarithmic strength of $\kappa h$ is that of the usual
one-dimensional GMC with parameter $\gamma/\sqrt2$; see
\cite[Theorem~2.11]{RhodesVargasGMCReview}.  Since
$\zeta_\gamma(1)=1$ and
$\zeta_\gamma'(1)=1-\gamma^2/4>0$, choose $q>1$ and
\[
 0<\alpha<\frac{\zeta_\gamma(q)-1}{q}.
\]
If $\mathcal D_n$ is the family of $2^n$ dyadic arcs, Markov's inequality
gives
\[
 \mathbb P_{\rm C}\left(
  \max_{I\in\mathcal D_n}M_h^{\partial}(I)>2^{-\alpha n}
 \right)
 \le C_q2^{-n(\zeta_\gamma(q)-1-\alpha q)}.
\]
The right-hand side is summable.  Hence, almost surely for all large $n$,
$M_h^{\partial}(I)\le2^{-\alpha n}$ for every $I\in\mathcal D_n$.  If
$2^{-(n+1)}<r\le2^{-n}$, each $B_{\LCircle}(\theta,r)$ is covered by only a bounded number
of arcs in $\mathcal D_n$, and therefore
\[
 M_h^{\partial}(B_{\LCircle}(\theta,r))
 \lesssim 2^{-\alpha n}
 \le 2^{\alpha}r^{\alpha}.
\]
This proves the Frostman bound in \eqref{eq:standing-boundary-gmc-event}.

For every deterministic non-empty closed arc $I$ and every $p>0$, the
subcritical negative-moment theorem
\cite[Theorem~2.12]{RhodesVargasGMCReview}, in a local circle chart with
parameter $\gamma/\sqrt2$, gives
\[
 \mathbb E_{\rm C}\!\left[M_h^{\partial}(I)^{-p}\right]<\infty
 \quad\Longrightarrow\quad
 \mathbb P_{\rm C}\!\left(M_h^{\partial}(I)=0\right)=0.
\]
Intersecting these events over a countable family of closed arcs such that
every non-empty open arc contains one proves
$\supp M_h^{\partial}=\LCircle$.  The thick-point description
\cite[Theorems~4.1--4.2]{RhodesVargasGMCReview}, again with parameter
$\gamma/\sqrt2$, places full $M_h^{\partial}$-mass on a set of Hausdorff
dimension $1-\gamma^2/4<1$, and hence $M_h^{\partial}\perp m$.

Baverez's PCAF construction
\cite[Theorem~4.1.1, Definition~4.1.1, and
Theorem~4.3.14]{BaverezThesis} shows that boundary GMC is a smooth Revuz
measure of full quasi-support for the Cauchy form.  The time-change theorem
\cite[Theorem~6.2.1 and equation~(6.2.22)]{FukushimaOshimaTakeda} then gives
\eqref{eq:lcp-trace-form}.

By subcritical GMC uniqueness, this chaos agrees almost surely with the
canonical version of Lemma~\ref{lem:lcp-canonical-boundary-gmc}.  Let $E_*$ be
the intersection of the completed full events for the preceding conclusions
and this identification.  Choose a Borel full core
\[
 \Omega_{\rm str}^{\partial}
 \in\mathcal B(\Xspace_{\rm C}),
 \qquad
 \Omega_{\rm str}^{\partial}\subset
 \Omega_{\rm GMC}^{\partial}\cap E_*,
 \qquad
 \mathbb P_{\rm C}(\Omega_{\rm str}^{\partial})=1.
\]
For smooth mean-zero $f$, the density $e^{\kappa f}$ is bounded above and below
by positive constants.  Thus all the preceding properties persist under
tilting, and the same time-change theorem gives the tilted trace form.
\end{proof}

For $x\in\Omega_{\rm str}^{\partial}$, let $A_x$ denote the
non-negative self-adjoint operator on $L^2(M_x^\partial)$ associated with
$(\E,\Vd_x)$.

\paragraph{All-parameter coherent Cameron--Martin lines.}
For $x\in\Omega_{\rm str}^{\partial}$ and
$f\in C^\infty(\LCircle)\cap\Hc$, define
\begin{equation}
 M_{x,\tau}^{\partial,f}
 :=e^{\kappa\tau f}M_x^\partial,
 \qquad \tau\in\mathbb R.
 \label{eq:lcp-coherent-line-gmc}
\end{equation}
This is an all-parameter family of finite, mutually equivalent measures and
\begin{equation}
 M_{x,\tau}^{\partial,f}
 =e^{\kappa(\tau-r)f}
  M_{x,r}^{\partial,f},
 \qquad r,\tau\in\mathbb R.
 \label{eq:lcp-coherent-line-cocycle}
\end{equation}
Every member retains the structural properties in
\eqref{eq:standing-boundary-gmc-event}--\eqref{eq:lcp-trace-form}.  For the
coordinate field $h$ and each fixed $\tau\in\mathbb R$, Cameron--Martin
equivalence and \eqref{eq:boundary-gmc-coherent-shift} give
\begin{equation}
 M_{h,\tau}^{\partial,f}=M_{h+\tau f}^{\partial}
 \qquad\mathbb P_{\rm C}\text{-almost surely}.
 \label{eq:lcp-canonical-coherent-fixed-parameter}
\end{equation}
The last identity is not asserted on one event simultaneously for all
$\tau$; the family \eqref{eq:lcp-coherent-line-gmc} is the version used in
all pathwise analytic arguments.

\begin{remark}
\label{rem:lcp-no-unit-volume-normalization}
No unit-volume normalization is imposed here.  The measure $M_x^{\partial}$
is the unnormalized GMC of the centered trace field; we never divide by
$M_x^{\partial}(\LCircle)$.  Indeed, the probability
normalization
$\overline M_x^{\partial}=M_x^{\partial}/M_x^{\partial}(\LCircle)$ would obey
\[
 \overline M_{x+f}^{\partial}
 =\frac{e^{\kappa f}}
        {\int_{\LCircle}e^{\kappa f}
          \,d\overline M_x^{\partial}}
   \overline M_x^{\partial},
\]
not the coherent orbit \eqref{eq:boundary-gmc-coherent-shift}.  References
which impose unit boundary length may still be used for smoothness and full
quasi-support, because multiplication by a positive pathwise scalar changes
only the time scale and preserves null sets and quasi-support.
\end{remark}

For every $x\in\Omega_{\rm str}^{\partial}$, the Fourier representation of the
form, smoothness, and full quasi-support give the kernel identity
\begin{equation}
 u\in\ker A_x
 \quad\Longleftrightarrow\quad
 u\in\Vd_x\ \text{and}\ \E(u,u)=0
 \quad\Longleftrightarrow\quad
 u=c\one\quad M_x^\partial\text{-almost everywhere}.
 \label{eq:lcp-kernel-constants}
\end{equation}
Indeed, vanishing energy identifies $u$ with a constant in
$H^{1/2}(\LCircle)$, hence quasi-everywhere for the chosen quasi-continuous
representatives.  Smoothness then transfers this equality to
$M_x^\partial$-almost everywhere, while the converse is immediate.

For any finite positive measure $\mu$ on $\LCircle$ with
$0<\mu(\LCircle)<\infty$, write
\begin{equation}
 L_0^2(\mu)
 :=\left\{f\in L^2(\mu):
 \int_{\LCircle}f\,d\mu=0\right\}.
 \label{eq:l2-zero-mean-M}
\end{equation}
Two elementary estimates will be used repeatedly.  First, multiplication by
a smooth function preserves $H^{1/2}(\LCircle)$.  For
$a\in C^1(\LCircle)$, the jump representation and
$|a(\theta)-a(\varphi)|\le C\norm{a'}_\infty
 |e^{i\theta}-e^{i\varphi}|$ give
\begin{equation}\label{eq:H-half-multiplier-bound}
 \norm{au}_{H^{1/2}(\LCircle)}
 \le C\norm{a}_{C^1}\norm{u}_{H^{1/2}(\LCircle)}.
\end{equation}
Second, we use the pathwise trace--Poincar\'e bound
\begin{equation}\label{eq:lcp-trace-poincare-bound}
 \norm{u}_{L^2(m)}^2
 \le C_x^{\rm P}\left(\E(u,u)+\norm{u}_{L^2(M_x^\partial)}^2\right),
 \qquad u\in\Vd_x.
\end{equation}
Suppose, to the contrary, that \eqref{eq:lcp-trace-poincare-bound} fails.
Then there are $u_k$ with $\norm{u_k}_{L^2(m)}=1$ and the right-hand side
tending to zero.  If
$c_k=\int_{\LCircle}u_k\,dm$, Fourier--Poincar\'e gives
$u_k-c_k\to0$ in $H^{1/2}(\LCircle)$.  Moreover,
\[
 1=\|u_k\|_{L^2(m)}^2
 =|c_k|^2+\|u_k-c_k\|_{L^2(m)}^2,
\]
so, after a subsequence, $c_k\to c$ with $|c|=1$.  A further subsequence
converges quasi-everywhere to $c$, hence
$M_x^\partial$-almost everywhere because $M_x^\partial$ is smooth.  Fatou's
lemma and $M_x^\partial(\LCircle)>0$ then give
\[
 |c|^2M_x^\partial(\LCircle)
 \le\liminf_{k\to\infty}\norm{u_k}_{L^2(M_x^\partial)}^2=0,
\]
contradicting $|c|=1$.

Equations \eqref{eq:H-half-multiplier-bound}--\eqref{eq:lcp-trace-poincare-bound}
show that every smooth multiplier is bounded for the form norm.  Consequently
$e^{-\kappa f/2}$ is an invertible multiplier and depends real analytically on
$f$ in every finite-dimensional smooth slice.  Hence the usual fixed-space
conjugation applies to \eqref{eq:lcp-trace-form}.  The remaining model-specific
work concerns the nonlocal inputs.

\subsection{Centered Green compactness and smoothing}

Fix $x\in\Omega_{\rm str}^{\partial}$ throughout this subsection.  Put
$Z_x^\partial:=M_x^\partial(\LCircle)$ and define
\begin{align}
 \bar G_x(\theta)
 &:=\frac1{Z_x^\partial}\int_{\LCircle}G(\theta,z)\,M_x^\partial(dz),
 \label{eq:centered-green-gh}\\
 c_x
 &:=\frac1{(Z_x^\partial)^2}
   \iint_{\LCircle\times\LCircle}G(z,w)\,M_x^\partial(dz)M_x^\partial(dw),
 \label{eq:centered-green-ch}\\
 G_x^\circ(\theta,\varphi)
 &:=G(\theta,\varphi)-\bar G_x(\theta)-\bar G_x(\varphi)+c_x.
 \label{eq:centered-green-kernel}
\end{align}
Then $G_x^\circ$ is symmetric and
\begin{equation}\label{eq:centered-green-kills-constants}
 \int_{\LCircle}G_x^\circ(\theta,\varphi)M_x^\partial(d\varphi)=0.
\end{equation}

Define the centered Green operator by
\begin{equation}\label{eq:lcp-centered-green-operator}
 \GreenOp_x f(\theta)
 :=\int_{\LCircle}G_x^\circ(\theta,\varphi)f(\varphi)M_x^\partial(d\varphi).
\end{equation}
Set
\[
 K_G^\partial(x)
 :=\sup_{\theta\in\LCircle}
 \int_{\LCircle}|G_x^\circ(\theta,\varphi)|^2M_x^\partial(d\varphi)
 \in[0,\infty].
\]

\begin{lemma}
\label{lem:lcp-centered-green-smoothing}
The centered kernel and Green operator have the following properties.
\begin{enumerate}[label=\textnormal{(\roman*)},leftmargin=*,itemsep=0pt,topsep=0.35em,parsep=0pt,partopsep=0pt]
\item $G_x^\circ\in L^2(M_x^\partial\otimes M_x^\partial)$ and
\begin{equation}
 \|\GreenOp_x\|_{\rm HS}^2
 =\iint_{\LCircle^2}|G_x^\circ(\theta,\varphi)|^2
   M_x^\partial(d\theta)M_x^\partial(d\varphi)<\infty.
 \label{eq:lcp-green-HS}
\end{equation}
\item The operator $\GreenOp_x$ is self-adjoint,
\[
 \GreenOp_x\one=0,
 \qquad
 \GreenOp_x\bigl(L_0^2(M_x^\partial)\bigr)
 \subseteq L_0^2(M_x^\partial).
\]
\item $K_G^\partial(x)<\infty$, and for every $f\in L^2(M_x^\partial)$,
\begin{equation}\label{eq:lcp-green-C-bound}
 \GreenOp_xf\in C(\LCircle),
 \qquad
 \norm{\GreenOp_x f}_\infty
 \le K_G^\partial(x)^{1/2}\norm{f}_{L^2(M_x^\partial)}.
\end{equation}
\end{enumerate}
\end{lemma}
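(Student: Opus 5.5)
The plan is to mirror the Dirichlet argument of Lemma~\ref{lem:frostman-green-smoothing} and Lemma~\ref{lem:green-smoothing-constant}, with the circle pseudo-Green kernel \eqref{eq:circle-pseudo-green} in place of $G_D$ and with the Frostman bound \eqref{eq:standing-boundary-gmc-event} as the only random input. We begin with deterministic kernel facts. From \eqref{eq:circle-pseudo-green},
\[
 |G(\theta,\varphi)|\le C+2\log^+\frac1{d_{\LCircle}(\theta,\varphi)},
\]
and $G$ is smooth, with locally bounded gradient, off the diagonal. The dyadic-annulus computation of Lemma~\ref{lem:frostman-green-smoothing} applies verbatim on the compact space $\LCircle$, now with the exponent $\alpha_x$, and gives three conclusions: $\sup_\theta\int G(\theta,\cdot)^2\,dM_x^\partial<\infty$, $M_x^\partial$ has no atoms, and
\[
 \sup_{d(\theta,\theta')\le\delta}\norm{G(\theta,\cdot)-G(\theta',\cdot)}_{L^2(M_x^\partial)}\to0 .
\]
There are no boundary issues, so no compact exhaustion is needed.

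Next we control the centering terms. By Cauchy--Schwarz, $|\bar G_x(\theta)|\le (Z_x^\partial)^{-1/2}\norm{G(\theta,\cdot)}_{L^2(M_x^\partial)}$, so $\bar G_x$ is bounded. It is also continuous, since $|\bar G_x(\theta)-\bar G_x(\theta')|$ is controlled by the $L^2$-modulus above. In addition $|c_x|\le\sup|\bar G_x|<\infty$. Here $Z_x^\partial>0$ because $M_x^\partial$ is nonzero by Proposition~\ref{prop:lcp-canonical-structural-event}(i). The triangle inequality then gives
\[
 K_G^\partial(x)\le 4\Bigl(\sup_\theta\norm{G(\theta,\cdot)}^2_{L^2(M_x^\partial)}+2\sup|\bar G_x|^2Z_x^\partial+c_x^2Z_x^\partial\Bigr)<\infty .
\]
Integrating in $\theta$ against the finite measure $M_x^\partial$ yields \eqref{eq:lcp-green-HS}, which is (i).

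For (ii), symmetry of $G_x^\circ$ is immediate from the definition. Together with (i) it makes $\GreenOp_x$ a self-adjoint Hilbert--Schmidt operator. The identity $\GreenOp_x\one=0$ is exactly \eqref{eq:centered-green-kills-constants}. Indeed, integrating \eqref{eq:centered-green-kernel} in $\varphi$ gives $Z_x^\partial\bar G_x(\theta)-Z_x^\partial\bar G_x(\theta)-Z_x^\partial c_x+Z_x^\partial c_x=0$, where $\int\bar G_x\,dM_x^\partial=Z_x^\partial c_x$ is used. For invariance of $L_0^2(M_x^\partial)$, take $f\in L_0^2$. Then $\int\GreenOp_xf\,dM_x^\partial=\langle\GreenOp_xf,\one\rangle=\langle f,\GreenOp_x\one\rangle=0$ by self-adjointness, and the Fubini step is justified by (i).

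For (iii), define $\GreenOp_xf(\theta)$ pointwise for every $\theta$. Cauchy--Schwarz against the finite quantity $K_G^\partial(x)$ gives the sup bound \eqref{eq:lcp-green-C-bound}. Continuity follows from
\[
 |\GreenOp_xf(\theta)-\GreenOp_xf(\theta')|\le\norm{G_x^\circ(\theta,\cdot)-G_x^\circ(\theta',\cdot)}_{L^2(M_x^\partial)}\norm f_{L^2(M_x^\partial)} .
\]
The right-hand side tends to zero because of the $L^2$-modulus of $G$ and the continuity of $\bar G_x$; the constant $c_x$ cancels.

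The main obstacle is the uniform $L^2(M_x^\partial)$-continuity of $\theta\mapsto G(\theta,\cdot)$. We would handle it exactly as in Lemma~\ref{lem:frostman-green-smoothing}: split into the region $A=B(\theta,2s)\cup B(\theta',2s)$ and its complement. On $A$ we use the log bound together with the vanishing of $\sup_\theta\int_{B(\theta,3s)}(1+\log^+1/d)^2\,dM_x^\partial$, which follows from the Frostman bound. Off $A$ we use the explicit Lipschitz bound for $\log|2\sin(\cdot/2)|$ away from its zero.
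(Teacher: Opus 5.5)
Your proposal is correct and follows the paper's proof essentially step for step. In both, the logarithmic bound on $G$ and the Frostman estimate give the uniform row bound and the row-$L^2(M_x^\partial)$ continuity (you use dyadic annuli where the paper uses a layer-cake integral); the centering terms are then shown to be bounded and continuous, and self-adjointness, $\GreenOp_x\one=0$, invariance of $L_0^2(M_x^\partial)$, and the sup and continuity bounds follow from symmetry, Fubini, and Cauchy--Schwarz. The one cosmetic difference is that you bound $\bar G_x$ and $c_x$ explicitly by Cauchy--Schwarz, whereas the paper gets boundedness of $\bar G_x$ from its continuity on the compact circle.
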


\begin{proof}
The explicit kernel \eqref{eq:circle-pseudo-green} gives
\begin{equation}\label{eq:circle-green-log-bound}
 |G(\theta,\varphi)|
 \le C\left(1+\log^+\frac1{d_{\LCircle}(\theta,\varphi)}\right).
\end{equation}
Write $C_x$ and $\alpha_x$ for the Frostman constants in
\eqref{eq:standing-boundary-gmc-event}.  For
$L_\theta(\varphi)=\log^+d_{\LCircle}(\theta,\varphi)^{-1}$, layer cake and
\eqref{eq:standing-boundary-gmc-event} give
\begin{equation}\label{eq:uniform-log-square-bound}
 \sup_{\theta\in\LCircle}\int_{\LCircle}L_\theta(\varphi)^2M_x^\partial(d\varphi)
 \le 2C_x\int_0^\infty t e^{-\alpha_x t}\,dt
    +M_x^\partial(\LCircle)<\infty.
\end{equation}
The same calculation, beginning at $\log(1/r)$, gives
\begin{equation}\label{eq:uniform-local-log-square-tail}
 \lim_{r\downarrow0}\sup_{\theta\in\LCircle}
 \int_{B_{\LCircle}(\theta,r)}
 \left(1+L_\theta(\varphi)^2\right)M_x^\partial(d\varphi)=0.
\end{equation}

If $\theta_k\to\theta$, fix $r>0$ and, for all sufficiently large $k$, put
\[
 N_k=B_{\LCircle}(\theta,2r)\cup B_{\LCircle}(\theta_k,2r).
\]
Since $d_{\LCircle}(\theta_k,\theta)<r$, the set $N_k$ is contained both in
$B_{\LCircle}(\theta,3r)$ and in $B_{\LCircle}(\theta_k,3r)$.  Hence
\begin{align*}
 &\int_{N_k}|G(\theta_k,\varphi)-G(\theta,\varphi)|^2
       M_x^\partial(d\varphi)\\
 &\le
 2\int_{B_{\LCircle}(\theta_k,3r)}
     |G(\theta_k,\varphi)|^2M_x^\partial(d\varphi)
 +2\int_{B_{\LCircle}(\theta,3r)}
     |G(\theta,\varphi)|^2M_x^\partial(d\varphi),
\end{align*}
which tends to zero uniformly in $k$ as $r\downarrow0$ by
\eqref{eq:circle-green-log-bound} and
\eqref{eq:uniform-local-log-square-tail}.  On $N_k^c$, both variables stay at distance at least
$2r$ from the diagonal, so uniform continuity of $G$ gives convergence to
zero for fixed $r$.  Letting first $k\to\infty$ and then $r\downarrow0$
proves
\begin{equation}\label{eq:green-row-L2-continuity}
 \norm{G(\theta_k,\cdot)-G(\theta,\cdot)}_{L^2(M_x^\partial)}\longrightarrow0.
\end{equation}

In particular,
\[
 \bar G_x(\theta)
 =\frac1{Z_x^\partial}
   \inner{G(\theta,\cdot)}{\one}_{L^2(M_x^\partial)}
\]
is continuous, and hence bounded on the compact circle.  Subtracting the
bounded continuous one-variable terms in
\eqref{eq:centered-green-kernel} preserves both the uniform square bound and
the row-$L^2$ continuity.  Thus
\eqref{eq:uniform-log-square-bound}--\eqref{eq:green-row-L2-continuity}
hold with $G_x^\circ$ in place of $G$.  This proves
$G_x^\circ\in L^2(M_x^\partial\otimes M_x^\partial)$, the Hilbert--Schmidt
assertion, and \eqref{eq:lcp-green-C-bound}.  For
$f\in L^2(M_x^\partial)$ and
$\theta_k\to\theta$,
\[
 |\GreenOp_x f(\theta_k)-\GreenOp_x f(\theta)|
 \le
 \|G_x^\circ(\theta_k,\cdot)-G_x^\circ(\theta,\cdot)\|_{L^2(M_x^\partial)}
 \|f\|_{L^2(M_x^\partial)}\longrightarrow0,
\]
so the image is continuous.

Finally, symmetry of $G_x^\circ$, Fubini's theorem, and
\eqref{eq:centered-green-kills-constants} give
\[
 \inner{\GreenOp_x f}{g}_{L^2(M_x^\partial)}
 =\inner{f}{\GreenOp_x g}_{L^2(M_x^\partial)},
 \qquad
 \GreenOp_x\one=0,
\]
and hence, for $f\in L_0^2(M_x^\partial)$,
\[
 \int_{\LCircle}\GreenOp_x f\,dM_x^\partial
 =\inner{f}{\GreenOp_x\one}_{L^2(M_x^\partial)}=0.
\]
Thus the operator is self-adjoint, kills constants, and preserves
$L_0^2(M_x^\partial)$.
\end{proof}

\begin{proposition}
\label{prop:lcp-centered-green-inverse}
For every $f\in L_0^2(M_x^\partial)$, the function
$u:=\GreenOp_xf$ belongs to $\Vd_x$ and satisfies
\begin{equation}\label{eq:lcp-green-form-inverse}
 \E(u,v)=\int_{\LCircle}f\widetilde v\,dM_x^\partial,
 \qquad v\in\Vd_x.
\end{equation}
It is the unique solution of \eqref{eq:lcp-green-form-inverse} with zero
$M_x^\partial$-mean.  Moreover,
\[
 \GreenOp_x:L_0^2(M_x^\partial)\longrightarrow L_0^2(M_x^\partial)
\]
is positive and injective, and
\[
 \GreenOp_x
 =\left(A_x\big|_{L_0^2(M_x^\partial)}\right)^{-1}.
\]
Consequently, the positive spectrum of $A_x$ is discrete.
\end{proposition}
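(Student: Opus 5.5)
The plan follows the LBM argument of Lemma~\ref{lem:green-form-inverse}, with the centered kernel in place of $G_D$. I would first prove that $u:=\GreenOp_xf$ is the restriction of an $H^{1/2}$ potential.

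Fix $f\in L_0^2(M_x^\partial)$. The signed measure $\sigma:=fM_x^\partial$ has zero mass, and its Fourier coefficients are bounded by $\norm{f}_{L^1(M_x^\partial)}$. I define the mean-zero Cauchy potential
\[
 U\sigma:=\sum_{n\ne0}\frac{\widehat\sigma(n)}{|n|}e^{in\theta}.
\]
Its energy is
\[
 \E(U\sigma,U\sigma)=\sum_{n\ne0}\frac{|\widehat\sigma(n)|^2}{|n|}
 =\iint G\,d\sigma\,d\sigma .
\]
This double integral is finite by the Hilbert--Schmidt bound \eqref{eq:lcp-green-HS}, once the off-diagonal expansion \eqref{eq:circle-pseudo-green} is justified by the same Poisson-semigroup approximation as in Lemma~\ref{lem:positive-finite-energy-potential}. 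Concretely, I smooth with $P_\eps$ and pass to the limit in $\Hc$ using monotone convergence of the regularized energies. This gives $U\sigma\in\Hc$ together with $\E(U\sigma,v)=\int\widetilde v\,d\sigma$ for all $v\in H^{1/2}$, which uses smoothness of $M_x^\partial$ to pass to quasi-continuous versions. It also gives the quasi-everywhere representation $\widetilde{U\sigma}(\theta)=\int G(\theta,\varphi)\,\sigma(d\varphi)$.

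Since $\int f\,dM_x^\partial=0$, the kernel identity reads
\[
 \GreenOp_xf=\int G(\cdot,\varphi)f(\varphi)M_x^\partial(d\varphi)-\frac1{Z_x^\partial}\iint G\,f\,dM_x^\partial\,dM_x^\partial .
\]
That is, $\GreenOp_xf=U\sigma-k$ for the constant $k$ chosen so that the result has zero $M_x^\partial$-mean. The $\bar G_x(\theta)$ term drops because $f$ is centered, and the $c_x$ term cancels against a constant. Lemma~\ref{lem:lcp-centered-green-smoothing}(iii) shows $u$ is bounded and continuous, so $u\in L^2(M_x^\partial)$ and therefore $u\in\Vd_x$ by \eqref{eq:lcp-trace-form}. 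Constants have zero energy, so $\E(u,v)=\E(U\sigma,v)=\int f\widetilde v\,dM_x^\partial$, which is \eqref{eq:lcp-green-form-inverse}.

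For uniqueness, suppose two zero-mean solutions differ by $w$. Then $\E(w,w)=0$, so \eqref{eq:lcp-kernel-constants} makes $w$ constant, and zero mean forces $w=0$. Self-adjointness and the invariance of $L_0^2$ come from Lemma~\ref{lem:lcp-centered-green-smoothing}(ii). For positivity, $\langle\GreenOp_xf,f\rangle=\E(u,u)\ge0$. If this vanishes, then $u$ is constant, hence zero, so $\int f\widetilde v\,dM_x^\partial=0$ for all $v\in\Vd_x$. Density of $\Vd_x$ in $L^2$ then gives $f=0$, so $\GreenOp_x$ is injective on $L_0^2$.

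Next I identify the inverse. The weak equation says $u\in\Dom(A_x)$ and $A_xu=f$. Conversely, for $w\in\Dom(A_x)\cap L_0^2$, the element $A_xw$ lies in $L_0^2$, because testing against $v=\one$ gives $\int A_xw\,dM_x^\partial=\E(w,\one)=0$. Uniqueness then yields $\GreenOp_xA_xw=w$. Hence $\GreenOp_x=(A_x|_{L_0^2})^{-1}$, where $A_x$ reduces on $L_0^2=(\ker A_x)^\perp$. Since $\GreenOp_x$ is compact and injective on the infinite-dimensional space $L_0^2$ (full support gives infinite dimension), the positive spectrum of $A_x$ is discrete.

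The main obstacle is the first step: rigorously building the Cauchy potential for signed finite-energy measures and establishing its quasi-everywhere Green representation with the logarithmic kernel.
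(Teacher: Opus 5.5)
Your proposal is correct and follows the paper's proof. The shared steps are: Abel/Poisson regularization of the potential of $\sigma=fM_x^\partial$ to get $\sum_{n\ne0}|\widehat\sigma(n)|^2/|n|=\iint G\,d\sigma\,d\sigma<\infty$; identifying $\GreenOp_xf$ with that potential up to a constant; uniqueness from zero energy plus zero mean; and the same converse argument for the inverse. Your injectivity step, via density of $\Vd_x$, is a harmless variant of the paper's Fourier-uniqueness argument.

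One point to tighten is the extension of $\E(U\sigma,v)=\int\widetilde v\,d\sigma$. You only need it for $v\in\Vd_x$. The paper gets this cheaply: trigonometric polynomials form a core for the regular time-changed form, and $|\int(\widetilde v_k-\widetilde v)f\,dM_x^\partial|\le\norm{f}_{L^2(M_x^\partial)}\norm{v_k-v}_{L^2(M_x^\partial)}$. Your claim for all $v\in H^{1/2}$ needs more than smoothness of $M_x^\partial$, namely a finite-energy bound for $|f|M_x^\partial$.
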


\begin{proof}
Let $f\in L_0^2(M_x^\partial)$ and set $\sigma=fM_x^\partial$.  Then
$\sigma(\LCircle)=0$.  The logarithmic kernel estimate
\eqref{eq:circle-green-log-bound} and the uniform square bound
\eqref{eq:uniform-log-square-bound}, followed by Cauchy--Schwarz, imply
\begin{equation}\label{eq:lcp-finite-green-energy-fM}
 \iint_{\LCircle\times\LCircle}|G(\theta,\varphi)|
       |f(\theta)f(\varphi)|M_x^\partial(d\theta)M_x^\partial(d\varphi)<\infty.
\end{equation}

Using the Abel-regularized kernels
\[
 G_r(\theta,\varphi)
 =\sum_{n\ne0}\frac{r^{|n|}e^{in(\theta-\varphi)}}{|n|},
 \qquad 0<r<1,
\]
we have, uniformly for $1/2\le r<1$,
\[
 |G_r(\theta,\varphi)|
 \le C\left(1+\log^+\frac1{d_{\LCircle}(\theta,\varphi)}\right).
\]
Thus \eqref{eq:lcp-finite-green-energy-fM} permits dominated convergence on
the integral side, whereas Parseval's identity gives the monotone scalar
series
\[
 \iint_{\LCircle\times\LCircle}G_r(\theta,\varphi)
       \sigma(d\theta)\sigma(d\varphi)
 =\sum_{n\ne0}\frac{r^{|n|}|\widehat\sigma(n)|^2}{|n|}
 \uparrow
 \sum_{n\ne0}\frac{|\widehat\sigma(n)|^2}{|n|}.
\]
Letting $r\uparrow1$ therefore gives the finite-energy identity
\begin{equation}\label{eq:lcp-fourier-energy-measure}
 \iint_{\LCircle\times\LCircle}G(\theta,\varphi)\sigma(d\theta)\sigma(d\varphi)
 =\sum_{n\ne0}\frac{|\widehat\sigma(n)|^2}{|n|}.
\end{equation}

Set
\[
 \Potential\sigma(\theta)
 :=\int_{\LCircle}G(\theta,\varphi)\sigma(d\varphi),
 \qquad
 u_r(\theta):=\int_{\LCircle}G_r(\theta,\varphi)\sigma(d\varphi).
\]
Then $u_r\in C^\infty(\LCircle)$, while
\begin{align}
 \norm{u_r-\Potential\sigma}_{L^2(M_x^\partial)}
 &\le
 \norm{G_r-G}_{L^2(M_x^\partial\otimes M_x^\partial)}\norm{f}_{L^2(M_x^\partial)}
 \longrightarrow0,
 \label{eq:lcp-abel-potential-L2}\\
 \E(u_r-u_s,u_r-u_s)
 &=\sum_{n\ne0}
   \frac{(r^{|n|}-s^{|n|})^2}{|n|}|\widehat\sigma(n)|^2
 \longrightarrow0,
 \qquad r,s\uparrow1.
 \label{eq:lcp-abel-potential-energy}
\end{align}
Dominated convergence gives both limits, using the uniform logarithmic square bound
for the first and \eqref{eq:lcp-fourier-energy-measure} for the second.  Thus
$(u_r)$ is Cauchy in the closed form norm; \eqref{eq:lcp-abel-potential-L2}
identifies its limit with $\Potential\sigma$, so $\Potential\sigma\in\Vd_x$.  Finally,
\eqref{eq:green-row-L2-continuity} gives $\Potential\sigma\in C(\LCircle)$, hence
$\Potential\sigma\in\Hc\cap C(\LCircle)\cap\Vd_x$.

For a trigonometric polynomial $v$, Fourier inversion gives
\begin{equation}\label{eq:lcp-potential-core-identity}
 \E(\Potential\sigma,v)=\int_{\LCircle}v\,d\sigma,
 \qquad
 \left|\int_{\LCircle}v\,d\sigma\right|
 \le \left(\sum_{n\ne0}\frac{|\widehat\sigma(n)|^2}{|n|}\right)^{1/2}
       \E(v,v)^{1/2}.
\end{equation}
If $v\in C^\infty(\LCircle)$ and
\[
 S_Nv:=\sum_{|n|\le N}\widehat v(n)e^{in\theta},
\]
then
\[
 \norm{S_Nv-v}_{H^{1/2}(\LCircle)}+\norm{S_Nv-v}_\infty
 \longrightarrow0.
\]
Since $M_x^\partial$ and $|\sigma|$ are finite, this convergence is also in
the time-changed form norm and in $L^1(|\sigma|)$, respectively.  Passing to
the limit in \eqref{eq:lcp-potential-core-identity} therefore proves that
identity for every smooth $v$.

The full-quasi-support time-change theorem
\cite[Theorem~6.2.1 and equation~(6.2.22)]{FukushimaOshimaTakeda} makes the
time-changed form regular on $\LCircle$, so
$\Vd_x\cap C(\LCircle)$ is a form core.  If
$w\in\Vd_x\cap C(\LCircle)$, its Fej\'er polynomials converge to $w$
both uniformly and in $H^{1/2}(\LCircle)$; since $M_x^\partial$ is finite,
they therefore converge in the time-changed form norm.  Thus trigonometric
polynomials are a form core.  Accordingly, for $v\in\Vd_x$, choose such
polynomials
$v_k\to v$ in the form norm.  Then
\[
 \left|\int_{\LCircle}(\widetilde v_k-\widetilde v)f\,dM_x^\partial\right|
 \le\norm{f}_{L^2(M_x^\partial)}
      \norm{v_k-v}_{L^2(M_x^\partial)}\longrightarrow0,
\]
while the left side of \eqref{eq:lcp-potential-core-identity} converges by
energy.
Thus the quasi-continuous representative is $|\sigma|$-integrable and
\begin{equation*}
 \E(\Potential\sigma,v)=\int_{\LCircle}\widetilde v\,d\sigma,
 \qquad v\in\Vd_x.
\end{equation*}

The centered operator differs from $\Potential(fM_x^\partial)$ only by the constant which
makes its $M_x^\partial$-mean zero.  Hence, for every $v\in\Vd_x$,
\[
 \E(\GreenOp_x f,v)
 =\E(\Potential(fM_x^\partial),v)
 =\int_{\LCircle}f\widetilde v\,dM_x^\partial,
\]
because constants have zero $\E$-energy.  This is
\eqref{eq:lcp-green-form-inverse}.  By the definition of the form generator,
\[
 A_x(\GreenOp_x f)=f,
 \qquad f\in L_0^2(M_x^\partial).
\]

Conversely, if $u\in\Dom(A_x)\cap L_0^2(M_x^\partial)$, then
$A_xu\in L_0^2(M_x^\partial)$ because
$\inner{A_xu}{\one}_{L^2(M_x^\partial)}=\E(u,\one)=0$.  The weak identity gives
\[
 \E(u-\GreenOp_x(A_xu),v)=0,
 \qquad v\in\Vd_x.
\]
Put $w=u-\GreenOp_x(A_xu)$.  Taking $v=w$ in the preceding weak
identity and using $w\in L_0^2(M_x^\partial)$ gives
\[
 \E(w,w)=0,
 \qquad \int_{\LCircle}w\,dM_x^\partial=0
 \quad\Longrightarrow\quad w=0,
\]
so $\GreenOp_x(A_xu)=u$.

For positivity and injectivity,
\[
 \inner{f}{\GreenOp_x f}_{L^2(M_x^\partial)}
 =\sum_{n\ne0}\frac{|\widehat\sigma(n)|^2}{|n|}\ge0,
 \qquad
 \inner{f}{\GreenOp_x f}=0
 \Longrightarrow \widehat\sigma\equiv0
 \Longrightarrow \sigma=0\Longrightarrow f=0,
\]
where $\widehat\sigma(0)=\sigma(\LCircle)=0$.  Thus $\GreenOp_x$ is
positive and injective.  The inverse identification and compactness give
discreteness of the positive spectrum.
\end{proof}

Fix a real normalized positive eigenpair
\[
 A_x\phi=\Lambda\phi,
 \qquad
 \|\phi\|_{L^2(M_x^\partial)}=1,
 \qquad
 \Lambda>0,
\]
and define
\[
 \phi^{\rm c}:=\Lambda\GreenOp_x\phi.
\]

\begin{corollary}
\label{cor:lcp-continuous-eigenfunctions}
The eigenfunction is orthogonal to constants, and $\phi^{\rm c}$ is its
unique continuous representative:
\begin{equation}\label{eq:lcp-eigenfunction-green-representation}
 \int_{\LCircle}\phi\,dM_x^\partial=0,
 \qquad
 \phi^{\rm c}\in C(\LCircle),
 \qquad
 [\phi^{\rm c}]_{M_x^\partial}=[\phi]_{M_x^\partial}.
\end{equation}
Moreover,
\[
 \|\phi^{\rm c}\|_\infty
 \le\Lambda K_G^\partial(x)^{1/2}.
\]
\end{corollary}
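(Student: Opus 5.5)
Three steps prove the corollary: orthogonality to constants first, then the identification of $\phi^{\rm c}$, then the continuity, uniqueness and sup bound.

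\emph{Orthogonality to constants.} Use the kernel identity \eqref{eq:lcp-kernel-constants}: $\one\in\Vd_x$ with $\E(\one,\cdot)=0$. Testing the weak eigenvalue equation $\E(\phi,v)=\Lambda\int\phi\widetilde v\,dM_x^\partial$ with $v=\one$ gives $\Lambda\int\phi\,dM_x^\partial=\E(\phi,\one)=0$. Since $\Lambda>0$, this yields $\phi\in L_0^2(M_x^\partial)$. Equivalently, one can use self-adjointness of $A_x$ and $A_x\one=0$.

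\emph{Identification of $\phi^{\rm c}$.} Since $\phi\in L_0^2(M_x^\partial)$, Proposition~\ref{prop:lcp-centered-green-inverse} applies with $f=\phi$. The function $u:=\Lambda\GreenOp_x\phi$ lies in $\Vd_x$, has zero $M_x^\partial$-mean, and satisfies $\E(u,v)=\Lambda\int\phi\widetilde v\,dM_x^\partial$ for all $v\in\Vd_x$. The eigenfunction $\phi$ has zero mean and satisfies the same weak identity. Put $w=u-\phi\in\Vd_x$. Then $\E(w,v)=0$ for all $v\in\Vd_x$, so $\E(w,w)=0$. By \eqref{eq:lcp-kernel-constants}, $w$ is $M_x^\partial$-a.e.\ constant, and since $w$ has zero mean this constant is $0$. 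Hence $[\phi^{\rm c}]_{M_x^\partial}=[\phi]_{M_x^\partial}$. Alternatively, apply $\GreenOp_x=(A_x|_{L_0^2})^{-1}$ directly to $A_x\phi=\Lambda\phi$.

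\emph{Continuity, uniqueness and the sup bound.} Lemma~\ref{lem:lcp-centered-green-smoothing}(iii) gives $\phi^{\rm c}=\Lambda\GreenOp_x\phi\in C(\LCircle)$ together with
\[
 \|\phi^{\rm c}\|_\infty\le\Lambda K_G^\partial(x)^{1/2}\|\phi\|_{L^2(M_x^\partial)}=\Lambda K_G^\partial(x)^{1/2}.
\]
For uniqueness, let $v$ be a continuous function with $[v]=[\phi]$. The set $\{|v-\phi^{\rm c}|>\eps\}$ is open and $M_x^\partial$-null. Full support of $M_x^\partial$ (Proposition~\ref{prop:lcp-canonical-structural-event}(i)) forces it to be empty, exactly as in Proposition~\ref{prop:green-potential-L2-Cb}.

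\emph{Main obstacle.} The only delicate point is the identification step. One must check that the pointwise kernel integral $\GreenOp_x\phi$ is the same object as the $L^2$-class produced by Proposition~\ref{prop:lcp-centered-green-inverse}. This holds because both are defined by the same kernel formula \eqref{eq:lcp-centered-green-operator}, so no extra regularity argument should be needed.
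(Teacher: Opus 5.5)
Your proposal is correct and follows the paper's proof essentially step for step: test the weak eigenvalue equation with $\one$ to get zero mean, invoke Proposition~\ref{prop:lcp-centered-green-inverse} to identify $[\Lambda\GreenOp_x\phi]_{M_x^\partial}$ with $[\phi]_{M_x^\partial}$, use Lemma~\ref{lem:lcp-centered-green-smoothing}(iii) for continuity and the sup bound, and use full support for uniqueness. Your explicit $w=u-\phi$ argument simply unpacks the zero-mean uniqueness clause already contained in that proposition.
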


\begin{proof}
The eigenvalue equation gives
\[
 0=\E(\phi,\one)
 =\Lambda\inner{\phi}{\one}_{L^2(M_x^\partial)}.
\]
Hence $\phi\in L_0^2(M_x^\partial)$, and
Proposition~\ref{prop:lcp-centered-green-inverse} gives
$[\phi^{\rm c}]_{M_x^\partial}=[\phi]_{M_x^\partial}$.
Lemma~\ref{lem:lcp-centered-green-smoothing} gives continuity and the stated
sup-norm bound.  Full support of $M_x^\partial$ makes the continuous
representative unique.
\end{proof}

Henceforth, positive eigenfunctions on $\Omega_{\rm str}^\partial$ are
identified with this continuous representative.

\subsection{Measurable spectrum and compression traces}

For the all-space measurable completion, fix a countable $\mathbb Q$-algebra
$\{\psi_j:j\ge0\}\subset C(\LCircle)$ which is uniformly dense in
$C(\LCircle)$ and contains $\psi_0=\one$.  Set
\[
 \mu_x:=
 \begin{cases}
  M_x^{\partial},&x\in\Omega_{\rm GMC}^{\partial},\\
  m,&x\notin\Omega_{\rm GMC}^{\partial},
 \end{cases}
 \qquad
 H_x:=L^2(\mu_x).
\]
For $x\in\Omega_{\rm str}^{\partial}$, first record the total mass and the
one-variable Green average:
\begin{align}
 Z_x&:=\mu_x(\LCircle),\\
 \bar G_x(\theta)
 &:=\frac1{Z_x}\int_{\LCircle}G(\theta,z)\,\mu_x(dz).
\end{align}
The remaining scalar average completes the centering of the Green kernel:
\begin{align}
 c_x
 &:=\frac1{Z_x^2}\iint_{\LCircle^2}G(z,w)\,\mu_x(dz)\mu_x(dw),\\
 G_x^\circ(\theta,\varphi)
 &:=G(\theta,\varphi)-\bar G_x(\theta)-\bar G_x(\varphi)+c_x.
 \label{eq:lcp-borel-centered-green-kernel}
\end{align}
The corresponding centered Green operator is
\begin{equation}
 \GreenOp_xu(\theta)
 :=\int_{\LCircle}G_x^\circ(\theta,\varphi)u(\varphi)\,\mu_x(d\varphi).
 \label{eq:lcp-borel-centered-green-operator}
\end{equation}
On $\Omega_{\rm str}^{\partial}$ these agree with the pathwise centered Green
objects in \eqref{eq:centered-green-gh}--\eqref{eq:lcp-centered-green-operator}.
Off $\Omega_{\rm str}^{\partial}$ set $G_x^\circ=0$ and $\GreenOp_x=0$ on $H_x$.
For $n\ge1$, let $r_n(x)$ be the $n$th positive eigenvalue of $\GreenOp_x$ on
$\Omega_{\rm str}^{\partial}$, counted with multiplicity, and set
$r_n(x)=0$ otherwise.

\begin{proposition}
\label{lem:lcp-borel-spectral-interface}
The family $(H_x,\GreenOp_x)_x$ is a measurable field of separable Hilbert
spaces and compact self-adjoint operators, and each
\[
 r_n:\Xspace_{\rm C}\longrightarrow[0,\infty)
\]
is measurable.  On $\Omega_{\rm str}^{\partial}$,
\[
 H_x=\operatorname{span}\{\one\}\oplus L_0^2(M_x^\partial),
 \qquad
 \ker\GreenOp_x=\operatorname{span}\{\one\},
\]
and the restriction of $\GreenOp_x$ to $L_0^2(M_x^\partial)$ is positive,
injective, and equal to
$\bigl(A_x|_{L_0^2(M_x^\partial)}\bigr)^{-1}$.  In particular,
$r_n(x)>0$ for every $n\ge1$ on this event.
\end{proposition}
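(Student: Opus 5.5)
The plan follows the LBM template of Proposition~\ref{prop:canonical-measurable-liouville-data}, adapting it to the centered kernel. The measurable-field framework is the one of Lemma~\ref{lem:measurable-Hilbert-field-calculus}, with frame functions $s_j(\mu_x)=[\psi_j]_{\mu_x}$. Since $x\mapsto\mu_x$ is measurable (Lemma~\ref{lem:lcp-canonical-boundary-gmc}, with the constant value $m$ off $\Omega_{\rm GMC}^\partial$), every Gram coefficient $\int\psi_i\psi_j\,d\mu_x$ is measurable. Density of the $\mathbb Q$-algebra in $C(\LCircle)$ makes the Gram--Schmidt frame an orthonormal basis of each $H_x$. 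This gives the measurable field of separable Hilbert spaces.

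Next I would check measurability of the operator field through its matrix coefficients $\langle s_i,\GreenOp_x s_j\rangle$. By \eqref{eq:lcp-borel-centered-green-kernel}, each coefficient is $\one_{\Omega_{\rm str}^\partial}(x)$ times a finite combination of the following quantities:
\[
\iint\psi_i(\theta)G(\theta,\varphi)\psi_j(\varphi)\,\mu_x(d\theta)\mu_x(d\varphi),\qquad
Z_x,\qquad
\int\psi\,d\mu_x,\qquad
\iint\psi_i\,G\,\mu_x\otimes\mu_x,
\]
together with $c_x$ and the products of these. For instance, the cross term has the form $Z_x^{-1}\iint\psi_i(\theta)G(\theta,z)\,\mu_x(d\theta)\mu_x(dz)\int\psi_j\,d\mu_x$.

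Each double integral of $G$ I would write as a limit of truncations $G\wedge k$ and $G\vee(-k)$. These truncated kernels are bounded and measurable, so the truncated integrals are measurable in $x$ by the functional monotone class theorem. On $\Omega_{\rm str}^\partial$, dominated convergence applies because of the logarithmic bound \eqref{eq:circle-green-log-bound} and the Frostman integrability \eqref{eq:uniform-log-square-bound}. The events themselves are Borel, and $Z_x>0$ there. Measurability of the matrix coefficients then gives a measurable operator field. Compactness and self-adjointness on $\Omega_{\rm str}^\partial$ come from Lemma~\ref{lem:lcp-centered-green-smoothing}, and off that event the operator is zero.

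The hard point is that $\GreenOp_x$ is not positive on all of $H_x$: it vanishes on constants and is positive on $L^2_0$. I expect this to be the main obstacle, because Lemma~\ref{lem:measurable-operator-field-calculus}(iii) is stated for positive compact operators. I would first try the following decomposition. The projection $P_x=Z_x^{-1}\langle\cdot,\one\rangle\one$ is a measurable field, since $\one=\psi_0$ and $Z_x$ is measurable. By Lemma~\ref{lem:lcp-centered-green-smoothing}(ii), $\GreenOp_x$ is zero on constants and maps $L^2_0$ into itself, so it commutes with $P_x$ and $\GreenOp_x=(I-P_x)\GreenOp_x(I-P_x)$.

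Granting positivity on $L^2_0$ (from Proposition~\ref{prop:lcp-centered-green-inverse}), $\GreenOp_x$ is then a positive compact operator on all of $H_x$. Its nonzero spectrum equals the spectrum of its restriction to $L^2_0$, and its kernel is exactly $\operatorname{span}\{\one\}$ by injectivity on $L^2_0$. Consequently $r_n(x)=\lambda_n(\GreenOp_x)$ in the ordering \eqref{eq:measurable-ordered-compact-eigenvalues}, which is measurable by Lemma~\ref{lem:measurable-operator-field-calculus}(iii). If positivity had to be avoided, the compression argument of that lemma with min--max on $J_N\GreenOp_xJ_N$ still works for compact self-adjoint fields restricted to positive eigenvalues.

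The structural statements on $\Omega_{\rm str}^\partial$ then follow from earlier results. The decomposition $H_x=\operatorname{span}\{\one\}\oplus L^2_0$ is orthogonal by definition of $L_0^2$. The identities $\ker\GreenOp_x=\operatorname{span}\{\one\}$ and $\GreenOp_x|_{L^2_0}=(A_x|_{L^2_0})^{-1}$ come from Proposition~\ref{prop:lcp-centered-green-inverse} together with Lemma~\ref{lem:lcp-centered-green-smoothing}(ii). Finally, $r_n(x)>0$ for every $n$ because $L^2_0(M_x^\partial)$ is infinite-dimensional: by full support, indicators of disjoint arcs minus their means span an infinite-dimensional space. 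A compact injective positive operator on an infinite-dimensional space has infinitely many positive eigenvalues.
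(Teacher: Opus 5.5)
Your proposal is correct and follows essentially the same route as the paper. You build the measurable frame from $(\psi_j)$ and show the matrix coefficients $\langle s_i,\GreenOp_x s_j\rangle$ are measurable by truncating the logarithmic kernel and using dominated convergence from the Frostman bound, with the structural facts coming from Lemma~\ref{lem:lcp-centered-green-smoothing} and Proposition~\ref{prop:lcp-centered-green-inverse}; your explicit check that $\GreenOp_x=(I-P_x)\GreenOp_x(I-P_x)$ is positive on all of $H_x$, so that Lemma~\ref{lem:measurable-operator-field-calculus}(iii) applies and $r_n=\lambda_n(\GreenOp_x)$, fills in a step the paper leaves implicit.
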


\begin{proof}
The map
\[
 x\longmapsto\mu_x:
 (\Xspace_{\rm C},\mathcal B(\Xspace_{\rm C}))
 \longrightarrow
 (\Mc_{\rm fin}^+(\LCircle),\mathcal B(\Mc_{\rm fin}^+(\LCircle)))
\]
is measurable.  Parameterized integration, first for the truncations
$G_N=(-N)\vee(G\wedge N)$ and then by passage to the limit using the
logarithmic estimate and the Frostman bound, makes
$(x,\theta,\varphi)\mapsto G_x^\circ(\theta,\varphi)$ jointly measurable.
Moreover,
\[
 \|\GreenOp_x\|_{\rm HS}^2
 =\iint_{\LCircle^2}|G_x^\circ(\theta,\varphi)|^2
   \mu_x(d\theta)\mu_x(d\varphi)<\infty
\]
on $\Omega_{\rm str}^{\partial}$.

The functions $s_j(x):=\psi_j\in H_x$ generate a measurable Hilbert field,
since
\[
 x\longmapsto\langle s_i(x),s_j(x)\rangle_{H_x}
 =\int_{\LCircle}\psi_i\psi_j\,d\mu_x
\]
is measurable.  Likewise,
\[
 x\longmapsto
 \langle s_i(x),\GreenOp_xs_j(x)\rangle_{H_x}
 =\iint_{\LCircle^2}
 \psi_i(\theta)G_x^\circ(\theta,\varphi)\psi_j(\varphi)
 \,\mu_x(d\theta)\mu_x(d\varphi)
\]
is measurable.  Hence $\GreenOp_x$ is a measurable operator field.

On $\Omega_{\rm str}^{\partial}$,
Lemma~\ref{lem:lcp-centered-green-smoothing} and
Proposition~\ref{prop:lcp-centered-green-inverse} give the stated
self-adjointness, kernel, positivity, injectivity, and inverse relation.
Full support makes $L_0^2(M_x^\partial)$ infinite-dimensional.  Therefore the
restriction has infinitely many positive eigenvalues.  The measurable-field
tools of Subsubsection~\ref{subsubsec:measurable-field-tools}, applied with
$D$ replaced by $\LCircle$ and $(\chi_j)$ by $(\psi_j)$, give the
measurability of every $r_n$ by
Lemma~\ref{lem:measurable-operator-field-calculus}.
\end{proof}

For Borel completion on all of $\Xspace_{\rm C}$, define
\begin{equation}
 \Lambda_n^x
 :=\begin{cases}
  r_n(x)^{-1},&x\in\Omega_{\rm str}^{\partial},\\
  0,&x\notin\Omega_{\rm str}^{\partial},
 \end{cases}
 \label{eq:lcp-borel-positive-eigenvalues}
\end{equation}
and, for $0<a<b$,
\[
 \WinProj_{a,b}^x
 :=\one_{(1/b,1/a)}(\GreenOp_x).
\]

\begin{corollary}
\label{cor:lcp-borel-positive-spectrum}
For every $n\ge1$ and $0<a<b$, the maps
\[
 x\longmapsto\Lambda_n^x,
 \qquad
 x\longmapsto\WinProj_{a,b}^x
\]
are measurable, and $\WinProj_{a,b}^x$ is finite-rank.  On
$\Omega_{\rm str}^{\partial}$,
\[
 \Lambda_n^x\text{ is the $n$th positive eigenvalue of }A_x,
 \qquad
 \WinProj_{a,b}^x=\one_{(a,b)}(A_x).
\]
\end{corollary}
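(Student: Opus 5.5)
The plan is to mirror the proof of Corollary~\ref{cor:canonical-borel-liouville-data}, with Proposition~\ref{lem:lcp-borel-spectral-interface} in the role of Proposition~\ref{prop:canonical-measurable-liouville-data}.

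\textbf{Step 1: measurability.} By Proposition~\ref{lem:lcp-borel-spectral-interface}, $x\mapsto\GreenOp_x$ is a measurable field of compact self-adjoint operators on the measurable Hilbert field $(H_x)$. It is positive on every fibre: on $\Omega_{\rm str}^{\partial}$ it is positive on $L_0^2$ and vanishes on $\operatorname{span}\{\one\}$, and off this event it is zero. Each $r_n$ is measurable by the same proposition. The set $\Omega_{\rm str}^{\partial}$ is Borel, and on it $r_n>0$. Hence
\[
 \Lambda_n^x=\one_{\Omega_{\rm str}^{\partial}}(x)\,r_n(x)^{-1}
\]
is measurable, with the convention that it equals $0$ off the event.

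For the window, the open interval $I=(1/b,1/a)$ lies in $(0,\infty)$. Lemma~\ref{lem:measurable-operator-field-calculus}(iii), transported to $\LCircle$ with the frame $(\psi_j)$ exactly as in the proof of Proposition~\ref{lem:lcp-borel-spectral-interface}, then gives measurability of $\one_I(\GreenOp_x)$. That lemma is stated for positive compact fields, which is why positivity on every fibre was checked above.

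\textbf{Step 2: finite rank.} Compactness of $\GreenOp_x$ implies that only finitely many eigenvalues, counted with multiplicity, exceed $1/b>0$. So $\WinProj_{a,b}^x$ is finite-rank for every $x$; off $\Omega_{\rm str}^{\partial}$ it is the zero projection.

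\textbf{Step 3: spectral identification on $\Omega_{\rm str}^{\partial}$.} Use the orthogonal decomposition $H_x=\operatorname{span}\{\one\}\oplus L_0^2(M_x^\partial)$. The kernel identity \eqref{eq:lcp-kernel-constants} shows that $A_x$ is reduced by this decomposition: the constants form its kernel, and $L_0^2$ is invariant because $\langle A_xu,\one\rangle=\E(u,\one)=0$. On $\operatorname{span}\{\one\}$ both $A_x$ and $\GreenOp_x$ vanish, and $0\notin I$ and $0\notin(a,b)$, so both windows are zero there. On $L_0^2$, Proposition~\ref{lem:lcp-borel-spectral-interface} gives $\GreenOp_x=(A_x|_{L_0^2})^{-1}$ with both operators positive. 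The spectral mapping $t\mapsto1/t$ carries eigenvalues of $A_x|_{L_0^2}$ in $(a,b)$ bijectively onto eigenvalues of $\GreenOp_x$ in $(1/b,1/a)$, with the same eigenspaces. Therefore $\one_{(1/b,1/a)}(\GreenOp_x)=\one_{(a,b)}(A_x)$.

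The same correspondence shows that the positive eigenvalues of $A_x$, listed in increasing order with multiplicity, are exactly the reciprocals of the positive eigenvalues $r_n(x)$ of $\GreenOp_x$, listed in decreasing order. Hence $\Lambda_n^x$ is the $n$th positive eigenvalue of $A_x$.

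The only point requiring care is confirming that the measurable functional calculus applies on all of $\Xspace_{\rm C}$, including the zero fibres off the event. Beyond that, no step presents a real obstacle.
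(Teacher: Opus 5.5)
Your proposal is correct and follows the paper's own proof: measurability comes from Proposition~\ref{lem:lcp-borel-spectral-interface} combined with Lemma~\ref{lem:measurable-operator-field-calculus}, and the pathwise identifications on $\Omega_{\rm str}^{\partial}$ come from the inverse relation $\GreenOp_x=(A_x|_{L_0^2(M_x^\partial)})^{-1}$ once the constant kernel is split off. Your explicit checks spell out details the paper leaves implicit, namely fibrewise positivity (including on the zero fibres off the event) and the reduction of $A_x$ by $\operatorname{span}\{\one\}\oplus L_0^2(M_x^\partial)$.
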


\begin{proof}
Proposition~\ref{lem:lcp-borel-spectral-interface} and
Lemma~\ref{lem:measurable-operator-field-calculus} give the measurability.
The inverse relation in Proposition~\ref{prop:lcp-centered-green-inverse}
gives the two pathwise identifications on $\Omega_{\rm str}^{\partial}$.
\end{proof}

The remaining measurability required in
Assumption~\ref{ass:spectral-borel} concerns finite-rank compressions by
multiplication operators.

\begin{corollary}
\label{cor:lcp-borel-compression-traces}
For every $0<a<b$, bounded real Borel function $v$ on $\LCircle$, and
$k\ge1$, the map
\begin{equation}
 x\longmapsto
 \Tr\!\left[(\WinProj_{a,b}^x\Mult_v\WinProj_{a,b}^x)^k\right]
 \in\mathbb R
 \label{eq:lcp-minimal-borel-spectral-interface}
\end{equation}
is measurable.
\end{corollary}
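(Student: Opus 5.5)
The argument follows the proof of Corollary~\ref{cor:canonical-borel-compression-traces}, with the circle frame $(\psi_j)$ and the fields $(H_x,\GreenOp_x)$ of Proposition~\ref{lem:lcp-borel-spectral-interface} replacing the planar objects. The plan has three steps: show that $\Mult_v$ is a measurable bounded operator field, deduce that the window projections and the compressed operator are measurable finite-rank fields, and read off the power traces from finite partial sums.

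\emph{Step 1: the multiplier field.} For bounded real Borel $v$, the matrix coefficients against the generating sections are
\[
 \langle s_i(x),\Mult_v s_j(x)\rangle_{H_x}
 =\int_{\LCircle}\psi_i v\psi_j\,d\mu_x .
\]
These are measurable in $x$, because $x\mapsto\mu_x$ is measurable into the narrow Borel structure and $\nu\mapsto\int F\,d\nu$ is Borel for bounded Borel $F$. The latter follows from a monotone-class argument starting from $F\in C(\LCircle)$. Moreover $\norm{\Mult_v}\le\norm{v}_\infty$ uniformly in $x$. So $\Mult_v$ is a measurable bounded operator field in the sense of \eqref{eq:measurable-operator-field-definition}.

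\emph{Step 2: the window projections.} The projection $\WinProj_{a,b}^x=\one_{(1/b,1/a)}(\GreenOp_x)$ needs one adjustment. Lemma~\ref{lem:measurable-operator-field-calculus}(iii) is stated for positive compact fields, whereas $\GreenOp_x$ is only self-adjoint compact. However, on $\Omega_{\rm str}^\partial$ it is nonnegative: its kernel is the constants, and by Proposition~\ref{lem:lcp-borel-spectral-interface} it is positive on $L_0^2$. Off $\Omega_{\rm str}^\partial$ it is zero. So the lemma applies directly.

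Since $0<1/b$, the interval $(1/b,1/a)$ is an open interval in $(0,\infty)$, and Lemma~\ref{lem:measurable-operator-field-calculus}(iii) makes $\WinProj_{a,b}^x$ a measurable field. Corollary~\ref{cor:lcp-borel-positive-spectrum} already records this measurability together with finite rank.

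\emph{Step 3: products and traces.} By Lemma~\ref{lem:measurable-operator-field-calculus}(i), the product $B_x:=\WinProj_{a,b}^x\Mult_v\WinProj_{a,b}^x$ and all its powers are measurable operator fields. Each $B_x$ is finite-rank, since $\WinProj_{a,b}^x$ is. Lemma~\ref{lem:measurable-finite-rank-invariants}(ii) then expresses $\Tr(B_x^k)$ as the pointwise limit of the measurable partial sums $t_{k,N}^B(x)$ in the Gram--Schmidt frame, which gives \eqref{eq:lcp-minimal-borel-spectral-interface}. Off $\Omega_{\rm str}^\partial$ both $\GreenOp_x$ and $\WinProj_{a,b}^x$ vanish, so the trace is $0$, consistent with the zero-extension convention.

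The only genuinely non-routine point is checking that the measurable-field tools of Subsubsection~\ref{subsubsec:measurable-field-tools} transfer to $\LCircle$. Lemma~\ref{lem:measurable-Hilbert-field-calculus}(ii) requires the rational span of the frame to be dense in $L^2(\mu_x)$ for every finite measure, which holds because $(\psi_j)$ is uniformly dense in $C(\LCircle)$ and $\LCircle$ is compact. The remaining lemmas are then purely operator-theoretic and apply verbatim.
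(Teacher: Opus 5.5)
Your proposal is correct and follows the paper's proof. The paper likewise gets measurability of the multiplier field from the matrix coefficients $\int\psi_i v\psi_j\,d\mu_x$, then uses Corollary~\ref{cor:lcp-borel-positive-spectrum} and Lemma~\ref{lem:measurable-operator-field-calculus} for the compressed powers, and Lemma~\ref{lem:measurable-finite-rank-invariants} for the traces. Your check that $\GreenOp_x$ is nonnegative on all of $H_x$ (it kills constants and is positive on $L_0^2(M_x^\partial)$, which is orthogonal to them) makes explicit a hypothesis of Lemma~\ref{lem:measurable-operator-field-calculus}(iii) that the paper uses without stating.
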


\begin{proof}
For bounded real Borel $v$,
\[
 \langle s_i(x),\Mult_vs_j(x)\rangle_{H_x}
 =\int_{\LCircle}\psi_i v\psi_j\,d\mu_x,
\]
so $\Mult_v$ is a measurable bounded operator field.  By
Corollary~\ref{cor:lcp-borel-positive-spectrum} and
Lemma~\ref{lem:measurable-operator-field-calculus},
$\WinProj_{a,b}^x\Mult_v\WinProj_{a,b}^x$ and its powers are measurable
finite-rank operator fields.  Lemma~\ref{lem:measurable-finite-rank-invariants}
then gives \eqref{eq:lcp-minimal-borel-spectral-interface}.
\end{proof}

For the coordinate field, $r_n(h)$ and $\Lambda_n^h$ denote the evaluations
at $x=h$ of these measurable functions.

\subsection{The nonlocal eigenfunction-square identity}

For real $u\in H^{1/2}(\LCircle)$, define its jump energy density by
\begin{equation}\label{eq:lcp-jump-energy-density}
 \Gamma_J(u)(\theta)
 :=\int_{\LCircle}(u(\theta)-u(\varphi))^2
       J(\theta,\varphi)m(d\varphi).
\end{equation}
Then $\Gamma_J(u)\in L^1(m)$ and
\begin{equation}\label{eq:lcp-jump-energy-total}
 \int_{\LCircle}\Gamma_J(u)\,dm=2\E(u,u).
\end{equation}
For $u\in H^{1/2}(\LCircle)$, the distribution $A_0u$ is understood through
\begin{equation}
 \langle A_0u,\zeta\rangle:=\E(u,\zeta),
 \qquad \zeta\in C^\infty(\LCircle).
 \label{eq:lcp-A0-distribution-convention}
\end{equation}
If $g\in L^1(m)$, the notation $gm$ denotes the measure whose action is
$\int_{\LCircle}\zeta g\,dm$.

Fix $x\in\Omega_{\rm str}^{\partial}$ and a real normalized eigenpair
\[
 A_x\phi=\Lambda\phi,
 \qquad
 \|\phi\|_{L^2(M_x^\partial)}=1,
 \qquad
 \Lambda>0.
\]

\begin{lemma}
\label{lem:lcp-nonlocal-square-identity}
The square $\phi^2$ belongs to $\Vd_x$ and satisfies
\begin{equation}\label{eq:lcp-nonlocal-square-measure-decomposition}
 A_0(\phi^2)
 =2\Lambda\phi^2M_x^\partial-\Gamma_J(\phi)m
 \qquad\text{in }\mathcal D'(\LCircle).
\end{equation}
Both terms on the right are finite Radon measures, and
$\Gamma_J(\phi)m\ll m$.  Equivalently, for every real
$\zeta\in C^\infty(\LCircle)$,
\begin{equation}\label{eq:lcp-nonlocal-square-identity}
 \E(\phi^2,\zeta)
 =2\Lambda\int_{\LCircle}\zeta\phi^2\,dM_x^\partial
  -\int_{\LCircle}\zeta\,\Gamma_J(\phi)\,dm.
\end{equation}
\end{lemma}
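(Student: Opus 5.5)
The plan mirrors Lemma~\ref{lem:eigenfunction-square-laplacian}, with the Leibniz rule replaced by its nonlocal counterpart for the jump form. By Corollary~\ref{cor:lcp-continuous-eigenfunctions}, $\phi$ (identified with $\phi^{\rm c}$) is continuous and hence bounded, and it lies in $H^{1/2}(\LCircle)\cap L^\infty$. The first step is to show $\phi^2\in\Vd_x$. For $\theta,\varphi\in\LCircle$ we have
\[
 |\phi^2(\theta)-\phi^2(\varphi)|\le2\|\phi\|_\infty|\phi(\theta)-\phi(\varphi)|,
\]
so the jump representation \eqref{eq:cauchy-form-jump} gives $\E(\phi^2,\phi^2)\le4\|\phi\|_\infty^2\E(\phi,\phi)$. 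Together with $\phi^2\in L^\infty\subset L^2(M_x^\partial)$ and $\phi^2\in L^2(m)$, this places $\phi^2$ in $H^{1/2}$. For the quasi-continuous version: $\phi$ is continuous, and a continuous $H^{1/2}$ function is its own quasi-continuous representative, so $\widetilde{\phi^2}=\phi^2$.

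For the identity, fix a real $\zeta\in C^\infty(\LCircle)$ and use the algebraic identity
\[
 (a^2-b^2)(c-d)=2(a-b)(ac-bd)-(a-b)^2(c+d),
\]
which one checks by expanding both sides. Set $a=\phi(\theta)$, $b=\phi(\varphi)$, $c=\zeta(\theta)$, $d=\zeta(\varphi)$, multiply by $\tfrac12J\,m\otimes m$, and integrate. The first term gives $2\E(\phi,\zeta\phi)$. The second term gives
\[
 -\tfrac12\iint(\phi(\theta)-\phi(\varphi))^2(\zeta(\theta)+\zeta(\varphi))J\,dm\,dm,
\]
which by symmetry of $J$ equals $-\int\zeta\,\Gamma_J(\phi)\,dm$.

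Before splitting, each piece must be shown absolutely integrable. Since $|a-b|$, $|ac-bd|$ and $|c-d|$ are each controlled by $|\phi(\theta)-\phi(\varphi)|$ or $|\zeta(\theta)-\zeta(\varphi)|$, with bounded factors, Cauchy--Schwarz against $J$ suffices. This uses $\phi,\zeta\phi\in H^{1/2}$, via the multiplier bound \eqref{eq:H-half-multiplier-bound}, and $\Gamma_J(\phi)\in L^1(m)$, via \eqref{eq:lcp-jump-energy-total}. The same facts show that $\Gamma_J(\phi)m$ is a finite positive measure absolutely continuous with respect to $m$.

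It remains to evaluate $\E(\phi,\zeta\phi)$. Since $\zeta\phi\in H^{1/2}$ and $\|\zeta\phi\|_{L^2(M_x^\partial)}\le\|\zeta\|_\infty$, we have $\zeta\phi\in\Vd_x$, so the weak eigenvalue equation gives
\[
 \E(\phi,\zeta\phi)=\Lambda\int\zeta\phi^2\,dM_x^\partial.
\]
This yields \eqref{eq:lcp-nonlocal-square-identity}. Finiteness of $\phi^2M_x^\partial$ follows from normalization, and the distributional form \eqref{eq:lcp-nonlocal-square-measure-decomposition} is then just the convention \eqref{eq:lcp-A0-distribution-convention}.

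The main obstacle is the legitimacy of splitting the double integral, since the two-variable integrands are only conditionally controlled near the diagonal. The Cauchy--Schwarz domination described above handles this, and it depends on the boundedness of $\phi$ established in Corollary~\ref{cor:lcp-continuous-eigenfunctions}.
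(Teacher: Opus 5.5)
Your proposal is correct and follows the paper's proof essentially step for step. You use the same ingredients: the bounded (Lipschitz) product property to get $\phi^2\in\Vd_x$, the same algebraic identity integrated against $\tfrac12 J\,m\otimes m$ with symmetry producing $-\int\zeta\,\Gamma_J(\phi)\,dm$, and the weak eigenvalue equation tested with $\zeta\phi\in\Vd_x$. Your explicit Cauchy--Schwarz justification for splitting the double integral is a detail the paper leaves implicit.
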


\begin{proof}
By Corollary~\ref{cor:lcp-continuous-eigenfunctions}, $\phi$ is bounded.  The
standard bounded product property of the Cauchy Dirichlet form (equivalently,
the Lipschitz functional calculus applied on the bounded range of $\phi$)
gives $\phi^2\in H^{1/2}(\LCircle)$; boundedness and finiteness of
$M_x^\partial$ give
$\phi^2\in L^2(M_x^\partial)$.  Hence $\phi^2\in\Vd_x$.  Likewise,
$\zeta\phi\in\Vd_x$.

For real numbers $a,b,c,d$,
\begin{equation*}
 2(a-b)(ac-bd)-(a^2-b^2)(c-d)=(a-b)^2(c+d).
\end{equation*}
Apply this with
$(a,b,c,d)=(\phi(\theta),\phi(\varphi),
\zeta(\theta),\zeta(\varphi))$, integrate against
$\frac12J(\theta,\varphi)m(d\theta)m(d\varphi)$, and use symmetry in
$(\theta,\varphi)$.  The result is
\begin{equation}\label{eq:lcp-square-form-product-rule}
 \E(\phi^2,\zeta)
 =2\E(\phi,\phi\zeta)
  -\int_{\LCircle}\zeta\,\Gamma_J(\phi)\,dm.
\end{equation}
The weak eigenvalue equation with test function $\phi\zeta$ gives
\[
 \E(\phi,\phi\zeta)
 =\Lambda\int_{\LCircle}\zeta\phi^2\,dM_x^\partial.
\]
Substitution in \eqref{eq:lcp-square-form-product-rule} proves
\eqref{eq:lcp-nonlocal-square-identity}.  The two terms on the right are finite
Radon measures by boundedness of $\phi$, finiteness of $M_x^\partial$, and
\eqref{eq:lcp-jump-energy-total}; this also proves the measure formulation.
\end{proof}

\subsection{Liouville--Cauchy realization and transversality}

\subsubsection{Liouville--Cauchy realization of the abstract framework}

Choose the countable rational direction space
\begin{equation}
 \mathcal Q_{\rm C}
 :=\operatorname{span}_{\mathbb Q}
 \{\cos(n\theta),\sin(n\theta):n\ge1\}.
 \label{eq:lcp-concrete-Q}
\end{equation}

\begin{lemma}
\label{lem:lcp-abstract-direction-interface}
The space $\mathcal Q_{\rm C}$ has the following three properties.
\begin{enumerate}[label=\textnormal{(\roman*)},leftmargin=*,itemsep=0pt,topsep=0.35em,parsep=0pt,partopsep=0pt]
\item $\mathcal Q_{\rm C}$ is a nonzero countable rational vector space contained
in $\Hreg=C^\infty(\LCircle)\cap\Hc$.
\item It is dense in the Cameron--Martin space:
\[
 \overline{\mathcal Q_{\rm C}}^{\,\Hc}=\Hc.
\]
\item Its annihilator among finite signed measures is exactly
\begin{equation}
 \left\{\sigma\in\Mc_{\rm fin}(\LCircle):
 \int_{\LCircle}q\,d\sigma=0\text{ for every }q\in\mathcal Q_{\rm C}\right\}
 =\operatorname{span}\{m\}.
 \label{eq:lcp-direction-annihilator}
\end{equation}
\end{enumerate}
\end{lemma}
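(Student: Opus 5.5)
Parts (i) and (ii) follow directly from the Fourier series. For (iii) we compute Fourier coefficients of a finite signed measure and invoke uniqueness.

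For (i), each $\cos(n\theta)$ and $\sin(n\theta)$ with $n\ge1$ is smooth. It has zero $m$-mean and only finitely many nonzero Fourier coefficients, so it lies in $C^\infty(\LCircle)\cap\Hc$. The rational span of a countable set is a countable $\mathbb Q$-vector space, and it is nonzero.

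For (ii), take $u\in\Hc$. Its Fourier partial sums $S_Nu$ converge to $u$ in $\Hc$, since the tail $\sum_{|n|>N}|n||\widehat u(n)|^2\to0$. Each $S_Nu$ is a real trigonometric polynomial without constant term, i.e.\ a finite real combination of $\cos(n\theta)$ and $\sin(n\theta)$. Replacing its finitely many real coefficients by nearby rationals changes the $\Hc$ norm by an arbitrarily small amount, because only finitely many modes are involved. Hence $\mathcal Q_{\rm C}$ is $\Hc$-dense.

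For (iii), first check that $m$ annihilates $\mathcal Q_{\rm C}$. This holds because $\int\cos(n\theta)\,dm=\int\sin(n\theta)\,dm=0$ for $n\ge1$, so $\operatorname{span}\{m\}\subseteq\mathcal Q_{\rm C}^\perp$.

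Conversely, let $\sigma$ annihilate $\mathcal Q_{\rm C}$. By $\mathbb Q$-linearity it annihilates $\cos(n\theta)$ and $\sin(n\theta)$ for every $n\ge1$, and therefore $\widehat\sigma(n)=0$ for all $n\ne0$. Set $\sigma':=\sigma-\sigma(\LCircle)\,m$. Then $\widehat{\sigma'}(n)=0$ for every $n\in\mathbb Z$, including $n=0$. It follows that $\int p\,d\sigma'=0$ for every trigonometric polynomial $p$. Trigonometric polynomials are uniformly dense in $C(\LCircle)$ by Fej\'er or Stone--Weierstrass, and $|\sigma'|$ is finite, so $\int\psi\,d\sigma'=0$ for all $\psi\in C(\LCircle)$. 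The Riesz representation theorem then gives $\sigma'=0$, i.e.\ $\sigma=\sigma(\LCircle)\,m\in\operatorname{span}\{m\}$.

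No step is a real obstacle. The only point needing care is the rational approximation in (ii). Because only finitely many modes appear, the map from coefficients to the $\Hc$ norm is continuous, so the approximation goes through.
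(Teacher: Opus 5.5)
Your proof is correct and follows the paper's route: (i) and (ii) come from the Fourier basis of $\dot H^{1/2}(\LCircle)$, and (iii) comes from killing the nonzero Fourier coefficients and applying Fourier uniqueness to $\sigma-\sigma(\LCircle)m$. Where the paper simply cites Fourier uniqueness for finite measures, you also check the easy inclusion $\operatorname{span}\{m\}\subseteq\mathcal Q_{\rm C}^\perp$ explicitly and derive uniqueness from uniform density of trigonometric polynomials plus the Riesz representation theorem.
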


\begin{proof}
Properties \textnormal{(i)} and \textnormal{(ii)} follow from the Fourier
basis of $\dot H^{1/2}(\LCircle)$.  For \textnormal{(iii)}, if $\sigma$
annihilates $\mathcal Q_{\rm C}$, then all nonzero Fourier coefficients of
$\sigma$ vanish.  Hence $\sigma-\sigma(\LCircle)m$ has every Fourier
coefficient equal to zero, and Fourier uniqueness for finite measures gives
$\sigma=\sigma(\LCircle)m$.
\end{proof}

Define the admissible Liouville--Cauchy orbit by
\begin{equation}
 \mathcal M_{\rm LCP}
 :=\left\{e^{\kappa f}M_x^\partial:
 x\in\Omega_{\rm str}^\partial,\ f\in\Hreg\right\}.
 \label{eq:lcp-admissible-orbit}
\end{equation}
For $\mu=e^{\kappa f}M_x^\partial$ in this class, set
$ c_f:=e^{\kappa\|f\|_\infty}$.  Then
\begin{equation}
 \begin{gathered}
  c_f^{-1}M_x^\partial\le\mu\le c_fM_x^\partial,
  \qquad \Vd_\mu=\Vd_x,\\
  c_f^{-1/2}\|u\|_{L^2(M_x^\partial)}
  \le\|u\|_{L^2(\mu)}
  \le c_f^{1/2}\|u\|_{L^2(M_x^\partial)}.
 \end{gathered}
 \label{eq:lcp-admissible-comparison}
\end{equation}
Write $A_\mu$ for the non-negative self-adjoint operator associated with
$(\E,\Vd_\mu)$ in $L^2(\mu)$.  For $g\in\Hreg$,
\begin{equation}
 e^{\kappa g}\mu=e^{\kappa(f+g)}M_x^\partial\in\mathcal M_{\rm LCP}.
 \label{eq:lcp-admissible-tilt-closure}
\end{equation}

The abstract and Liouville--Cauchy symbols are related by
\begin{equation}
 \Xspace=\Xspace_{\rm C},\qquad \mathbb P=\mathbb P_{\rm C},
 \qquad
 (\Omega_{\rm can},\Omega_{\rm str})
 =(\Omega_{\rm GMC}^{\partial},\Omega_{\rm str}^{\partial}).
 \label{eq:lcp-abstract-gaussian-dictionary}
\end{equation}
and
\begin{equation}
 \State=\LCircle,\qquad \mu_x=M_x^\partial,
 \qquad
 (\Hc,\Hreg,\kappa,\rho,\mathcal Q)
 =\left(\dot H^{1/2}(\LCircle),
 C^\infty(\LCircle)\cap\Hc,\frac\gamma2,m,\mathcal Q_{\rm C}\right).
 \label{eq:lcp-abstract-dictionary}
\end{equation}
At the coordinate environment $h$,
\[
 (\Vd_h^{\rm C},A_h^{\rm C},\Lambda_n^{{\rm C},h})
 =(\Vd_h,A_h,\Lambda_n^h).
\]

\subsubsection{Liouville--Cauchy response transversality}

The preceding constructions provide the model inputs for
Assumptions~\ref{ass:admissible-forms}--\ref{ass:spectral-borel}.  We now
verify the additional pathwise condition in
Definition~\ref{def:abstract-response-transversality}.  Fix
$x\in\Omega_{\rm str}^{\partial}$ and real normalized eigenpairs
$(\Lambda_i,\phi_i)_{i=1}^M$ of $A_x$ with pairwise distinct positive
eigenvalues.  Define
\[
 L:=A_0\big|_{\operatorname{span}_{\mathbb R}
   \{\phi_1^2,\ldots,\phi_M^2\}},
 \qquad
 b_i:=\Gamma_J(\phi_i)m,
 \quad 1\le i\le M,
\]
where $L$ is initially understood distributionally.

\begin{corollary}
\label{cor:lcp-square-interface}
The map $L$ takes values in $\Mc_{\rm fin}(\LCircle)$, and each $b_i$ is a
finite positive Radon measure.  Moreover,
\begin{equation}
 M_x^\partial\perp m,\qquad
 \operatorname{supp}M_x^\partial=\LCircle,\qquad
 b_i\ll m,\qquad
 L(\phi_i^2)=2\Lambda_i\phi_i^2M_x^\partial-b_i,
 \quad 1\le i\le M.
 \label{eq:lcp-square-interface-identity}
\end{equation}
\end{corollary}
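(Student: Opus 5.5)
This corollary is the Cauchy counterpart of Corollary~\ref{cor:lbm-square-interface}, and I would prove it the same way, by collecting facts already established on $\Omega_{\rm str}^{\partial}$.

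First, the two measure-theoretic clauses come directly from Proposition~\ref{prop:lcp-canonical-structural-event}(i). For $x\in\Omega_{\rm str}^{\partial}$ that proposition states that $M_x^\partial$ is finite, has full support, and is singular with respect to $m$. This gives $M_x^\partial\perp m$ and $\operatorname{supp}M_x^\partial=\LCircle$ at once.

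Second, I would handle each eigenpair separately. By Corollary~\ref{cor:lcp-continuous-eigenfunctions}, each $\phi_i$ is identified with a bounded continuous representative. Lemma~\ref{lem:lcp-nonlocal-square-identity} then applies to $(\Lambda_i,\phi_i)$ and yields
\[
 A_0(\phi_i^2)=2\Lambda_i\phi_i^2M_x^\partial-\Gamma_J(\phi_i)m
 \quad\text{in }\mathcal D'(\LCircle).
\]
The jump density $\Gamma_J(\phi_i)$ is a non-negative $L^1(m)$ function, since its integral equals $2\E(\phi_i,\phi_i)<\infty$ by \eqref{eq:lcp-jump-energy-total}. Hence $b_i=\Gamma_J(\phi_i)m$ is a finite positive Radon measure with $b_i\ll m$. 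The term $\phi_i^2M_x^\partial$ is also a finite positive measure, because $\phi_i$ is bounded and $M_x^\partial$ is finite. This gives the identity $L(\phi_i^2)=2\Lambda_i\phi_i^2M_x^\partial-b_i$.

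Finally, for the range of $L$: the operator $A_0$ is linear on distributions through \eqref{eq:lcp-A0-distribution-convention}. So for any element $\sum_i c_i\phi_i^2$ of the span, $L$ returns $\sum_i c_i(2\Lambda_i\phi_i^2M_x^\partial-b_i)$, which is a finite signed Radon measure. Well-definedness needs no extra argument: if two linear combinations define the same function in $H^{1/2}$, they define the same distribution $A_0(\cdot)$.

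I do not expect a genuine obstacle. The only point needing care is that Lemma~\ref{lem:lcp-nonlocal-square-identity} is stated for a fixed $x\in\Omega_{\rm str}^{\partial}$, so it has to be applied pathwise to each of the $M$ eigenpairs on that same event.
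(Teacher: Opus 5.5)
Your proposal is correct and follows essentially the same route as the paper. Proposition~\ref{prop:lcp-canonical-structural-event} gives singularity and full support, and Lemma~\ref{lem:lcp-nonlocal-square-identity} gives the identity for each eigenpair. The total jump-energy identity \eqref{eq:lcp-jump-energy-total} makes each $b_i$ a finite measure absolutely continuous with respect to $m$, and linearity then places the range of $L$ in $\Mc_{\rm fin}(\LCircle)$.
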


\begin{proof}
Proposition~\ref{prop:lcp-canonical-structural-event} gives
$M_x^\partial\perp m$ and full support.  By
Lemma~\ref{lem:lcp-nonlocal-square-identity},
\[
 A_0(\phi_i^2)
 =2\Lambda_i\phi_i^2M_x^\partial-\Gamma_J(\phi_i)m,
 \qquad \Gamma_J(\phi_i)m\ll m.
\]
The total-energy identity \eqref{eq:lcp-jump-energy-total} makes each $b_i$
finite.  Hence $L(\phi_i^2)\in\Mc_{\rm fin}(\LCircle)$ and the displayed
properties follow.
\end{proof}

Now fix $1\le n_1<\cdots<n_N$ such that
$\Lambda_{n_1}^x,\ldots,\Lambda_{n_N}^x$ are simple, and choose real normalized
eigenfunctions $\phi_{n_1}^x,\ldots,\phi_{n_N}^x$.

\begin{corollary}
\label{cor:lcp-response-transversality}
The response functionals
\[
 \ell_{n_i}^{M_x^\partial}(q)
 =-\kappa\Lambda_{n_i}^x
   \int_{\LCircle}q(\phi_{n_i}^x)^2\,dM_x^\partial,
 \qquad q\in\mathcal Q_{\rm C},
\]
are linearly independent.  Hence the selected eigenvalue family is
response-transverse in the sense of
Definition~\ref{def:abstract-response-transversality}.
\end{corollary}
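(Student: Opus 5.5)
The plan mirrors the LBM argument in Corollary~\ref{cor:lbm-response-transversality}, except that the centering measure $\rho=m$ is now nonzero and has to be removed by a mass count. Suppose $a\in\R^N$ satisfies
\[
 \sum_{i=1}^Na_i\ell_{n_i}^{M_x^\partial}(q)=0
 \qquad(q\in\mathcal Q_{\rm C}).
\]
Set $\sigma:=\sum_ia_i\Lambda_{n_i}^x(\phi_{n_i}^x)^2M_x^\partial$. This is a finite signed Radon measure, because each eigenfunction is bounded by Corollary~\ref{cor:lcp-continuous-eigenfunctions}. The hypothesis says exactly that $\sigma$ annihilates $\mathcal Q_{\rm C}$. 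The annihilator identity \eqref{eq:lcp-direction-annihilator} then gives $\sigma=cm$ for some $c\in\R$.

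Equivalently, $\sum_ia_i\eta_{n_i}^x=0$, where $\eta_{n_i}^x=-\kappa\Lambda_{n_i}^x(\phi_{n_i}^2M_x^\partial-m)$ are the centered response measures. Each $\eta_{n_i}^x$ has total mass zero by normalization, so Corollary~\ref{cor:square-implies-RT} applies directly. Its proof gives, concretely:
\begin{itemize}[label=$\bullet$]
 \item $c\,m(\LCircle)=\sigma(\LCircle)=\sum_ia_i\Lambda_{n_i}^x$;
 \item $\sum_ia_i\Lambda_{n_i}^x\Theta(\phi_{n_i},\phi_{n_i})=\sigma-(\sum_ia_i\Lambda_{n_i}^x)m=0$.
\end{itemize}

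To conclude $a=0$, I would invoke the weighted square independence \eqref{eq:abstract-weighted-square-independence} of Theorem~\ref{thm:abstract-square-transversality}, taking $\nu_{\rm ref}=m$, $c_0=2$, $L=A_0$ restricted to the span of the squares, and $b_i=\Gamma_J(\phi_{n_i})m$. The hypotheses are supplied as follows.
\begin{itemize}[label=$\bullet$]
 \item Condition (i) holds by distinctness and normalization of the eigenpairs. Continuity of the representatives comes from Corollary~\ref{cor:lcp-continuous-eigenfunctions}.
 \item Condition (ii) holds because Corollary~\ref{cor:lcp-square-interface} gives $M_x^\partial\perp m$ and full support, and $\rho=m\ll\nu_{\rm ref}$ trivially.
 \item Condition (iii) is the square identity in Corollary~\ref{cor:lcp-square-interface}, itself a consequence of Lemma~\ref{lem:lcp-nonlocal-square-identity}.
\end{itemize}
In the induction inside that theorem, the singular part with respect to $m$ isolates $\sum_ia_i\Lambda_i^{k+1}\phi_i^2M_x^\partial$. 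The absolutely continuous terms, namely the $b_i$ and the centering multiple of $m$, drop out.

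Linear independence of the functionals on $\mathcal Q_{\rm C}$ is, by definition, response transversality in the sense of Definition~\ref{def:abstract-response-transversality}. The step needing care is the bookkeeping of the centering term: the argument must use that the weighted square combination differs from $\sigma$ only by a multiple of $m$, and that this multiple is absolutely continuous. That is precisely why $\rho\ll\nu_{\rm ref}$ appears among the hypotheses of Theorem~\ref{thm:abstract-square-transversality}, so no new estimate is required beyond checking that $\Vd_x$ contains $\phi^2$, which Lemma~\ref{lem:lcp-nonlocal-square-identity} provides.
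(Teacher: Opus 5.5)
Your proposal is correct and matches the paper's proof, which simply feeds Corollary~\ref{cor:lcp-square-interface} and the annihilator identity of Lemma~\ref{lem:lcp-abstract-direction-interface} into Corollary~\ref{cor:square-implies-RT}. Your mass count and your appeal to the weighted square independence \eqref{eq:abstract-weighted-square-independence}, with $\nu_{\rm ref}=m$, $c_0=2$, $L=A_0$ and $b_i=\Gamma_J(\phi_{n_i})m$, just write out the proof of that corollary in this model.
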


\begin{proof}
Apply Corollary~\ref{cor:lcp-square-interface} to the eigenpairs
$(\Lambda_{n_i}^x,\phi_{n_i}^x)_{i=1}^N$.  Together with
Lemma~\ref{lem:lcp-abstract-direction-interface}, the hypotheses of
Corollary~\ref{cor:square-implies-RT} are satisfied.  That corollary gives the
stated linear independence.
\end{proof}

\subsection{Final proof of Theorem~\ref{thm:intro-lcp-spectrum}}

Use the identifications
\eqref{eq:lcp-abstract-gaussian-dictionary}--
\eqref{eq:lcp-abstract-dictionary}.  The five assumptions of
Theorem~\ref{thm:intro-abstract-response} are verified as follows.

\begin{itemize}[label=$\bullet$]
\item Assumption~\ref{ass:admissible-forms}.  For
$\mu=e^{\kappa f}M_x^\partial\in\mathcal M_{\rm LCP}$,
\eqref{eq:lcp-admissible-comparison} gives
\[
 \Vd_\mu=\Vd_x,
 \qquad
 \|\cdot\|_{L^2(\mu)}\asymp\|\cdot\|_{L^2(M_x^\partial)}.
\]
Proposition~\ref{prop:lcp-canonical-structural-event} gives a closed, dense
trace form at $M_x^\partial$.  Lemma~\ref{lem:lcp-centered-green-smoothing},
Proposition~\ref{prop:lcp-centered-green-inverse}, and
Corollary~\ref{cor:lcp-borel-positive-spectrum} give
\[
 \ker A_x=\operatorname{span}\{\one\},
 \qquad
 0<\Lambda_1^x\le\Lambda_2^x\le\cdots\uparrow\infty,
\]
with compact inverse on the orthogonal complement of constants.  Hence the
form embedding is compact.  The comparison transfers closedness,
density, and compactness to $\mu$, while full support makes $L^2(\mu)$
infinite-dimensional.

\item Assumption~\ref{ass:regular-form-multipliers}.  For $g\in\Hreg$,
\eqref{eq:H-half-multiplier-bound}--\eqref{eq:lcp-trace-poincare-bound} and
\eqref{eq:lcp-admissible-comparison} give
\[
 g\Vd_\mu\subset\Vd_\mu,
 \qquad
 \|gu\|_{\E,\mu}\le C_{\mu,g}\|u\|_{\E,\mu}.
\]

\item Assumption~\ref{ass:coherent-orbit}.
Lemma~\ref{lem:lcp-canonical-boundary-gmc} gives
\[
 M_{x+g}^\partial=e^{\kappa g}M_x^\partial,
 \qquad x,x+g\in\Omega_{\rm can},\quad g\in\Hreg.
\]
Proposition~\ref{prop:lcp-canonical-structural-event} gives
$\Omega_{\rm str}\subset\Omega_{\rm can}$ with
$\mathbb P_{\rm C}(\Omega_{\rm str})=1$, while
\eqref{eq:lcp-admissible-orbit} and
\eqref{eq:lcp-admissible-tilt-closure} give the structural and closure clauses.

\item Assumption~\ref{ass:direction-separation}.  Lemma
\ref{lem:lcp-abstract-direction-interface} gives
\eqref{eq:countable-direction-space}, and
\eqref{eq:lcp-direction-annihilator} is
\eqref{eq:direction-annihilator} with $\rho=m$.

\item Assumption~\ref{ass:spectral-borel}.  Corollaries
\ref{cor:lcp-borel-positive-spectrum} and
\ref{cor:lcp-borel-compression-traces} give
\eqref{eq:minimal-borel-interface} for rational $0<a<b$,
$q\in\mathcal Q_{\rm C}$, and $n,k\ge1$.
\end{itemize}

The preceding Green inverse gives
$\ker A_h^{\rm C}=\operatorname{span}\{\one\}$ almost surely, while
Theorem~\ref{thm:intro-abstract-response}\textnormal{(i)} makes the positive
spectrum simple:
\[
 0<\Lambda_1^h<\Lambda_2^h<\cdots\uparrow\infty.
\]
Together these statements are \eqref{eq:intro-lcp-simple-spectrum}.

Fix $1\le n_1<\cdots<n_N$.  On the same full-probability simplicity event,
Corollary~\ref{cor:lcp-response-transversality}, evaluated at $x=h$, gives
\[
 \sum_{i=1}^Na_i\ell_{n_i}^{M_h^\partial}(q)=0
 \quad(q\in\mathcal Q_{\rm C})
 \quad\Longrightarrow\quad
 a_1=\cdots=a_N=0.
\]
Thus Definition~\ref{def:abstract-response-transversality} holds, and
Theorem~\ref{thm:intro-abstract-response}\textnormal{(ii)} yields
\eqref{eq:intro-lcp-density}.  This proves
Theorem~\ref{thm:intro-lcp-spectrum}.  The stronger potential realization in
Definition~\ref{def:response-realization} is established separately in
Appendix~\ref{app:lcp-response}.

\appendix

\section{Further response results for LBM}
\label{app:lbm-finite-energy-response}

Theorem~\ref{thm:intro-lbm-spectrum} was proved entirely in
Section~\ref{sec:lbm-realization}.  This appendix develops further pathwise
response information that is not needed for that theorem.  We first represent
the response measures by finite-energy Dirichlet Green potentials, in a
canonical environment and along the regular tilt orbit, and then extend the
response to all of $H_0^1(D)$.  In particular, we verify the stronger
realization property in Definition~\ref{def:response-realization}.

Throughout the appendix we return to the LBM conventions of
Section~\ref{sec:lbm-realization}:
\[
 \Hc=H_0^1(D),\qquad \Hreg=C_c^\infty(D),\qquad
 \kappa=\gamma,\qquad \rho=0,
\]
$\E$ is the Dirichlet energy in
\eqref{eq:standing-energy-normalization}, and hence
$\Theta_\mu(u,v)=uv\,\mu$.
Recall also from \eqref{eq:lbm-structural-event-abstract} and
\eqref{eq:lbm-admissible-orbit} that
\[
 \Omega_{\rm LBM}\subseteq\Omega_{\rm Fr}\cap\Omega_{\rm op},
 \qquad
 \mathcal M_{\rm LBM}
 =\{e^{\gamma f}M_x:
      x\in\Omega_{\rm LBM},\ f\in C_c^\infty(D)\}.
\]

\subsection{Finite-energy spectral products}

For $x\in\Omega_{\rm Fr}\cap\Omega_{\rm op}$ and
$\theta\in L^2(M_x)$, Lemmas~\ref{lem:green-hilbert-schmidt},
\ref{lem:finite-energy-potential}, and~\ref{lem:green-form-inverse}, together
with Proposition~\ref{prop:green-potential-L2-Cb}, give
\[
 \theta M_x\in\ME,
 \qquad
 \widetilde{\Pot(\theta M_x)}=\GreenOp_x\theta
 \quad\text{$\E$-q.e.}.
\]
The two sides represent the same element of $L^2(M_x)$, and
$\GreenOp_x\theta$ is its unique continuous representative.
\begin{proposition}
\label{prop:pathwise-cluster-product-green-energy}
Fix $x\in\Omega_{\rm Fr}\cap\Omega_{\rm op}$, and let
$(\Lambda_i^x,\phi_i)$ and $(\Lambda_j^x,\phi_j)$ be positive eigenpairs of
$A_x$ with real $L^2(M_x)$-normalized eigenfunctions.  Then
\begin{equation}
 \phi_i\phi_jM_x\in\ME,
 \label{eq:pathwise-cluster-product-membership}
\end{equation}
and
\begin{equation}
 I_D(|\phi_i\phi_j|M_x)
 \le M_x(D)^{1/2}K_G(x)^{3/2}
      \min\{(\Lambda_i^x)^2,(\Lambda_j^x)^2\}<\infty.
 \label{eq:pathwise-cluster-product-green-energy-bound}
\end{equation}
Consequently, for every $g\in\Hc$,
\begin{equation}
 \big\langle\Pot(\phi_i\phi_jM_x),g\big\rangle_{\Hc}
 =\int_D\widetilde g\,\phi_i\phi_j\,dM_x.
 \label{eq:eigenfunction-product-response-pairing}
\end{equation}
\end{proposition}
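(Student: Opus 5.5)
We will bound the energy of $|\phi_i\phi_j|M_x$ by combining the Hilbert--Schmidt structure of $\GreenOp_x$ with the sup-norm control of eigenfunctions, and then read off membership in $\ME$ and the pairing from Lemma~\ref{lem:finite-energy-potential}.

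\emph{Energy bound.} Put $\theta:=|\phi_i\phi_j|$. By Lemma~\ref{lem:green-potential-eigenfunction-regularity}(iii), $\|\phi_i\|_\infty\le\Lambda_i^xK_G(x)^{1/2}$, and the same holds for $\phi_j$. Assume without loss of generality that $\Lambda_i^x\le\Lambda_j^x$. Then
\[
 \theta\le\|\phi_i\|_\infty|\phi_j|,
 \qquad
 \|\theta\|_{L^2(M_x)}\le\Lambda_i^xK_G(x)^{1/2},
\]
since $\|\phi_j\|_{L^2(M_x)}=1$. We expand the energy through \eqref{eq:green-operator-bilinear}:
\[
 I_D(\theta M_x)=\langle\theta,\GreenOp_x\theta\rangle_{L^2(M_x)}
 \le\|\theta\|_{L^1(M_x)}\,\|\GreenOp_x\theta\|_\infty .
\]
Proposition~\ref{prop:green-potential-L2-Cb} gives
\[
 \|\GreenOp_x\theta\|_\infty\le K_G(x)^{1/2}\|\theta\|_{L^2(M_x)}
 \le\Lambda_i^xK_G(x).
\]
For the $L^1$ factor, Cauchy--Schwarz gives
\[
 \|\theta\|_{L^1(M_x)}\le M_x(D)^{1/2}\|\theta\|_{L^2(M_x)}
 \le M_x(D)^{1/2}\Lambda_i^xK_G(x)^{1/2}.
\]
Multiplying the two factors yields exactly $M_x(D)^{1/2}K_G(x)^{3/2}(\Lambda_i^x)^2$, and $\Lambda_i^x=\min\{\Lambda_i^x,\Lambda_j^x\}$. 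This is \eqref{eq:pathwise-cluster-product-green-energy-bound}. Finiteness of the bound follows from $K_G(x)<\infty$ on $\Omega_{\rm Fr}$, which is Lemma~\ref{lem:green-smoothing-constant}, together with $M_x(D)<\infty$.

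\emph{Membership in $\ME$.} Set $\nu:=\phi_i\phi_jM_x$. Its Jordan parts satisfy $\nu^\pm\le\theta M_x$. Since the Green kernel $G_D$ is nonnegative, $I_D$ is monotone on positive measures, so
\[
 I_D(\nu^\pm)\le I_D(\theta M_x)<\infty .
\]
This gives \eqref{eq:pathwise-cluster-product-membership}. Alternatively, membership follows directly from Lemma~\ref{lem:green-hilbert-schmidt}, because $\phi_i\phi_j\in L^2(M_x)$ by boundedness of the eigenfunctions.

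\emph{Pairing.} The identity \eqref{eq:eigenfunction-product-response-pairing} is the second relation in \eqref{eq:response-potential-energy-identity} applied to $\nu$. The only point to check is that $\widetilde g$ is $|\nu|$-integrable. This holds because finite-energy measures charge no polar sets and the pairing in \eqref{eq:finite-energy-potential-pairing} is well defined on $\Hc$.

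\emph{Main obstacle.} The only step needing care is reaching the precise power $K_G(x)^{3/2}$. The argument uses the split $\langle\theta,\GreenOp_x\theta\rangle\le\|\theta\|_{L^1}\|\GreenOp_x\theta\|_\infty$, because the cruder bound $\|\GreenOp_x\|_{\rm HS}\|\theta\|_2^2$ would introduce a different constant.
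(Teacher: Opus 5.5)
Your proof is correct and follows the paper's argument step for step: the sup-norm bound on the eigenfunctions, $I_D(\theta M_x)=\langle\theta,\GreenOp_x\theta\rangle_{L^2(M_x)}$, domination of the Jordan parts by $\theta M_x$, and the pairing from Lemma~\ref{lem:finite-energy-potential}. The only difference is that the paper bounds $\langle\theta,\GreenOp_x\theta\rangle$ by $\|\GreenOp_x\|_{\rm op}\|\theta\|_{L^2(M_x)}^2$ with $\|\GreenOp_x\|_{\rm op}\le\|\GreenOp_x\|_{\rm HS}\le M_x(D)^{1/2}K_G(x)^{1/2}$ from \eqref{eq:green-HS-from-smoothing}, rather than using your $L^1$--$L^\infty$ split; this gives exactly the same constant $M_x(D)^{1/2}K_G(x)^{3/2}$, so your closing remark that the Hilbert--Schmidt route would produce a different constant is mistaken.
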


\begin{proof}
For $\theta:=|\phi_i\phi_j|$, Lemma~\ref{lem:green-potential-eigenfunction-regularity}
gives
\[
 \|\theta\|_{L^2(M_x)}^2
 \le\min\{\|\phi_i\|_\infty^2,\|\phi_j\|_\infty^2\}
 \le K_G(x)\min\{(\Lambda_i^x)^2,(\Lambda_j^x)^2\}.
\]
By \eqref{eq:green-HS-from-smoothing} and
\eqref{eq:green-form-inverse-energy},
\begin{align*}
 I_D(\theta M_x)
 &=\langle\theta,\GreenOp_x\theta\rangle_{L^2(M_x)}
 \le\|\GreenOp_x\|_{\rm op}\|\theta\|_{L^2(M_x)}^2\\
 &\le M_x(D)^{1/2}K_G(x)^{1/2}
       \|\theta\|_{L^2(M_x)}^2,
\end{align*}
which proves \eqref{eq:pathwise-cluster-product-green-energy-bound}.  Since
$0\le(\phi_i\phi_j)_\pm\le\theta$ and $G_D\ge0$, both Jordan parts have finite
energy, proving \eqref{eq:pathwise-cluster-product-membership}.  Finally,
\eqref{eq:eigenfunction-product-response-pairing} follows from
Lemma~\ref{lem:finite-energy-potential}.
\end{proof}

The Green estimates from Section~\ref{sec:lbm-realization} persist along the
regular-tilt orbit.

\begin{lemma}
\label{lem:lbm-tilted-product-energy}
Let $x\in\Omega_{\rm LBM}$, $f\in C_c^\infty(D)$, and
$\mu=e^{\gamma f}M_x$.  Put
\[
 K_G(\mu)
 :=\sup_{z\in D}\int_DG_D(z,y)^2\,\mu(dy).
\]
For $w\in L^2(\mu)$, write
\[
 \GreenOp_\mu w(z):=\int_DG_D(z,y)w(y)\,\mu(dy).
\]
Then
\begin{align}
 K_G(\mu)
 &\le e^{\gamma\|f\|_\infty}K_G(x)<\infty,
 \label{eq:lbm-tilted-smoothing-bound}\\
 \iint_{D^2}G_D(z,y)^2\,\mu(dz)\mu(dy)
 &\le e^{2\gamma\|f\|_\infty}
 \iint_{D^2}G_D(z,y)^2\,
 M_x(dz)M_x(dy)<\infty.
 \label{eq:lbm-tilted-HS-bound}
\end{align}
Every real $L^2(\mu)$-normalized eigenfunction of $A_\mu$ has a bounded
continuous quasi-continuous representative.  Any two positive eigenpairs
$(\Lambda_i^\mu,\phi_i)$ and $(\Lambda_j^\mu,\phi_j)$ with real normalized
eigenfunctions satisfy
\begin{equation}
 \phi_i\phi_j\,\mu\in\ME,
 \qquad
 I_D(|\phi_i\phi_j|\mu)
 \le
 \mu(D)^{1/2}K_G(\mu)^{3/2}
 \min\{(\Lambda_i^\mu)^2,(\Lambda_j^\mu)^2\}.
 \label{eq:lbm-tilted-product-energy}
\end{equation}
\end{lemma}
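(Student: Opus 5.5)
The plan is to reduce every claim to the untilted environment $M_x$ through the two-sided comparison $e^{-\gamma\|f\|_\infty}M_x\le\mu\le e^{\gamma\|f\|_\infty}M_x$ in \eqref{eq:lbm-admissible-comparison}. Then rerun, for $\mu$, the same chain of arguments already given for $M_x$: Lemma~\ref{lem:green-hilbert-schmidt}, Lemma~\ref{lem:green-form-inverse}, Proposition~\ref{prop:green-potential-L2-Cb}, Lemma~\ref{lem:green-potential-eigenfunction-regularity}, and Proposition~\ref{prop:pathwise-cluster-product-green-energy}.

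First, the two integral bounds follow from $\mu(dy)\le e^{\gamma\|f\|_\infty}M_x(dy)$. Applied once, for each fixed $z$, to $\int G_D(z,y)^2\mu(dy)$, it gives \eqref{eq:lbm-tilted-smoothing-bound}. Applied in both variables, it gives \eqref{eq:lbm-tilted-HS-bound}. Finiteness comes from Lemma~\ref{lem:green-smoothing-constant} and $\Omega_{\rm LBM}\subseteq\Omega_{\rm Fr}\cap\Omega_{\rm op}$. The tilted measure also inherits the hypotheses of Lemma~\ref{lem:frostman-green-smoothing}: it satisfies a Frostman bound with constant $e^{\gamma\|f\|_\infty}C_x$. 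Hence the local $L^2(\mu)$-continuity \eqref{eq:green-kernel-L2-continuity} holds with $\mu$ in place of $M_x$.

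Next, identify $\GreenOp_\mu=A_\mu^{-1}$. The proofs of Lemmas~\ref{lem:green-hilbert-schmidt} and~\ref{lem:green-form-inverse} use only four facts: $\mu$ is a finite smooth measure, it has full quasi-support, the Hilbert--Schmidt kernel bound holds, and $\Vd_\mu=\{u\in H_0^1(D):\widetilde u\in L^2(\mu)\}$. The first two transfer from $M_x$ because $\mu$ and $M_x$ have the same null sets. The kernel bound is \eqref{eq:lbm-tilted-HS-bound}. The description of $\Vd_\mu$ is \eqref{eq:lbm-admissible-comparison}. Repeating those proofs verbatim gives $\theta\mu\in\ME$ and $\widetilde{\Pot(\theta\mu)}=\GreenOp_\mu\theta$ in $L^2(\mu)$, together with the Green inverse identity. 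The argument of Proposition~\ref{prop:green-potential-L2-Cb}, now using $K_G(\mu)$ and the transferred $L^2(\mu)$-continuity, shows that $\GreenOp_\mu\theta\in C_b(D)$ with $\|\GreenOp_\mu\theta\|_\infty\le K_G(\mu)^{1/2}\|\theta\|_{L^2(\mu)}$. The continuous representative is unique because $\mu$ has full support.

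For an eigenpair we write $\phi=\Lambda^\mu\Pot(\phi\mu)$, exactly as in Lemma~\ref{lem:green-potential-eigenfunction-regularity}. This yields a bounded continuous representative $\phi^{\rm c}=\Lambda^\mu\GreenOp_\mu\phi$ that agrees with $\widetilde\phi$ quasi-everywhere and satisfies $\|\phi\|_\infty\le\Lambda^\mu K_G(\mu)^{1/2}$.

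Finally, set $\theta=|\phi_i\phi_j|$. Then $\|\theta\|_{L^2(\mu)}^2\le\min(\|\phi_i\|_\infty^2,\|\phi_j\|_\infty^2)\le K_G(\mu)\min\{(\Lambda_i^\mu)^2,(\Lambda_j^\mu)^2\}$, using normalization of the other factor. The operator norm satisfies $\|\GreenOp_\mu\|_{\rm op}\le\|\GreenOp_\mu\|_{\rm HS}\le(\mu(D)K_G(\mu))^{1/2}$. Combining these via $I_D(\theta\mu)=\langle\theta,\GreenOp_\mu\theta\rangle$ gives the stated bound $\mu(D)^{1/2}K_G(\mu)^{3/2}\min\{(\Lambda_i^\mu)^2,(\Lambda_j^\mu)^2\}$. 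Since $G_D\ge0$, the Jordan parts are dominated by $\theta\mu$, so $\phi_i\phi_j\mu\in\ME$.

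The main obstacle is the transfer step: checking honestly that each earlier lemma used only properties preserved under the bounded tilt. In particular, the smoothness and full quasi-support of $\mu$ and the identification of the time-changed form must be confirmed, and I would verify these by citing the equality of null sets and \eqref{eq:lbm-admissible-comparison}.
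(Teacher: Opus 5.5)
Your proposal is correct and follows essentially the same route as the paper: the density comparison gives the two kernel bounds, and the Green representation $\phi=\Lambda\Pot(\phi\mu)$ gives the bounded continuous representative with $\|\phi\|_\infty\le\Lambda K_G(\mu)^{1/2}$. The estimate $I_D(\theta\mu)=\langle\theta,\GreenOp_\mu\theta\rangle_{L^2(\mu)}\le\|\GreenOp_\mu\|_{\rm HS}\|\theta\|_{L^2(\mu)}^2$ then finishes the proof.

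The only minor difference is how $\Pot(\phi\mu)\in\Vd_\mu$ is obtained. The paper writes $\phi\mu=(e^{\gamma f}\phi)M_x$ and applies Lemmas~\ref{lem:green-hilbert-schmidt} and~\ref{lem:green-form-inverse} directly at $M_x$, using $\Vd_\mu=\Vd_x$; this sidesteps part of the transfer step you flagged as the main obstacle. You instead re-verify those lemmas for $\mu$ via equality of null sets, which is equally valid.
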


\begin{proof}
Set $c_f:=e^{\gamma\|f\|_\infty}$.  The comparison
\[
 c_f^{-1}M_x\le\mu\le c_fM_x
\]
gives \eqref{eq:lbm-tilted-smoothing-bound}--\eqref{eq:lbm-tilted-HS-bound};
it also gives $\Vd_\mu=\Vd_x$ and preserves the Frostman and full-support
properties.

For a normalized eigenpair $(\Lambda,\phi)$, set
\[
 \theta_f:=e^{\gamma f}\phi,
 \qquad
 \phi\mu=\theta_fM_x,
 \qquad
 \|\theta_f\|_{L^2(M_x)}^2
 \le c_f\|\phi\|_{L^2(\mu)}^2=c_f.
\]
Lemmas~\ref{lem:green-hilbert-schmidt} and
\ref{lem:green-form-inverse}, applied to $M_x$, show that
\[
 u:=\Pot(\phi\mu)=\Pot(\theta_fM_x)
 \in\Vd_x=\Vd_\mu.
\]
The finite-energy pairing and the weak eigenvalue equation therefore give
\[
 \E(\phi-\Lambda u,v)=0\quad(v\in\Vd_\mu)
 \quad\Longrightarrow\quad
 \phi=\Lambda u\quad\text{in }H_0^1(D).
\]
Lemma~\ref{lem:finite-energy-potential} identifies the quasi-continuous
representative of $u=\Pot(\phi\mu)$ with the Green integral
$\GreenOp_\mu\phi$.  Consequently,
\[
 \widetilde\phi=\Lambda\GreenOp_\mu\phi
 \quad\text{quasi-everywhere and in }L^2(\mu).
\]
The proof of Lemma~\ref{lem:green-hilbert-schmidt}, with $M_x$ replaced by
$\mu$, also gives, for every $w\in L^2(\mu)$,
\[
 w\mu\in\ME,
 \qquad
 I_D(w\mu)
 =\langle w,\GreenOp_\mu w\rangle_{L^2(\mu)}.
\]
Green smoothing therefore yields a bounded continuous quasi-continuous
representative and
\[
 \|\phi\|_\infty\le\Lambda K_G(\mu)^{1/2}.
\]
For $\theta:=|\phi_i\phi_j|$, it follows that
\begin{align*}
 \|\theta\|_{L^2(\mu)}^2
 &\le K_G(\mu)
       \min\{(\Lambda_i^\mu)^2,(\Lambda_j^\mu)^2\},\\
 I_D(\theta\mu)
 &=\langle\theta,\GreenOp_\mu\theta\rangle_{L^2(\mu)}
 \le \mu(D)^{1/2}K_G(\mu)^{1/2}
       \|\theta\|_{L^2(\mu)}^2.
\end{align*}
This proves \eqref{eq:lbm-tilted-product-energy}; the positive and negative
parts of $\phi_i\phi_j\mu$ are dominated by $\theta\mu$, so
$\phi_i\phi_j\mu\in\ME$.
\end{proof}

\subsection{Potential realization and Cameron--Martin response}

For every $\mu\in\mathcal M_{\rm LBM}$, take
\begin{equation}
 \mathcal M_{\rm resp}(\mu):=\ME,
 \qquad
 \Potential_\mu\sigma:=\Pot\sigma.
 \label{eq:lbm-response-space}
\end{equation}

\begin{proposition}
\label{prop:lbm-response-realization}
For every $\mu\in\mathcal M_{\rm LBM}$ and every $u,v$ in a common
positive eigenspace of $A_\mu$,
\begin{equation}
 \Theta_\mu(u,v)=uv\,\mu\in\ME.
 \label{eq:lbm-response-product-space}
\end{equation}
The map $\Potential_\mu=\Pot$ in \eqref{eq:lbm-response-space} is linear and
injective, and
\begin{equation}
 \inner{\Potential_\mu\sigma}{f}_{\Hc}
 =\int_Df\,d\sigma,
 \qquad
 \sigma\in\ME,\quad f\in C_c^\infty(D).
 \label{eq:lbm-response-potential-identification}
\end{equation}
Thus the LBM response is faithfully potential-realizable in the sense of
Definition~\ref{def:response-realization}.
\end{proposition}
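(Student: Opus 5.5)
The plan splits the statement into three parts: membership of the response products, the linearity, injectivity and reproducing properties of $\Pot$, and the final identification with Definition~\ref{def:response-realization}.

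\emph{Membership.} Fix $\mu=e^{\gamma f}M_x\in\mathcal M_{\rm LBM}$ with $x\in\Omega_{\rm LBM}$, and let $u,v$ lie in a common positive eigenspace $E_\mu(\Lambda)$. Choose a real $L^2(\mu)$-orthonormal basis $(\phi_i)_{i=1}^r$ of that eigenspace and write $u=\sum_i a_i\phi_i$, $v=\sum_j b_j\phi_j$. Then
\[
 uv\,\mu=\sum_{i,j}a_ib_j\,\phi_i\phi_j\,\mu .
\]
Each summand lies in $\ME$ by \eqref{eq:lbm-tilted-product-energy} of Lemma~\ref{lem:lbm-tilted-product-energy}. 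To conclude \eqref{eq:lbm-response-product-space}, I only need $\ME$ to be a real vector space. This follows from Lemma~\ref{lem:finite-energy-potential}, which gives $I_D(|\nu|,|\eta|)<\infty$ for $\nu,\eta\in\ME$. Consequently
\[
 I_D\bigl((\nu+\eta)^\pm\bigr)\le I_D(|\nu|+|\eta|)<\infty,
\]
using $(\nu+\eta)^\pm\le|\nu|+|\eta|$ and $G_D\ge0$. Closure under scalar multiplication is immediate. Since $\rho=0$, we have $\Theta_\mu(u,v)=uv\,\mu$.

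\emph{Linearity, reproducing identity, injectivity.} By uniqueness in Lemma~\ref{lem:finite-energy-potential}, $\Pot\sigma$ is characterized by $\E(\Pot\sigma,w)=\int_D\widetilde w\,d\sigma$ for all $w\in\Hc$. The right side is linear in $\sigma$, so uniqueness makes $\sigma\mapsto\Pot\sigma$ linear on $\ME$. For $f\in C_c^\infty(D)$, the function $f$ is continuous and hence its own quasi-continuous representative. Then \eqref{eq:response-potential-energy-identity} gives \eqref{eq:lbm-response-potential-identification}, because the Cameron--Martin inner product is $\E$. Injectivity is the argument of Lemma~\ref{lem:green-energy-strict-positive}: if $\Pot\sigma=0$, then $\int f\,d\sigma=0$ for every $f\in C_c^\infty(D)$, so $\sigma=0$ as a Radon measure. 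Alternatively, one can note $I_D(\sigma)=\|\Pot\sigma\|_{\Hc}^2=0$ and apply that lemma directly.

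\emph{Main point.} The only delicate step is the membership claim along the tilted orbit rather than at $M_x$ itself. That difficulty is already absorbed by Lemma~\ref{lem:lbm-tilted-product-energy}, through Green smoothing for the comparable measure $\mu$. With that lemma available, everything else is formal. Collecting the three parts verifies Definition~\ref{def:response-realization} with $\mathcal M_{\rm resp}(\mu)=\ME$, and injectivity gives faithfulness.
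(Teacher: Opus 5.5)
Your proposal is correct and follows essentially the same route as the paper. You show that $\ME$ is a vector space via Green-energy bounds on the Jordan parts, take linearity and the reproducing identity from Lemma~\ref{lem:finite-energy-potential}, get injectivity from the argument of Lemma~\ref{lem:green-energy-strict-positive}, and get membership of $uv\,\mu$ from Lemma~\ref{lem:lbm-tilted-product-energy}. The differences are cosmetic: you expand $u,v$ in an orthonormal basis where the paper normalizes $u,v$ and uses homogeneity, and you bound $I_D(|\nu|+|\eta|)$ by bilinear expansion rather than the Cauchy--Schwarz triangle inequality.
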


\begin{proof}
For $\nu,\eta\in\ME$, the positive mutual-energy estimate
\eqref{eq:positive-mutual-energy-pairing}, applied to their Jordan parts, gives
$I_D(|\nu|),I_D(|\eta|)<\infty$.  Positivity of $G_D$ and
\eqref{eq:green-energy-cs} then give
\[
 I_D(|\nu+\eta|)^{1/2}
 \le I_D(|\nu|+|\eta|)^{1/2}
 \le I_D(|\nu|)^{1/2}
     +I_D(|\eta|)^{1/2}<\infty.
\]
Since $(\nu+\eta)^\pm\le|\nu+\eta|$, this proves closure under addition.
The same argument applies to scalar multiples, so $\ME$ is a real vector
space.  Lemma~\ref{lem:finite-energy-potential} yields
\[
 \Pot(a\nu+b\eta)=a\Pot\nu+b\Pot\eta,
 \qquad a,b\in\mathbb R,
\]
and \eqref{eq:lbm-response-potential-identification};
Lemma~\ref{lem:green-energy-strict-positive} gives injectivity.

If $u,v$ belong to a common positive eigenspace of $A_\mu$, then
Lemma~\ref{lem:lbm-tilted-product-energy} and homogeneity give
\[
 \Theta_\mu(u,v)=uv\,\mu\in\ME,
\]
with the zero case immediate.  These are exactly the requirements of
Definition~\ref{def:response-realization}.
\end{proof}

\begin{proposition}
\label{prop:lbm-full-cm-response}
Let $x\in\Omega_{\rm LBM}$ and let $\Lambda>0$ have multiplicity
$r$ for $A_x$, with real $L^2(M_x)$-orthonormal eigenbasis
$(\phi_i)_{i=1}^r$.  Set
\begin{equation}
 \sigma_{ij}^x
 :=\Theta_{M_x}(\phi_i,\phi_j)=\phi_i\phi_jM_x
 \in\ME,
 \qquad 1\le i,j\le r.
 \label{eq:lbm-cluster-product-measures}
\end{equation}
The response in \eqref{eq:lbm-cluster-response-matrix}, initially defined
for $f\in C_c^\infty(D)$, extends uniquely to a continuous linear map
$f\mapsto\Response_f^\Lambda(M_x)$ on $\Hc$, given by
\begin{equation}
 \begin{aligned}
 \left(\inner{\Response_f^\Lambda(M_x)\phi_j}{\phi_i}
              _{L^2(M_x)}\right)_{i,j=1}^r
 &=-\gamma\Lambda
 \left(\inner{\Pot\sigma_{ij}^x}{f}_{\Hc}\right)_{i,j=1}^r\\
 &=-\gamma\Lambda
 \left(\int_D\widetilde f\,\phi_i\phi_j\,dM_x\right)_{i,j=1}^r.
 \end{aligned}
 \label{eq:lbm-full-cm-cluster-response}
\end{equation}
For $f\in H_0^1(D)\setminus C_c^\infty(D)$, this notation denotes only the
continuous extension; no tilted operator or analytic eigenvalue branch is
asserted.
\end{proposition}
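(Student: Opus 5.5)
The plan is to apply the abstract Proposition~\ref{prop:full-cm-extension} to the LBM dictionary \eqref{eq:lbm-abstract-dictionary}, and then identify the Hilbert pairing with the integral against $\widetilde f$ using Proposition~\ref{prop:pathwise-cluster-product-green-energy}.

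\emph{Step 1: check the hypotheses of Proposition~\ref{prop:full-cm-extension}.} Proposition~\ref{prop:lbm-response-realization} shows that the LBM response is potential-realizable, with $\mathcal M_{\rm resp}(\mu)=\ME$ and $\Potential_\mu=\Pot$. For $\mu=M_x$ with $x\in\Omega_{\rm LBM}$, it also gives $\sigma_{ij}^x=\Theta_{M_x}(\phi_i,\phi_j)\in\ME$; this is the membership asserted in \eqref{eq:lbm-cluster-product-measures}. The direction conditions \eqref{eq:abstract-full-cm-direction-conditions} hold for two reasons. First, $C_c^\infty(D)$ is dense in $H_0^1(D)$ by definition of $H_0^1(D)$. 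Second, $\rho=0$, so $\int f\,d\rho=0$ trivially.

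\emph{Step 2: extension and first equality.} Proposition~\ref{prop:full-cm-extension} now gives a unique continuous linear extension $f\mapsto\Response_f^\Lambda(M_x)$ from $\Hreg$ to $\Hc$. Its matrix in the basis $(\phi_i)$ is $-\gamma\Lambda\bigl(\inner{\Pot\sigma_{ij}^x}{f}_{\Hc}\bigr)_{i,j}$, which is the first line of \eqref{eq:lbm-full-cm-cluster-response}. On $C_c^\infty(D)$ this matrix agrees with \eqref{eq:lbm-cluster-response-matrix}, and the abstract proof already records that agreement.

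\emph{Step 3: second equality.} Identity \eqref{eq:eigenfunction-product-response-pairing} of Proposition~\ref{prop:pathwise-cluster-product-green-energy} applies here because $\Omega_{\rm LBM}\subseteq\Omega_{\rm Fr}\cap\Omega_{\rm op}$. For every $g\in\Hc$ it gives $\inner{\Pot(\phi_i\phi_jM_x)}{g}_{\Hc}=\int_D\widetilde g\,\phi_i\phi_j\,dM_x$. The integral is well defined for three reasons:
\begin{itemize}
\item $\phi_i\phi_j$ is bounded, by Lemma~\ref{lem:green-potential-eigenfunction-regularity};
\item $M_x$ charges no polar set, so the quasi-continuous version $\widetilde g$ is defined $M_x$-a.e.;
\item $\widetilde g$ is integrable against the finite-energy measure $|\phi_i\phi_j|M_x$, by the pairing in Lemma~\ref{lem:finite-energy-potential}.
\end{itemize}
This gives the second line of \eqref{eq:lbm-full-cm-cluster-response}.

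The closing remark needs no proof. For $f\notin C_c^\infty(D)$ the symbol $\Response_f^\Lambda(M_x)$ is only notation for the extension, and no tilt $e^{\gamma\tau f}$ is claimed, since $f$ need not be bounded.

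I expect the main obstacle to be the uniqueness of the extension, and more precisely its independence of the chosen orthonormal basis. Uniqueness follows from density together with the Hilbert--Schmidt estimate in the abstract proof. Basis independence follows because both sides of \eqref{eq:lbm-full-cm-cluster-response} are symmetric bilinear forms on $E_x(\Lambda)$ that are bilinear in $(\phi_i,\phi_j)$. Everything else reduces to earlier results.
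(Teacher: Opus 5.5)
Your proposal is correct and matches the paper's proof. Both arguments verify the hypotheses of Proposition~\ref{prop:full-cm-extension} via Proposition~\ref{prop:lbm-response-realization}, density of $C_c^\infty(D)$ in $H_0^1(D)$, and $\rho=0$, and both get the second equality from the finite-energy pairing; the only cosmetic difference is that the paper cites Lemma~\ref{lem:finite-energy-potential} directly, whereas you cite its specialization \eqref{eq:eigenfunction-product-response-pairing}.
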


\begin{proof}
Proposition~\ref{prop:lbm-response-realization}, the density of
$C_c^\infty(D)$ in $H_0^1(D)$, and $\rho=0$ verify both conditions in
\eqref{eq:abstract-full-cm-direction-conditions}.  Hence
Proposition~\ref{prop:full-cm-extension} applies.  Since
$\Potential_{M_x}=\Pot$, $\kappa=\gamma$, and
$\Theta_{M_x}(\phi_i,\phi_j)=\sigma_{ij}^x$, it gives the first equality in
\eqref{eq:lbm-full-cm-cluster-response}.  The second follows from
Lemma~\ref{lem:finite-energy-potential}:
\[
 \inner{\Pot\sigma_{ij}^x}{f}_{\Hc}
 =\int_D\widetilde f\,d\sigma_{ij}^x
 =\int_D\widetilde f\,\phi_i\phi_j\,dM_x,
 \qquad f\in H_0^1(D).
\]
\end{proof}

\begin{corollary}
\label{cor:lbm-finite-energy-simple-response}
Let $x\in\Omega_{\rm LBM}$, let $\Lambda_n^x$ be a simple eigenvalue
of $A_x$, and choose a real $L^2(M_x)$-normalized eigenfunction $\phi_n^x$.
The response measure $\eta_n^x$ in
\eqref{eq:lbm-spectral-response-measure} belongs to $\ME$ and satisfies
\begin{equation}
 I_D(\eta_n^x)
 \le \gamma^2(\Lambda_n^x)^4M_x(D)^{1/2}
       K_G(x)^{3/2}<\infty.
 \label{eq:lbm-response-energy-bound}
\end{equation}
The simple-eigenvalue response functional extends continuously to $\Hc$ as
\begin{equation}
 f\longmapsto
 -\gamma\Lambda_n^x\int_D\widetilde f(\phi_n^x)^2\,dM_x
 =\inner{\Pot\eta_n^x}{f}_{\Hc},
 \qquad f\in\Hc.
 \label{eq:lbm-full-cm-simple-response}
\end{equation}
\end{corollary}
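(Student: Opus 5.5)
The plan is to read both assertions off results already in hand, namely Proposition~\ref{prop:pathwise-cluster-product-green-energy} and the potential calculus of Lemma~\ref{lem:finite-energy-potential}.

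\emph{Energy bound.} Since $\eta_n^x=-\gamma\Lambda_n^x(\phi_n^x)^2M_x$ is a scalar multiple of $\phi_n^x\phi_n^xM_x$, I apply Proposition~\ref{prop:pathwise-cluster-product-green-energy} with $i=j=n$. This is allowed because $x\in\Omega_{\rm LBM}\subseteq\Omega_{\rm Fr}\cap\Omega_{\rm op}$. It gives $(\phi_n^x)^2M_x\in\ME$ and
\[
 I_D\bigl((\phi_n^x)^2M_x\bigr)\le M_x(D)^{1/2}K_G(x)^{3/2}(\Lambda_n^x)^2 .
\]
The measure $(\phi_n^x)^2M_x$ is positive, so no absolute value is needed. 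Then $\ME$ is closed under scalar multiplication, by Proposition~\ref{prop:lbm-response-realization} or directly from the definition, since the Jordan parts just scale. Bilinearity of $I_D$ gives
\[
 I_D(\eta_n^x)=\gamma^2(\Lambda_n^x)^2\,I_D\bigl((\phi_n^x)^2M_x\bigr).
\]
Combining the two displays yields \eqref{eq:lbm-response-energy-bound}. Finiteness of $M_x(D)$ comes from Lemma~\ref{lem:canonical-borel-gmc}, and finiteness of $K_G(x)$ from Lemma~\ref{lem:green-smoothing-constant}.

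\emph{Extension.} By Lemma~\ref{lem:finite-energy-potential}, in particular \eqref{eq:response-potential-energy-identity}, every $f\in\Hc$ satisfies
\[
 \inner{\Pot\eta_n^x}{f}_{\Hc}=\int_D\widetilde f\,d\eta_n^x=-\gamma\Lambda_n^x\int_D\widetilde f(\phi_n^x)^2\,dM_x .
\]
This is \eqref{eq:lbm-full-cm-simple-response}. The left side is continuous in $f$, with norm $I_D(\eta_n^x)^{1/2}$. For $f\in C_c^\infty(D)$ we have $\widetilde f=f$, so the functional agrees with the simple response $\partial_f\Lambda_n^x$ of Corollary~\ref{cor:lbm-simple-eigenvalue-response}. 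Since $C_c^\infty(D)$ is dense in $H_0^1(D)$, this continuous extension is the unique one. Equivalently, the whole statement is the case $r=1$ of Proposition~\ref{prop:lbm-full-cm-response}.

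The only point that needs care is that $\widetilde f$ is integrable against $(\phi_n^x)^2M_x$. This is handled by Lemma~\ref{lem:finite-energy-potential}, which applies to any finite-energy measure, so I do not expect a genuine obstacle here.
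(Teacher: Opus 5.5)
Your proposal is correct and follows the paper's proof essentially verbatim: it applies Proposition~\ref{prop:pathwise-cluster-product-green-energy} with $i=j=n$, uses the scaling $I_D(\eta_n^x)=\gamma^2(\Lambda_n^x)^2I_D((\phi_n^x)^2M_x)$, and reads off the extension from the pairing identity of Lemma~\ref{lem:finite-energy-potential}. Your remarks that the extension is unique by density of $C_c^\infty(D)$ and agrees with Corollary~\ref{cor:lbm-simple-eigenvalue-response} on $\Hreg$ are consistent additions that the paper leaves implicit.
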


\begin{proof}
Proposition~\ref{prop:pathwise-cluster-product-green-energy} with $i=j=n$
gives $(\phi_n^x)^2M_x\in\ME$ and
\[
 I_D((\phi_n^x)^2M_x)
 \le (\Lambda_n^x)^2M_x(D)^{1/2}
      K_G(x)^{3/2}.
\]
Since $\eta_n^x=-\gamma\Lambda_n^x(\phi_n^x)^2M_x$,
\[
 I_D(\eta_n^x)
 =\gamma^2(\Lambda_n^x)^2I_D((\phi_n^x)^2M_x),
\]
which gives \eqref{eq:lbm-response-energy-bound}.  Finally, for
$f\in\Hc$,
\[
 \inner{\Pot\eta_n^x}{f}_{\Hc}
 =\int_D\widetilde f\,d\eta_n^x
 =-\gamma\Lambda_n^x\int_D\widetilde f(\phi_n^x)^2\,dM_x,
\]
proving \eqref{eq:lbm-full-cm-simple-response}.
\end{proof}

\begin{corollary}
\label{cor:lbm-positive-response-gram}
Let $x\in\Omega_{\rm LBM}$ and $1\le n_1<\cdots<n_N$ satisfy the hypotheses
of Corollary~\ref{cor:lbm-response-transversality}.  Then the matrix
\begin{equation}
 \left(\inner{\Pot\eta_{n_i}^x}{\Pot\eta_{n_j}^x}_{\Hc}\right)_{i,j=1}^N
 \label{eq:lbm-response-gram}
\end{equation}
is positive definite.
\end{corollary}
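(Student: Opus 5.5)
The plan is to reduce the statement to the abstract Gram corollary, Corollary~\ref{cor:abstract-positive-response-gram}, by verifying its two hypotheses in the LBM dictionary. The first hypothesis is faithful potential realizability, with $\mathcal M_{\rm resp}(M_x)=\ME$ and $\Potential_{M_x}=\Pot$. This is exactly Proposition~\ref{prop:lbm-response-realization}, applied with $\mu=M_x\in\mathcal M_{\rm LBM}$, which is legitimate because $x\in\Omega_{\rm LBM}$ and we may take $f=0$ in \eqref{eq:lbm-admissible-orbit}. The second hypothesis is that the centered response measures are linearly independent elements of $\ME$. Membership follows from Corollary~\ref{cor:lbm-finite-energy-simple-response}, which gives $\eta_{n_i}^x\in\ME$. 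Since $\rho=0$, the abstract centered measure \eqref{eq:abstract-centered-response-measure} with $\kappa=\gamma$ coincides with \eqref{eq:lbm-spectral-response-measure}. Linear independence is the first assertion of Corollary~\ref{cor:lbm-response-transversality}, which rests on the weighted square transversality \eqref{eq:lbm-weighted-square-transversality}.

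With both hypotheses in place, Corollary~\ref{cor:abstract-positive-response-gram} yields positive definiteness of \eqref{eq:lbm-response-gram}. For a self-contained check one can also argue directly. Symmetry of the matrix comes from the real Hilbert inner product. For $a\in\R^N$, linearity of $\Pot$ (Proposition~\ref{prop:lbm-response-realization}) and \eqref{eq:response-potential-energy-identity} give
\[
 \sum_{i,j}a_ia_j\inner{\Pot\eta_{n_i}^x}{\Pot\eta_{n_j}^x}_{\Hc}
 =\Bigl\|\Pot\Bigl(\sum_ia_i\eta_{n_i}^x\Bigr)\Bigr\|_{\Hc}^2
 =I_D\Bigl(\sum_ia_i\eta_{n_i}^x\Bigr).
\]
If this vanishes, Lemma~\ref{lem:green-energy-strict-positive} forces $\sum_ia_i\eta_{n_i}^x=0$, and linear independence then gives $a=0$.

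There is no real obstacle. The only point needing care is that $\ME$ is a vector space, so that the finite combination $\sum_ia_i\eta_{n_i}^x$ lies in $\ME$ and the energy identity applies to it. This closure was established in the proof of Proposition~\ref{prop:lbm-response-realization}, so it can simply be cited.
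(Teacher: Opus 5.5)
Your proposal is correct and follows the paper's proof: the paper likewise obtains the statement as an instance of Corollary~\ref{cor:abstract-positive-response-gram}, using Proposition~\ref{prop:lbm-response-realization} for faithful potential realizability and Corollary~\ref{cor:lbm-response-transversality} for linear independence of the $\eta_{n_i}^x$. Your direct check via $\norm{\Pot(\sum_i a_i\eta_{n_i}^x)}_{\Hc}^2=I_D(\sum_i a_i\eta_{n_i}^x)$ and Lemma~\ref{lem:green-energy-strict-positive} simply unpacks that abstract corollary.
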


\begin{proof}
This is Corollary~\ref{cor:abstract-positive-response-gram}, using
Proposition~\ref{prop:lbm-response-realization} and
Corollary~\ref{cor:lbm-response-transversality}.
\end{proof}

\subsection{Cluster envelopes and first-order splitting}

For $x\in\Omega_{\rm LBM}$ and an eigenvalue $\Lambda>0$, define
\begin{equation}
 \mathcal N_x(\Lambda)
 :=\left\{-\gamma\Lambda\psi^2M_x:
 \psi\in E_x(\Lambda),\
 \norm{\psi}_{L^2(M_x)}=1\right\}.
 \label{eq:lbm-active-response-family}
\end{equation}

\begin{corollary}
\label{cor:lbm-cluster-envelope}
Let $x\in\Omega_{\rm LBM}$ and let $\Lambda>0$ be an eigenvalue of
$A_x$ occupying the ordered labels $n,\ldots,n+r-1$.  Then, for every
$f\in C_c^\infty(D)$,
\begin{equation}
 \partial_f^+\Lambda_n^x
 =\min_{\eta\in\mathcal N_x(\Lambda)}\int_Df\,d\eta,
 \qquad
 \partial_f^-\Lambda_n^x
 =\max_{\eta\in\mathcal N_x(\Lambda)}\int_Df\,d\eta.
 \label{eq:lbm-cluster-lower-edge-envelope}
\end{equation}
\end{corollary}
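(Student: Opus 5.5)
This is the LBM specialization of Proposition~\ref{prop:abstract-cluster-envelope}, so the plan is to check that proposition's hypotheses under the dictionary \eqref{eq:lbm-abstract-dictionary} and then read off \eqref{eq:lbm-cluster-lower-edge-envelope}.

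\emph{Step 1: the setting.} Let $x\in\Omega_{\rm LBM}$. Then $M_x\in\mathcal M_{\rm LBM}$ is admissible. Assumptions~\ref{ass:admissible-forms}--\ref{ass:regular-form-multipliers} hold by the verification in the final proof of Theorem~\ref{thm:intro-lbm-spectrum}, and the closure clause is \eqref{eq:lbm-admissible-tilt-closure}. Hence the tilt $d\mu_\tau=e^{\gamma\tau f}\,dM_x$ with $f\in C_c^\infty(D)$ falls within Section~\ref{sec:pathwise-response}, and it coincides with $M_{x,\tau}^f$. Consequently the one-sided derivatives \eqref{eq:lbm-ordered-directional-derivatives} are exactly those of \eqref{eq:abstract-one-sided-derivative}.

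\emph{Step 2: centering.} Since $\rho=0$, the centering condition \eqref{eq:abstract-centering-invisible-direction} holds trivially for every $f$. It follows that $\Theta_{M_x}(\psi,\psi)=\psi^2M_x$, and with $\kappa=\gamma$ the abstract family \eqref{eq:abstract-active-family} equals $\mathcal N_x(\Lambda)$ in \eqref{eq:lbm-active-response-family}.

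\emph{Step 3: apply the abstract result.} Proposition~\ref{prop:abstract-cluster-envelope} now gives the formula directly. If one prefers to see it by hand, Corollary~\ref{cor:lbm-ordered-cluster-response} gives $\partial_f^+\Lambda_n^x=\beta_1(f)$ and $\partial_f^-\Lambda_n^x=\beta_r(f)$. Since $\int f\,d(-\gamma\Lambda\psi^2M_x)=\inner{\Response_f^\Lambda(M_x)\psi}{\psi}$ for unit $\psi$, Rayleigh--Ritz on the finite-dimensional eigenspace identifies $\beta_1(f)$ and $\beta_r(f)$ with the minimum and maximum of this quadratic form over unit vectors. These extrema are attained because the unit sphere of $E_x(\Lambda)$ is compact.

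The only real point of care is Step~1: the admissible class must contain $M_x$ itself (take $f=0$ in \eqref{eq:lbm-admissible-orbit}), and the ordered labels $n,\ldots,n+r-1$ must be the positive ordered labels of the abstract convention. This holds because $\ker A_x=\{0\}$ by Corollary~\ref{cor:lbm-compact-resolvent}.
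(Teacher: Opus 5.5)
Your proposal is correct and follows essentially the paper's route. The paper's proof is exactly your Step~3 computation: it identifies $\int_D f\,d(-\gamma\Lambda\psi^2M_x)$ with the quadratic form $\langle\Response_f^\Lambda(M_x)\psi,\psi\rangle_{L^2(M_x)}$ and combines Rayleigh--Ritz with Corollary~\ref{cor:lbm-ordered-cluster-response}. This is the same as specializing Proposition~\ref{prop:abstract-cluster-envelope} with $\rho=0$ and $\kappa=\gamma$.
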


\begin{proof}
For a unit $\psi\in E_x(\Lambda)$,
\[
 \int_D f\,d(-\gamma\Lambda\psi^2M_x)
 =\inner{\Response_f^\Lambda(M_x)\psi}{\psi}_{L^2(M_x)}.
\]
Rayleigh--Ritz and Corollary~\ref{cor:lbm-ordered-cluster-response} give
\eqref{eq:lbm-cluster-lower-edge-envelope}.
\end{proof}

\begin{lemma}
\label{lem:lbm-active-response-multiplicity}
For $x\in\Omega_{\rm LBM}$ and an eigenvalue $\Lambda>0$ of multiplicity $r$,
\begin{equation}
 \mathcal N_x(\Lambda)\text{ is a singleton}
 \quad\Longleftrightarrow\quad r=1.
 \label{eq:lbm-active-family-singleton}
\end{equation}
\end{lemma}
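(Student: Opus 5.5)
This is the LBM specialization of Lemma~\ref{lem:abstract-active-response-multiplicity}, and we will either invoke that lemma directly or repeat its short argument in the LBM notation. Since $\rho=0$ here, $\Theta_{M_x}(\psi,\psi)=\psi^2M_x$. The family $\mathcal N_x(\Lambda)$ in \eqref{eq:lbm-active-response-family} is therefore exactly the abstract active family \eqref{eq:abstract-active-family} for $\mu=M_x$ and $\kappa=\gamma$. The measure $M_x$ is admissible because $x\in\Omega_{\rm LBM}$, so the abstract lemma applies verbatim.

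For a self-contained argument we split into the two directions.

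If $r=1$, any two unit vectors of $E_x(\Lambda)$ differ only by a sign. Hence $\psi^2M_x$ is independent of the choice of $\psi$, and $\mathcal N_x(\Lambda)$ is a single measure.

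Conversely, suppose $r\ge2$ and the family is a singleton. Pick an $L^2(M_x)$-orthonormal pair $u,v\in E_x(\Lambda)$ and also use the unit vector $(u+v)/\sqrt2$. Dividing by $-\gamma\Lambda\ne0$, equality of the three response measures gives
\[
 u^2M_x=v^2M_x=\tfrac12(u+v)^2M_x .
\]
Expanding the last term yields $uv=0$, and the first equality gives $u^2=v^2$, both $M_x$-almost everywhere. It follows that $u^4=u^2v^2=(uv)^2=0$ almost everywhere. This contradicts $\|u\|_{L^2(M_x)}=1$.

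There is no real obstacle. The only point to check is that the equality of measures is taken in $\Mc_{\rm fin}(D)$, so that the identities hold $M_x$-a.e. This is immediate because the eigenfunctions are bounded by Lemma~\ref{lem:green-potential-eigenfunction-regularity}, so all the products involved are integrable.
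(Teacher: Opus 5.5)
Your proof is correct and is essentially the paper's own argument: the sign ambiguity handles $r=1$, and for $r\ge2$ the three measures built from $u$, $v$ and $(u+v)/\sqrt2$ force $uv=0$ and $u^2=v^2$ $M_x$-a.e., hence $u^4=0$, which contradicts $\|u\|_{L^2(M_x)}=1$. Your appeal to boundedness of eigenfunctions is unnecessary: $u,v\in L^2(M_x)$ already makes $u^2$, $v^2$, $uv$ integrable densities with respect to $M_x$, so uniqueness of Radon--Nikodym densities gives the a.e.\ identities directly.
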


\begin{proof}
If $r=1$, unit eigenfunctions differ only by sign.  Conversely, if $r\ge2$
and the family were a singleton, orthonormal $u,v\in E_x(\Lambda)$ would
satisfy
\[
 u^2M_x=v^2M_x=\frac{(u+v)^2}{2}M_x.
\]
Thus $uv=0$ and $u^2=v^2$ $M_x$-almost everywhere, so
$u^4=u^2v^2=0$ $M_x$-almost everywhere, contrary to normalization.
\end{proof}

\begin{corollary}
\label{cor:lbm-first-order-splitting}
Let $x\in\Omega_{\rm LBM}$ and let $\Lambda>0$ be a multiple eigenvalue of
$A_x$.  There exists $q\in\mathcal Q_D$ such that
\begin{equation}
 \Response_q^\Lambda(M_x)
 =-\gamma\Lambda\EigProj_\Lambda^x
   \Mult_q\big|_{E_x(\Lambda)}
 \label{eq:lbm-simple-splitting-matrix}
\end{equation}
has simple spectrum.  Hence the analytic branches issuing from $\Lambda$
along $M_{x,\tau}^{q}=e^{\gamma\tau q}M_x$ have pairwise distinct first
derivatives at zero.
\end{corollary}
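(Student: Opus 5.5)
This is the LBM specialization of Theorem~\ref{thm:abstract-simple-splitting}, so I will obtain it by checking that theorem's hypotheses under the dictionary \eqref{eq:lbm-abstract-dictionary}. Fix $x\in\Omega_{\rm LBM}$. The measure $\mu=M_x$ is admissible by \eqref{eq:lbm-admissible-orbit}, taking $f=0$. The data entering Theorem~\ref{thm:abstract-simple-splitting} are the following:
\begin{itemize}
\item Assumptions~\ref{ass:admissible-forms} and~\ref{ass:regular-form-multipliers}, verified in the final proof of Theorem~\ref{thm:intro-lbm-spectrum};
\item the closure clause \eqref{eq:lbm-admissible-tilt-closure};
\item the direction space \eqref{eq:countable-direction-space}, which is Lemma~\ref{lem:lbm-abstract-direction-interface}(i) with $\rho=0$;
\item the annihilator identity \eqref{eq:direction-annihilator}, which with $\rho=0$ is exactly \eqref{eq:lbm-direction-annihilator}.
\end{itemize}
All of these are already recorded in the LBM section.

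With these in place, I will spell out the two ingredients of the abstract argument in LBM notation, so the corollary reads self-contained. First comes the pairwise non-scalar step, Proposition~\ref{prop:abstract-pairwise-nonscalar}. Take orthonormal $u,v\in E_x(\Lambda)$ and suppose every $q\in\mathcal Q_D$ gives a scalar $2\times2$ compression. Then $uv\,M_x$ and $(u^2-v^2)M_x$ annihilate $\mathcal Q_D$, so both vanish by \eqref{eq:lbm-direction-annihilator}. Hence $uv=0$ and $u^2=v^2$ $M_x$-a.e., so $u^4=0$, contradicting normalization. This is the same computation as in Lemma~\ref{lem:lbm-active-response-multiplicity}.

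Second, Lemma~\ref{lem:abstract-simple-matrix} is applied to the real span $\mathcal A$ of the compressions $\Compress_q^\Lambda(M_x)$, $q\in\mathcal Q_D$, and yields a finite real combination $\sum_j a_j^*q_j$ whose compression has simple spectrum. The discriminant of $a\mapsto\sum_j a_j\Compress_{q_j}^\Lambda(M_x)$ is a polynomial that is nonzero at $a^*$, so it stays nonzero at some nearby rational $b$. Since $\mathcal Q_D$ is a $\mathbb Q$-vector space, $q=\sum_j b_jq_j$ lies in $\mathcal Q_D$. Multiplying by $-\gamma\Lambda\ne0$ shows that \eqref{eq:lbm-simple-splitting-matrix} has simple spectrum.

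Finally, for the branch statement I will invoke Proposition~\ref{prop:lbm-cluster-response} with $f=q\in C_c^\infty(D)$. By \eqref{eq:lbm-cluster-branch-derivatives}, the derivatives $\lambda_j'(0)$ along $M^{q}_{x,\tau}$ are, as a multiset, the eigenvalues of $\Response_q^\Lambda(M_x)$. Since that spectrum is simple, the derivatives are pairwise distinct. I expect no real obstacle here. The only point needing care is the rational perturbation that keeps $q$ inside the countable family $\mathcal Q_D$, and that is exactly the discriminant-openness argument above.
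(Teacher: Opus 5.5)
Your proposal is correct and follows the paper's proof. The paper likewise checks the hypotheses of Theorem~\ref{thm:abstract-simple-splitting} through Lemma~\ref{lem:lbm-abstract-direction-interface} and the dictionary \eqref{eq:lbm-abstract-dictionary} with $\rho=0$, applies that theorem, and then uses Proposition~\ref{prop:lbm-cluster-response} to identify the simple response spectrum with the branch derivatives; your unpacking of the pairwise non-scalar step and the rational discriminant perturbation simply reproduces the abstract proof in LBM notation.
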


\begin{proof}
The admissible-form, multiplier, and tilt-closure hypotheses used in
Section~\ref{sec:pathwise-response} were verified in
Section~\ref{sec:lbm-realization}.  Lemma~\ref{lem:lbm-abstract-direction-interface} and
\eqref{eq:lbm-abstract-dictionary} verify
\eqref{eq:countable-direction-space}--\eqref{eq:direction-annihilator} with
$(\kappa,\rho,\mathcal Q)=(\gamma,0,\mathcal Q_D)$.  Hence
Theorem~\ref{thm:abstract-simple-splitting} applies.  Proposition
\ref{prop:lbm-cluster-response} identifies the resulting simple response
spectrum with the branch derivatives.
\end{proof}

\section{Further response results for the Liouville--Cauchy operator}
\label{app:lcp-response}

Theorem~\ref{thm:intro-lcp-spectrum} was proved entirely in
Section~\ref{sec:lcp-realization}.  This appendix develops the centered
finite-energy and full Cameron--Martin response calculus, which is not needed
for that theorem.

Throughout this appendix we use the Liouville--Cauchy conventions of
Section~\ref{sec:lcp-realization}:
\[
 \Hc=\dot H^{1/2}(\LCircle),\qquad
 \Hreg=C^\infty(\LCircle)\cap\Hc,
 \qquad \kappa=\frac\gamma2,\qquad \rho=m.
\]
Here $G$ is the Cauchy pseudo-Green kernel in
\eqref{eq:circle-pseudo-green}, $\mathcal M_{\rm LCP}$ is the orbit
in \eqref{eq:lcp-admissible-orbit}, and $\Theta_\mu$ is the centered product
measure in \eqref{eq:centered-product-measure}.
All pathwise statements are made for a deterministic
$x\in\Omega_{\rm str}^{\partial}$; evaluation at the coordinate field is
obtained by setting $x=h$.

\subsection{Finite-energy centered spectral products}

For any finite signed measure $\zeta$ on $\LCircle$ with
$\zeta(\LCircle)=0$ and
$\sum_{n\ne0}|\widehat\zeta(n)|^2/|n|<\infty$, define its Fourier
potential $\Potential\zeta\in\Hc$ by
\begin{equation}
 \widehat{\Potential\zeta}(0)=0,
 \qquad
 \widehat{\Potential\zeta}(n)=\frac{\widehat\zeta(n)}{|n|},
 \quad n\ne0.
 \label{eq:lcp-response-potential-fourier}
\end{equation}
When the Green integral is absolutely convergent, this agrees with the kernel
potential $\theta\mapsto\int_{\LCircle}G(\theta,\varphi)\,\zeta(d\varphi)$.

Let $\sigma_1$ and $\sigma_2$ be finite signed measures of total mass zero satisfying
\begin{equation}
 \iint_{\LCircle^2}|G(\theta,\varphi)|
 \,d(|\sigma_1|+|\sigma_2|)(\theta)\,d(|\sigma_1|+|\sigma_2|)(\varphi)<\infty.
 \label{eq:lcp-green-energy-joint-domain}
\end{equation}
Define
\begin{equation}\label{eq:lcp-green-energy-definition}
 I_{\LCircle}(\sigma_1,\sigma_2)
 :=\iint_{\LCircle\times\LCircle}G(\theta,\varphi)\sigma_1(d\theta)\sigma_2(d\varphi),
 \qquad
 I_{\LCircle}(\sigma_1):=I_{\LCircle}(\sigma_1,\sigma_1),
\end{equation}
and similarly for $\sigma_2$ and $\sigma_1+\sigma_2$.  The Abel self-energy argument in
\eqref{eq:lcp-fourier-energy-measure} gives
\[
 I_{\LCircle}(\zeta)
 =\sum_{n\ne0}\frac{|\widehat\zeta(n)|^2}{|n|}
 =\norm{\Potential\zeta}_{\Hc}^2,
 \qquad \zeta\in\{\sigma_1,\sigma_2,\sigma_1+\sigma_2\}.
\]
Polarization therefore gives
\begin{equation}\label{eq:lcp-energy-potential-isometry}
 I_{\LCircle}(\sigma_1,\sigma_2)
 =\frac12\left[
   I_{\LCircle}(\sigma_1+\sigma_2)
   -I_{\LCircle}(\sigma_1)
   -I_{\LCircle}(\sigma_2)\right]
 =\inner{\Potential\sigma_1}{\Potential\sigma_2}_{\Hc}.
\end{equation}

\begin{lemma}
\label{lem:lcp-finite-energy-centered-products}
Let $x\in\Omega_{\rm str}^{\partial}$, and let
$\phi_1,\ldots,\phi_r$ be real, $L^2(M_x^\partial)$-orthonormal
eigenfunctions of $A_x$ with positive eigenvalues, and define
\begin{equation}
 \sigma_{ij}^x
 :=\Theta_{M_x^\partial}(\phi_i,\phi_j)
 =\phi_i\phi_jM_x^\partial-\delta_{ij}m,
 \qquad 1\le i,j\le r.
 \label{eq:lcp-centered-cluster-products}
\end{equation}
For $1\le i,j\le r$,
\begin{equation}
 \sigma_{ij}^x(\LCircle)=0,
 \qquad
 \iint_{\LCircle\times\LCircle}|G(\theta,\varphi)|
 \dd|\sigma_{ij}^x|(\theta)
 \dd|\sigma_{ij}^x|(\varphi)<\infty.
 \label{eq:lcp-centered-products-finite-energy}
\end{equation}
Moreover, $\Potential\sigma_{ij}^x\in\Hc$ and
\begin{equation}
 \inner{\Potential\sigma_{ij}^x}{f}_{\Hc}
 =\int_{\LCircle}f\phi_i\phi_j\,dM_x^\partial,
 \qquad f\in\Hreg.
 \label{eq:lcp-centered-product-pairing}
\end{equation}
\end{lemma}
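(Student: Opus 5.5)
The plan is to verify the three assertions directly from the pathwise facts already established on $\Omega_{\rm str}^{\partial}$: continuity and boundedness of eigenfunctions (Corollary~\ref{cor:lcp-continuous-eigenfunctions}), the Frostman bound \eqref{eq:standing-boundary-gmc-event}, and the Abel energy identity \eqref{eq:lcp-fourier-energy-measure}.

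\emph{Zero mass.} For zero mass, orthonormality gives $\int\phi_i\phi_j\,dM_x^\partial=\delta_{ij}$, while $m(\LCircle)=1$. Hence $\sigma_{ij}^x(\LCircle)=\delta_{ij}-\delta_{ij}=0$.

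\emph{Finite absolute energy.} Bound $|\sigma_{ij}^x|\le\|\phi_i\|_\infty\|\phi_j\|_\infty M_x^\partial+m$. By the log bound \eqref{eq:circle-green-log-bound}, it then suffices to control three integrals of $1+\log^+ d_{\LCircle}(\theta,\varphi)^{-1}$, against
\[
 M_x^\partial\otimes M_x^\partial,\qquad M_x^\partial\otimes m,\qquad m\otimes m .
\]
The first is finite by \eqref{eq:uniform-log-square-bound} (the square bound implies the first-power bound via Cauchy--Schwarz against the finite mass). The second is finite because $\sup_\theta\int\log^+ d^{-1}\,dm<\infty$ and $M_x^\partial$ is finite. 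The third is elementary.

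\emph{Potential and pairing.} By \eqref{eq:lcp-fourier-energy-measure}, applied through the Abel-regularized kernels $G_r$ exactly as in the proof of Proposition~\ref{prop:lcp-centered-green-inverse}, now for the general zero-mass measure $\sigma=\sigma_{ij}^x$ using the absolute integrability just shown as the dominating function, we get
\[
 \sum_{n\ne0}\frac{|\widehat\sigma(n)|^2}{|n|}=I_{\LCircle}(\sigma)<\infty .
\]
So $\Potential\sigma\in\Hc$ by \eqref{eq:lcp-response-potential-fourier}. For $f\in\Hreg$, which is smooth and mean zero, Parseval gives
\[
 \inner{\Potential\sigma}{f}_{\Hc}=\sum_{n\ne0}|n|\frac{\widehat\sigma(n)}{|n|}\overline{\widehat f(n)}=\sum_{n\ne0}\widehat\sigma(n)\overline{\widehat f(n)}=\int f\,d\sigma .
\]
The last equality is justified because the Fourier series of $f$ converges absolutely and uniformly, $\widehat f(0)=0$, and $\sigma$ is finite; conjugation is harmless since $f$ and $\sigma$ are real. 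Finally,
\[
 \int f\,d\sigma=\int f\phi_i\phi_j\,dM_x^\partial-\delta_{ij}\int f\,dm,
\]
and the last term vanishes since $f$ has zero $m$-mean. This yields \eqref{eq:lcp-centered-product-pairing}.

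\emph{Main obstacle.} The only delicate point is the energy identity for a signed measure that is not of the form $fM_x^\partial$, since it involves the $m$ part and cross terms. There the dominated-convergence step needs the uniform log majorant of $G_r$ against $|\sigma|\otimes|\sigma|$, which is exactly what the absolute-energy step supplies, together with monotonicity of the Fourier side. I expect the rest to be routine.
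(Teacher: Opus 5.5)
Your proposal is correct and follows essentially the same route as the paper. The ingredients match: a total-variation bound from the bounded continuous eigenfunctions, the logarithmic kernel and Frostman estimates for the absolute energy, orthonormality for zero mass, the Abel self-energy identity to place $\Potential\sigma_{ij}^x$ in $\Hc$, and Fourier inversion with $\widehat f(0)=0$ for the pairing. You run the Abel identity explicitly for a general zero-mass signed measure, with domination by the absolute log-energy, where the paper simply cites it. Your cruder bound $|\sigma_{ij}^x|\le\|\phi_i\|_\infty\|\phi_j\|_\infty M_x^\partial+m$, in place of the paper's $\delta_{ij}m$ term, is harmless.
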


\begin{proof}
Corollary~\ref{cor:lcp-continuous-eigenfunctions} gives bounded continuous
representatives of all $\phi_i$, hence
\begin{equation*}
 |\sigma_{ij}^x|
 \le \norm{\phi_i}_\infty\norm{\phi_j}_\infty M_x^\partial+\delta_{ij}m.
\end{equation*}
By \eqref{eq:circle-green-log-bound}, \eqref{eq:uniform-log-square-bound}, its
$m$-analogue, and Cauchy--Schwarz,
\begin{align*}
 &\iint|G|\,d|\sigma_{ij}^x|\,d|\sigma_{ij}^x|\\
 &\le \norm{\phi_i}_\infty^2\norm{\phi_j}_\infty^2
       \iint|G|\,dM_x^\partial dM_x^\partial\\
 &\quad+2\delta_{ij}\norm{\phi_i}_\infty\norm{\phi_j}_\infty
       \iint|G|\,dM_x^\partial dm
 +\delta_{ij}\iint|G|\,dm\,dm<\infty.
 \end{align*}
 This proves \eqref{eq:lcp-centered-products-finite-energy}, and orthonormality
 gives
 $\sigma_{ij}^x(\LCircle)
 =\int\phi_i\phi_j\,dM_x^\partial-\delta_{ij}=0$.
 The Abel identity preceding \eqref{eq:lcp-energy-potential-isometry} now gives
 $\Potential\sigma_{ij}^x\in\Hc$.  For $f\in\Hreg$, Cauchy--Schwarz in
 Fourier space and Fourier inversion give
 \begin{align*}
  \inner{\Potential\sigma_{ij}^x}{f}_{\Hc}
  &=\sum_{n\ne0}
     \widehat{\sigma_{ij}^x}(n)
     \overline{\widehat f(n)}
    =\int f\,d\sigma_{ij}^x\\
  &=\int f\phi_i\phi_j\,dM_x^\partial
    -\delta_{ij}\int f\,dm
   =\int f\phi_i\phi_j\,dM_x^\partial,
 \end{align*}
 because $f\in\Hc$ has zero $m$-mean.  This is
 \eqref{eq:lcp-centered-product-pairing}.
The left side extends continuously to all of $\Hc$.  We use it as the
finite-energy pairing with $\sigma_{ij}^x$ for non-smooth
$f\in\Hc$; no ordinary integral against a quasi-continuous representative is
asserted there.
\end{proof}

Let $\EigProj_\Lambda^x$ be the
$L^2(M_x^\partial)$-orthogonal projection onto
$\ker(A_x-\Lambda)$.  For
$f\in\Hreg$, define
\begin{equation}
 \Response_f^\Lambda(M_x^\partial)
 :=-\kappa\Lambda
 \EigProj_\Lambda^x
 \Mult_f\big|_{\ker(A_x-\Lambda)}.
 \label{eq:lcp-response-compression}
\end{equation}

\begin{proposition}
\label{prop:lcp-cluster-response}
Let $x\in\Omega_{\rm str}^{\partial}$ and let $\Lambda>0$ have multiplicity
$r$ for $A_x$, with real $L^2(M_x^\partial)$-orthonormal eigenbasis
$(\phi_i)_{i=1}^r$.  Put
$\sigma_{ij}^x:=\Theta_{M_x^\partial}(\phi_i,\phi_j)$ as in
\eqref{eq:lcp-centered-cluster-products}.  For $f\in\Hreg$, the first variation along
$M_{x,\tau}^{\partial,f}
:=e^{\kappa\tau f}M_x^\partial$ is
\begin{equation}
 \left(\inner{\Response_f^\Lambda(M_x^\partial)\phi_j}{\phi_i}
              _{L^2(M_x^\partial)}\right)_{i,j=1}^r
 =-\kappa\Lambda
 \left(\int_{\LCircle}f\phi_i\phi_j\,dM_x^\partial\right)_{i,j=1}^r
 =-\kappa\Lambda
 \left(\inner{\Potential\sigma_{ij}^x}{f}_{\Hc}
 \right)_{i,j=1}^r.
 \label{eq:lcp-cluster-response-matrix}
\end{equation}
The last expression defines the unique continuous linear extension
$f\mapsto\Response_f^\Lambda(M_x^\partial)$ from
$\Hreg$ to $\Hc$.
\end{proposition}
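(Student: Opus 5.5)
The plan is to obtain all three assertions by specializing the abstract results of Section~\ref{sec:pathwise-response} to the Liouville--Cauchy dictionary, in the same way Proposition~\ref{prop:lbm-full-cm-response} was obtained for LBM.

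\emph{First equality.} Section~\ref{sec:lcp-realization} already verified, for $\mu=M_x^\partial\in\mathcal M_{\rm LCP}$ with $\kappa=\gamma/2$, the hypotheses used in Section~\ref{sec:pathwise-response}:
\begin{itemize}[label=$\bullet$]
\item Assumptions~\ref{ass:admissible-forms}--\ref{ass:regular-form-multipliers};
\item the tilt closure \eqref{eq:lcp-admissible-tilt-closure}.
\end{itemize}
So Proposition~\ref{prop:abstract-cluster-response} applies along $M_{x,\tau}^{\partial,f}$. It identifies the first variation with $\Response_f^\Lambda(M_x^\partial)$ in \eqref{eq:lcp-response-compression}. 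Its matrix \eqref{eq:abstract-response-matrix} is exactly the first equality, because $\inner{\Mult_f\phi_j}{\phi_i}_{L^2(M_x^\partial)}=\int f\phi_i\phi_j\,dM_x^\partial$ for $\phi_i,\phi_j$ in the eigenspace.

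\emph{Second equality.} This is Lemma~\ref{lem:lcp-finite-energy-centered-products}, specifically \eqref{eq:lcp-centered-product-pairing}, applied to the orthonormal eigenbasis. The centering term $\delta_{ij}m$ is invisible to $f\in\Hreg$ because elements of $\Hc$ have zero $m$-mean.

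\emph{Continuous extension.} I would check the potential-realization hypotheses of Proposition~\ref{prop:full-cm-extension}:
\begin{itemize}[label=$\bullet$]
\item Take $\mathcal M_{\rm resp}(M_x^\partial)$ to be the zero-mass finite signed measures with finite Fourier energy $\sum_{n\ne0}|\widehat\zeta(n)|^2/|n|$. This is a vector space, since the energy is a seminorm squared.
\item Take $\Potential_\mu$ to be the Fourier potential \eqref{eq:lcp-response-potential-fourier}, which is linear into $\Hc$.
\item Lemma~\ref{lem:lcp-finite-energy-centered-products} puts every $\Theta_\mu(u,v)$ with $u,v$ in a common positive eigenspace into this space, by polarization and bilinearity of $\Theta_\mu$.
\item The reproducing identity \eqref{eq:abstract-response-reproducing} for $f\in\Hreg$ is the Fourier computation $\sum_{n\ne0}\widehat\zeta(n)\overline{\widehat f(n)}=\int f\,d\zeta$. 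This uses only absolute summability, since $\widehat f$ decays rapidly, together with $\widehat\zeta(0)=0$.
\item Condition \eqref{eq:abstract-full-cm-direction-conditions} holds: $\Hreg\supset\mathcal Q_{\rm C}$ is dense in $\Hc$ by Lemma~\ref{lem:lcp-abstract-direction-interface}, and $\int f\,dm=0$ for $f\in\Hreg$.
\end{itemize}
Proposition~\ref{prop:full-cm-extension} then gives existence and uniqueness of the continuous linear extension, characterized by the last expression in \eqref{eq:lcp-cluster-response-matrix}. Uniqueness is simply density of $\Hreg$ plus continuity of $f\mapsto\inner{\Potential\sigma_{ij}^x}{f}_{\Hc}$.

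The only delicate point is the reproducing identity for the centered measures. The $-\delta_{ij}m$ term must pair to zero, and the Fourier pairing must be legitimate for finite measures that are not absolutely continuous. Both are handled by the rapid decay of $\widehat f$ for smooth $f$ and the finiteness of $\widehat\zeta$, so I expect no real obstacle beyond bookkeeping.
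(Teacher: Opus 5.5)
Your proposal is correct and follows essentially the paper's route. The paper re-derives the first equality inline, via the transported form $a_{\tau,f}$, the collapse $\dot a_{0,f}(u,v)=-\kappa\Lambda\int fuv\,dM_x^\partial$ on the eigenspace, and form Hellmann--Feynman (exactly the content of Proposition~\ref{prop:abstract-cluster-response}, which you cite), then takes the second equality from \eqref{eq:lcp-centered-product-pairing} and the extension from continuity of $f\mapsto\inner{\Potential\sigma_{ij}^x}{f}_{\Hc}$ plus density of $\Hreg$ in $\Hc$, so your detour through Proposition~\ref{prop:full-cm-extension} is harmless but not needed.
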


\begin{proof}
Along the coherent line, the fixed-space unitary is
\[
 U_{\tau,f}:L^2(e^{\kappa\tau f}M_x^\partial)
 \longrightarrow L^2(M_x^\partial),
 \qquad U_{\tau,f}u=e^{\kappa\tau f/2}u.
\]
The transported form is
\[
 a_{\tau,f}(u,v)
 =\E(e^{-\kappa\tau f/2}u,e^{-\kappa\tau f/2}v),
 \qquad \Dom a_{\tau,f}=\Vd_x,
\]
and \eqref{eq:H-half-multiplier-bound}--
\eqref{eq:lcp-trace-poincare-bound} justify differentiation in the form norm.
Thus
\[
 \dot a_{0,f}(u,v)
 =-\frac\kappa2\E(fu,v)
  -\frac\kappa2\E(u,fv).
\]
For $u,v$ in the $\Lambda$-eigenspace, the weak eigenvalue equation gives
\[
 \E(fu,v)=\E(u,fv)
 =\Lambda\int_{\LCircle}fuv\,dM_x^\partial.
\]
Therefore
\[
 \dot a_{0,f}(u,v)
 =-\kappa\Lambda\int_{\LCircle}fuv\,dM_x^\partial.
 \]
 The form Hellmann--Feynman formula gives
 \eqref{eq:lcp-cluster-response-matrix}.  Continuity on $\Hc$ follows from
 \eqref{eq:lcp-centered-product-pairing}, and uniqueness follows from the
 density of $\Hreg$ in $\Hc$.
\end{proof}

For $f\in\Hc\setminus\Hreg$ this notation denotes only the
continuous response extension supplied by
\eqref{eq:lcp-centered-product-pairing}; no differentiable form path is
asserted.

\subsection{Potential realization and centered response}

For every $\mu\in\mathcal M_{\rm LCP}$, set
\begin{equation}
 \mathcal M_{\rm resp}(\mu)
 :=\left\{\sigma\in\Mc_{\rm fin}(\LCircle):
 \sigma(\LCircle)=0,\quad
 \sum_{k\ne0}\frac{|\widehat\sigma(k)|^2}{|k|}<\infty\right\}.
 \label{eq:lcp-response-space}
\end{equation}
For $\sigma\in\mathcal M_{\rm resp}(\mu)$, set
$\Potential_\mu\sigma:=\Potential\sigma$ by
\eqref{eq:lcp-response-potential-fourier}.

\begin{proposition}
\label{prop:lcp-response-realization}
The Liouville--Cauchy response is faithfully potential-realizable in the sense
of Definition~\ref{def:response-realization}, with
\eqref{eq:lcp-response-space} and \eqref{eq:lcp-response-potential-fourier}.
\end{proposition}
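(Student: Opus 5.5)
We need to check the three clauses of Definition~\ref{def:response-realization} for every $\mu\in\mathcal M_{\rm LCP}$, plus injectivity.

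\emph{Step 1: vector space and potential map.} First, $\mathcal M_{\rm resp}(\mu)$ in \eqref{eq:lcp-response-space} is a real vector space. Zero total mass is linear. The weighted Fourier sum is the square of a seminorm, $\sigma\mapsto(\sum_{k\ne0}|\widehat\sigma(k)|^2/|k|)^{1/2}$, so it is finite on sums by the triangle inequality in the weighted $\ell^2$ space. The Fourier potential \eqref{eq:lcp-response-potential-fourier} is then visibly linear. It takes values in $\Hc$ because $\widehat{\Potential\sigma}(0)=0$ and $\|\Potential\sigma\|_\Hc^2=\sum_{k\ne0}|\widehat\sigma(k)|^2/|k|<\infty$. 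Realness follows from $\widehat\sigma(-k)=\overline{\widehat\sigma(k)}$.

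\emph{Step 2: reproducing identity and injectivity.} For $f\in\Hreg=C^\infty(\LCircle)\cap\Hc$, the Fourier series of $f$ converges absolutely and uniformly. We can therefore compute
\[
\inner{\Potential\sigma}{f}_\Hc=\sum_{k\ne0}|k|\frac{\widehat\sigma(k)}{|k|}\overline{\widehat f(k)}=\sum_{k\ne0}\widehat\sigma(k)\overline{\widehat f(k)}=\int_{\LCircle}f\,d\sigma.
\]
The last equality is Fubini against the uniformly convergent series. We use that $\widehat f(0)=0$, and that the $k=0$ term would in any case vanish since $\sigma(\LCircle)=0$. This gives \eqref{eq:abstract-response-reproducing}. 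For injectivity, suppose $\Potential\sigma=0$. Then $\widehat\sigma(k)=0$ for all $k\ne0$, and $\widehat\sigma(0)=\sigma(\LCircle)=0$. Fourier uniqueness for finite measures then gives $\sigma=0$.

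\emph{Step 3: eigenfunction products lie in the response space.} This is the main obstacle, because $\mu=e^{\kappa f}M_x^\partial$ is a tilted measure rather than $M_x^\partial$ itself, while Lemma~\ref{lem:lcp-finite-energy-centered-products} is stated for $M_x^\partial$.

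Take $u,v$ in a common positive eigenspace of $A_\mu$. By polarization and homogeneity it suffices to treat $L^2(\mu)$-orthonormal eigenfunctions, in which case $\Theta_\mu(u,v)=uv\,\mu-\inner{u}{v}_{L^2(\mu)}m$ has total mass zero.

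For finite energy, I would redo the argument of Lemma~\ref{lem:lcp-finite-energy-centered-products} with $\mu$ in place of $M_x^\partial$. Proposition~\ref{prop:lcp-canonical-structural-event}(iii) gives $\mu$ the same Frostman bound, full support, and trace form. Hence Lemma~\ref{lem:lcp-centered-green-smoothing}, Proposition~\ref{prop:lcp-centered-green-inverse} and Corollary~\ref{cor:lcp-continuous-eigenfunctions}, whose proofs used only these properties, apply verbatim to $\mu$. This gives bounded continuous representatives of $u$ and $v$. Then $|\Theta_\mu(u,v)|\le\|u\|_\infty\|v\|_\infty\mu+m$. The logarithmic kernel bound \eqref{eq:circle-green-log-bound}, the uniform log-square bound \eqref{eq:uniform-log-square-bound} for $\mu$ and for $m$, and Cauchy--Schwarz give absolute integrability of $G$ against $|\Theta_\mu|\otimes|\Theta_\mu|$. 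The Abel argument behind \eqref{eq:lcp-fourier-energy-measure} then identifies this Green energy with $\sum_{k\ne0}|\widehat{\Theta_\mu(u,v)}(k)|^2/|k|<\infty$. This places $\Theta_\mu(u,v)$ in $\mathcal M_{\rm resp}(\mu)$.

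The only point needing care is that the continuous-representative and Frostman inputs transfer under the bounded tilt. Since $e^{\kappa f}$ is bounded above and below, this follows from \eqref{eq:lcp-admissible-comparison}, and these properties are all that is used.
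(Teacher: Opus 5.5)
Your proposal is correct and follows the paper's proof essentially step by step. Both arguments verify the vector-space structure in Fourier variables, obtain the reproducing identity on $\Hreg$, and get injectivity from Fourier uniqueness together with zero mass. Both then transfer the Frostman/Green-smoothing package to the tilted measure $\mu=e^{\kappa g}M_x^\partial$, so that the argument of Lemma~\ref{lem:lcp-finite-energy-centered-products} runs with $\mu$ in place of $M_x^\partial$. The only cosmetic difference is that you reduce to normalized eigenfunctions by polarization and homogeneity, whereas the paper expands $u,v$ in an orthonormal eigenbasis and uses bilinearity of $\Theta_\mu$.
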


\begin{proof}
Fix
\[
 \mu=e^{\kappa g}M_x^\partial
 \in\mathcal M_{\rm LCP}.
\]
Set $Z_\mu:=\mu(\LCircle)$ and define
$\bar G_\mu,c_\mu,G_\mu^\circ,$ and $\GreenOp_\mu$ by the centered
formulas \eqref{eq:centered-green-gh}--
\eqref{eq:lcp-centered-green-operator}, with
$M_x^\partial,Z_x^\partial$ replaced by $\mu,Z_\mu$.  The bounded density comparison with
$M_x^\partial$ preserves the Frostman bound, smoothness, full quasi-support,
and full support.  The proofs of
Lemma~\ref{lem:lcp-centered-green-smoothing},
Proposition~\ref{prop:lcp-centered-green-inverse}, and
Corollary~\ref{cor:lcp-continuous-eigenfunctions} therefore give
\begin{equation}
 \GreenOp_\mu
 =\left(A_\mu\big|_{L_0^2(\mu)}\right)^{-1},
 \qquad
 \GreenOp_\mu:L^2(\mu)\longrightarrow C(\LCircle).
 \label{eq:lcp-tilted-green-interface}
\end{equation}

For $\sigma\in\mathcal M_{\rm resp}(\mu)$,
\begin{align*}
 \norm{\Potential_\mu\sigma}_{\Hc}^2
 &=\sum_{k\ne0}\frac{|\widehat\sigma(k)|^2}{|k|}<\infty,\\
 \inner{\Potential_\mu\sigma}{f}_{\Hc}
 &=\sum_{k\ne0}\widehat\sigma(k)\overline{\widehat f(k)}
 =\int_{\LCircle}f\,d\sigma,
 \qquad f\in\Hreg.
\end{align*}
If $\Potential_\mu\sigma=0$, all nonzero Fourier coefficients of $\sigma$
vanish; the zero-mass condition also gives $\widehat\sigma(0)=0$, so Fourier
uniqueness gives $\sigma=0$.  Thus $\Potential_\mu$ is injective.  Moreover,
for $a,b\in\mathbb R$,
\[
 \left(\sum_{k\ne0}
 \frac{|a\widehat\sigma_1(k)+b\widehat\sigma_2(k)|^2}{|k|}\right)^{1/2}
 \le |a|\norm{\Potential_\mu\sigma_1}_{\Hc}
     +|b|\norm{\Potential_\mu\sigma_2}_{\Hc},
\]
so $\mathcal M_{\rm resp}(\mu)$ is a vector space and
$\Potential_\mu$ is linear.

Let $\phi_1,\ldots,\phi_r$ be a real $L^2(\mu)$-orthonormal basis of a common
positive eigenspace.  Applying the proof of
Lemma~\ref{lem:lcp-finite-energy-centered-products} with $\mu$ in place of
$M_x^\partial$ gives
\[
 \phi_i\phi_j\mu-\delta_{ij}m
 \in\mathcal M_{\rm resp}(\mu),
 \qquad 1\le i,j\le r.
\]
Consequently, for $u=\sum_i a_i\phi_i$ and $v=\sum_jb_j\phi_j$,
\[
 \Theta_\mu(u,v)
 =\sum_{i,j=1}^ra_ib_j
  (\phi_i\phi_j\mu-\delta_{ij}m)
 \in\mathcal M_{\rm resp}(\mu).
\]
 This proves potential realizability and faithfulness.
\end{proof}

For $x\in\Omega_{\rm str}^{\partial}$ and a real normalized simple positive
eigenpair $(\Lambda_n^x,\phi_n^x)$, define
\begin{equation}
 \eta_n^x
 :=-\kappa\Lambda_n^x
 \left((\phi_n^x)^2M_x^\partial-m\right).
 \label{eq:lcp-centered-response-measure}
\end{equation}
Lemma~\ref{lem:lcp-finite-energy-centered-products} gives
\[
 \eta_n^x\in\mathcal M_{\rm resp}(M_x^\partial).
\]
For $f\in\Hc$, define the extended response functional by
\begin{equation}
 \ell_{n,\mathrm{ext}}^x(f)
 :=\inner{\Potential\eta_n^x}{f}_{\Hc}.
 \label{eq:lcp-centered-response-functional}
\end{equation}
For $f\in\Hreg$, Corollary~\ref{cor:abstract-simple-response} gives the
derivative formula, while equation~\eqref{eq:lcp-centered-product-pairing}
identifies it with the potential pairing:
\[
 \ell_{n,\mathrm{ext}}^x(f)
 =-\kappa\Lambda_n^x
   \int_{\LCircle}f(\phi_n^x)^2\,dM_x^\partial
 =\partial_f\Lambda_n^x.
\]
For $f\in\Hc\setminus\Hreg$, only the continuous extension
$\ell_{n,\mathrm{ext}}^x(f)$ is asserted; no analytic perturbation path is
attached to $f$.

For $1\le n_1<\cdots<n_N$ with simple eigenvalues, set
\begin{equation}
 C_{ij}(x)
 :=\inner{\Potential\eta_{n_i}^x}
          {\Potential\eta_{n_j}^x}_{\Hc},
 \qquad 1\le i,j\le N.
 \label{eq:lcp-centered-response-gram}
\end{equation}

\begin{corollary}
\label{cor:lcp-centered-response-gram}
For the data in \eqref{eq:lcp-centered-response-gram}, the centered response
potentials are linearly independent, and $C(x)$ is positive definite.
\end{corollary}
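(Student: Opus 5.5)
The plan is to reduce the statement to the abstract Gram criterion, Corollary~\ref{cor:abstract-positive-response-gram}. That corollary applies to faithfully potential-realizable responses, and Proposition~\ref{prop:lcp-response-realization} supplies exactly this for $\mu=M_x^\partial\in\mathcal M_{\rm LCP}$ (take $g=0$), with $\mathcal M_{\rm resp}(\mu)$ as in \eqref{eq:lcp-response-space} and $\Potential_\mu=\Potential$. Lemma~\ref{lem:lcp-finite-energy-centered-products} with $i=j$ shows that each $\eta_{n_i}^x$ lies in $\mathcal M_{\rm resp}(M_x^\partial)$: it has zero mass and finite energy. So the matrix $C(x)$ in \eqref{eq:lcp-centered-response-gram} is precisely the Gram matrix \eqref{eq:abstract-response-gram}. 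What remains is to show that the measures $\eta_{n_1}^x,\ldots,\eta_{n_N}^x$ are linearly independent.

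For that independence I would use the square criterion. Suppose $\sum_i a_i\eta_{n_i}^x=0$. By \eqref{eq:lcp-centered-response-measure} this reads
\[
 -\kappa\sum_{i=1}^N a_i\Lambda_{n_i}^x\,\Theta_{M_x^\partial}(\phi_{n_i}^x,\phi_{n_i}^x)=0 .
\]
Corollary~\ref{cor:lcp-square-interface} verifies the hypotheses of Theorem~\ref{thm:abstract-square-transversality} with $\nu_{\rm ref}=m$, $\rho=m\ll m$, $c_0=2$, $b_i=\Gamma_J(\phi_{n_i}^x)m$, and $L=A_0$ restricted to the span of the squares. Continuity of the representatives comes from Corollary~\ref{cor:lcp-continuous-eigenfunctions}. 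The eigenvalues are distinct because the $n_i$ are distinct indices of simple eigenvalues, and they are positive. The weighted independence \eqref{eq:abstract-weighted-square-independence} then gives $a=0$. Equivalently, one could invoke Corollary~\ref{cor:lcp-response-transversality} together with the annihilator identity \eqref{eq:lcp-direction-annihilator} and the zero-mass argument of Corollary~\ref{cor:square-implies-RT}.

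Once independence holds, injectivity and linearity of $\Potential$ give that the potentials $\Potential\eta_{n_i}^x$ are linearly independent in $\Hc$. The identity $a^\top C(x)a=\norm{\Potential(\sum_i a_i\eta_{n_i}^x)}_{\Hc}^2$ then gives positive definiteness, as in the proof of Corollary~\ref{cor:abstract-positive-response-gram}.

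The only point needing care is bookkeeping. The singular/absolutely-continuous split in Theorem~\ref{thm:abstract-square-transversality} must tolerate the centering term $\rho=m$. It does: $\rho\ll\nu_{\rm ref}$ is assumed there, and Proposition~\ref{prop:lcp-canonical-structural-event} supplies $M_x^\partial\perp m$ and full support on $\Omega_{\rm str}^\partial$. I do not expect a genuine obstacle.
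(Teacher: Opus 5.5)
Your proposal is correct and follows the paper's own proof. Weighted square transversality (Corollary~\ref{cor:lcp-square-interface} with Theorem~\ref{thm:abstract-square-transversality}) gives independence of the centered response measures $\eta_{n_i}^x$. Faithful potential realization (Proposition~\ref{prop:lcp-response-realization}) transfers this to the potentials, and the identity $a^\top C(x)a=\norm{\sum_i a_i\Potential\eta_{n_i}^x}_{\Hc}^2$ gives positive definiteness. Invoking Corollary~\ref{cor:abstract-positive-response-gram} instead of writing out that quadratic form directly is only a cosmetic difference.
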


\begin{proof}
For $a=(a_1,\ldots,a_N)\in\mathbb R^N$,
\[
 \sum_{i=1}^Na_i\eta_{n_i}^x
 =-\kappa\sum_{i=1}^N
 a_i\Lambda_{n_i}^x
 \left((\phi_{n_i}^x)^2M_x^\partial-m\right).
\]
Corollary~\ref{cor:lcp-square-interface} and the weighted conclusion of
Theorem~\ref{thm:abstract-square-transversality} imply that this measure can
vanish only when $a=0$.  Proposition~\ref{prop:lcp-response-realization}
then gives linear independence of the potentials.  Hence, for $a\ne0$,
\begin{align*}
 \sum_{i,j=1}^Na_iC_{ij}(x)a_j
 &=\inner{\sum_{i=1}^Na_i\Potential\eta_{n_i}^x}
          {\sum_{j=1}^Na_j\Potential\eta_{n_j}^x}_{\Hc}\\
 &=\norm{\sum_{i=1}^Na_i\Potential\eta_{n_i}^x}_{\Hc}^2>0.
\end{align*}
\end{proof}

\small
\raggedright

\end{document}